\def\comment#1{{\sf{[#1]}}}
\def\Z{{\mathbb Z}}
\def\Q{{\mathbb Q}}
\def\R{{\mathbb R}}
\def\C{{\mathbb C}}
\def\P{{\mathbb P}}
\def\F{{\mathbb F}}
\def\L{{\mathbb L}}
\def\kk{{\Bbbk}}		
\def\cA{{\mathcal A}}
\def\cB{{\mathcal B}}
\def\cG{{\mathcal G}}
\def\cJ{{\mathcal J}}
\def\K{{\mathcal K}}
\def\M{{\mathcal M}}
\def\cO{{\mathcal O}}
\def\U{{\mathcal U}}
\def\X{{\mathcal X}}
\def\sE{{\mathscr E}}
\def\sL{{\mathscr L}}
\def\sP{{\mathscr P}}
\def\sS{{\mathscr S}}
\def\sV{{\mathscr V}}
\def\k{{\mathfrak k}}
\def\p{{\mathfrak p}}
\def\u{{\mathfrak u}}
\def\e{{\epsilon}}
\def\w{{\omega}}
\def\G{{\Gamma}}
\def\bL{\boldsymbol{L}} 
\def\bP{\boldsymbol{P}}
\def\bV{\boldsymbol{V}}
\def\alphahat{{\hat{\alpha}}}
\def\chitilde{{\tilde{\chi}}}
\def\rhotilde{{\tilde{\rho}}}
\def\tautilde{{\tilde{\tau}}}
\def\cGhat{{\widehat{\cG}}}
\def\Mhat{{\widehat{M}}}
\def\Uhat{{\widehat{\U}}}
\def\Xhat{{\widehat{X}}}
\def\uhat{{\widehat{\u}}}
\def\betadot{{\dot{\beta}}}
\def\gammadot{{\dot{\gamma}}}
\def\cGbar{\overline{\cG}}
\def\Mbar{{\overline{\M}}}
\def\Qbar{{\overline{\Q}}}
\def\Sbar{{\overline{S}}}
\def\Ubar{{\overline{\U}}}
\def\Xbar{{\overline{X}}}
\def\Ybar{{\overline{Y}}}
\def\Deltabar{{\overline{\Delta}}}
\def\Pdual{\check{\bP}}
\def\Ldual{\check{\bL}}
\def\Hdual{\check{H}}
\def\deltadual{\check{\delta}}
\def\vv{{\vec{\mathsf v}}}
\def\qq{{\vec{\mathsf q}}}
\def\MHS{{\mathsf{MHS}}}
\def\MTM{{\mathsf{MTM}}}
\def\GRT{{\mathrm{GRT}}}
\def\URT{{\mathrm{URT}}}
\def\et{{\mathrm{\acute{e}t}}}
\def\cts{{\mathrm{cts}}}
\def\un{{\mathrm{un}}}
\def\ss{{\mathrm{ss}}}
\def\op{{\mathrm{op}}}
\def\rot{{\mathrm{rot}}}
\def\To{\longrightarrow}
\def\bdot{{\bullet}}
\def\blank{{\underline{\phantom{x}}}}		
\def\dd{{\mathbf{d}}}
\def\KV{{\mathrm{KV}}}
\def\KVI{{\mathrm{KVI}}}
\def\cs{\beta_{C\!S}}
\def\csdual{{\check{\beta}_{C\!S}}}
\def\phidual{{\check{\varphi}}}
\def\psidual{{\check{\psi}}}
\def\partialdual{{\check{\partial}}}
\def\bil{{\langle\phantom{x},\phantom{x}\rangle}}
\def\gold{\{\phantom{x},\phantom{x}\}}
\def\ll{\langle\langle}
\def\rr{\rangle\rangle}
\def\Pminus{{\P^1-\{0,1,\infty\}}}
\def\Ql{{\Q_\ell}}
\def\Zl{{\Z_\ell}}
\def\uu{{\vec{1}}}
\def\MT{{\mathrm{MT}}}
\def\Gm{{\mathbb{G}_m}}
\def\Sp{{\mathrm{Sp}}}
\def\GSp{{\mathrm{GSp}}}
\def\SL{{\mathrm{SL}}}
\def\KRV{{\mathcal{KRV}}}						
\newcommand\id{\operatorname{id}}
\newcommand\Hom{\operatorname{Hom}}
\newcommand\Ext{\operatorname{Ext}}
\newcommand\Aut{\operatorname{Aut}}
\newcommand\Diff{\operatorname{Diff}}
\newcommand\Der{\operatorname{Der}}
\newcommand\Gr{\operatorname{Gr}}
\newcommand\cone{\operatorname{cone}}
\newcommand\Gal{\operatorname{Gal}}
\newcommand\Jac{\operatorname{Jac}}
\newcommand\Spec{\operatorname{Spec}}
\newcommand\cupdot{\operatorname{\dot{\cup}}}
\newcommand\Arf{\operatorname{Arf}}
\newcommand\comptensor{\operatorname{\widehat{\otimes}}}
\numberwithin{equation}{section}
\newtheorem{theorem}{Theorem}[section]
\newtheorem{lemma}[theorem]{Lemma}
\newtheorem{proposition}[theorem]{Proposition}
\newtheorem{corollary}[theorem]{Corollary}
\newtheorem{bigtheorem}{Theorem}
\newtheorem{bigcorollary}[bigtheorem]{Corollary}
\theoremstyle{definition}
\newtheorem{definition}[theorem]{Definition}
\newtheorem{hypothesis}[theorem]{Hypothesis}
\theoremstyle{remark}
\newtheorem{remark}[theorem]{Remark}
\newtheorem{conjecture}[theorem]{Conjecture}
\begin{document}

\title[Hodge Theory of the Turaev Cobracket]{Hodge Theory of the Turaev Cobracket and the Kashiwara--Vergne Problem}

\author{Richard Hain}
\address{Department of Mathematics\\ Duke University\\
Durham, NC 27708-0320}
\email{hain@math.duke.edu}

\thanks{Supported in part by grant DMS-1406420 from the National Science Foundation.}
\thanks{ORCID iD: {\sf orcid.org/0000-0002-7009-6971}}

\date{\today}

\subjclass{Primary 	17B62, 58A12; Secondary 57N05, 14C30}

\keywords{Turaev cobracket, Goldman bracket, Lie bialgebra, Hodge theory}

\begin{abstract} 
In this paper we show that, after completing in the $I$-adic topology, the Turaev cobracket on the vector space freely generated by the closed geodesics on a smooth, complex algebraic curve $X$ with a quasi-algebraic framing is a morphism of mixed Hodge structure. We combine this with results of a previous paper on the Goldman bracket to construct torsors of solutions to the Kashiwara--Vergne problem in all genera. The solutions so constructed form a torsor under a prounipotent group that depends only on the topology of the framed surface. We give a partial presentation of these groups. Along the way, we give a homological description of the Turaev cobracket.
\end{abstract}

\maketitle


\section{Introduction}

Denote the set of free homotopy classes of maps $S^1 \to X$ in a topological space $X$ by $\lambda(X)$ and the free $R$-module it generates by $R\lambda(X)$. When $X$ is an oriented surface with a nowhere vanishing vector field $\xi$, there is a map
$$
\delta_\xi : R\lambda(X) \to R\lambda(X)\otimes R\lambda(X),
$$
called the {\em Turaev cobracket}, that gives $R\lambda(X)$ the structure of a Lie coalgebra. The cobracket was first defined by Turaev \cite{turaev:78} on $R\lambda(M)/R$ (with no framing) and lifted to $R\lambda(M)$ for framed surfaces in \cite[\S18]{turaev:skein} and \cite{akkn2}. The cobracket $\delta_\xi$ and the Goldman bracket \cite{goldman}
$$
\gold : R\lambda(X)\otimes R\lambda(X) \to R\lambda(X)
$$
endow $R\lambda(X)$ with the structure of an involutive Lie bialgebra \cite{turaev:skein,chas,kk:intersections}.

The value of the cobracket on a loop $a \in \lambda(X)$ is obtained by representing it by an immersed circle $\alpha : S^1 \to X$ with transverse self intersections and trivial winding number relative to $\xi$. Each double point $P$ of $\alpha$ divides it into two loops based at $P$, which we denote by $\alpha'_P$ and $\alpha_P''$. Let $\e_P = \pm 1$ be the intersection number of the initial arcs of $\alpha_P'$ and $\alpha_P''$. The cobracket of $a$ is then defined by
\begin{equation}
\label{eqn:def_turaev}
\delta_\xi(a) = \sum_P \e_P(a'_P \otimes a''_P - a''_P \otimes a'_P),
\end{equation}
where $a_P'$ and $a_P''$ are the classes of $\alpha_P'$ and $\alpha_P''$, respectively.

The powers of the augmentation ideal $I$ of $R\pi_1(X,x)$ define the $I$-adic topology on it and induce a topology on $R\lambda(X)$. Kawazumi and Kuno \cite{kk:intersections} showed that $\delta_\xi$ is continuous in the $I$-adic topology and thus induces a map
$$
\delta_\xi : R\lambda(X)^\wedge \to R\lambda(X)^\wedge\comptensor R\lambda(X)^\wedge
$$
on $I$-adic completions. This and the completed Goldman bracket give $R\lambda(X)^\wedge$ the structure of an involutive completed Lie bialgebra \cite{kk:intersections}.

Now suppose that $X$ is a smooth affine curve over $\C$ or, equivalently, the complement of a non-empty finite set $D$ in a compact Riemann surface $\Xbar$. In this case $\Q\lambda(X)^\wedge$ has a canonical pro-mixed Hodge structure \cite{hain:dht}. In particular, it has a {\em weight filtration}
$$
\cdots \subseteq W_{-2} \Q\lambda(X)^\wedge \subseteq W_{-1} \Q\lambda(X)^\wedge \subseteq W_0 \Q\lambda(X)^\wedge = \Q\lambda(X)^\wedge
$$
and its complexification $\C\lambda(X)^\wedge$ has a {\em Hodge filtration}
$$
\cdots \supset  F^{-2} \C\lambda(X)^\wedge \supset F^{-1} \C\lambda(X)^\wedge \supset F^{0} \C\lambda(X)^\wedge \supset F^1 \C\lambda(X)^\wedge = 0.
$$
The Hodge filtration depends on the algebraic structure on $X$, while the weight filtration is topologically determined and so does not depend on the complex structure.\footnote{The weight filtration on $\Q\lambda(X)^\wedge$ is the image of the weight filtration of $\Q\pi_1(X,x)^\wedge$, which is determined uniquely by the conditions that $W_{-1} \Q\pi_1(X,x)^\wedge  = I$, $W_{-2} \Q\pi_1(X,x)^\wedge=\ker\{I\to H_1(\Xbar)\}$, and by the condition that $W_{-m-2}\Q\pi_1(X,x)^\wedge$ is the ideal generated by $W_{-1}W_{-m-1}$ and $W_{-2}W_{-m}$.} This pro-mixed Hodge structure contains subtle geometric and arithmetic information about $X$. Together with the image of $\Z\lambda(X)^\wedge$, the mixed Hodge structure on $\Q\lambda(X)/I^3$ determines, for example, $\Xbar$ up to isomorphism, $X$ up to finite ambiguity, as well as the point in the intermediate jacobian of the jacobian of $\Xbar$ determined by the Ceresa cycle of $\Xbar$.

Our first main result is that the Turaev cobracket is compatible with this structure.

\begin{bigtheorem}
\label{thm:turaev}
If $\xi$ is a nowhere vanishing holomorphic vector field on $X$ that is meromorphic on $\Xbar$, then
$$
\delta_\xi : \Q\lambda(X)^\wedge\otimes\Q(-1) \to \Q\lambda(X)^\wedge\comptensor \Q\lambda(X)^\wedge
$$
is a morphism of pro-mixed Hodge structures, so that $\Q\lambda(X)^\wedge\otimes\Q(1)$ is a complete Lie coalgebra in the category of pro-mixed Hodge structures.
\end{bigtheorem}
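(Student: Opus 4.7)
The plan is to verify the two defining properties of a morphism of pro-MHS separately, after noting that $\delta_\xi$ is automatically defined over $\Q$ (indeed, over $\Z$) from its topological definition: strictness with respect to the weight filtration, and strictness with respect to the Hodge filtration after complexification. Because the weight filtration on $\Q\lambda(X)^\wedge$ is topological, this first compatibility involves purely topological input, whereas the second is where both the algebraic structure on $X$ and the framing $\xi$ enter.

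For the weight filtration, I would argue directly on the $I$-adic level. The weight filtration is induced from that on $\Q\pi_1(X,x)^\wedge$, which is the $I$-adic filtration refined by the cut $W_{-2} = \ker\{I\to H_1(\Xbar)\}$ described in the footnote above. Since definition~(\ref{eqn:def_turaev}) decomposes a loop at each double point $P$ into two sub-loops whose concatenation is the original, a bookkeeping argument on powers of $I$ shows $\delta_\xi$ carries $W_{-n}\Q\lambda(X)^\wedge$ into $W_{-n-2}$ of the tensor product. The additional shift by $-2$ beyond the naive $I$-adic count is accounted for by the intersection number $\e_P$, which is intersection-theoretic and so naturally carries a Tate twist by $\Q(-1)$.

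For the Hodge filtration, which is the real content of the theorem, I would first develop the homological/de~Rham description of $\delta_\xi$ advertised in the abstract. The aim is to rewrite the sum over self-intersections as the restriction to the diagonal of a $\xi$-modified Poincar\'e--Lefschetz-type pairing on $X$, so that $\delta_\xi$ becomes an algebraic combination of cup products, Poincar\'e duality, contraction against the dual $1$-form $\w_\xi$ to $\xi$, and the iterated-integral pairing between $\Q\pi_1(X,x)^\wedge$ and the reduced bar complex of $\Omega^\bdot(X)$. Each of these building blocks is a morphism of MHS of an explicit type: cup product and Poincar\'e duality are morphisms of MHS on $H^\bdot(X)$; $\w_\xi$ lies in $F^1$ of the logarithmic de~Rham complex on $\Xbar$ with poles on $D = \Xbar\setminus X$ precisely because $\xi$ is holomorphic on $X$ with at most meromorphic poles on $D$; and the pro-MHS on $\Q\pi_1(X,x)^\wedge$ from \cite{hain:dht} is constructed via iterated integrals that respect $F^\bdot$. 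Assembling these, the map $\delta_\xi$ preserves $F^\bdot$ with the prescribed twist.

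The main obstacle is the homological reformulation itself: replacing the combinatorial sum in~(\ref{eqn:def_turaev}) by a formula that is manifestly algebraic and Hodge-theoretic. Once this reformulation is in hand on the chain level, the compatibility with $F^\bdot$ reduces to standard Hodge theory of smooth affine curves, and combining it with the weight estimate of the previous paragraph yields the theorem; the coalgebra statement then follows, since the Lie-coalgebra axioms are already established topologically in \cite{kk:intersections} and are preserved under any morphism of MHS.
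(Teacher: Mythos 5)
Your overall architecture for the Hodge filtration --- reformulate the combinatorial sum in (\ref{eqn:def_turaev}) homologically, then check each factor against the standard Hodge theory of affine curves --- is indeed the paper's strategy, but the proposal has two genuine gaps. The first is in the reformulation itself, which you rightly identify as the main obstacle but leave without its key ingredient: the bundle $\Xhat$ of non-zero tangent vectors. The paper does not express $\delta_\xi$ as a pairing on $X^2$ contracted against a $1$-form dual to $\xi$. Instead it lifts a free homotopy class to $\Lambda\Xhat$ via $\xi$, sends it to a class $\varphi(\alpha)$ in the cone group $H_2(X^2,\Xhat;\bL_\Xhat\to\bP_X\otimes\bP_X^\op)$ built from the homotopy $s_\alpha$ of Proposition~\ref{prop:homologous}, and caps with a class $c_\xi=\pi^\ast\tau_\xi+f_\xi\in H^2_\Delta(X^2,\Xhat)$. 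The correction term $f_\xi\in H^1(\Xhat)$ is essential: intersecting $s_{\pi\circ\alpha}$ with the diagonal produces, besides the double-point sum, the extra term $\rot_\xi(\beta)\,(\beta\otimes 1-1\otimes\beta)$ of Lemma~\ref{lem:formula}, which cancels only against $\varphi(\alpha)\cap f_\xi$. Your sketch has no mechanism for this cancellation, so the proposed formula does not compute $\delta_\xi$. The algebraicity of $\xi$ then enters not through a form in $F^1$ but through showing that $c_\xi$ is a Hodge class of type $(1,1)$: $f_\xi$ is $\tfrac{1}{2\pi i\, m}\, r_m^\ast(dt/t)$ for an algebraic map $r_m:\Xhat\to\C^\ast$ determined by $\xi^m$, and $\tau_\xi$ is a Thom-type class. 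This is exactly where the single $\Q(-1)$ comes from.

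The second gap is the weight-filtration step, which is both misdirected and unnecessary. Misdirected: with the source twisted by $\Q(-1)$, the morphism condition reads $\delta_\xi(W_{-n}\Q\lambda(X)^\wedge)\subseteq W_{-n+2}$, i.e.\ the cobracket may \emph{raise} weight by $2$ (cutting a loop at a double point can lose two units of $I$-adic degree, which is the content of Kawazumi--Kuno's continuity estimate); your claimed containment in $W_{-n-2}$ goes the wrong way, and in any case an $I$-adic count does not by itself control the refinement $W_{-2}=\ker\{I\to H_1(\Xbar)\}$. Unnecessary: in the paper all the groups in the dualized factorization of $\deltadual_\xi$ are cohomology of cones of mixed Hodge complexes, so they carry natural MHSs and every map in the factorization preserves $W$ and $F$ simultaneously; no separate topological weight estimate is needed. (A small terminological point: the defining property of a morphism of MHS is preservation of the filtrations, not strictness; strictness is then automatic by Deligne.)
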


We call such a framing $\xi$ an {\em algebraic framing}. The previous result also holds in the more general situation where the framing $\xi$ is a section of a twist of the holomorphic tangent bundle of $\Xbar$ by a torsion line bundle. We call such framings {\em quasi-algebraic framings} of $X$. (See Definition~\ref{def:q-alg}.)

The main result of \cite{hain:goldman} asserts that
$$
\gold : \Q\lambda(X)^\wedge\otimes\Q\lambda(X)^\wedge \to \Q\lambda(X)^\wedge \otimes \Q(1)
$$
is a morphism of mixed Hodge structure (MHS), so that $\Q\lambda(X)^\wedge\otimes\Q(-1)$ is a complete Lie algebra in the category of pro-mixed Hodge structures.

\begin{bigcorollary}
\label{cor:gr_gt}
If $\xi$ is a quasi-algebraic framing of $X$, then $\big(\Q\lambda(X)^\wedge,\gold,\delta_\xi\big)$ is a ``twisted'' completed Lie bialgebra in the category of pro-mixed Hodge structures.
\end{bigcorollary}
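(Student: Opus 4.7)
The plan is to combine Theorem~\ref{thm:turaev} with the main theorem of \cite{hain:goldman} and observe that the remaining content of the corollary is purely topological. Write $\hat\Lambda := \Q\lambda(X)^\wedge$. The main theorem of \cite{hain:goldman} promotes the Goldman bracket to a morphism of pro-MHS
\[
\gold\colon \hat\Lambda \otimes \hat\Lambda \to \hat\Lambda \otimes \Q(1),
\]
and Theorem~\ref{thm:turaev} does the same for the Turaev cobracket,
\[
\delta_\xi\colon \hat\Lambda \otimes \Q(-1) \to \hat\Lambda \comptensor \hat\Lambda.
\]
By Turaev, Chas, and Kawazumi--Kuno \cite{turaev:skein,chas,kk:intersections}, the triple $(\hat\Lambda,\gold,\delta_\xi)$ is already an involutive completed Lie bialgebra in the category of complete $\Q$-vector spaces; in particular the Jacobi, co-Jacobi, Drinfeld cocycle, and involutivity identities hold after forgetting the Hodge and weight filtrations.

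First I would pin down the precise meaning of a ``twisted'' completed Lie bialgebra in $\pro\MHS$: this is a pro-MHS $V$ equipped with pro-MHS morphisms $V\otimes V\to V(1)$ and $V(-1)\to V\comptensor V$ whose iterated composites in the four Lie-bialgebra axioms have matching Tate twists on the two sides of each identity. Under this convention Jacobi becomes an equality of pro-MHS morphisms $V^{\otimes 3}\to V(2)$, co-Jacobi an equality of morphisms $V(-2)\to V^{\comptensor 3}$, the cocycle identity an equality of morphisms $V^{\otimes 2}\to V^{\comptensor 2}(2)$, and involutivity the vanishing of the composite $V(-1)\to V(1)$. Each such composite is built by tensoring the two structure maps with identities and symmetries, so is itself a morphism of pro-MHS by functoriality of $\otimes$ and $\comptensor$.

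With this bookkeeping in place, the proof reduces to a formal application of faithfulness: both sides of each axiom are morphisms of pro-MHS that agree as $\Q$-linear maps by the topological Lie bialgebra identities of \cite{turaev:skein,chas,kk:intersections}, and the forgetful functor from $\pro\MHS$ to pro-$\Q$-vector spaces is faithful, so the identities persist in $\pro\MHS$. The main, and really only, obstacle is the Tate-twist bookkeeping in the first step, which needs care because the bracket and cobracket are twisted in opposite directions; modulo this routine check, the corollary is a formal consequence of Theorem~\ref{thm:turaev}, the Goldman theorem of \cite{hain:goldman}, and the topological Lie bialgebra structure on $\hat\Lambda$.
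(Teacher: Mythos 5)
Your proposal is correct and follows exactly the route the paper intends: the corollary is stated as an immediate consequence of Theorem~\ref{thm:turaev} together with the main theorem of \cite{hain:goldman}, with the Lie bialgebra identities already known topologically from \cite{turaev:skein,chas,kk:intersections} and persisting in $\pro\MHS$ by faithfulness of the forgetful functor. Your explicit Tate-twist bookkeeping for the four axioms is a careful spelling-out of what the paper leaves implicit in its remark that the bracket and cobracket must be twisted by $\Q(\pm 1)$ in opposite directions.
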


By ``twisted'' we mean that one has to twist both the bracket and cobracket by $\Q(\pm 1)$ to make them morphisms of MHS. There is no one twist of $\Q\lambda(X)$ that makes them simultaneously morphisms of MHS.

Let $\vv$ be a non-zero tangent vector of $\Xbar$ at a point of $D$. Standard results in Hodge theory (see \cite[\S10.2]{hain:goldman}) imply:

\begin{bigcorollary}
\label{cor:formality}
Hodge theory determines torsors of compatible isomorphisms
\begin{equation}
\label{eqn:split_GT}
\big(\Q\lambda(X)^\wedge,\gold,\delta_\xi\big) \overset{\simeq}{\To} \Big(\prod_{m\ge 0}\Gr^W_{-m}\Q\lambda(X)^\wedge,\Gr^W_\bdot\gold,\Gr^W_\bdot\delta_\xi\Big)
\end{equation}
of the Goldman--Turaev Lie bialgebra with the associated weight graded Lie bialgebra and of the completed Hopf algebras
\begin{equation}
\label{eqn:split_A}
\Q\pi_1(X,\vv)^\wedge \overset{\simeq}{\To} \prod_{m\ge 0}\Gr^W_{-m} \Q\pi_1(X,\vv)^\wedge
\end{equation}
under which the logarithm of the boundary circle lies in $\Gr^W_{-2} \Q\pi_1(X,\vv)^\wedge$. These isomorphisms are torsors under the prounipotent radical $U^\MT_{X,\vv}$ of the Mumford--Tate group of the MHS on $\Q\pi_1(X,\vv)^\wedge$.
\end{bigcorollary}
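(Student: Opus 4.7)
The plan is to deduce the statement from the Tannakian splitting principle for pro-mixed Hodge structures set up in \cite[\S10.2]{hain:goldman}. The Mumford--Tate group $\MT_{X,\vv}$ of the pro-MHS on $\Q\pi_1(X,\vv)^\wedge$ admits a canonical surjection to $\Gm$ given by the weight cocharacter; its kernel is the prounipotent radical $U^\MT_{X,\vv}$, and since $\Gm$ is reductive, sections of this surjection form a torsor under $U^\MT_{X,\vv}$. Each section $s : \Gm \to \MT_{X,\vv}$ induces a grading of the weight filtration on every object of the Tannakian subcategory $\langle \Q\pi_1(X,\vv)^\wedge \rangle$ of pro-MHS generated by $\Q\pi_1(X,\vv)^\wedge$, in a way that is natural for morphisms of pro-MHS. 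This reduces the corollary to identifying all the relevant structures inside one Tannakian subcategory.

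I would then verify this identification. The Hopf algebra $\Q\pi_1(X,\vv)^\wedge$ is present by construction, and $\Q\lambda(X)^\wedge$ --- the quotient by the closure of the subspace spanned by commutators $g - hgh^{-1}$, reflecting that free loops are conjugacy classes of based loops --- is a canonical quotient pro-MHS, hence also lies in $\langle \Q\pi_1(X,\vv)^\wedge \rangle$. The Hopf algebra multiplication and comultiplication are morphisms of MHS. By the main result of \cite{hain:goldman}, the Goldman bracket $\gold : \Q\lambda(X)^\wedge \otimes \Q\lambda(X)^\wedge \to \Q\lambda(X)^\wedge \otimes \Q(1)$ is a morphism in this category, and by Theorem~\ref{thm:turaev} (which applies because $\xi$ is quasi-algebraic) so is the Turaev cobracket $\delta_\xi$. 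Consequently, a single cocharacter section $s$ simultaneously splits $\Q\pi_1(X,\vv)^\wedge$ as a Hopf algebra and $\Q\lambda(X)^\wedge$ as a twisted Lie bialgebra, giving the compatible isomorphisms \eqref{eqn:split_A} and \eqref{eqn:split_GT} together with the asserted $U^\MT_{X,\vv}$-torsor structure.

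For the placement of the boundary loop, the small loop $\sigma_\vv$ around the puncture at which $\vv$ sits is null-homologous in $\Xbar$, so $\sigma_\vv - 1 \in \ker\{I \to H_1(\Xbar)\} = W_{-2}\Q\pi_1(X,\vv)^\wedge$ and hence $\log\sigma_\vv \in W_{-2}\Q\pi_1(X,\vv)^\wedge$. Moreover, the line $\Q\cdot\log\sigma_\vv$ is a sub-pro-MHS of $\Q\pi_1(X,\vv)^\wedge$ isomorphic to $\Q(1)$, since its generator is the residue class at the puncture and is therefore fixed by the Mumford--Tate group up to the weight character. Any cocharacter splitting thus sends this line into the pure weight-$(-2)$ summand $\Gr^W_{-2}\Q\pi_1(X,\vv)^\wedge$, as required.

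The only non-routine ingredient is the assertion that a single cocharacter splitting simultaneously respects \emph{all} the morphisms of pro-MHS in sight, rather than requiring separate splittings for the Hopf algebra, bracket, and cobracket. This is precisely what the Tannakian splitting principle guarantees, and is handled once and for all in \cite[\S10.2]{hain:goldman}; the main expository effort in the proof is simply to list the structures that live in $\langle \Q\pi_1(X,\vv)^\wedge \rangle$ and invoke that principle.
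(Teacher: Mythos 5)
Your proposal follows the same route as the paper: the paper's entire justification of this corollary is to invoke \cite[Thm.~6]{hain:goldman} and the splitting principle of \cite[\S10.2]{hain:goldman} together with the fact that $\gold$ and $\delta_\xi$ are morphisms of pro-MHS (Corollary~\ref{cor:gr_gt}), which is exactly what you do; your verifications that $\Q\lambda(X)^\wedge$ is a quotient pro-MHS of $\Q\pi_1(X,\vv)^\wedge$, that one functorial splitting handles all structure maps simultaneously, and that $\Q\cdot\log\sigma_\vv$ is a copy of $\Q(1)$ (so that it is carried into $\Gr^W_{-2}$ and not merely into $\prod_{m\ge 2}\Gr^W_{-m}$) are the right points to check.

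One correction to your opening paragraph, however: it is not true that $U^\MT_{X,\vv}$ is the kernel of a surjection $\MT_{X,\vv}\to\Gm$. The reductive quotient $\MT_{X,\vv}/U^\MT_{X,\vv}$ is the Mumford--Tate group of $\Gr^W_\bdot\Q\pi_1(X,\vv)^\wedge$, which for $g\ge 1$ contains $\GSp(H)$ with $H=H_1(\Xbar;\Q)$ and is therefore strictly larger than $\Gm$; the kernel of the weight (similitude) character then contains $\Sp(H)$ in addition to $U^\MT_{X,\vv}$, and ``sections of $\MT_{X,\vv}\to\Gm$'' do not form a torsor under $U^\MT_{X,\vv}$. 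The mechanism actually used in \cite[\S10.2]{hain:goldman} --- and recalled later in the paper in Section~\ref{sec:torsors} --- is that natural splittings of the weight filtration correspond to lifts $\tilde\chi:\Gm\to\MT_{X,\vv}$ of the \emph{central} cocharacter $\chi:\Gm\to\MT_{X,\vv}/U^\MT_{X,\vv}$; these lifts are permuted simply transitively by conjugation by $U^\MT_{X,\vv}$ (freeness because the induced grading on $\u^\MT_{X,\vv}$ is concentrated in strictly negative weights). With that substitution the rest of your argument goes through unchanged.
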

In the terminology of \cite{akkn2}, such isomorphisms solve the {\em Goldman--Turaev formality} problem.

There are many potential applications of these results to the study of the geometry and arithmetic of algebraic curves. In this paper, we will focus on an application to the Kashiwara--Vergne problem \cite{akkn2}, a problem in Lie theory related to Poisson geometry and the study of associators. Additional applications can be found in \cite{hain:johnson}.

Solutions of the Kashiwara--Vergne problem of type $(g,n+1)$, where $2g-1+n>0$, are automorphisms $\Phi$ of the complete Hopf algebra
$$
\Q\ll x_1,\dots,x_g,y_1,\dots,y_g,z_1,\dots,z_n\rr
$$
that solve the Kashiwara--Vergne equations. They correspond to automorphisms $\Phi$ that induce isomorphisms of the Goldman--Turaev Lie bialgebra with the completion of its associated weight graded that satisfy certain natural boundary conditions. Corollary~\ref{cor:formality}, combined with \cite[Thm.~5]{akkn1}, implies that the automorphism $\Phi$ constructed from a Hodge splitting of $\Q\pi_1(X,\vv)^\wedge$ in \cite[\S13.4]{hain:goldman} solves the KV equations. The following result is a special case of Corollary~\ref{cor:kv}.

\begin{bigcorollary}
\label{cor:big_kv}
Suppose that $X$ is an affine curve of type $(g,n+1)$, where $2g-1+n>0$. If $\xi$ is a quasi-algebraic framing of $X$, then the isomorphisms $\Phi$ constructed in \cite[\S13.4]{hain:goldman} from the canonical MHS on $\Q\pi_1(X,\vv)^\wedge$ are solutions of the Kashiwara--Vergne problem. The solutions constructed in this manner form a torsor under the unipotent radical $\U^\MT_{X,\vv}$ of the Mumford--Tate group of the canonical mixed Hodge structure on $\Q\pi_1(X,\vv)^\wedge$.
\end{bigcorollary}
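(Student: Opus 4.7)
The plan is to reduce the statement to the Goldman--Turaev formality result Corollary~\ref{cor:formality} by invoking the translation between solutions of the Kashiwara--Vergne problem of type $(g,n+1)$ and isomorphisms of the completed Goldman--Turaev Lie bialgebra with its associated weight graded, provided by \cite[Thm.~5]{akkn1}. That translation asserts that an automorphism $\Phi$ of the complete Hopf algebra $\Q\ll x_1,\dots,x_g,y_1,\dots,y_g,z_1,\dots,z_n\rr$ solves the KV equations if and only if it induces a compatible pair consisting of (a) an isomorphism of completed Hopf algebras $\Q\pi_1(X,\vv)^\wedge$ with its associated weight graded, and (b) an isomorphism of the Goldman--Turaev Lie bialgebra on $\Q\lambda(X)^\wedge$ with its associated weight graded, satisfying the boundary condition that $\log$ of the simple loop around $\vv$ lies in the degree $-2$ piece (together with matching conditions for the other boundary components).

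The first step is to apply Corollary~\ref{cor:formality}, whose statement provides exactly a torsor of such compatible pairs and, moreover, guarantees the boundary condition that $\log$ of the boundary loop at $\vv$ lies in $\Gr^W_{-2}\Q\pi_1(X,\vv)^\wedge$. The availability of the bialgebra half of the splitting rests on Theorem~\ref{thm:turaev} (so that the cobracket is a morphism in the pro-MHS category, after the $\Q(-1)$ twist) combined with the Goldman bracket result of \cite{hain:goldman}; the compatibility with the Hopf algebra splitting is built into the formality corollary. The ``twisted'' feature of Corollary~\ref{cor:gr_gt} is harmless here because the KV equations only see the bracket and cobracket up to the standard twists implemented in \cite{akkn1}.

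The second step is to identify the automorphism $\Phi$ produced in \cite[\S13.4]{hain:goldman} with the automorphism of $\Q\ll x_1,\dots,y_g,z_1,\dots,z_n\rr$ that the Hodge splitting of Corollary~\ref{cor:formality} determines after the canonical identification of the associated weight graded with the free complete Hopf algebra on $2g+n$ generators (the $x_i,y_j$ coming from $H_1(\Xbar)$ and the $z_k$ coming from the loops around the punctures). Once this matching is made, \cite[Thm.~5]{akkn1} directly implies that $\Phi$ solves the KV equations. The torsor claim is then immediate: the set of Hodge splittings of a pro-MHS is a torsor under the prounipotent radical of its Mumford--Tate group, and since the construction of \cite[\S13.4]{hain:goldman} depends functorially on the choice of splitting, the resulting set of KV solutions is a torsor under $\U^\MT_{X,\vv}$.

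The main obstacle I expect is bookkeeping rather than substance: carefully matching the normalizations (choice of symplectic basis, orientation of boundary loops, and the $\Q(\pm 1)$ twists on the bracket and cobracket) between the conventions of \cite{akkn1,akkn2} and those of \cite{hain:goldman}, and verifying that the automorphism produced in \cite[\S13.4]{hain:goldman} really does intertwine \emph{both} the Goldman bracket \emph{and} the Turaev cobracket with their combinatorial counterparts on the free Hopf algebra, rather than only intertwining the bracket. This last point is the crucial new input from Theorem~\ref{thm:turaev}: without it, the $\Phi$ of \cite[\S13.4]{hain:goldman} would only be known to solve the ``half'' of the KV problem corresponding to the Goldman bracket.
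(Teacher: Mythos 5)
Your proposal is correct and follows essentially the same route as the paper: the paper deduces the corollary from Proposition~\ref{prop:splittings} (which is Corollary~\ref{cor:formality}, i.e.\ the torsor of simultaneous Hodge splittings of the Hopf algebra and of the Goldman--Turaev Lie bialgebra furnished by \cite[Thm.~6]{hain:goldman} together with Corollary~\ref{cor:gr_gt}) combined with \cite[Thm.~5]{akkn1}. Your closing observation --- that Theorem~\ref{thm:turaev} is precisely what upgrades the splitting of \cite[\S13.4]{hain:goldman} from intertwining only the bracket to intertwining the full bialgebra structure --- is exactly the point of the argument.
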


Our solutions of the Kashiwara--Vergne problem have the property that the corresponding splitting of the filtrations are compatible with those of the Lie algebra of the relative completion of the mapping class group constructed in \cite{hain:torelli}. (See \cite[Thm.~6]{hain:goldman}.) Whether or not all solutions of the Kashiwara--Vergne problem have this property is not known.

The Kashiwara--Vergne problem concerns smooth surfaces and does not require a complex structure. Let $\Sbar$ be a closed oriented surface of genus $g$ and $P=\{x_0,\dots,x_n\}$ a finite subset. Set $S=\Sbar-P$.
Assume that $S$ is hyperbolic; that is, $2g-1+n>0$. Suppose that $\xi_o$ is a framing of $S$. Denote the index (or local degree) of $\xi_o$ at $x_j$ by $d_j$. Let $\dd = (d_0,\dots,d_n) \in \Z^{n+1}$ be the vector of local degrees of $\xi_o$. The Poincar\'e--Hopf Theorem implies that $\sum d_j = 2-2g$.

In \cite{akkn2} it is shown that the Kashiwara--Vergne problem admits solutions for all framed surfaces of genus $g\neq 1$ and for surfaces of genus 1 with certain, but not all, framings.\label{page:foot}\footnote{To compare the two statements, one should note that if $\gamma_j$ is the boundary of sufficiently small disk in $\Xbar$, centered at $x_j$, then $d_j +\rot_{\xi} \gamma_j = 1$. Note that the boundary orientation conventions used in \cite{akkn1,akkn,akkn2} differ from those used in this paper. Their ``adapted framing'' has the property that $d_0 = 2-2g$ and $d_j = 0$ for all $j\ge 1$.\label{foot:rot}} (See \cite[Thm.~6.1]{akkn2}.) We obtain an independent proof of their result by showing (in Section~\ref{sec:alg_framing}) that the framings for which the KV-problem has a solution are precisely those that can be realized by a quasi-algebraic framing. The proof combines work of Kawazumi \cite{kawazumi:framings} with the existence of meromorphic quadratic differentials, which is established in the works of Kontsevich--Zorich \cite{kontsevich-zorich} and Bainbridge, Chen, Gendron, Grushevsky and M\"oller \cite{mero_diffs}.

\begin{bigtheorem}
\label{thm:quasi-alg}
 If $g\neq 1$, then there is a complex structure $(\Xbar,D)$ on $(\Sbar,P)$ such that $\xi_o$ is homotopic to a quasi-algebraic framing of $X$. When $g=1$, then there is a complex structure on $(\Sbar,P)$ for which $\xi_o$ is quasi-algebraic if and only if the rotation number of $\rot_{\xi_o}\gamma$ of every simple closed curve $\gamma$ in $\Xbar$ is divisible by $\gcd(d_0,\dots,d_n)$.
\end{bigtheorem}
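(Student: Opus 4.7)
The strategy combines Kawazumi's classification of framings up to homotopy \cite{kawazumi:framings} with the non-emptiness of strata of meromorphic abelian differentials, established by Kontsevich--Zorich \cite{kontsevich-zorich} and extended to the meromorphic case by Bainbridge--Chen--Gendron--Grushevsky--M\"oller \cite{mero_diffs}. First I reformulate the quasi-algebraic condition in terms of line bundles: a quasi-algebraic framing with local indices $\dd = (d_0,\dots,d_n)$ is a nowhere-vanishing section on $X$ of $T_\Xbar \otimes L$ for some torsion line bundle $L$, whose divisor on $\Xbar$ equals $D_\dd := \sum_j d_j x_j$. Its existence is equivalent to $\cO_\Xbar(D_\dd) \cong T_\Xbar \otimes L$ for some torsion $L$, equivalently, the class $[D_\dd] + [K_\Xbar]$ is torsion in $\Pic^0(\Xbar)$. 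Dually, this amounts to the existence of a meromorphic $1$-form $\omega$ with orders $-d_j$ at $x_j$ on some finite unramified cover of $\Xbar$; in the algebraic case, $L$ is trivial and $\omega$ lives on $\Xbar$ itself.

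For $g \neq 1$, I apply the Kontsevich--Zorich--BCGGM theorem to the stratum of meromorphic $1$-forms with signature $(-d_0,\dots,-d_n)$ on a genus-$g$ Riemann surface: this stratum is non-empty since the degree balance $\sum(-d_j) = 2g-2 = \deg K_\Xbar$ follows from Poincar\'e--Hopf. Any point in the stratum yields a complex structure $(\Xbar, D)$ on $(\Sbar, P)$ together with an algebraic framing realizing the signature $\dd$. To match the homotopy class of this framing with that of $\xi_o$, I invoke Kawazumi's theorem that the set of framings with fixed signature is a torsor under $H^1(\Sbar;\Z) \cong \Z^{2g}$, and verify that varying the stratum point together with the Teichm\"uller parameter surjects onto this torsor. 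For $g \geq 2$ this follows from the positive-dimensionality of each stratum combined with Kawazumi's transitivity statement for the mapping class group action on framings; for $g=0$ the torsor is trivial.

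For $g = 1$, the canonical class is trivial and $\sum d_j = 0$, so the quasi-algebraic condition reduces to requiring that $[D_\dd]$ be a torsion point of $\Pic^0(\Xbar) \cong \Xbar$. Writing $\Xbar = \C/(\Z + \tau\Z)$ and lifting the $x_j$ to $z_j \in \C$, the value $\sum d_j z_j \pmod{\Z + \tau\Z}$ is well-defined, and as one varies $\tau$ and the $z_j$ the attainable torsion classes are precisely those in a subgroup of $\Xbar$ whose index is controlled by $\gcd(\dd)$. Under Kawazumi's refinement of the framing classification in genus $1$, the mod-$\gcd(\dd)$ reduction of the rotation-number function $\rot_\xi$ on simple closed curves in $\Xbar$ corresponds exactly to this torsion class, so $\xi_o$ is quasi-algebraic for some complex structure if and only if $\gcd(\dd) \mid \rot_{\xi_o}(\gamma)$ for every simple closed curve $\gamma \subset \Xbar$.

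The main obstacle is the homotopy matching in both cases. For $g \neq 1$, surjectivity of the composite map from ``stratum $\times$ Teichm\"uller parameter'' onto the $H^1(\Sbar;\Z)$-torsor of framings of signature $\dd$ requires unpacking Kawazumi's formulas and verifying that the period data together with the variation of the zero/pole positions covers every framing class. For $g=1$, the crucial step is to identify the torsion class $[D_\dd] \in \Xbar$ with the mod-$\gcd(\dd)$ class of Kawazumi's rotation-number function---a local-to-global computation carried out by evaluating rotation numbers of the model vector fields $z^{1-d_j}\partial_z$ around each puncture and comparing with the periods of the corresponding abelian differential on the universal cover.
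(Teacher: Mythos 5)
There is a genuine gap in your $g\ge 2$ argument: you try to realize the framing by a meromorphic \emph{abelian} differential (equivalently an algebraic $1$-framing), and this fails for two separate reasons. First, the stratum of meromorphic $1$-forms with orders $(-d_0,\dots,-d_n)$ need not be non-empty just because the degrees balance: the residue theorem forbids a meromorphic $1$-form whose only pole is a single simple pole, so for instance $d_0=1$ and all other $d_j\le 0$ (possible whenever $g\ge 2$, since $\sum d_j = 2-2g$) gives an empty stratum. Second, and more decisively, even when the stratum is non-empty your claim that ``varying the stratum point together with the Teichm\"uller parameter surjects onto the torsor of framings'' is false when $\dd$ is even: by Kawazumi's theorem the $\G_{g,n+1}$-orbits of framings with even $\dd$ are separated by the Arf invariant $\Arf(\xi)$ of the associated $\F_2$-quadratic form, and this invariant is constant on connected components of the stratum (it is the spin parity in the Kontsevich--Zorich classification). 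Moving within a component or acting by the mapping class group therefore produces exactly one orbit of framings per component, and there are strata (e.g.\ $\mathcal H(2)$ in genus $2$, which is connected) realizing only one of the two Arf classes. Positive-dimensionality of the stratum buys nothing here.

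The paper's proof avoids both problems by passing to $m=2$: strata of meromorphic \emph{quadratic} differentials (the loci $\sS^2_\dd$ where $2\sum d_jx_j$ is $(-2)$-canonical) are non-empty for every $\dd$ by \cite{mero_diffs}, and Corollary~\ref{cor:quad} shows that the Arf obstruction disappears after squaring --- $\xi_0^2$ has square roots $\xi_\delta$ with $F_{\xi_\delta}=F_{\xi_0}+\delta$ for every $\delta\in H^1(\Sbar;\F_2)$, so both Arf classes of framings arise as square roots of the same $2$-framing. This is precisely why the theorem asserts only a \emph{quasi}-algebraic (here $2$-algebraic) framing, not an algebraic one. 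Your genus~$1$ discussion is closer to the paper's argument (torsion of $K_{\Xbar}+\sum d_jx_j$ in $\Pic^0$, the invariant $A(\xi)=\gcd(d_j)$ for quasi-algebraic framings since $A(\partial/\partial z)=0$), but the ``if'' direction still needs the verification that the fiber of $F_\dd$ over a torsion point can be chosen off all diagonals $\{x_j=x_k\}$ so that the marked points remain distinct, which you do not address. To repair the proof you must replace the abelian strata by quadratic ones and prove the analogue of Corollary~\ref{cor:quad}.
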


Solutions of the Kashiwara--Vergne (KV) problem for $(S,\xi_o)$ form a torsor under a prounipotent group, denoted $\KRV_{g,n+\uu}^\dd$ in \cite{akkn2}. It depends only on the vector of local degrees $\dd$ and not on other topological invariants of $\xi_o$. Corollary~\ref{cor:big_kv} implies that each quasi-algebraic structure
$$
\phi : (\Xbar,D,\vv,\xi) \overset{\simeq}{\To} (\Sbar,P,\vv_o,\xi_o)
$$
determines an injection $\U^\MT_{X,\vv} \hookrightarrow \KRV^\dd_{g,n+\uu}$. Letting the stabilizer of $\xi_o$ in the mapping class group of $(\Sbar,P)$ act on the complex structure $\phi$ by precomposition, we obtain a larger a larger torsor of solutions to the KV-problem. These form a torsor under a prounipotent group $\Uhat^\dd_{g,n+\uu}$ whose construction and structure is discussed below. It is a subgroup of $\KRV_{g,n+\uu}^\dd$. We conjecture that it is equal to $\KRV_{g,n+\uu}^\dd$. Equivalently, we conjecture that all solutions of the Kashiwara--Vergne problem arise from the Hodge theoretic constructions for some quasi-algebraic structure $\phi$.

In order to state the next theorem, we need to introduce several prounipotent groups. Denote the category of mixed Tate motives unramified over $\Z$ by $\MTM(\Z)$. Denote the prounipotent radical of its tannakian fundamental group $\pi_1(\MTM,\w^B)$ (with respect to the Betti realization $\w^B$) by $\K$. Its Lie algebra $\k$ is non-canonically isomorphic to the free Lie algebra
$$
\k \cong \L(\sigma_3,\sigma_5,\sigma_7,\sigma_9,\dots)^\wedge.
$$

Denote the relative completion of the mapping class group of $(\Sbar,P,\vv_o)$ by $\cG_{g,n+\uu}$ and its prounipotent radical by $\U_{g,n+\uu}$. (See \cite{hain:torelli} for definitions.) These act on $\Q\pi_1(S,\vv_o)^\wedge$. Denote the image of $\U_{g,n+\uu}$ in $\Aut\Q\pi_1(S,\vv_o)^\wedge$ by $\Ubar_{g,n+\uu}$.\footnote{Conjecturally, the homomorphism $\cG_{g,n+\uu} \to \Aut\Q\pi_1(S,\vv_o)^\wedge$ is injective, which would imply that $\U_{g,n+\uu} = \Ubar_{g,n+\uu}$.} The vector field $\xi_o$ determines a homomorphism $\Ubar_{g,n+\uu} \to H_1(\Sbar)$ that depends only on the vector $\dd$ of local degrees of $\xi$. Denote its kernel by $\Ubar_{g,n+\uu}^\dd$.\footnote{Explicit presentations of the Lie algebras of the $\U_{g,n+\uu}$ are known for all $n\ge 0$ when $g \neq 2$ \cite{hain:torelli,hain:kahler,hain-matsumoto:mem}; partial presentations (e.g., generating sets) are known when $g=2$, \cite{watanabe}. Presentations of the $\U_{g,n+\uu}^\dd$ can be easily deduced from these.} The group $\Uhat_{g,n+\uu}^\dd$, mentioned above, is the subgroup of $\KRV_{g,n+\uu}^\dd$ generated by $\Ubar_{g,n+\uu}^\dd$ and $\U^\MT_{X,\vv}$.

Ihara and Nakamura \cite{ihara-nakamura} construct canonical smoothings of each maximally degenerate stable curve $X_0$ of type $(g,n+1)$ over $\Z[[q_1,\dots,q_N]]$ for all $n \ge 0$, where $N= \dim \M_{g,n+1}$. Associated to each tangent vector $\vv = \pm \partial/\partial q_j$ of $\Mbar_{g,n+1}$ at the point corresponding to $X_0$, there is a limit pro-MHS on $\Q\lambda(X)^\wedge$, that we denote by $\Q\lambda(X_\vv)^\wedge$.

\begin{hypothesis}
\label{hypoth:ihara}
The limit MHS on $\Q\lambda(X_\vv)^\wedge$ is the Hodge realization of a pro-object of $\MTM(\Z)$. Equivalently, the Mumford--Tate group of the MHS on $\Q\lambda(X_\vv)^\wedge$ is isomorphic to $\pi_1(\MTM,\w^B)$.
\end{hypothesis}

A more detailed description of this hypothesis is given in the proof of Proposition~\ref{prop:surjection}. The author claims that this hypothesis is true. The proof is expected to be available in \cite{hain:ihara}. Brown's result \cite{brown} is a significant ingredient in the proof, and also the $(0,3)$ case.

\begin{bigtheorem}
\label{thm:krv}
If $2g+n>1$ (i.e., $S$ is hyperbolic), then the group $\Uhat_{g,n+\uu}^\dd$ does not depend on the choice of a quasi-algebraic structure $\phi : (\Sbar,P,\vv_o,\xi_o) \to (\Xbar,D,\vv,\xi)$. The group $\Ubar_{g,n+\uu}^\dd$ is normal in $\Uhat_{g,n+\uu}^\dd$. If we assume Hypothesis~\ref{hypoth:ihara}, there is a canonical surjective group homomorphism $\K \to \Uhat_{g,n+\uu}^\dd/\Ubar_{g,n+\uu}^\dd$, where $\K$ denotes the prounipotent radical of $\pi_1(\MTM)$.
\end{bigtheorem}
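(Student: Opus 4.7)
The plan is to establish the three claims in the order stated, using variation of mixed Hodge structure over the moduli of quasi-algebraic framed curves together with the Ihara--Nakamura tangential base points at the boundary of $\Mbar_{g,n+1}$.

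For the independence statement, I would consider the moduli space of complex structures $(\Xbar,D,\vv)$ on $(\Sbar,P,\vv_o)$ for which $\xi_o$ is homotopic to a quasi-algebraic framing. A key preliminary step is to show that this moduli space is path-connected, or at least that two quasi-algebraic structures differ by an element of the stabilizer of $\xi_o$ in the mapping class group of $(\Sbar,P,\vv_o)$; by Theorem~\ref{thm:quasi-alg} and the work on strata of meromorphic quadratic differentials used in its proof, this reduces to a connectedness statement about the relevant strata. Given two quasi-algebraic structures $\phi_1,\phi_2$ connected by such a path, the admissible variation of mixed Hodge structure on $\Q\pi_1(X,\vv)^\wedge$ along the path produces a conjugation from $\U^\MT_{X_1,\vv_1}$ to $\U^\MT_{X_2,\vv_2}$ by monodromy, and this monodromy lies in the image of the $\xi_o$-stabilizer in the mapping class group, hence in $\Ubar^\dd_{g,n+\uu}$. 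The subgroups $\langle\Ubar^\dd_{g,n+\uu},\U^\MT_{X_i,\vv_i}\rangle$ of $\KRV^\dd_{g,n+\uu}$ therefore coincide.

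For normality, the point is that $\Ubar^\dd_{g,n+\uu}$ is the image in $\Aut\Q\pi_1(S,\vv_o)^\wedge$ of the Hodge-theoretically defined prounipotent radical of the relative completion of the mapping class group (with its canonical MHS from \cite{hain:torelli}), intersected with the kernel of the framing-degree map. Since the Mumford--Tate group $\U^\MT_{X,\vv}$ acts by automorphisms of the canonical MHS on $\Q\pi_1(X,\vv)^\wedge$ and preserves every Hodge-theoretic sub-structure, it must normalize this image. A Hodge splitting supplied by Corollary~\ref{cor:formality} makes this concrete: it conjugates the $\U^\MT_{X,\vv}$-action on $\Aut\Q\pi_1(S,\vv_o)^\wedge$ to one preserving the weight filtration, under which stability of $\Ubar^\dd_{g,n+\uu}$ is manifest.

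For (iii), I would choose a maximally degenerate stable curve $X_0$ of type $(g,n+1)$ and a tangential base point $\vv=\pm\partial/\partial q_j$ at $X_0$ supplied by the Ihara--Nakamura smoothings. Hypothesis~\ref{hypoth:ihara} identifies the Mumford--Tate group of the limit MHS on $\Q\lambda(X_\vv)^\wedge$ with a quotient of $\pi_1(\MTM(\Z),\w^B)$, whose prounipotent radical is $\K$; functoriality promotes this to a surjection $\K\twoheadrightarrow\U^\MT_{X_0,\vv}$ onto the radical of the Mumford--Tate group acting on $\Q\pi_1(X_\vv,\vv)^\wedge$. Composing with $\U^\MT_{X_0,\vv}\hookrightarrow\Uhat^\dd_{g,n+\uu}$ and projecting modulo $\Ubar^\dd_{g,n+\uu}$ gives the asserted homomorphism, surjective by the very definition of $\Uhat^\dd_{g,n+\uu}$ once we use (i) to allow the degenerate base point. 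Canonicity then reduces to independence of the choices of $X_0$ and $\vv$, which should follow from the motivic naturality of the Ihara--Nakamura smoothings. The hardest part, I expect, will be part (i): pinning down the correct connectedness statement for the moduli of quasi-algebraic structures compatible with $\xi_o$ and identifying the monodromy of the variation of MHS with elements of the topologically defined group $\Ubar^\dd_{g,n+\uu}$ rather than a larger subgroup of $\Aut\Q\pi_1(S,\vv_o)^\wedge$; the remaining parts should then follow formally, modulo careful bookkeeping in (iii) around the motivic hypothesis.
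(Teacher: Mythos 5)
Your parts (ii) and (iii) are broadly the paper's route. For normality, the precise input you need (and which the paper supplies via \cite[Lem.~4.5]{hain:torelli} and the algebro-geometric description of $\tautilde_\dd$) is that the monodromy homomorphism $\U_{g,n+\uu}^\dd\to\Aut\Q\pi_1(X,\vv)^\wedge$ is $\pi_1(\MHS)$-equivariant; then $\cGhat_{g,n+\uu}$ is the image of the semidirect product $\pi_1(\MHS)\ltimes\cG_{g,n+\uu}$ and $\Ubar_{g,n+\uu}^\dd$ is the image of a $\pi_1(\MHS)$-stable normal subgroup. Your phrase ``preserves every Hodge-theoretic sub-structure'' gestures at this but is not itself an argument, and the remark about a Hodge splitting making normality manifest does not do any work. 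For (iii), the chain $\K\twoheadrightarrow\U^\MT_{X_\qq,\vv}\to\Uhat^\dd_{g,n+\uu}/\Ubar^\dd_{g,n+\uu}$ via the Ihara--Nakamura tangential base point, Hypothesis~\ref{hypoth:ihara} and Brown's theorem is exactly what the paper does.

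The genuine gap is in part (i), and it propagates into (iii). You propose to prove independence of $\phi$ by showing that the locus of quasi-algebraic structures compatible with $\xi_o$ is connected and that the comparison monodromy lies in $\Ubar_{g,n+\uu}^\dd$. That connectedness statement is not available and is delicate at best: strata of abelian and quadratic differentials are disconnected in general (Kontsevich--Zorich), with components separated by the Arf-type invariants that force the paper to pass to squares in Section~\ref{sec:framings}; Theorem~\ref{thm:quasi-alg} produces one quasi-algebraic structure, not a path between two given ones. More importantly, requiring the conjugating element to lie in $\Ubar_{g,n+\uu}^\dd$ is stronger than necessary. The paper works over all of $\M_{g,n+\uu}$, which is connected for free: since $\cG_x\times\cG_{x,y}\to\cG_y$ is a morphism of MHS, the groups $\cGhat_x$ form a local system over $\M_{g,n+\uu}$ whose comparison isomorphisms are conjugation by elements of $\cGbar_{g,n+\uu}$. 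Because $\cGbar_{g,n+\uu}\subseteq\cGhat_{g,n+\uu}(\phi)$ and conjugation by $\cGbar_{g,n+\uu}$ preserves $\ker\tautilde_\dd$, such a conjugation automatically carries $\Uhat_{g,n+\uu}^\dd(\phi_1)$ to $\Uhat_{g,n+\uu}^\dd(\phi_2)$ as the \emph{same} subgroup; the conjugating element need not stabilize $\xi_o$. This also repairs (iii): you invoke (i) ``to allow the degenerate base point,'' but a tangent vector at the boundary of $\Mbar_{g,n+\uu}$ is not an interior quasi-algebraic structure on $(\Sbar,P,\vv_o,\xi_o)$, so (i) as you formulate it does not apply there. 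What is actually needed is that $\cO(\Uhat_{g,n+\uu}/\Ubar_{g,n+\uu})$ is a \emph{constant} admissible variation of MHS over $\M_{g,n+\uu}$ (trivial monodromy plus the theorem of the fixed part), so that it has a canonical limit at the Ihara--Nakamura tangent vector and the surjection from $\pi_1(\MTM)$ constructed there is independent of all choices.
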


This result follows from a more general result, Theorem~\ref{thm:krv2}, which is proved in Section~\ref{sec:krv}. We expect the homomorphism $\K \to \Uhat_{g,n+\uu}^\dd/\Ubar_{g,n+\uu}^\dd$ to be an isomorphism. The injectivity of this homomorphism is closely related to Oda's Conjecture \cite{oda} (proved in \cite{takao}) and should follow from it.

In genus 1, the associated graded Lie algebra $\Gr^W_\bdot \u_{1,n+\uu}$ of the Lie algebra of $\U_{1,n+\uu}$ contains the derivations $\delta_{2n}$ (denoted $\e_{2n}$ in \cite{hain-matsumoto:mem}). The first statement of Theorem~\ref{thm:krv} implies \cite[Thm.~1.5]{akkn2} as well as higher genus generalizations.

\begin{conjecture}
The inclusion $\Uhat_{g,n+\uu}^\dd \to \KRV_{g,n+\uu}^\dd$ is an isomorphism if and only if the inclusion of $\pi_1(\MTM)$ into $\GRT$, the de~Rham version of Grothendieck--Teichm\"uller group, is an isomorphism. In this case, $\KRV_{g,n+\uu}^\dd$ will be a split extension
$$
1 \to \Ubar_{g,n+\uu}^\dd \to \KRV_{g,n+\uu}^\dd \to \K \to 1.
$$
\end{conjecture}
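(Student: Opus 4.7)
The plan is to study the short exact sequence
\[
1 \to \Ubar^\dd_{g,n+\uu} \to \KRV^\dd_{g,n+\uu} \to Q \to 1,
\]
where $Q$ denotes the quotient, and to identify $Q$ canonically with the prounipotent radical $\GRT_1$ of $\GRT$. First one must verify that $\Ubar^\dd_{g,n+\uu}$ is normal in the full $\KRV^\dd_{g,n+\uu}$ (not merely in $\Uhat^\dd_{g,n+\uu}$); this should be deducible from its description via Theorem~\ref{thm:krv} as the kernel of a natural homomorphism attached to the Goldman--Turaev bialgebra, together with the fact that elements of $\KRV^\dd_{g,n+\uu}$ preserve this bialgebra structure by definition.

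The heart of the argument is to construct a canonical isomorphism $\GRT_1 \xrightarrow{\simeq} Q$ extending, via the canonical embedding $\pi_1(\MTM) \hookrightarrow \GRT$ (which restricts to $\K \hookrightarrow \GRT_1$ on prounipotent radicals), the surjection $\K \twoheadrightarrow \Uhat^\dd_{g,n+\uu}/\Ubar^\dd_{g,n+\uu}$ furnished by Theorem~\ref{thm:krv}. In genus zero this is essentially the Alekseev--Torossian theorem that $\GRT_1$ acts freely and transitively on the torsor of KV solutions modulo inner automorphisms. In higher genus one expects the $\GRT_1$-action to arise from a de~Rham Tannakian reinterpretation of the Hodge-theoretic constructions of Corollaries~\ref{cor:gr_gt}--\ref{cor:big_kv}, equivalently from monodromy in the universal family over $\Mbar_{g,n+1}$ coupled with the operadic compatibilities between KV solutions for varying $(g,n+\uu)$ (gluing of boundary components, forgetting marked points). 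Producing this $\GRT_1$-action on the KV torsor, and verifying that its orbit map gives an isomorphism onto $Q$, is the principal technical obstacle of the proof; injectivity should come from the limit-MHS analysis underlying Theorem~\ref{thm:krv}, whereas surjectivity demands a genuine higher-genus analogue of Alekseev--Torossian.

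Granted $Q \cong \GRT_1$, the canonical arrow $\K \to \Uhat^\dd_{g,n+\uu}/\Ubar^\dd_{g,n+\uu} \hookrightarrow Q$ factors as $\K \hookrightarrow \GRT_1 \cong Q$. Since by construction $\Uhat^\dd_{g,n+\uu}$ is the subgroup of $\KRV^\dd_{g,n+\uu}$ generated by $\Ubar^\dd_{g,n+\uu}$ and the image of $\U^\MT_{X,\vv}$, whose further image in $Q$ coincides with the image of $\K$, the equivalence
\[
\Uhat^\dd_{g,n+\uu} = \KRV^\dd_{g,n+\uu} \iff \K \twoheadrightarrow Q \cong \GRT_1 \iff \pi_1(\MTM) = \GRT
\]
is then formal.

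For the splitting under this hypothesis, recall that $\k$ is non-canonically isomorphic to the free complete Lie algebra $\L(\sigma_3,\sigma_5,\sigma_7,\dots)^\wedge$, so $\K$ is a free prounipotent group on countably many generators. A homomorphic section of the projection $\KRV^\dd_{g,n+\uu} \twoheadrightarrow \K$ is therefore obtained by choosing arbitrary lifts of the generators $\sigma_{2k+1}$ and invoking the universal property of the free prounipotent group; this yields the asserted semidirect product decomposition
\[
\KRV^\dd_{g,n+\uu} \cong \Ubar^\dd_{g,n+\uu} \rtimes \K.
\]
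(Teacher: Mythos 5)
The statement you are trying to prove is stated in the paper as a \emph{conjecture}; the paper offers no proof, and the remark immediately following it says explicitly that even the precise relationship between Theorem~\ref{thm:krv} and the conjecture ``is not clear to the author.'' Your proposal does not close this gap: its entire weight rests on the unproved identification of $Q := \KRV^\dd_{g,n+\uu}/\Ubar^\dd_{g,n+\uu}$ with $\GRT_1=\URT$, and you yourself flag both halves of that identification as open. Concretely: (a) injectivity of the composite $\URT \hookrightarrow \KRV^\dd_{g,n+\uu} \to Q$ is exactly the assertion that the ``geometric part'' $\URT\cap\Ubar^\dd_{g,n+\uu}$ is trivial, which is precisely the issue the paper's remark isolates as unresolved; (b) surjectivity is a higher-genus analogue of Alekseev--Torossian that is not known and does not follow from ``operadic compatibilities'' without a genuine construction. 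Since $Q\cong\GRT_1$ (together with normality of $\Ubar^\dd_{g,n+\uu}$) already implies the conjecture by the formal argument you give at the end, your proposal in effect replaces the conjecture by a strictly stronger open statement rather than proving it.

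Two further points. First, normality of $\Ubar^\dd_{g,n+\uu}$ in all of $\KRV^\dd_{g,n+\uu}$ (not merely in $\Uhat^\dd_{g,n+\uu}$, which is what Theorem~\ref{thm:krv} gives) is asserted with only a heuristic; $\Ubar^\dd_{g,n+\uu}$ is defined as the image of a relative completion of a mapping-class-group stabilizer, not as the kernel of a map defined on $\KRV^\dd_{g,n+\uu}$, so its normality in the full group requires an argument. Second, the parts of your proposal that are sound --- the formal equivalence $\Uhat^\dd=\KRV^\dd \iff \K\twoheadrightarrow Q$ once $Q\cong\GRT_1$ is granted, and the splitting of the extension using freeness of $\k\cong\L(\sigma_3,\sigma_5,\dots)^\wedge$ --- are the easy parts, and the splitting argument moreover presupposes the conjectural isomorphism $Q\cong\K$ to even define the projection $\KRV^\dd_{g,n+\uu}\twoheadrightarrow\K$. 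As written, this is a plausible reduction of the conjecture to other conjectures, not a proof.
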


\begin{remark}
The precise relationship between Theorem~\ref{thm:krv} and the conjecture is not clear to the author. The group $\GRT$ is an extension of $\Gm$ by a prounipotent group that we denote by $\URT$. Brown's Theorem \cite{brown} implies that there is an inclusion $\K \hookrightarrow \URT$. It is shown in \cite{akkn2} that there is a natural inclusion $\URT \hookrightarrow \KRV^\dd_{g,n+\uu}$ when $2g+n-1>0$. To understand the relation between the theorem and the conjecture, one has to understand the ``geometric part'' $\URT\cap \Ubar_{g,n+\uu}^\dd$ of $\GRT$. If it is trivial, then $\Uhat_{g,n+uu}^\dd \to \KRV_{g,n+\uu}^\dd$ would be an isomorphism if and only if $\K \to \URT$ were an isomorphism (or, equivalently, $\pi_1(\MTM) \to \GRT$ were an isomorphism).
\end{remark}

A few remarks about the approach and the structure of the paper. As when proving that the Goldman bracket is a morphism of MHS \cite{hain:goldman}, the proof of Theorem~\ref{thm:turaev} consists in:
\begin{enumerate}

\item Finding a homological description of the cobracket $\delta_\xi$ analogous to the homological description of the Goldman bracket given by Kawazumi--Kuno \cite[\S3]{kk:log}. This description gives a factorization of the cobracket.

\item Giving a de~Rham description of the continuous dual of each map in this factorization.

\item Proving that, for each quasi-complex structure on $(\Sbar,P,\vv_o,\xi_o)$, each map in this factorization of the dual cobracket is a morphism of MHS.

\end{enumerate}
The homological description of the cobracket is established in Sections~\ref{sec:factorizn} and \ref{sec:turaev}. This description appears to be new. The de~Rham descriptions of the factors of the dual cobracket are given in Section~\ref{sec:DR}. The proof of Theorem~\ref{thm:turaev} is completed in Section~\ref{sec:hodge_turaev} where it is shown that each map in the factorization of the cobracket is a morphism of MHS for each choice of a complex structure. The group $\Uhat_{g,n+\uu}^\dd$ is defined and analyzed in Section~\ref{sec:torsors}, and Theorem~\ref{thm:krv} is proved in Section~\ref{sec:krv}.

This paper is a continuation of \cite{hain:goldman}. We assume familiarity with the sections of that paper on rational $K(\pi,1)$ spaces, iterated integrals, and Hodge theory.

\bigskip

\noindent{\em Acknowledgments:} I would like to thank Anton Alekseev, Nariya Kawazumi, Yusuke Kuno and Florian Naef for patiently answering my numerous questions about their work. I am also grateful to Quentin Gendron for pointing out an issue with the existence of algebraic framings which necessitated the introduction of quasi-algebraic framings. Finally, I am very grateful to the referees for their comments, suggestions and for pointing out numerous typographical errors and points that needed clarification.

\section{Notation and Conventions}

Suppose that $X$ is a topological space. There are two conventions for multiplying paths. We use the topologist's convention: The product $\alpha\beta$ of two paths $\alpha,\beta : [0,1]\to X$ is defined when $\alpha(1) = \beta(0)$. The product path traverses $\alpha$ first, then $\beta$. We will denote the set of homotopy classes of paths from $x$ to $y$ in $X$ by $\pi(X;x,y)$. In particular, $\pi_1(X,x) = \pi(X;x,x)$. The fundamental groupoid of $X$ is the category whose objects are $x\in X$ and where $\Hom(x,y) = \pi(X;x,y)$.

As in \cite{hain:goldman}, we have attempted to denote complex algebraic and analytic varieties by the roman letters $X$, $Y$, etc and arbitrary smooth manifolds (and differentiable spaces) by the letters $M$, $N$, etc. This is not always possible. The diagonal in $T\times T$ will be denoted $\Delta_T$.

The singular homology of a smooth manifold $M$ will be computed using the complex $C_\bdot(M)$ of {\em smooth} singular chains. The complex $C^\bdot(M)$ will denote its dual, the complex of smooth singular cochains. The de~Rham complex of $M$ will be denoted by $E^\bdot(M)$. The integration map $E^\bdot(M) \to C^\bdot(M;\R)$ is thus a well-defined cochain map.

The notation $\bil : C^\bdot \otimes C_\bdot \to \kk$ will often be used for any (natural) pairing between a chain complex $C_\bdot$ and a dual cochain complex $C^\bdot$. For example, it will be used to denote Kronecker pairings and integration pairings.

\subsection{Local systems and connections}

Here we regard a local system on a manifold $N$ as a locally constant sheaf. We will denote the complex of differential forms on $N$ with values in a local system $\bV$ of real (or rational) vector spaces by $E^\bdot(N;\bV)$. As in \cite{hain:goldman}, we denote the flat vector bundle associated to $\bV$ by $\sV$ and the sheaf of $j$-forms on $N$ with values in $\bV$ by $\sE^j_N\otimes\sV$. So $E^j(N,\bV)$ is just the space of global sections of $\sE^j_N\otimes\sV$. There are therefore isomorphisms
$$
H^\bdot(E^\bdot(N;\bV)) \cong H^\bdot(N;\bV)
$$

The pullback of a local system $\bV$ over $Y\times Y$ along the interchange map $\tau : Y^2 \to Y^2$ will be denoted by $\bV^\op$.

\subsection{Cones}
\label{sec:cones}

Several homological constructions will use cones. Since signs are important, we fix our conventions. The cone of a map $\phi : A_\bdot \to B_\bdot$ of chain complexes is defined by
$$
C_\bdot(\phi) := \cone(A_\bdot \to B_\bdot)[-1],
$$
where $C_j(\phi) = B_j \oplus A_{j-1}$ with differential $\partial (b,a) = (\partial b - \phi(a), - \partial a)$. The cone of a map $\psi: \cB^\bdot \to \cA^\bdot$ of cochain complexes is defined by
$$
C^\bdot(\psi) := \cone(\cB^\bdot\to \cA^\bdot)[-1],
$$
where $C^j(\psi) := \cB^j \oplus \cA^{j-1}$ with differential $d(\beta,\alpha) = (d\beta,-d\alpha - \psi^\ast\beta)$. Pairings of complexes
$$
\bil_A : \cA^\bdot\otimes A_\bdot \to V \text{ and } \bil_B : \cB^\bdot \otimes B_\bdot \to V
$$
induce the pairing
$$
\bil : C^\bdot(\psi) \otimes C_\bdot(\phi) \to V
$$
defined by $(\beta,\alpha)\otimes (b,a) \mapsto \langle \alpha, a\rangle_A + \langle\beta,b\rangle_B$. It satisfies $\langle dz,c\rangle = \langle z,\partial c\rangle$ and thus induces a pairing
$$
\bil : H^\bdot(C^\bdot(\psi)) \otimes H_\bdot(C_\bdot(\phi)) \to V.
$$

\section{Preliminaries}

We recall and elaborate on notation from \cite{hain:goldman}. Fix a ring $\kk$. Typically, this will be $\Z$, $\Q$, $\R$ or $\C$. Suppose that $M$ is a smooth manifold, possibly with boundary. All paths $[0,1]\to M$ will be piecewise smooth unless otherwise noted. Denote the space of paths $\gamma : [0,1]\to M$ by $PM$. This is endowed with the compact open topology. For each $t\in [0,1]$, one has the map
$$
p_t : PM \to M
$$
defined by $p_t(\gamma) = \gamma(t)$. It is a (Hurewicz) fibration.

\subsection{Fibrations}
 
The most fundamental path fibration is the map
\begin{equation}
\label{eqn:path_fibn}
p_0\times p_1 : PM \to M\times M.
\end{equation}
Its fiber over $(x_0,x_1)$ is the space $P_{x_0,x_1}M$ of paths in $M$ from $x_0$ to $x_1$. When $x_0=x_1 = x$, the fiber is the space $\Lambda_x M$ of loops in $M$ based at $x$. The local system over $M\times M$ whose fiber over $(x_0,x_1)$ is $H_0(P_{x_0,x_1}M;\kk)$ will be denoted by $\bP_M$.

 More generally, for $(t_1,\dots,t_n) \in [0,1]^n$ with $0 < t_1 \le t_2 \le \dots \le t_n < 1$, one has the fibration
$$
\prod_{j=1}^n p_{t_j} : PM \to M^n
$$ 
whose fiber over $(x_1,\dots,x_n)$ is
$$
P_{\blank,x_1}M \times P_{x_1,x_2}M \times \dots \times P_{x_{n-1},x_n}M \times P_{x_n,\blank} M.
$$
Here $P_{\blank,x}M$ denotes the space of paths terminating at $x \in M$ and $P_{x,\blank}M$ denotes the space of paths emanating from $x$. Since $P_{x,\blank}M$ and $P_{\blank,x}M$ are contractible, the fiber of the corresponding local system over $M^n$ is
$$
\pi_{1,2}^\ast\bP_M\otimes \pi_{2,3}^\ast\bP_M \otimes \cdots \otimes \pi_{n-1,n}\bP_M,
$$
where $\pi_{j,k} : M^n \to M\times M$ denotes the projection onto the product of the $j$th and $k$th factors.

The ``pullback path fibration'' obtained by pulling back (\ref{eqn:path_fibn}) along a smooth map $f : N \to M\times M$ will be denoted by $P_f M \to N$. When $f$ is the diagonal map $M \to M\times M$, the pullback is the fibration
\begin{equation}
\label{eqn:Lambda_over_M}
p : \Lambda M \to M
\end{equation}
of the free loop space of $M$ over $M$. Its fiber over $x\in M$ is the space $\Lambda_x M$ of loops in $M$ based at $x$. The corresponding local system will be denoted by $\bL_M$. It has fiber $H_0(\Lambda_x M;\kk)$ over $x\in M$.

\subsection{Homology}

The following result follows easily from the fact that a non-compact surface is a $K(\pi,1)$ and has cohomological dimension 1. Cf.\ \cite[Prop.~3.5]{hain:goldman}.

\begin{proposition}
\label{prop:vanishing}
If $M$ is a surface and if $M$ is not closed, then $H_j(\Lambda M)$ vanishes (with all coefficients) for all $j>1$. \qed
\end{proposition}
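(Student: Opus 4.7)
The plan is to collapse the Serre spectral sequence of the evaluation fibration $p : \Lambda M \to M$ of (\ref{eqn:Lambda_over_M}) onto a single row, and then exploit the $1$-dimensionality of $M$ to kill everything in degree $\ge 2$.

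First, I would record two topological facts. Since $M$ is a surface that is not closed, it has the homotopy type of a $1$-dimensional CW complex (a graph): if $M$ has non-empty boundary this is classical, and an open surface deformation retracts onto its $1$-skeleton for any triangulation. In particular $M$ is a $K(\pi,1)$ with $\pi = \pi_1(M,x)$ a free group, and $M$ has homological dimension $\le 1$ with coefficients in any local system $\bV$, so $H_p(M;\bV) = 0$ for $p \ge 2$.

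Next, the fiber of $p$ over $x$ is the based loop space $\Lambda_x M$. Because $M$ is aspherical, $\pi_n(\Lambda_x M) = \pi_{n+1}(M) = 0$ for $n \ge 1$, so $\Lambda_x M$ is weakly homotopy equivalent to the discrete set $\pi_1(M,x)$. Therefore $H_q(\Lambda_x M;\kk) = 0$ for all $q \ge 1$, and $H_0(\Lambda_x M;\kk)$ is the fiber of the local system $\bL_M$ on $M$.

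The Serre spectral sequence of $p$ then has $E^2_{p,q} = H_p(M;\mathcal{H}_q)$, where $\mathcal{H}_q$ denotes the local system with fiber $H_q(\Lambda_x M;\kk)$. By the previous paragraph this is concentrated in the row $q=0$, so the sequence degenerates at $E^2$ and gives
$$
H_j(\Lambda M;\kk) \cong H_j(M;\bL_M).
$$
By the first step this vanishes for $j \ge 2$, which is the claim. There is no real obstacle: the only point that requires a line of justification is that the bound on homological dimension holds with arbitrary local coefficients, which follows from the fact that $M$ is homotopy equivalent to a $1$-dimensional CW complex, so its cellular chain complex with coefficients in any local system is concentrated in degrees $0$ and $1$.
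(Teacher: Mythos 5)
Your argument is correct and is exactly the one the paper intends: the paper records no proof beyond the remark that a non-closed surface is a $K(\pi,1)$ of (co)homological dimension $1$, and the identification $H_\bdot(\Lambda M)\cong H_\bdot(M;\bL_M)$ via the collapsed Leray--Serre spectral sequence of $p:\Lambda M\to M$ is precisely the mechanism used again in Remark~\ref{rem:K(pi,1)}. Both of your ingredients --- weak contractibility of the components of the fiber $\Lambda_x M$ from asphericity, and vanishing of $H_p(M;\bV)$ for $p\ge 2$ because $M$ is homotopy equivalent to a graph --- are the intended ones, so there is nothing to add.
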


\section{Factoring Loops}
\label{sec:factorizn}

In this section $M$ is a smooth manifold and $\kk$ is any commutative ring. Recall from \cite[\S3.3]{hain:goldman} the construction of the Chas--Sullivan map
$$
\cs : H_0(\Lambda M) \to H_1(M;\bL_M).
$$
It is induced by the map that takes a loop $\alpha : S^1 \to M$ to the horizontal lift $\alphahat : S^1 \to \bL_M$ of $\alpha$ defined by $\alphahat(\theta)(\phi) = \alpha(\phi+\theta)$. 

We now describe a generalization of the Chas--Sullivan map. It arises from the factorization of a loop into two arcs. The evaluation map
\begin{equation}
\label{eqn:fibn}
p_0 \times p_{1/2} : \Lambda M \to M\times M
\end{equation}
is a fibration. Its fiber over $(x,y)$ is $P_{x,y}M \times P_{y,x} M$. The corresponding local system over $M\times M$ is
$$
\bP_M \otimes \bP_M^\op
$$
where $\bV^\op$ denotes the pullback of the local system $\bV$ on $M\times M$ along the map $(x,y) \mapsto (y,x)$. The restriction of $\bP_M\otimes \bP_M^\op$ to the diagonal $\Delta_M \cong M$, is $\bL_M\otimes \bL_M$.

\begin{remark}
\label{rem:K(pi,1)}
When $M$ is a $K(\pi,1)$, each path component of $P_{x_0,x_1}M$ is contractible. The Leray-Serre spectral sequences of the fibrations (\ref{eqn:Lambda_over_M}) and (\ref{eqn:fibn}) imply that there are natural isomorphisms
$$
H_\bdot(M;\bL_M) \cong H_\bdot(\Lambda M) \cong H_\bdot(M^2;\bP_M \otimes \bP_M^\op).
$$
\end{remark}

Composing $\cs$ with the maps induced on homology by the two maps $\bL_M \to \bL_M \otimes \bL_M$ defined by
$$
\alpha \mapsto \alphahat \otimes 1_{p(\alpha)} \text{ and } \alphahat \mapsto 1_{p(\alpha)} \otimes \alpha,
$$
where $1$ denotes the horizontal section of $\bL_M$ whose value at $x$ is $1_x$, gives two maps
$$
\cs \otimes 1 \text{ and } 1 \otimes \cs:  H_0(\Lambda M) \To H_1(M;\bL_M\otimes \bL_M).
$$
Composing these with the diagonal map
$$
\Delta_\ast : H_1(M;\bL_M\otimes \bL_M) \To H_1(M^2; \bP_M\otimes\bP_M^\op)
$$
yields two maps
$$
\Delta_\ast(\cs \otimes 1) \text{ and } \Delta_\ast(1 \otimes \cs) :
H_0(\Lambda M) \To H_1(M^2; \bP_M\otimes\bP_M^\op).
$$

\begin{proposition}
\label{prop:homologous}
There is a horizontal section $s_\alpha : [0,2\pi i]\times S^1$ of $\bP_M\otimes \bP_M^\op$ satisfying
$$
\partial s_\alpha = 1\otimes\alphahat - \alphahat \otimes 1.
$$
Consequently, $\Delta_\ast(\cs \otimes 1) = \Delta_\ast(1 \otimes \cs)$.
\end{proposition}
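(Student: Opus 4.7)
I would prove the proposition by exhibiting the 2-chain $s_\alpha$ explicitly as a sliding decomposition of $\alpha$ into two arcs. Parametrize $S^1 = \R/\Z$, and consider the smooth map
\[
\sigma : [0,1]\times S^1 \To M\times M,\qquad \sigma(s,\theta) = \bigl(\alpha(\theta),\,\alpha(\theta+s)\bigr),
\]
whose boundary $\sigma(\{1\}\times S^1) - \sigma(\{0\}\times S^1)$ lies on the diagonal $\Delta_M$. Over the point $(s,\theta)$, cutting $\alpha$ at the parameters $\theta$ and $\theta+s$ yields a canonical element of the fiber of $\sigma^\ast(\bP_M\otimes\bP_M^\op)$,
\[
s_\alpha(s,\theta) := \bigl[\alpha|_{[\theta,\theta+s]}\bigr] \otimes \bigl[\alpha|_{[\theta+s,\theta+1]}\bigr].
\]
At $s=0$ this reduces to $1_{\alpha(\theta)}\otimes\alphahat(\theta)$, and at $s=1$ to $\alphahat(\theta)\otimes 1_{\alpha(\theta)}$, so modulo the chosen orientation of the cylinder the boundary is precisely $1\otimes\alphahat - \alphahat\otimes 1$.

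The core technical step is to check that $s_\alpha$ is horizontal. Parallel transport in $\bP_M$ along a path $(\gamma_1,\gamma_2)$ in $M\times M$ acts by $[\delta]\mapsto [\gamma_1^{-1}\cdot\delta\cdot\gamma_2]$; applied to an infinitesimal variation $(s,\theta)\rightsquigarrow (s+\Delta s, \theta+\Delta\theta)$, this rule prepends $\alpha|_{[\theta+\Delta\theta,\theta]}$ and appends $\alpha|_{[\theta+s,\theta+s+\Delta s+\Delta\theta]}$ to $\alpha|_{[\theta,\theta+s]}$. The resulting concatenation simplifies to $\alpha|_{[\theta+\Delta\theta,\theta+\Delta\theta+s+\Delta s]}$, which is exactly the value of $s_\alpha$ at the new point; the analogous cancellation handles the second tensor factor (remembering that $\bP_M^\op$ uses the reversed order). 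Hence $s_\alpha$ is flat, and the pair $(\sigma,s_\alpha)$ defines a 2-chain in $M^2$ with coefficients in $\bP_M\otimes\bP_M^\op$ realizing the desired homology.

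The second claim follows at once. By linearity it suffices to test the two maps on the class of a loop $\alpha$, and for this $\alpha$ the two 1-cycles $\Delta_\ast(\alphahat\otimes 1)$ and $\Delta_\ast(1\otimes\alphahat)$ differ exactly by $\partial s_\alpha$, so they are homologous in $H_1(M^2;\bP_M\otimes\bP_M^\op)$. The main obstacle is essentially bookkeeping: one needs to fix conventions for path composition (the topologist's order), for the identification $\bP_M^\op \cong \tau^\ast\bP_M$, and for the orientation of $[0,1]\times S^1$ consistently, so that the signs in the boundary computation match those in the statement. Once these conventions are set, both horizontality and the boundary computation reduce to routine cancellations among subarcs of $\alpha$.
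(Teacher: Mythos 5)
Your proof is correct and follows essentially the same route as the paper: both realize $s_\alpha$ as the horizontal section, given by cutting $\alpha$ at two points, over a cylinder that maps onto $S^1\times S^1$ by collapsing the two boundary circles to the diagonal, with those boundary circles carrying $1\otimes\alphahat$ and $\alphahat\otimes 1$. The only difference is cosmetic: your parametrization $(s,\theta)\mapsto(\alpha(\theta),\alpha(\theta+s))$ runs the interval in the direction opposite to the paper's $(t,\phi)\mapsto(t+\phi,\phi)$, so with the product orientation you would get $\alphahat\otimes 1 - 1\otimes\alphahat$ and must reverse the orientation of the cylinder to obtain the stated sign, as you acknowledge.
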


\begin{proof}
Each loop $\alpha : S^1 \to M$ induces a map $\alpha^2 : S^1 \times S^1 \to M \times M$. This lifts to the horizontal section $s_\alpha$ of $\bP_M\otimes\bP_M^\op$ defined over $(S^1 \times S^1) - \Delta_{S^1}$ defined by
$$
s_\alpha(\theta,\phi) = \alpha'\otimes \alpha'',
$$
where $\alpha'$ is the restriction of $\alpha$ to the positively oriented arc in $S^1$ from $\theta$ to $\phi$ and $\alpha''$ is its restriction to the arc from $\phi$ to $\theta$. This lift does not extend continuously to $S^1\times S^1$, except when $\alpha$ is null homotopic.

To extend the lift, we replace $S^1\times S^1$ by $U := [0,2\pi] \times S^1$. The map
\begin{equation}
\label{eqn:quot}
U \to S^1 \times S^1,\qquad (t,\phi) \mapsto (t+\phi, \phi)
\end{equation}
is a quotient map that takes the boundary of $U$ onto the diagonal $\Delta_{S^1}$. It induces a homeomorphism $(0,2\pi) \times S^1 \approx (S^1\times S^1) - \Delta$ and identifies $(0,\phi)$ with $(2\pi,\phi)$. The horizontal lift $s_\alpha : (S^1\times S^1) - \Delta_{S^1} \to \bP_M\otimes \bP_M^\op$ of $\alpha^2$ extends uniquely to a horizontal lift
$$
U \to \bP_M\otimes \bP_M^\op
$$
which we will also denote by $s_\alpha$. The boundary of $U$ is $\{2\pi\}\times S^1 -  \{0\}\times S^1$, which implies that $\partial s_\alpha = 1\otimes\alphahat - \alphahat \otimes 1$.
\end{proof}

When $M$ is a $K(\pi,1)$, both maps $\Delta_\ast(\cs \otimes 1)$ and $\Delta_\ast(1 \otimes \cs)$ are easily seen to correspond to $\cs : H_0(\Lambda M) \to H_1(M,\bL_M) \cong H_1(\Lambda M)$ under the isomorphism given in Remark~\ref{rem:K(pi,1)}.

\section{A Homological Description of the Turaev Cobracket}
\label{sec:turaev}

Throughout this section, $M$ will be a smooth oriented surface without boundary\footnote{If $M$ has boundary, replace it by $M-\partial M$, which is homotopy equivalent to $M$.} and $\kk$ arbitrary. Denote space of non-zero tangent vectors of $M$ by $\Mhat$ and the projection by $\pi : \Mhat \to M$. Denote the composition of the projection $\pi : \Mhat \to M$ with the diagonal map $\Delta : M\to M\times M$ by $\Deltabar$.

\begin{remark}
When reading this section, it is worth keeping in mind that, by an elementary case of a theorem of Hirsch \cite{hirsch} (that goes back to Whitney \cite{whitney}), the set of regular homotopy classes of immersed loops in $M$ corresponds to the set $\pi_0(\Lambda \Mhat)$ of homotopy classes of loops in $\Mhat$. The formula (\ref{eqn:def_turaev}) gives a well-defined map
$$
H_0(\Lambda \Mhat) \to H_0(\Lambda M) \otimes H_0(\Lambda M).
$$
The key point in this section is to give a homological description of this map.
\end{remark}

\subsection{A homological description of $\cs : H_0(\Lambda \Mhat) \to H_1(\Lambda\Mhat)$}
The first step in giving a homological description of the cobracket is to give a homological description of the homology of $\Lambda \Mhat$ that is suitable for computing the intersection product. It uses a cone construction and the homotopy $s_\alpha$ constructed in Proposition~\ref{prop:homologous}.

Define
\begin{equation}
\label{eqn:iota}
\iota : \bL_\Mhat \to \Deltabar^\ast \big(\bP_M \otimes \bP_M^\op\big)
\end{equation}
to be the map whose restriction to the fiber $\bL_{\Mhat,v}$ over $v \in \Mhat$ is defined by
$$
\iota(\alpha) = 1_x \otimes (\pi\circ \alpha) - (\pi\circ \alpha) \otimes 1_x
\in H_0(\Lambda_x M) \otimes H_0(\Lambda_x M),
$$
where $\alpha \in \Lambda_v \Mhat$ and $x=\pi(v)$.

The maps $\Deltabar$ and $\iota$ induce a chain map
$$
\Deltabar_\ast\otimes \iota : C_\bdot(\Mhat;\bL_\Mhat) \to C_\bdot(M^2;\bP_M\otimes \bP_M^\op)
$$
of singular chain complexes. We can therefore form the cone
$$
C_\bdot(\Deltabar_\ast\otimes \iota) := \cone\big(C_\bdot(\Mhat;\bL_\Mhat) \to C_\bdot(M^2;\bP_M\otimes \bP_M^\op)\big)[-1]
$$
Set
$$
H_\bdot(M^2,\Mhat;\bL_\Mhat\to \bP_M\otimes\bP_M^\op) := H_\bdot(C_\bdot(\Deltabar_\ast\otimes \iota))
$$

For each $\alpha \in \Lambda \Mhat$ we have the 2-cycle $(s_{\pi\circ\alpha},\alphahat) \in C_2(\Deltabar_\ast\otimes \iota)$, where
$$
s_{\pi\circ\alpha} : U \to \bP_M\otimes\bP_M^\op
$$
is the section defined in Proposition~\ref{prop:homologous}.

\begin{proposition}
\label{prop:phi_cs}
The map that takes the class of a loop $\alpha \in \Lambda \Mhat$ to the class of the cycle $(s_{\pi\circ\alpha},\alphahat) \in C_\bdot(\Deltabar_\ast\otimes \iota)$ defines a homomorphism
\begin{equation}
\label{eqn:phi}
\varphi : H_0(\Lambda \Mhat) \to H_2(M^2,\Mhat;\bL_\Mhat\to \bP_M\otimes\bP_M^\op)
\end{equation}
whose composition with the map $H_2\big(M^2,\Mhat;\bL_\Mhat\to \bP_M\otimes\bP_M^\op\big) \to H_1(\Mhat;\bL_\Mhat)$ is the Chas--Sullivan map $\cs$ for $\Mhat$.
\end{proposition}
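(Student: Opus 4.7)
The plan is to verify three things in turn: that $(s_{\pi\circ\alpha},\alphahat)$ is a cycle; that the assignment is invariant under homotopy of $\alpha$, hence descends to $H_0(\Lambda\Mhat)$; and that its image under the natural projection to $H_1(\Mhat;\bL_\Mhat)$ is the class $[\alphahat]=\cs(\alpha)$.

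First I would check that $(s_{\pi\circ\alpha},\alphahat)\in C_2(\Deltabar_\ast\otimes\iota)$ is a cycle. By the cone conventions of Section~\ref{sec:cones},
$$
\partial(s_{\pi\circ\alpha},\alphahat) = \bigl(\partial s_{\pi\circ\alpha} - (\Deltabar_\ast\otimes\iota)(\alphahat),\ -\partial\alphahat\bigr).
$$
The second component vanishes because $\alphahat:S^1\to\bL_\Mhat$ is a $1$-cycle. For the first component, observe that $\pi\circ\alphahat = \widehat{\pi\circ\alpha}$, so the definition of $\iota$ gives $(\Deltabar_\ast\otimes\iota)(\alphahat) = 1\otimes\widehat{\pi\circ\alpha} - \widehat{\pi\circ\alpha}\otimes 1$ as a $1$-chain in $M^2$ supported on the diagonal image of $\pi\circ\alpha$. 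This is exactly $\partial s_{\pi\circ\alpha}$ by Proposition~\ref{prop:homologous}.

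Next I would show invariance under homotopy in $\Lambda\Mhat$. Given a continuous family $H:S^1\times[0,1]\to\Mhat$ connecting $\alpha_0$ to $\alpha_1$, lift it to the $2$-chain $\widehat{H}:S^1\times[0,1]\to\bL_\Mhat$ by the same horizontal lifting rule that defines $\alphahat$. Then extend the construction in the proof of Proposition~\ref{prop:homologous} to a one-parameter family, producing a $3$-chain $s_{\pi\circ H}:U\times[0,1]\to\bP_M\otimes\bP_M^\op$ over $(\pi\circ H)^2\circ(q\times\id)$, where $q:U\to S^1\times S^1$ is the quotient (\ref{eqn:quot}). A direct boundary calculation, using $\partial s_{\pi\circ H_t} = (\Deltabar_\ast\otimes\iota)(\widehat{H}_t)$ fiberwise in $t$ together with the standard sign rules, shows that $(s_{\pi\circ H},\widehat{H})\in C_3(\Deltabar_\ast\otimes\iota)$ has boundary $(s_{\pi\circ\alpha_1},\alphahat_1)-(s_{\pi\circ\alpha_0},\alphahat_0)$. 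This proves $\varphi$ is well-defined on $H_0(\Lambda\Mhat)$ and makes it clear that the map is $\kk$-linear.

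Finally, to identify the composition with the map to $H_1(\Mhat;\bL_\Mhat)$, I would use the cone sign conventions once more: the projection $(b,a)\mapsto a$ defines a chain map from $C_\bdot(\Deltabar_\ast\otimes\iota)$ to $C_{\bdot-1}(\Mhat;\bL_\Mhat)$ (up to an overall sign coming from the $-\partial a$ in the cone differential) which induces the connecting map $H_2(M^2,\Mhat;\bL_\Mhat\to\bP_M\otimes\bP_M^\op)\to H_1(\Mhat;\bL_\Mhat)$. Applied to $(s_{\pi\circ\alpha},\alphahat)$ this yields $[\alphahat]$, which is the definition of $\cs$. The only real obstacle is a careful bookkeeping in the homotopy step: one must verify that the parts of $\partial s_{\pi\circ H}$ along $\partial U\times[0,1]$ cancel the part of $(\Deltabar_\ast\otimes\iota)(\widehat{H})$ coming from the interior of $[0,1]$, while the pieces along $U\times\partial[0,1]$ reassemble into the desired boundary at $t=0,1$. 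Once the signs are pinned down, the rest is formal.
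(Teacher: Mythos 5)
Your proposal is correct and follows exactly the route the paper intends: the paper in fact states Proposition~\ref{prop:phi_cs} without proof, treating it as immediate from Proposition~\ref{prop:homologous} (which gives $\partial s_{\pi\circ\alpha}=(\Deltabar_\ast\otimes\iota)(\alphahat)$, i.e.\ the cycle condition) together with the cone conventions of Section~\ref{sec:cones}, under which the map to $H_1(\Mhat;\bL_\Mhat)$ is the projection $(b,a)\mapsto a$ and hence sends $(s_{\pi\circ\alpha},\alphahat)$ to $\cs(\alpha)$. Your three steps, including the 3-chain argument for homotopy invariance, supply precisely the omitted details; the only cosmetic caveat is that the overall sign of the boundary $(s_{\pi\circ H},\widehat H)$ depends on the orientation of the $[0,1]$ factor, which does not affect the conclusion.
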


The following result is he homological description of $\cs$ that we will need in the homological description of the Turaev cobracket.

\begin{lemma}
\label{lem:les_cone}
If $M$ is a surface that is not $S^2$, then there is a diagram
$$
\xymatrix{
&& H_0(\Lambda \Mhat)\ar[d]_\varphi \ar[dr]^\cs \cr
\ar[r]_(.3)\partial & H_2(\Lambda M) \ar[r] & H_2(M^2,\Mhat;\bL_\Mhat\to \bP_M\otimes\bP_M^\op) \ar[r]_(.7)\psi & H_1(\Lambda \Mhat) \ar[r]_(.6)\partial & 0
}
$$
whose bottom row is exact. If $M$ is not closed, then $H_2(\Lambda M)$ vanishes so that the map $\psi$ is an isomorphism.
\end{lemma}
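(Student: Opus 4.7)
The plan is to realize the displayed bottom row as a truncation of the homology long exact sequence of the mapping cone $C_\bdot(\Deltabar_\ast\otimes\iota)$. The cone construction of Section~\ref{sec:cones} fits in a short exact sequence of chain complexes
\[
0\to C_\bdot(M^2;\bP_M\otimes\bP_M^\op)\to C_\bdot(\Deltabar_\ast\otimes\iota)\to C_\bdot(\Mhat;\bL_\Mhat)[-1]\to 0,
\]
producing the long exact sequence
\[
\cdots \to H_n(\Mhat;\bL_\Mhat) \xrightarrow{(\Deltabar_\ast\iota)_\ast} H_n(M^2;\bP_M\otimes\bP_M^\op) \to H_n(M^2,\Mhat;\bL_\Mhat\to\bP_M\otimes\bP_M^\op) \xrightarrow{\psi} H_{n-1}(\Mhat;\bL_\Mhat) \to \cdots.
\]
Because $M$ is an orientable surface with $M\ne S^2$, $M$ is a $K(\pi,1)$, and the homotopy long exact sequence of the circle bundle $\Mhat\to M$ then shows $\Mhat$ is aspherical too. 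By Remark~\ref{rem:K(pi,1)} the outer terms of this LES identify with $H_\bdot(\Lambda\Mhat)$ and $H_\bdot(\Lambda M)$, yielding the bottom row of the diagram upon truncation at $n=2$.

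Commutativity of the upper triangle, $\psi\circ\varphi=\cs$, is precisely the content of Proposition~\ref{prop:phi_cs}.

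The crux is exactness at $H_1(\Lambda\Mhat)$ (the trailing $0$), which is equivalent to vanishing of $(\Deltabar_\ast\otimes\iota)_\ast:H_1(\Lambda\Mhat)\to H_1(\Lambda M)$. I expect this to be the main obstacle. I would decompose $\Lambda\Mhat=\bigsqcup_{[\gamma]}\Lambda_\gamma\Mhat$ into path components indexed by conjugacy classes of $\pi_1(\Mhat)$, with $\Lambda_\gamma\Mhat\simeq BZ(\gamma)$ by asphericity, and argue case by case. On components for which $\pi\gamma$ is nullhomotopic in $M$ (the identity component, and components arising from the fiber of $\Mhat\to M$), the map $\iota$ already vanishes at the chain level, since $\iota$ applied to the corresponding constant section of $\bL_\Mhat$ evaluates to $1\otimes 1-1\otimes 1=0$. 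On components with $\pi\gamma$ nontrivial in $\pi_1(M)$, the Chas--Sullivan generator $\cs[\hat\gamma]\in H_1(BZ(\gamma))$ is killed by Proposition~\ref{prop:homologous}, via the explicit bounding 2-chain $s_{\pi\circ\gamma}$. When $M$ is hyperbolic, the centralizer $Z(\gamma)$ is infinite cyclic and this single generator exhausts $H_1(BZ(\gamma))$; in the torus case the centralizers are larger and contribute extra generators, which I would handle by a family version of Proposition~\ref{prop:homologous}, exploiting that the null-homotopy $s_\alpha$ depends continuously on $\alpha$ and therefore produces bounding chains for one-parameter families of loops in the abelian-group translation directions.

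Finally, if $M$ is not closed, Proposition~\ref{prop:vanishing} gives $H_2(\Lambda M)=0$, so the exact sequence specializes to $0\to H_2(M^2,\Mhat;\bL_\Mhat\to\bP_M\otimes\bP_M^\op)\xrightarrow{\psi} H_1(\Lambda\Mhat)\to 0$, showing that $\psi$ is an isomorphism.
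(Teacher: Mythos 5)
Your overall skeleton is the same as the paper's: the bottom row is the homology long exact sequence of the cone $C_\bdot(\Deltabar_\ast\otimes\iota)$ after the identifications of Remark~\ref{rem:K(pi,1)}, the triangle commutes by Proposition~\ref{prop:phi_cs}, and the final claim follows from Proposition~\ref{prop:vanishing}. The difference lies in the key point, the vanishing of the connecting map $\partial\colon H_1(\Lambda\Mhat)\to H_1(\Lambda M)$. The paper disposes of this globally, in all degrees, in one line: $\partial$ is induced by the coefficient map $\iota\colon\alpha\mapsto 1_x\otimes(\pi\circ\alpha)-(\pi\circ\alpha)\otimes 1_x$, and under the identification $H_\bdot(M^2;\bP_M\otimes\bP_M^\op)\cong H_\bdot(\Lambda M)$ of Remark~\ref{rem:K(pi,1)} each of the two summands induces the same map as $(\Lambda\pi)_\ast$ (this is the observation closing Section~\ref{sec:factorizn}), so their difference is zero. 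No decomposition into path components and no case analysis is needed.

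Your component-by-component argument, as written, has a genuine gap. Since $\Mhat\to M$ is a circle bundle, $\pi_1(\Mhat)$ is a central extension of $\pi_1(M)$ by $\Z$, so for $\gamma$ with $\pi_\ast\gamma$ nontrivial the centralizer $Z(\gamma)$ is never infinite cyclic: it contains the central fiber class, and for $M$ non-closed, where $\Mhat\simeq M\times S^1$, one has $Z(\gamma)\cong Z(\pi_\ast\gamma)\times\Z\cong\Z^2$, hence $H_1(\Lambda_\gamma\Mhat)\cong\Z^2$. The Chas--Sullivan class $[\hat\gamma]$, which is what Proposition~\ref{prop:homologous} kills, is a single element of this rank-two group and does not exhaust it; the fiber-direction generator is simply not addressed. (It does in fact map to zero, because $\pi$ collapses the fiber circle to a point, so its image under $\Deltabar_\ast\otimes\iota$ is a constant, hence degenerate, $1$-cycle --- but this needs to be said.) The torus case is likewise deferred to an unproved ``family version'' of Proposition~\ref{prop:homologous}. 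Both defects disappear if you replace the component analysis by the global cancellation argument above.
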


\begin{proof}
The bottom row is part of the long exact homology sequence associated to the cone $C_\bdot(\Deltabar_\ast\otimes \iota)$ after making the identifications from Remark~\ref{rem:K(pi,1)}. Under these identifications, the connecting homomorphism
$$
\partial : H_\bdot(\Lambda \Mhat) \to H_\bdot(\Lambda M)
$$
vanishes as it is induced by the map $\alpha \mapsto \pi\circ\alpha - \pi\circ\alpha$.
\end{proof}

\subsection{The groups $H^\bdot_\Delta(M^2,N)$}
\label{sec:rel_coho}

Denote the singular cochain complex of a pair $(Y,Z)$ with coefficients in $\kk$ by $C^\bdot(Y,Z)$. For a continuous map $h:T\to M^2$, define
$$
C^\bdot_\Delta(M^2,T) := \cone\big(C^\bdot(M^2,M^2-\Delta_M) \overset{h^\ast}{\To} C^\bdot(T)\big)[-1].
$$
Denote its cohomology groups by $H^\bdot_\Delta(M^2,T)$. These can also be computed by the complex
$$
\cone\big(C^\bdot(M^2) \overset{j^\ast \oplus h^\ast}\To C^\bdot(M^2-\Delta_M) \oplus C^\bdot(T)\big)[-1],
$$
where $j : M^2-\Delta_M \to M^2$ is the inclusion. 

\begin{lemma}
\label{lem:les}
There is a long exact sequence
$$
\xymatrix{
\cdots \ar[r] & H^{j-1}(T) \ar[r] & H^j_\Delta(M^2,T) \ar[r] & H^j_\Delta(M^2) \ar[r] & H^j(T) \ar[r] & \cdots
}.
$$
\end{lemma}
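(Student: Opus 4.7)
The plan is to extract this long exact sequence directly from the defining mapping cone, so essentially no geometry is needed beyond what is already built into the construction of $C^\bdot_\Delta(M^2,T)$.

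First I would unravel the cone convention of Section~\ref{sec:cones} to produce the degreewise split short exact sequence of cochain complexes
$$
0 \To C^\bdot(T)[-1] \To C^\bdot_\Delta(M^2,T) \To C^\bdot(M^2,M^2-\Delta_M) \To 0,
$$
where the first map sends $\tau \in C^{j-1}(T)$ to $(0,\tau) \in C^j_\Delta(M^2,T)$ and the second map sends $(\beta,\alpha)$ to $\beta$. That both maps are cochain maps is immediate from the cone differential $d(\beta,\alpha) = (d\beta,\, -d\alpha - h^\ast\beta)$: if $d\tau=0$ then $d(0,\tau)=(0,-d\tau)=0$, and the projection on the first factor clearly commutes with $d$.

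Next I would apply the standard zig-zag/snake lemma to obtain the long exact sequence
$$
\cdots \To H^{j-1}(T) \To H^j_\Delta(M^2,T) \To H^j(M^2,M^2-\Delta_M) \To H^j(T) \To \cdots
$$
and identify $H^j(M^2,M^2-\Delta_M)$ with $H^j_\Delta(M^2)$ via the alternative cone description of $H^\bdot_\Delta(M^2,T)$ given just before the statement (applied with $T = \emptyset$, or equivalently by using the quasi-isomorphism $C^\bdot(M^2,M^2-\Delta_M) \simeq \cone(C^\bdot(M^2) \to C^\bdot(M^2-\Delta_M))[-1]$). The connecting homomorphism $H^j_\Delta(M^2) \to H^j(T)$ is then (up to sign) just $h^\ast$: lifting a cocycle $\beta \in C^j(M^2,M^2-\Delta_M)$ to $(\beta,0) \in C^j_\Delta(M^2,T)$ gives $d(\beta,0) = (0,-h^\ast\beta)$, so the connecting map sends $[\beta]$ to $-[h^\ast\beta]$.

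There is no real obstacle here; the entire content is the long exact sequence of a mapping cone. The only point that warrants checking is the sign and indexing convention in Section~\ref{sec:cones}, to confirm that the shift $[-1]$ on $C^\bdot(T)$ is what produces the index $j-1$ (rather than $j+1$) in the displayed long exact sequence.
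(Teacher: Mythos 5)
Your proof is correct and is essentially the paper's own argument: the paper likewise extracts the long exact sequence from the short exact sequence $0 \to C^\bdot(T)[-1] \to C^\bdot_\Delta(M^2,T) \to C^\bdot_\Delta(M^2) \to 0$, where $C^\bdot_\Delta(M^2)$ is exactly $C^\bdot(M^2,M^2-\Delta_M)$ (the case $T=\emptyset$ of the definition). You simply spell out the details the paper leaves implicit, including the identification of the connecting map with $-h^\ast$.
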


\begin{proof}
The long exact sequence comes from the short exact sequence
$$
\xymatrix{
0 \ar[r] & C^\bdot(T)[-1] \ar[r] & C^\bdot_\Delta(M^2,T) \ar[r] & C^\bdot_\Delta(M^2) \ar[r] & 0
}
$$
of complexes.
\end{proof}

We are interested in the 3 cases: $T$ is empty; $T=\Delta_M$ and $h$ is the inclusion; $T=\Mhat$ and $h$ is the composition of the projection $\pi$ with the diagonal map. When $T$ is empty, the Thom isomorphism gives an isomorphism $H^j(M) \cong H^{j+2}_\Delta(M^2)$. We'll consider the case $T=\Mhat$ in the next section. Here we consider the case $T=\Delta_M$.

We will suppose that $\xi$ is a nowhere vanishing vector field on $M$. The normal bundle of the diagonal $\Delta_M$ in $M^2$ is isomorphic to the tangent bundle $TM$ of $M$. Fix a riemannian metric on $M$. The exponential map induces a diffeomorphism of a closed disk bundle in $TM$ with a regular neighbourhood $N$ of $\Delta_M$ in $M^2$. By rescalling $\xi$, we may assume that the exponential map takes $\xi$ into $\partial N$. We will henceforth regard $\xi$ as the section $\exp \xi$ of $\partial N$. Denote the closed unit ball in $\R^2$ by $B$. We can choose a trivialization
$$
\pi \times q : N \overset{\simeq}{\To} M \times B
$$
such that $q\circ \xi : M \to B-\{0\}$ is null homotopic. This condition determines the homotopy class of the trivialization.

Since $\partial M$ is empty, the inclusion $(N,\partial N) \hookrightarrow (M^2,M^2-\Delta_M)$ induces an isomorphism
$$
H^\bdot_\Delta(M^2,\Delta_M) \overset{\simeq}{\To} H^\bdot(N,\Delta_M \cup \partial N).
$$
The K\"unneth Theorem implies that
$
q^\ast : H^2(B,\partial B) \to H^2(N,\partial N)
$
is an isomorphism.

\begin{proposition}
There is a short exact sequence
$$
0 \to H^1(\Delta_M) \to H^2(N,\Delta_M \cup \partial N) \to H^2(N,\partial N) \to 0.
$$
\end{proposition}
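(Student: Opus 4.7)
The plan is to derive the short exact sequence from the long exact cohomology sequence of the triple $(N,\,\Delta_M\cup\partial N,\,\partial N)$, using the Thom isomorphism to control $H^\bdot(N,\partial N)$ and the nowhere-vanishing vector field $\xi$ to force vanishing of the Euler class. Since the zero section $\Delta_M$ and the boundary $\partial N$ are disjoint closed subsets of $N$, the union is in fact a disjoint union, so $H^j(\Delta_M\cup\partial N,\partial N)\cong H^j(\Delta_M)$. Substituting this into the triple LES around degree~$2$ yields the five-term exact sequence
$$
H^1(N,\partial N)\to H^1(\Delta_M)\to H^2(N,\Delta_M\cup\partial N)\to H^2(N,\partial N)\to H^2(\Delta_M).
$$

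Next I would identify the outer terms via the Thom isomorphism. The pair $(N,\partial N)$ is the Thom space of $TM$, and the trivialization $\pi\times q:N\cong M\times B$ already chosen in the paper realizes this concretely via the K\"unneth formula, giving $H^j(N,\partial N)\cong H^{j-2}(M)$. In particular $H^1(N,\partial N)=0$, killing the leftmost term. The connecting map $H^2(N,\partial N)\to H^2(\Delta_M)$ is the restriction of the Thom class to the zero section, which under the Thom isomorphism is cup product with the Euler class $e(TM)\in H^2(M)$.

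The essential step is therefore to verify that this last map vanishes, i.e.\ that $e(TM)=0$. If $M$ is not closed, then $M$ has the homotopy type of a $1$-complex and $H^2(M)=0$ automatically. If $M$ is closed, the existence of the nowhere-vanishing section $\xi$ together with Poincar\'e--Hopf forces $\chi(M)=0$ and hence $e(TM)=0$; the condition imposed on the trivialization, namely that $q\circ\xi:M\to B-\{0\}$ be null homotopic, is precisely the geometric manifestation of this vanishing. In either case the right-hand map is zero, and the five-term sequence truncates to the asserted short exact sequence.

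The only substantive geometric input is the vanishing of $e(TM)$ supplied by $\xi$; everything else is a formal consequence of the triple LES and the Thom isomorphism. The only mild subtlety to confirm is the identification of the connecting map with the restriction of the Thom class, which is standard once the trivialization is fixed.
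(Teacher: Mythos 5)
Your proof is correct and follows essentially the same route as the paper: the long exact sequence of the triple $(N,\Delta_M\cup\partial N,\partial N)$, with exactness on the left coming from $H^1(N,\partial N)=0$ via the K\"unneth/Thom isomorphism. The only cosmetic difference is at the right-hand end, where the paper kills the restriction map $H^2(N,\partial N)\to H^2(\Delta_M)$ by observing that $\Delta_M\hookrightarrow N\overset{q}{\to}B$ is the constant map $0$ (so the map factors through $H^2(B)=0$), whereas you phrase the same vanishing as the vanishing of the Euler class $e(TM)$ guaranteed by $\xi$ — as you note yourself, these are the same fact.
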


\begin{proof}
This is part of the long exact sequence of the triple $(N,\Delta_M \cup \partial N,\partial N)$. Exactness on the left follows from the K\"unneth Theorem (or the Thom Isomorphism Theorem); exactness on the right follows as $\Delta_M \hookrightarrow N \overset{q}{\To} B$ is the constant map $0$.
\end{proof}

The projection $q : N \to B$ induces a homomorphism
$$
q^\ast : H^2(B,\partial B) \cong H^2(B,\{0\}\cup \partial B) \to H^2(N,\Delta_M \cup \partial N).
$$
This map depends on the homotopy class of the trivialization $\xi$. Denote the positive integral generator of $H^2(B,\partial B)$ by $\tau_B$. Define
$$
\tau_\xi := q^\ast \tau_B \in H^2(N,\Delta_M\cup \partial N).
$$
The image of $\tau_\xi$ in $H^2(N,\partial N)$ is the Thom class $\tau_M$ of the tangent bundle of $M$.

Choose a representative $\w_B \in C^2(B,\partial B)$ of $\tau_B$. Set $\w_\xi = q^\ast \w_B$. Since the restriction of $\w_B$ to the diagonal vanishes, the pair $(\w_\xi,0)$ represents $\tau_\xi \in H^2(N,\Delta_M \cup \partial N)$.

To better understand $\tau_\xi$, suppose that $\gamma : S^1 \to \partial N$. Since $\partial M$ is empty, there is a diffeomorphism $\partial N \approx M \times \partial B$ that commutes with the projections to $M$. Define the rotation number $\rot_\xi(\gamma)$ of $\gamma$ with respect to $\xi$ to be the rotation number of $q\circ \gamma$ about $0 \in B$. If $\gammadot: S^1 \to \Mhat$ is the tangent map lift of  an immersed loop $\gamma$ in $M$, then $\rot_\xi(\gammadot)$ equals the standard rotation number $\rot_\xi(\gamma)$.

Let $\Gamma_\gamma$ be the relative 2-cycle
$$
\Gamma_\gamma : (I\times S^1,\partial I \times S^1) \to (N,\Delta_M\cup \partial N) \approx M\times (B,\{0\} \cup \partial B)
$$
that corresponds to the map
$$
(I\times S^1,\partial I \times S^1) \to M\times (B,\{0\}\cup\partial B), \quad
(t,\theta) \mapsto \big(\pi\circ\gamma(t),t\, q\circ\gamma(\theta)\big).
$$
Give $I\times S^1$ the product orientation. 

\begin{lemma}
\label{lem:rot}
We have $\langle \tau_\xi,\Gamma_\gamma \rangle = \rot_\xi(\gamma)$.
\end{lemma}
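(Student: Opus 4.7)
The plan is to pair cocycle and cycle representatives directly and reduce the statement to a degree calculation in the disk $B$.

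First, I would use the cochain representative $(\w_\xi,0)$ of $\tau_\xi$ constructed just before the lemma, where $\w_\xi = q^\ast\w_B$ and $\w_B \in C^2(B,\partial B)$ represents the positive generator $\tau_B$, normalised so that $\int_B\w_B=1$. The pairing rule from Section~\ref{sec:cones}, applied to the cone $C^\bdot_\Delta(M^2,\Delta_M)$, says that $(\w_\xi,0)$ evaluates on any relative $2$-cycle by integrating its first component over the relative cycle in $(M^2,M^2-\Delta_M)$ and adding the $\Delta_M$-pairing, which is zero. Hence
\[
\langle\tau_\xi,\Gamma_\gamma\rangle \;=\; \int_{I\times S^1}\Gamma_\gamma^\ast\w_\xi \;=\; \int_{I\times S^1}(q\circ\Gamma_\gamma)^\ast\w_B.
\]

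Second, I would identify the pulled-back map $f := q\circ\Gamma_\gamma : I\times S^1 \to B$. From the formula for $\Gamma_\gamma$, one has $f(t,\theta) = t\cdot q\circ\gamma(\theta)$, i.e.\ $f$ is the straight-line cone from $0\in B$ onto the loop $q\circ\gamma : S^1 \to \partial B$. It is a smooth map of pairs $(I\times S^1,\partial I\times S^1) \to (B,\{0\}\cup\partial B)$.

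Third, I would compute the integral as a degree. Since $\w_B$ integrates to $1$ on $B$, the integral $\int_{I\times S^1} f^\ast\w_B$ equals the signed count of preimages of any regular value of $f$. Picking $re^{i\phi}\in B$ with $0<r<1$, the equation $f(t,\theta)=re^{i\phi}$ forces $t=r$ and $q\circ\gamma(\theta)=e^{i\phi}$; the signed preimage count thus equals $\deg(q\circ\gamma : S^1\to\partial B)$, which is $\rot_\xi(\gamma)$ by definition. The only bookkeeping issue is that the product orientation on $I\times S^1$ must give the correct sign at each transverse preimage, which follows from the fixed orientations of $I$, $S^1$ and $B$ together with the chosen trivialisation $\pi\times q$. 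Assembling the three steps gives $\langle\tau_\xi,\Gamma_\gamma\rangle = \rot_\xi(\gamma)$.
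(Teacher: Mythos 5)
Your proposal is correct, and its first two steps coincide with the paper's: evaluate the representative $(\w_\xi,0)$ of $\tau_\xi$ on $\Gamma_\gamma$, push forward along $q$, and reduce everything to a pairing inside $(B,\{0\}\cup\partial B)$ with the cone $q\circ\Gamma_\gamma(t,\theta)=t\,q\circ\gamma(\theta)$. The two arguments diverge only in the final evaluation. The paper writes $\w_B=d\eta_B$ in $C^2(B)$ and uses $\partial\Gamma_\gamma=\gamma-c_0$ to get $\langle\w_B,q_\ast\Gamma_\gamma\rangle=\langle\eta_B,q\circ\gamma\rangle=\rot_\xi(\gamma)$ in one line; you instead compute $\int_{I\times S^1}(q\circ\Gamma_\gamma)^\ast\w_B$ as a signed preimage count of a regular value $re^{i\phi}$ and identify it with $\deg(q\circ\gamma)=\rot_\xi(\gamma)$. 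Both work; the Stokes route is shorter, while yours makes the winding-number interpretation visible. One point in your step three needs a word of justification: the assertion that ``$\int f^\ast\w_B$ equals the signed preimage count of any regular value'' is not literally true for an arbitrary representative with $\int_B\w_B=1$, because $f$ sends the boundary component $\{0\}\times S^1$ into the \emph{interior} point $0\in B$ rather than into $\partial B$. You should either choose $\w_B$ supported in a small disk about the chosen regular value --- which is permitted because the pairing of the class $\tau_B\in H^2(B,\{0\}\cup\partial B)$ with the relative cycle is independent of the representative: two representatives differ by $d\eta$ with $\eta$ a relative cochain, and $f(\partial(I\times S^1))\subseteq\{0\}\cup\partial B$ --- or simply fall back on the primitive-and-Stokes computation. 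With that patch, and your orientation check (radial-then-angular agrees with the product orientation on $I\times S^1$ and the standard orientation on $B$), the argument is complete.
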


\begin{proof}
Write $\w_B = d\eta_B$ in $C^2(B)$. Observe that $\rot_\xi(\gamma) = \langle \eta_B,q\circ\gamma\rangle$. Since $\partial \Gamma_\gamma = \gamma - c_0$, where $c_0$ denotes the constant map $S^1 \to B$ with value 0,
$$
\langle \tau_\xi, \Gamma_\gamma \rangle
= \langle \w_B, q_\ast\Gamma_\gamma \rangle
= \langle d\eta_B, q_\ast \Gamma_\gamma \rangle
= \langle \eta_B, q_\ast \partial \Gamma_\gamma \rangle
= \langle \eta_B, q\circ\gamma\rangle
= \rot_\xi(\gamma).
$$
\end{proof}

\subsection{The class $c_\xi$}
\label{sec:c}

In this section, we show that each non-vanishing vector field $\xi$ determines a class $c_\xi \in H^2_\Delta(M^2,\Mhat)$. Pairing with this class corresponds to intersecting with the diagonal and is a key component of the homological description of $\delta_\xi$.

\begin{lemma}
\label{lem:f_xi}
Each section $\xi$ of $\Mhat \to M$ determines a class $f_\xi \in H^1(\Mhat;\Z)$ whose pullback $\xi^\ast f_\xi$ to $M$ vanishes and whose restriction to each fiber $\Mhat_x$ is the positive integral generator of $H^1(\Mhat_x;\Z)$. It is characterized by these properties. If $\gammadot: S^1 \to \Mhat$ is the tangent map lift of  an immersed loop $\gamma$ in $M$, then $\rot_\xi(\gamma) = \langle f_\xi, \gammadot \rangle$.
\end{lemma}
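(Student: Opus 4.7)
The plan is to construct $f_\xi$ via the Leray--Serre spectral sequence of the fibration $\pi: \Mhat \to M$, whose fiber is $\R^2\setminus\{0\} \simeq S^1$. Since $M$ is oriented, this is an oriented circle bundle, so $R^1\pi_\ast\Z$ is the trivial local system $\Z$ on $M$. The edge exact sequence reads
\begin{equation*}
0 \to H^1(M;\Z) \xrightarrow{\pi^\ast} H^1(\Mhat;\Z) \xrightarrow{r} H^0(M;\Z) \xrightarrow{\smile e(TM)} H^2(M;\Z),
\end{equation*}
where $r$ is restriction to a fiber. The existence of the nowhere-vanishing section $\xi$ forces $e(TM)=0$, so this becomes a short exact sequence of abelian groups, split by $\xi^\ast$. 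I would then define $f_\xi$ to be the unique element of $\ker \xi^\ast$ that maps under $r$ to the positive generator of $H^0(M;\Z) \cong \Z$ (the orientation of the fiber). Both required properties are built in, and uniqueness is automatic since $\ker \xi^\ast$ maps isomorphically to $\Z$ via $r$.

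For the rotation number formula, I would use the setup already fixed in Section~\ref{sec:rel_coho}. The Riemannian exponential map identifies a deformation retract of $\Mhat$ with $\partial N$ compatibly with projection to $M$, and the rescaling of $\xi$ identifies $\xi$ with a section $M \to \partial N$. The trivialization $q: \partial N \to \partial B$ pulls back the positive generator of $H^1(\partial B;\Z)$ to a class $q^\ast[\partial B]^\vee \in H^1(\Mhat;\Z)$. This class lies in $\ker \xi^\ast$, because $q\circ\xi$ is nullhomotopic by the choice of trivialization, and restricts to the positive generator on each fiber, because $q$ restricts to a degree-one map on each fiber. By the uniqueness just established, $q^\ast[\partial B]^\vee = f_\xi$.

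Finally, for an immersed loop $\gamma : S^1 \to M$ with tangent lift $\gammadot : S^1 \to \Mhat$, the composition $q \circ \gammadot : S^1 \to \partial B$ is, by the definition of the rotation number given just before Lemma~\ref{lem:rot}, precisely the map whose winding number about $0 \in B$ is $\rot_\xi(\gamma)$. Therefore
\begin{equation*}
\langle f_\xi, \gammadot \rangle = \langle q^\ast[\partial B]^\vee, \gammadot \rangle = \deg(q\circ\gammadot) = \rot_\xi(\gamma),
\end{equation*}
which is the required formula.

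The main obstacle I foresee is bookkeeping with the geometric identification of $\Mhat$ with $\partial N$: one must ensure that the rescaling of $\xi$, the Riemannian exponential, and the trivialization $\pi\times q$ all cohere so that $q \circ \xi$ really does represent the nullhomotopic section used to split the spectral sequence. Once these identifications are pinned down, all remaining steps are direct applications of the Serre spectral sequence and the definition of the rotation number. A minor point worth flagging is that the vanishing of $e(TM)$ needed in the first step is automatic from the hypothesis, so the lemma is vacuous precisely when $\Mhat \to M$ has no section (i.e., for closed surfaces of genus $\neq 1$).
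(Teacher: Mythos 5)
Your proof is correct and follows essentially the same route as the paper: the paper uses the trivialization $\Mhat \cong M\times(\R^2-\{0\})$ determined by $\xi$ together with the K\"unneth theorem to define and characterize $f_\xi$ as the pullback of the generator of $H^1(S^1;\Z)$, which is exactly the content of your Gysin-sequence argument, and it likewise reduces the rotation-number formula to the identification of $f_\xi$ with $q^\ast$ of the fiber generator (via Lemma~\ref{lem:rot}). The only difference is presentational: you package the existence and uniqueness via the edge sequence and vanishing Euler class rather than via K\"unneth, which is an equivalent formulation.
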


\begin{proof}
This follows from the K\"unneth Theorem and the fact the section $\xi$ determines a trivialization $r:\Mhat \overset{\simeq}{\To} M \times (\R^2-\{0\})$ with $r\circ \xi$ constant. It is unique up to homotopy. Take $f_\xi$ to be the pullback of the positive generator of $H^1(S^1;\Z)$ under the projection $\Mhat \overset{\simeq}{\To} M \times (\R^2-\{0\}) \to \R^2-\{0\} \to S^1$. The last statement follows from Lemma~\ref{lem:rot}.
\end{proof}

\begin{lemma}
\label{lem:s}
When $\Mhat \to M$ is a trivial bundle, there is a short exact sequence
$$
\xymatrix{
0 \ar[r] & H^1(\Mhat) \ar[r] & H^2_\Delta(M^2,\Mhat) \ar[r] & H^2_\Delta(M^2) \ar[r]  & 0
}.
$$
Each framing $\xi$ of $M$ induces a natural splitting $s_\xi : H^2_\Delta(M^2) \to H^2_\Delta(M^2,\Mhat)$ which depends only on the homotopy class of $\xi$.
\end{lemma}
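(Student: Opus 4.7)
My plan is to extract the short exact sequence directly from Lemma~\ref{lem:les} applied to $T=\Mhat$ and $h=\Deltabar$, and then build the splitting by transferring the class $\tau_\xi$ constructed in Section~\ref{sec:rel_coho} along the projection $\pi:\Mhat\to M$. Instantiating Lemma~\ref{lem:les} at $T=\Mhat$ produces the five term exact sequence
$$
H^1_\Delta(M^2) \To H^1(\Mhat) \To H^2_\Delta(M^2,\Mhat) \To H^2_\Delta(M^2) \overset{\partial}{\To} H^2(\Mhat).
$$
The Thom isomorphism identifies $H^j_\Delta(M^2)$ with $H^{j-2}(M)$, so the leftmost term vanishes and $H^2_\Delta(M^2)\cong H^0(M)$ is spanned (one per component) by the Thom class $\tau_M$. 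Unwinding the cone differential $d(\omega,\eta)=(d\omega,-d\eta-h^\ast\omega)$ shows that $\partial[\omega] = \pm[\Deltabar^\ast\omega] = \pm\pi^\ast\Delta^\ast[\omega]$, i.e.\ $\partial$ sends $\tau_M$ to $\pm\pi^\ast e(TM)$. Since $\Mhat\to M$ is trivial, $e(TM)=0$, so $\partial=0$ and the advertised short exact sequence follows.

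For the splitting, a framing $\xi$ is a section of $\pi$, so $h\circ\xi=\Delta$ and $\pi^\ast\Delta^\ast=h^\ast$. Consequently $\pi$ induces a chain map
$$
\pi^\ast : C^\bdot_\Delta(M^2,\Delta_M) \to C^\bdot_\Delta(M^2,\Mhat),\qquad (\omega,\eta)\mapsto(\omega,\pi^\ast\eta).
$$
From Section~\ref{sec:rel_coho} the framing $\xi$ already furnishes a distinguished lift $\tau_\xi \in H^2_\Delta(M^2,\Delta_M)$ of the Thom class $\tau_M$, whose construction depends only on the homotopy class of $\xi$ (through the homotopy class of the trivialization $\pi\times q$). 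Component by component over $H^2_\Delta(M^2)\cong H^0(M)$, I define
$$
s_\xi(\tau_{M_i}) := \pi^\ast\bigl(\tau_\xi|_{M_i}\bigr),
$$
and extend $\kk$-linearly.

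It remains to check that $s_\xi$ is genuinely a section of $H^2_\Delta(M^2,\Mhat)\twoheadrightarrow H^2_\Delta(M^2)$ and depends only on the homotopy class of $\xi$. Both reduce to the corresponding facts for $\tau_\xi$, together with the trivial observation that the projection $(\omega,\eta)\mapsto\omega$ onto $C^\bdot_\Delta(M^2)$ commutes with $\pi^\ast$. Homotopy invariance is automatic from the naturality of the $\pi^\ast$-transfer. The main obstacle I anticipate is the sign and Thom bookkeeping needed to identify $\partial$ with $\pi^\ast\Delta^\ast$ and to confirm compatibility of $\pi^\ast$ with the long exact sequences of the two cones; once those are unwound, the argument is essentially formal.
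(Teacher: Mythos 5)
Your proposal is correct and follows essentially the same route as the paper: both extract the sequence from Lemma~\ref{lem:les} with $T=\Mhat$, kill the left-hand term via the Thom isomorphism, use triviality of $T M$ (equivalently of the normal bundle of the diagonal) to get surjectivity on the right, and define the splitting by $s_\xi(\tau_M)=\pi^\ast\tau_\xi$ using the map $\pi^\ast:H^2_\Delta(M^2,\Delta_M)\to H^2_\Delta(M^2,\Mhat)$ induced by $\pi:\Mhat\to\Delta_M$. Your explicit identification of the map $H^2_\Delta(M^2)\to H^2(\Mhat)$ with $\pm\pi^\ast e(TM)$ just spells out what the paper leaves implicit.
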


\begin{proof}
This is part of the long exact sequence in Lemma~\ref{lem:les}. Exactness of the sequence follows from the Thom isomorphism $H^j(\Delta_M) \cong H_\Delta^{j+2}(M^2)$, which implies that $H^1_\Delta(M^2) = 0$. The triviality of the tangent bundle of $M$ implies that the normal bundle of the diagonal in $M^2$ is trivial, which gives the exactness on the right.

Since $H^2_\Delta(M^2)$ is freely generated by the Thom class $\tau_M$ of $M$, to construct the lift, it suffices to lift $\tau_M$ to $H^2_\Delta(M^2,\Mhat)$. To do this, note that $\pi : \Mhat \to \Delta_M$ induces a map
$$
\pi^\ast : H^2_\Delta(M^2,\Delta_M) \to H^2_\Delta(M^2,\Mhat)
$$
and recall that $H^2_\Delta(M^2,\Delta_M) \cong H^2(N,\Delta_M\cup\partial N)$. Define $s_\xi(\tau_M) = \pi^\ast \tau_\xi$.
\end{proof}

\begin{definition}
Define  
$
c_\xi := \pi^\ast \tau_\xi + f_\xi \in H^2_\Delta(M^2,\Mhat),
$
where $f_\xi\in H^1(\Mhat)$ is identified with its image in $H^2_\Delta(M^2,\Mhat)$.
\end{definition}

\subsection{The pairing}
\label{sec:pairing}

Here we define a pairing and compute its value on $\varphi(\alpha)\otimes c_\xi$ for all $\alpha \in \Lambda \Mhat$. It is closely related to the value $\delta_\xi(\pi\circ\alpha)$ of the cobracket on $\alpha$.

\begin{proposition}
\label{prop:pairing}
There is a well-defined pairing
$$
\blank\cap\blank :
H_2(M^2,\Mhat;\bL_\Mhat\to \bP_M\otimes\bP_M^\op)\otimes H^2_\Delta(M^2,\Mhat) \to H_0(M;\bL_M\otimes \bL_M).
$$
\end{proposition}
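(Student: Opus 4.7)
The plan is to build the pairing by assembling two constituent pairings and invoking the cone-pairing recipe at the end of Section~\ref{sec:cones}. Using the canonical identification $(\bP_M\otimes\bP_M^\op)|_{\Delta_M}\cong\bL_M\otimes\bL_M$, the target $H_0(M;\bL_M\otimes\bL_M)$ is $H_0(\Delta_M;\bL_M\otimes\bL_M)$, and both constituent pairings will land there.

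The first ingredient is a ``diagonal cap'' pairing
$$
\langle\cdot,\cdot\rangle_1 : C^\bdot(M^2,M^2-\Delta_M)\otimes C_\bdot(M^2;\bP_M\otimes\bP_M^\op) \to C_\bdot(\Delta_M;\bL_M\otimes\bL_M),
$$
given by the usual cap product landing in $C_\bdot(M^2,M^2-\Delta_M;\bP_M\otimes\bP_M^\op)$, followed by excision to a tubular neighbourhood $N$ of $\Delta_M$ and a fixed chain-level deformation retraction of $(N,N-\Delta_M)$ onto $\Delta_M$. The second is the standard Kronecker/cap pairing
$$
\langle\cdot,\cdot\rangle_2 : C^\bdot(\Mhat)\otimes C_\bdot(\Mhat;\bL_\Mhat) \to C_\bdot(\Mhat;\bL_\Mhat),
$$
post-composed with $\pi_\ast\iota_\ast$ to land in $C_\bdot(\Delta_M;\bL_M\otimes\bL_M)$, where $\iota$ is the coefficient map of (\ref{eqn:iota}).

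The decisive compatibility is the projection formula
$$
\pi_\ast\iota_\ast\bigl\langle\Deltabar^\ast\beta,\,a\bigr\rangle_2 \;=\; \bigl\langle\beta,\,(\Deltabar_\ast\otimes\iota)(a)\bigr\rangle_1,
$$
for $\beta\in C^\bdot(M^2,M^2-\Delta_M)$ and $a\in C_\bdot(\Mhat;\bL_\Mhat)$. This follows from the naturality of the cap product together with the fact that $\Deltabar=\Delta\circ\pi$ and that $\iota$ was defined precisely to intertwine the coefficient systems: the chain $(\Deltabar_\ast\otimes\iota)(a)$ is already supported in $\Delta_M$, so capping with $\beta$ and retracting reduces to evaluating $\Deltabar^\ast\beta$ on $a$ and pushing forward by $\pi_\ast\iota_\ast$. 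With this identity in hand, the recipe of Section~\ref{sec:cones} shows that
$$
\bigl\langle(\beta,\alpha),(b,a)\bigr\rangle := \bigl\langle\beta,b\bigr\rangle_1 + \pi_\ast\iota_\ast\bigl\langle\alpha,a\bigr\rangle_2
$$
is a chain-level map $C^\bdot_\Delta(M^2,\Mhat)\otimes C_\bdot(\Deltabar_\ast\otimes\iota) \to C_\bdot(\Delta_M;\bL_M\otimes\bL_M)$ satisfying $\langle dz,c\rangle=\langle z,\partial c\rangle$; it therefore descends to (co)homology, and in bidegree $(2,2)$ yields the pairing of the statement.

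The main obstacle is chain-level bookkeeping for $\langle\cdot,\cdot\rangle_1$, since the excision and deformation-retraction identifications are only quasi-isomorphisms rather than strict maps. The cleanest workaround is to restrict attention to cocycle representatives supported in a fixed small tubular neighbourhood $N$ of $\Delta_M$, fix an explicit chain-level deformation retracting small chains in $N$ onto $\Delta_M$, and check via acyclic-carriers that the resulting ambiguity is absorbed into chain homotopies that vanish after passing to cone cohomology. Since the proposition only asserts a (co)homology-level pairing, these choices do not affect the final statement.
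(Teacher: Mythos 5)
Your construction is essentially the paper's: the pairing is assembled from the two constituent Kronecker-type pairings via the cone recipe of Section~\ref{sec:cones}, with your ``projection formula'' being exactly the (tautological) compatibility $\iota\langle\Deltabar^\ast\zeta,u\rangle=\langle\zeta,(\Deltabar_\ast\otimes\iota)u\rangle$ that makes the cone pairing a chain map. The only difference is in handling the support issue for $\langle\cdot,\cdot\rangle_1$: rather than excision plus a chain-level retraction onto $\Delta_M$ (your ``main obstacle''), the paper computes with $\U$-small chains for the cover $\U=\{M^2-\Delta_M,U\}$, so the relative cocycle automatically produces a $0$-chain supported in the tubular neighbourhood $U$, and the identification $H_0(U;(\bP_M\otimes\bP_M^\op)|_U)\cong H_0(M;\bL_M\otimes\bL_M)$ is made only at the level of $H_0$ via the homotopy equivalence $U\to M$ --- which dissolves the bookkeeping you flag at the end.
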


\begin{proof}
We continue with the notation above. Let $U=N-\partial N$. Let $r : U \to \Delta_M$ be a retraction. Let $\U = \{M^2 - \Delta_M,U\}$. It is an open cover of $M^2$. We can compute the product using $\U$-small chains $C_\bdot^\U$ and cochains $C^\bdot_\U$ via the pairing
{\scriptsize\begin{multline*}
\cone\big(C_\bdot(\Mhat;\bL_\Mhat) \to C_\bdot^\U(M^2;\bP_M\otimes \bP_M^\op)\big)[-1] \otimes \cone\big(C^\bdot_\U(M^2,M^2-\Delta_M)
\to C^\bdot(\Mhat)\big)[-1]
\cr
\to C_\bdot(U;\bP_M\otimes \bP_M^\op)
\end{multline*}
}
defined by
$$
(s,u)\cap (\zeta,\eta) = \langle \zeta,s \rangle + \iota\langle \eta,u\rangle,
$$
where, as usual, $\bil$ denotes the natural pairing between cochains and chains. The induced pairing between $H_2$ and $H^2$ takes values in $H_0(U,(\bP_M\otimes \bP_M^\op)|_U)$. This group is naturally isomorphic to $H_0(M;\bL_M\otimes\bL_M)$ as the homotopy equivalence $r:U\to M$ induces a natural isomorphism $r^\ast (\bL_M\otimes \bL_M) \cong (\bP_M\otimes \bP_M^\op)|_U$.
\end{proof}

The following computation is the main ingredient in the computation of $\varphi(\alpha)\cap c_\xi$, where it is used with $\alpha \in \Lambda \Mhat$ and $\beta = \pi\circ \alpha$. Recall from the introduction the notation for $\e_P$, $\beta_P'$ and $\beta_P''$. Recall that $\w_\xi = q^\ast \w_B$ is a 2-cocycle that represents $c_\xi$.

\begin{lemma}
\label{lem:formula}
If $\beta : S^1 \to M$ is an immersed circle with transverse self intersections, then
$$
\langle \w_\xi, s_\beta \rangle = 
\rot_\xi(\beta)\big(\beta\otimes 1 - 1 \otimes \beta \big)
-\sum_P \e_P(\beta_P'\otimes \beta_P'' - \beta_P''\otimes \beta_P') \in H_0(\Lambda M)^{\otimes 2},
$$
where $P$ ranges over the double points of $\beta$.
\end{lemma}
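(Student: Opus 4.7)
The plan is to localize the pairing $\langle \w_\xi, s_\beta \rangle$ around $(\beta^2)^{-1}(\Delta_M) \cap U$, exploiting that $\w_\xi = q^\ast \w_B$ is supported in the tubular neighborhood $N$ of $\Delta_M$. Since $\beta$ is immersed with transverse self-intersections, the map $\beta^2$ is transverse to $\Delta_M$ away from the preimage of $\Delta_{S^1}$, so $(\beta^2)^{-1}(\Delta_M) \cap U$ is the disjoint union of $\partial U$ (the preimage of $\Delta_{S^1}$ under the quotient $U \to S^1\times S^1$) together with finitely many isolated interior points. These interior points are indexed by ordered pairs $(\theta_1,\theta_2)$ with $\theta_1 \neq \theta_2$ and $\beta(\theta_1)=\beta(\theta_2)$, so each unordered double point $P$ contributes two such preimages. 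After shrinking $N$ if necessary, $\langle \w_\xi, s_\beta \rangle$ decomposes as a sum of local contributions from a collar of $\partial U$ and from small disks around the interior preimages.

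For an interior preimage corresponding to $(\theta_1,\theta_2)$ at a double point $P$, a direct computation in the tangent spaces (using $T(M^2)/T\Delta_M \cong TM$ via $(a,b)\mapsto b-a$) identifies the local intersection number of $\beta^2$ with $\Delta_M$ as $-\e_P$, while the parallel transport of $s_\beta$ under the retraction $r$ identifies its value with $\beta_P'\otimes\beta_P''$ in $\bL_M\otimes\bL_M$. The swapped preimage $(\theta_2,\theta_1)$ contributes local intersection number $+\e_P$ (the sign changes because the swap on $S^1\times S^1$ is orientation-reversing while the swap on $M\times M$ is orientation-preserving) and value $\beta_P''\otimes\beta_P'$. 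Summing over the two preimages of each $P$ and over all double points yields $-\sum_P \e_P(\beta_P'\otimes\beta_P'' - \beta_P''\otimes\beta_P')$.

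For the boundary contribution near $\{0\}\times S^1 \subset \partial U$, take a collar $[0,\e]\times S^1$ on which $\beta^2$ factors through $N$. Via the trivialization $\pi\times q : N \cong M\times B$, the restriction $(\beta^2)|_{\mathrm{collar}}$ can be homotoped rel.\ $\Delta_M\cup\partial N$ to a relative $2$-cycle of the form $\Gamma_\gamma$ as in Lemma~\ref{lem:rot}, where $\gamma$ is (a rescaling to $\partial N$ of) the tangent lift $\betadot$ of $\beta$. Along the collar the horizontal section $s_\beta$ limits to the constant section $\beta\otimes 1$ along $\{0\}\times S^1$ under the identification $r^\ast(\bL_M\otimes\bL_M)\cong(\bP_M\otimes\bP_M^\op)|_N$. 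Lemma~\ref{lem:rot} then gives $\langle \tau_\xi,\Gamma_\gamma\rangle = \rot_\xi(\gamma) = \rot_\xi(\beta)$ (the second equality by Lemma~\ref{lem:f_xi}), so this collar contributes $\rot_\xi(\beta)\cdot(\beta\otimes 1)$. The analogous analysis near $\{2\pi\}\times S^1$, combined with the orientation sign in $\partial U = \{2\pi\}\times S^1 - \{0\}\times S^1$, yields $-\rot_\xi(\beta)\cdot(1\otimes\beta)$. Adding the boundary and double-point contributions produces the claimed formula.

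The principal obstacle is the boundary analysis: one must construct the homotopy deforming $\beta^2|_{\mathrm{collar}}$ to the standard $\Gamma_\gamma$ form while keeping both the horizontal section $s_\beta$ and its limiting value under control, and then track the various sign conventions (the orientation of $\partial U$, the local intersection numbers at double points via $(a,b)\mapsto b-a$, and the chosen trivialization of $\Mhat$ recorded by $\xi$) so that they combine consistently. The interior contribution is routine once the sign at a single transverse preimage is pinned down, and Lemmas~\ref{lem:rot} and \ref{lem:f_xi} provide exactly the machinery required to translate the boundary integral into $\rot_\xi(\beta)$.
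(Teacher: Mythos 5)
Your proposal is correct and follows essentially the same route as the paper's proof: decompose $(\beta^2)^{-1}(N)$ into a collar of the diagonal plus disks around the off-diagonal preimages of double points, compute the local intersection numbers $\mp\e_P$ at the two preimages of each double point, and reduce the collar contribution to $\rot_\xi(\beta)(\beta\otimes 1 - 1\otimes\beta)$ via the relative cycle $\Gamma_{\betadot}$ and Lemma~\ref{lem:rot}. The signs you identify (orientation-reversal of the swap on $S^1\times S^1$, and the opposite orientations of the two collar components relative to $\Gamma_{\betadot}$) are exactly the ones the paper tracks.
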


\begin{proof}

We use the notation of Section~\ref{sec:rel_coho}. Since the map $\beta^2 : S^1 \times S^1 \to M^2$ maps the diagonal $\Delta_{S^1}$ in $S^1\times S^1$ to the diagonal in $M^2$, $\beta^2$ cannot be transverse to $\Delta_M$. However, by shrinking $N$ if necessary, we may assume that $\beta$ is transverse to $\partial N_r$ for all $0<r\le 1$, where $N_r$ denotes disk sub-bundle of $N$ of radius $r$, where $N = N_1$. In this case, the inverse image of $N$ under $\beta^2$ is a disjoint union
$$
\Gamma \cupdot \coprod_{(\theta,\phi)} U_{\theta,\phi}
$$
where $\Gamma$ is a neighbourhood of $\Delta_{S^1}$ diffeomorphic to $[-1,1] \times S^1$, and where $U_{\theta,\phi}$ is a disk about the point $(\theta,\phi)\in (S^1\times S^1) - \Delta_{S^1}$ that corresponds to a double point of $\beta$.


Each double point $P$ of $\beta$ determines a pair of points $(\theta,\phi)$ and $(\phi,\theta)$ in $S^1\times S^1 - \Delta_{S^1}$, where $\beta(\theta)= \beta(\phi)=P$. As in the introduction, $\beta'_P$ denotes the restriction of $\beta$ to the positively oriented arc in $S^1$ from $\theta$ to $\phi$, and $\beta''_P$ denotes its restriction to the arc from $\phi$ to $\theta$. Denote the initial tangent vectors of $\beta'_P$ and $\beta''_P$ by $\vv'$ and $\vv''$. The intersection number $\e_P$ is defined by
$$
\vv'\wedge\vv'' \in \e_P \times (\text{a positive number}) \times \text{ (the orientation of $M$ at $P$)}.
$$
An elementary computation shows that the intersection number of $\beta^2 : S^1\times S^1 \to M^2$ with $\Delta_M$ at $(\theta,\phi)$ is $-\e_P$, and is $\e_P$ at $(\phi,\theta)$. Consequently,
$$
\langle \w_\xi, Z'\rangle = -\e_P \text{ and } \langle \w_\xi, Z''\rangle = \e_P
$$
where $U'$ (resp.\ $U''$) denotes $U_{\theta,\phi}$ (resp.\ $U_{\phi,\theta}$) and $Z'$ (resp.\ $Z''$) is the positive generator of $H_2(U',\partial U';\Z)$ (resp.\ $H_2(U'',\partial U'';\Z)$). 

The contribution of the double point $P$ to $\langle \w_\xi, s_\beta \rangle$ is
thus
\begin{equation}
\label{eqn:P}
\langle \w_\xi, Z' \rangle\, \beta'_P \otimes \beta''_P + \langle \w_\xi, Z'' \rangle\, \beta''_P \otimes \beta'_P
=
-\e_P (\beta'_P \otimes \beta''_P - \beta''_P \otimes \beta'_P).
\end{equation}


It remains to compute the contribution of the strip $\Gamma$ to $\langle \w_\xi,s_\beta\rangle$. The derivative $\betadot : S^1 \to TM$ of $\beta$ corresponds to a section of the circle bundle $\partial N \to \Delta_M$, unique up to homotopy. By the construction preceding Lemma~\ref{lem:rot}, this determines a relative chain $\Gamma_\betadot$ in $(N,\Delta_M \cup \partial N)$.

The inverse image of $\Gamma$ in $[0,2\pi]\times S^1$ under the map (\ref{eqn:quot}) is the disjoint union of two strips, $\Gamma_0$, a regular neighbourhood of $0\times S^1$, and $\Gamma_{2\pi}$, a regular neighbourhood of $2\pi \times S^1$.

Give $\Gamma_0$ and $\Gamma_{2\pi}$ the orientation induced from $S^1\times S^1$. Then, as classes in $H_2(N,\Delta_M\cup\partial N)$, we have
$$
[\Gamma_0] = [\Gamma_\betadot] \text{ and } [\Gamma_{2\pi}] = -[\Gamma_{\betadot}].
$$
As observed in the proof of Proposition~\ref{prop:homologous}, the restriction of $s_\beta$ to $\Gamma_{2\pi}$ is homotopic to $1\otimes \beta$, and to $\Gamma_0$ is homotopic to $\beta\otimes 1$.

Lemma~\ref{lem:rot} now implies that the contribution to $\langle \w_\xi,s_\beta\rangle$ from $\Gamma$ is
\begin{multline}
\label{eqn:strip}
\langle \w_\xi,\Gamma\rangle = 
\langle \w_\xi,\Gamma_{2\pi} \rangle\, 1\otimes \beta
+ \langle \w_\xi,\Gamma_0\rangle \,\beta\otimes 1
= -\langle \w_\xi,\Gamma_\betadot \rangle\, 1\otimes \beta
+ \langle \w_\xi,\Gamma_\betadot\rangle\, \beta\otimes 1 \cr
= \rot_\xi(\beta)\big(\beta\otimes 1 - 1\otimes \beta\big).
\end{multline}
The result follows by adding the contribution of the strip (\ref{eqn:strip}) to the sum of the contributions (\ref{eqn:P}) of the double points $P$. 
\end{proof}

\begin{corollary}
\label{cor:formula}
If $\alpha \in \Lambda \Mhat$ and $\pi\circ \alpha$ is immersed in $M$ with transverse self intersections, then
$$
\varphi(\alpha) \cap c_\xi  = 
-\sum_P \e_P\big((\pi\circ \alpha_P')\otimes (\pi\circ \alpha'') - (\pi\circ \alpha'')\otimes (\pi\circ \alpha')\big).
$$
\end{corollary}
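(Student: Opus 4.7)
The plan is to unfold the pairing on explicit cocycle/cycle representatives. By Proposition~\ref{prop:phi_cs}, the class $\varphi(\alpha)$ is represented by the $2$-cycle $(s_{\pi\circ\alpha},\alphahat)$ in $C_\bdot(\Deltabar_\ast\otimes\iota)$. By the definition in Section~\ref{sec:c}, $c_\xi = \pi^\ast\tau_\xi + f_\xi$ is represented in $C^\bdot_\Delta(M^2,\Mhat)$ by the cocycle $(\w_\xi, f_\xi)$, where $\w_\xi = q^\ast \w_B$ is the relative $2$-cocycle on $(N,\Delta_M \cup \partial N)$ from Section~\ref{sec:rel_coho}. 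Substituting into the pairing formula of Section~\ref{sec:cones}, as specialized in the proof of Proposition~\ref{prop:pairing}, yields
\begin{equation*}
\varphi(\alpha)\cap c_\xi \;=\; \langle \w_\xi, s_{\pi\circ\alpha}\rangle \;+\; \iota\langle f_\xi, \alphahat\rangle.
\end{equation*}

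Applying Lemma~\ref{lem:formula} to the immersed circle $\beta := \pi\circ\alpha$ evaluates the first summand as
\begin{equation*}
\rot_\xi(\beta)\bigl(\beta\otimes 1 - 1\otimes \beta\bigr)\; -\; \sum_P \e_P\bigl(\beta_P'\otimes\beta_P'' - \beta_P''\otimes \beta_P'\bigr).
\end{equation*}
For the second summand, the Chas--Sullivan lift $\alphahat$ is the $1$-cycle given by the loop $\alpha$ in $\Mhat$ carrying its own free homotopy class as a local coefficient in $\bL_\Mhat$. Integrating the $1$-cocycle $f_\xi$ against it therefore produces the $0$-chain $\langle f_\xi,\alpha\rangle\cdot[\alpha]$ in $H_0(\Mhat;\bL_\Mhat)$. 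Applying the definition (\ref{eqn:iota}) of $\iota$ to this coefficient gives
\begin{equation*}
\iota\langle f_\xi,\alphahat\rangle \;=\; \langle f_\xi,\alpha\rangle\bigl(1\otimes\beta - \beta\otimes 1\bigr).
\end{equation*}

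Finally, by Hirsch's theorem (as recalled in the remark at the start of this section) and the homotopy invariance of $\varphi$, we may replace $\alpha$ by its regularly homotopic tangent-map representative; equivalently, we may identify $\alpha$ with the tangent lift $\betadot$ of $\beta = \pi\circ\alpha$. Lemma~\ref{lem:f_xi} then gives $\langle f_\xi, \alpha\rangle = \rot_\xi(\beta)$, so the two rotation contributions cancel, leaving exactly the formula asserted. The only substantive input is Lemma~\ref{lem:formula}; the remainder is cocycle bookkeeping and a sign check, and the one subtle point is making sure that the contribution of $f_\xi$ carries the correct sign relative to the strip contribution in Lemma~\ref{lem:formula} so that the cancellation is clean.
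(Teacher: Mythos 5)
Your proposal is correct and follows essentially the same route as the paper's own proof: represent $\varphi(\alpha)$ by the cycle $(s_{\pi\circ\alpha},\alphahat)$, split $c_\xi = \pi^\ast\tau_\xi + f_\xi$, evaluate the $\tau_\xi$ contribution via Lemma~\ref{lem:formula} and the $f_\xi$ contribution via the definition of $\iota$ and Lemma~\ref{lem:f_xi}, and cancel the two rotation-number terms. Your explicit justification that $\langle f_\xi,\alphahat\rangle = \rot_\xi(\pi\circ\alpha)$ by identifying $\alpha$ with the tangent lift of $\pi\circ\alpha$ is a point the paper leaves implicit, so no objection there.
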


\begin{proof}
The class $\varphi(\alpha)$ is represented by $(s_{\pi\circ \alpha},\alphahat)$ and $\tau_\xi$ by $(\w_\xi,0)$.
Applying Proposition~\ref{prop:pairing} with $\beta = \pi\circ \alpha$, we obtain
\begin{multline*}
\varphi(\alpha) \cap c_\xi  = 
\rot_\xi(\pi\circ \alpha)\big((\pi\circ\alpha)\otimes 1 - 1 \otimes(\pi\circ\alpha) \big)
\cr
-\sum_P \e_P\big((\pi\circ \alpha_P')\otimes (\pi\circ \alpha'') - (\pi\circ \alpha'')\otimes (\pi\circ \alpha')\big).
\end{multline*}
The definition of the pairing $\cap$ implies that
$$
\varphi(\alpha)\cap f_\xi = \iota \langle f_\xi,\alpha\rangle = -\rot_\xi(\pi\circ\alpha) \big((\pi\circ\alpha)\otimes 1 - 1 \otimes(\pi\circ\alpha) \big).
$$
The result follows as $c_\xi = \tau_\xi + f_\xi$.
\end{proof}

\subsection{A homological description of $\delta_\xi$}

We can now give a homological description of the Turaev cobracket of a non-compact oriented surface $M$. Recall that, when $\bV$ is a local system over $M$, $H_0(M;\bV)$ is the maximal trivial quotient of $\bV$. Applying this when $\bV=\bL_M\otimes\bL_M$, we see that there is a canonical map
$$
H_0(M;\bL_M\otimes \bL_M) \to H_0(M;\bL_M)\otimes H_0(M;\bL_M) \cong H_0(\Lambda M)\otimes H_0(\Lambda M).
$$
For a section $\xi$ of $\Mhat \to M$, define
$$
p_\xi : H_2(M^2,\Mhat;\bL_\Mhat\to \bP_M\otimes\bP_M^\op) \to H_0(M;\bL_M) \otimes H_0(M;\bL_M) \cong H_0(\Lambda M)^{\otimes 2}
$$
to be the composite
$$
\xymatrix@C=16pt{
H_2(M^2,\Mhat;\bL_\Mhat\to \bP_M\otimes\bP_M^\op) \ar[r]^(.6){\blank \cap c_\xi} & H_0(M;\bL_M\otimes\bL_M) \ar[r] & H_0(\Lambda M) \otimes H_0(\Lambda M).
}
$$

Each section $\xi : M \to \Mhat$ of $\pi$ induces a map $\Lambda\xi : \Lambda M \to \Lambda \Mhat$ and thus a homomorphism
$$
(\Lambda\xi)_\ast : H_0(\Lambda M) \to H_0(\Lambda \Mhat).
$$
It is injective as its composition with $(\Lambda \pi)_\ast$ is the identity. The image of a free homotopy class of $f : S^1 \to M$ corresponds to the regular homotopy class of an immersed circle $\alpha$ with $\rot_\xi(\alpha) = 0$ that is freely homotopic to $f$.

The following factorization of $\delta_\xi$ follows directly from Corollary~\ref{cor:formula}.

\begin{theorem}
\label{thm:factor_turaev}
If $M$ is a non-compact oriented surface and $\xi$ is a section of $\pi : \Mhat \to M$, then the diagram
$$
\xymatrix{
H_0(\Lambda M) \ar[r]^{(\Lambda\xi)_\ast} \ar[d]_{\delta_\xi} \ar[r] & H_0(\Lambda \Mhat) \ar[r]^\cs \ar[dr]^\varphi & H_1(\Lambda \Mhat)
\cr
H_0(\Lambda M) \otimes H_0(\Lambda M) && H_2(M^2,\Mhat;\bL_\Mhat\to \bP_M\otimes\bP_M^\op) \ar[ll]^(.55){-p_\xi} \ar[u]^\cong_\psi
}
$$
commutes.
\end{theorem}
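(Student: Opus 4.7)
The commutativity of the upper triangle (from $H_0(\Lambda\Mhat)$ via $\varphi$ and $\psi$ to $H_1(\Lambda\Mhat)$) is immediate from Proposition~\ref{prop:phi_cs} combined with Lemma~\ref{lem:les_cone}, so the real content of the theorem is the outer-square identity
$$
\delta_\xi \;=\; (-p_\xi)\circ \varphi \circ (\Lambda\xi)_\ast.
$$
The plan is to verify this directly on geometric representatives and match the result against the defining formula~(\ref{eqn:def_turaev}).

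For the first step, fix an arbitrary class $a \in H_0(\Lambda M)$ and choose a convenient representative of $(\Lambda\xi)_\ast(a)$. As recorded in the remark opening Section~\ref{sec:turaev}, Hirsch--Whitney identifies $\pi_0(\Lambda\Mhat)$ with the set of regular homotopy classes of immersed loops in $M$, and the image of $(\Lambda\xi)_\ast$ consists precisely of classes with vanishing rotation number relative to $\xi$. I may therefore represent $(\Lambda\xi)_\ast(a)$ by the tangent-map lift $\alphadot:S^1\to\Mhat$ of an immersion $\alpha:S^1\to M$ that is freely homotopic to a representative of $a$, has transverse self-intersections (achievable by a small perturbation within its regular homotopy class), and satisfies $\rot_\xi(\alpha)=0$.

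For the second step, apply Corollary~\ref{cor:formula} to $\alphadot$, noting that $\pi\circ\alphadot = \alpha$ is immersed with transverse self-intersections. It yields
$$
\varphi(\alphadot)\cap c_\xi \;=\; -\sum_P \e_P\bigl(\alpha'_P\otimes \alpha''_P - \alpha''_P\otimes \alpha'_P\bigr) \in H_0(M;\bL_M\otimes\bL_M),
$$
where $P$ ranges over the double points of $\alpha$ and the notation $\alpha'_P$, $\alpha''_P$, $\e_P$ is that of the introduction. By the definition of $p_\xi$ given in Section~\ref{sec:pairing}, applying $p_\xi$ amounts to this cap product followed by the canonical surjection $H_0(M;\bL_M\otimes\bL_M)\to H_0(\Lambda M)^{\otimes 2}$; the right-hand side already lies in the image of this surjection because each $\alpha'_P$, $\alpha''_P$ is a loop in $M$. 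Negating and comparing with~(\ref{eqn:def_turaev}) produces $\delta_\xi(a)$ on the nose.

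The substantive work underlying this theorem has already been done in Lemma~\ref{lem:formula} and Corollary~\ref{cor:formula}; the remainder is essentially bookkeeping, so I do not anticipate a serious obstacle. The single subtlety worth flagging is the availability, in every free homotopy class of loops in $M$, of a representative with the required immersion, transversality, and vanishing-$\xi$-rotation properties; this is supplied by Hirsch--Whitney together with a standard general-position perturbation.
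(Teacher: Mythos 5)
Your proof is correct and follows the paper's own route: the paper states that the theorem ``follows directly from Corollary~\ref{cor:formula},'' and your argument is precisely that deduction, made explicit by representing $(\Lambda\xi)_\ast(a)$ by the tangent lift of a transversely self-intersecting immersion with $\rot_\xi=0$ and comparing with (\ref{eqn:def_turaev}). The citations of Proposition~\ref{prop:phi_cs} and Lemma~\ref{lem:les_cone} for the upper triangle are also the intended ones.
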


\section{De~Rham Aspects}
\label{sec:DR}

In this section, in preparation for applying the machinery of Hodge theory in Section~\ref{sec:hodge_turaev}, we construct de~Rham versions of the continuous duals of the maps used in the homological description of the Turaev cobracket given in Section~\ref{sec:turaev}.

\subsection{Preliminaries}

Suppose that $N$ is a smooth manifold with finite first Betti number and that $\kk$ is a field of characteristic zero. We are especially interested in the case where $N$ is a rational $K(\pi,1)$ space.

Recall from \cite[\S7]{hain:goldman} that $H_0(P_{x_0,x_1}N;\kk)$ and $H_0(\Lambda N;\kk)$ have natural topologies and that their continuous duals are denoted
$$
\Hdual^0(P_{x_0,x_1}N;\kk) := \Hom^\cts_\kk(H_0(P_{x_0,x_1}N),\kk)
$$
and
$$
\Hdual^0(\Lambda N;\kk) := \Hom^\cts_\kk(H_0(\Lambda N),\kk).
$$
Recall from \cite[\S8]{hain:goldman} that $\Ldual_N$ denotes the continuous dual of the local system $\bL_N$. There is a natural isomorphism \cite[Thm.~6.9]{hain:goldman}.
$$
\Hdual^0(\Lambda N;\kk) \cong H^0(N;\Ldual_N).
$$

Denote the local system over $N\times N$ whose fiber over $(x_0,x_1)$ is $\Hdual^0(P_{x_0,x_1}N;\kk)$ by $\Pdual_N$ and its pullback along the interchange map $N^2 \to N^2$ by $\Pdual_N^\op$. 

\begin{lemma}
Let $p:N\times N\to N$ be projection onto the first factor. If $N$ is a rational $K(\pi,1)$, then there is a natural isomorphism of locally constant sheaves
$$
R^k p_\ast (\Pdual_N\otimes\Pdual_N^\op) \cong
\begin{cases}
\Ldual_N & k=0,\cr
0 & k\neq 0
\end{cases}
$$
over $N$.
\end{lemma}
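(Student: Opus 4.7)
The plan is to compute the stalks of $R^k p_\ast(\Pdual_N \otimes \Pdual_N^\op)$ by proper base change and then to collapse a Leray spectral sequence using the $K(\pi, 1)$ hypothesis. Since $p: N \times N \to N$ is a trivial fiber bundle, proper base change identifies the stalk at $x$ with $H^k(N;\cF_x)$, where $\cF_x$ denotes the restriction of $\Pdual_N \otimes \Pdual_N^\op$ to $\{x\} \times N \cong N$; its fiber at $y$ is $\Hdual^0(P_{x,y}N) \comptensor \Hdual^0(P_{y,x}N)$.

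I would then exploit the factorization of the base-point evaluation of the free loop space,
$$\Lambda N \;\xrightarrow{\;(p_0,\, p_{1/2})\;}\; N \times N \;\xrightarrow{\;p\;}\; N, \qquad p \circ (p_0, p_{1/2}) \,=\, p_0.$$
The fibers of $(p_0, p_{1/2})$ are $P_{x,y}N \times P_{y,x}N$; since $N$ is a rational $K(\pi, 1)$, each connected component of such a fiber is contractible. Working in the continuous-dual formulation compatible with the $I$-adic topology defining $\Pdual_N$ and $\Ldual_N$, the Leray spectral sequence of $(p_0, p_{1/2})$ therefore degenerates, identifying $R(p_0, p_{1/2})_\ast \kk$ with $\Pdual_N \otimes \Pdual_N^\op$ in degree $0$. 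Substituting into the Leray spectral sequence for the composition gives a quasi-isomorphism
$$R p_\ast(\Pdual_N \otimes \Pdual_N^\op) \;\simeq\; R(p_0)_\ast \kk$$
in the derived category of continuous local systems on $N$. Stalk by stalk, this right-hand side is the continuous-dual cohomology of $\Lambda_x N$; by (the all-degree continuous refinement of) Theorem~6.9 of \cite{hain:goldman}, this vanishes above degree $0$ and equals $\Ldual_{N, x}$ in degree $0$. Naturality in $x$ promotes the stalkwise identification to an isomorphism of locally constant sheaves over $N$.

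The main obstacle is justifying the vanishing of the continuous-dual higher cohomology of $\Lambda_x N$: ordinary cohomology $H^{>0}(\Lambda_x N; \kk) \cong \prod_{[g]} H^{>0}(BC(g); \kk)$ is typically nonzero (centralizers of nontrivial conjugacy classes contribute), so both Leray collapses depend crucially on working throughout in the continuous-dual / $I$-adic formulation, in which these contributions are absent. This is precisely the sheaf-level refinement of the homological identification recorded in Remark~\ref{rem:K(pi,1)}.
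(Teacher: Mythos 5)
The paper's entire proof is a citation to \cite[Cor.~9.2]{hain:goldman}, and the substance of that corollary is exactly the step your argument leaves unproved. Your reduction of the statement to a stalkwise computation $H^k(N;\cF_x)$, where $\cF_x$ is the restriction of $\Pdual_N\otimes\Pdual_N^\op$ to $\{x\}\times N$, is fine, and the identification of $H^0$ with $\Ldual_{N,x}$ via loop composition $P_{x,y}N\times P_{y,x}N\to \Lambda_xN$ is the right picture. But the heart of the lemma is the vanishing $H^{>0}(N;\cF_x)=0$, and your proposal asserts this rather than proves it: there is no ``all-degree continuous refinement of Theorem~6.9'' available to quote --- that theorem is a degree-zero statement, and the higher vanishing you need is precisely what \cite[Cor.~9.2]{hain:goldman} establishes. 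The actual mechanism is a Shapiro-type argument: the fiber of $\Pdual_N$ over $(x,y)$ is the continuous dual of $\kk\pi(N;x,y)^\wedge$, so the restricted local system is a continuous coinduced module $\Homcts(\kk\pi_1(N)^\wedge,V)$ for the completed group ring, hence acyclic for continuous group cohomology in positive degrees; the rational $K(\pi,1)$ hypothesis is then what transfers this to the sheaf cohomology of $N$. Flagging the gap as ``the main obstacle'' does not fill it.

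Two of your intermediate steps are also incorrect as stated. First, the contractibility of the components of $P_{x,y}N\times P_{y,x}N$ requires $N$ to be a genuine $K(\pi,1)$ (components of $P_{x,y}N$ are homotopy equivalent to $\Omega N$); a rational $K(\pi,1)$ need not be aspherical, and the lemma is stated for the latter. Second, even granting asphericity, $R^0(p_0,p_{1/2})_\ast\kk$ has stalk the \emph{full} linear dual of $H_0$ of the fiber, i.e.\ all $\kk$-valued functions on $\pi(N;x,y)\times\pi(N;y,x)$, which strictly contains the continuous dual $\Hdual^0(P_{x,y}N)\otimes\Hdual^0(P_{y,x}N)$. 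So the Leray substitution $Rp_\ast(\Pdual_N\otimes\Pdual_N^\op)\simeq R(p_0)_\ast\kk$ computes the wrong sheaf (and indeed $H^{>0}(\Lambda_xN;\kk)$ is typically nonzero, as you note), which is why the argument must be run entirely inside the category of unipotent local systems and their continuous duals from the start, not imported from the topological free loop fibration.
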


\begin{proof}
This follows directly from \cite[Cor.~9.2]{hain:goldman}.
\end{proof}

\begin{corollary}
\label{cor:coho_iso}
If $N$ is a rational $K(\pi,1)$, then there is a natural isomorphism
$$
H^j(N^2;\Pdual_N\otimes \Pdual_N^\op) \cong H^j(N;\Ldual_N).
$$
\end{corollary}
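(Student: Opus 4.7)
The plan is to prove Corollary~\ref{cor:coho_iso} as an immediate consequence of the preceding lemma via the Leray spectral sequence. Let $p : N^2 \to N$ denote the projection onto the first factor, as in the lemma. For the local system $\Pdual_N \otimes \Pdual_N^\op$ on $N^2$, the Leray spectral sequence of $p$ takes the form
$$
E_2^{i,j} = H^i\bigl(N;\, R^j p_\ast(\Pdual_N \otimes \Pdual_N^\op)\bigr) \Longrightarrow H^{i+j}\bigl(N^2;\, \Pdual_N \otimes \Pdual_N^\op\bigr).
$$

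First I would invoke the preceding lemma, which under the hypothesis that $N$ is a rational $K(\pi,1)$ identifies $R^0 p_\ast(\Pdual_N\otimes\Pdual_N^\op) \cong \Ldual_N$ and shows that $R^k p_\ast(\Pdual_N\otimes\Pdual_N^\op) = 0$ for $k \neq 0$. This makes the spectral sequence degenerate at $E_2$, with only the $j=0$ row nontrivial, so the edge homomorphism yields the natural isomorphism
$$
H^j(N^2;\,\Pdual_N\otimes\Pdual_N^\op) \;\cong\; H^j(N;\,\Ldual_N),
$$
which is the claim.

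There is essentially no obstacle once the preceding lemma is in hand: the entire content of the corollary is the vanishing and identification of the higher direct images, and the spectral sequence collapse is automatic. The only minor issue worth flagging is naturality, which is clear since both the Leray spectral sequence and the identification $R^0 p_\ast \cong \Ldual_N$ supplied by the lemma are functorial in the local system.
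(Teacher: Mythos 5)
Your proposal is correct and is essentially identical to the paper's own proof: both apply the Leray spectral sequence of the first-factor projection $p : N^2 \to N$, use the preceding lemma to identify $R^0 p_\ast(\Pdual_N\otimes\Pdual_N^\op)\cong\Ldual_N$ and kill the higher direct images, and conclude by collapse at $E_2$. Nothing is missing.
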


\begin{proof}
Apply the Leray spectral sequence of the projection $p:N\times N \to N$. The previous result and the fact that $N$ is a rational $K(\pi,1)$ imply that
$$
E_2^{j,k} \cong 
\begin{cases}
H^j(N;\Ldual_N) & k=0,\cr
0 & k>0
\end{cases}
$$
so that the spectral sequence collapses at $E_2$.
\end{proof}

\subsubsection{Differential forms}

Now $\kk$ will be $\R$ or $\C$. We regard a local system on $N$ as a locally constant sheaf. We will denote the complex of differential forms on $N$ with values in a local system $\bV$ of real (or rational) vector spaces by $E^\bdot(N;\bV)$. In \cite{hain:goldman}, we denoted the flat vector bundle associated to $\bV$ by $\sV$ and the sheaf of $j$-forms on $N$ with values in $\bV$ by $\sE^j_N\otimes\sV$. So $E^j(N,\bV)$ is just the space of global sections of $\sE^j_N\otimes\sV$. There are therefore isomorphisms
$$
H^\bdot(E^\bdot(N;\bV)) \cong H^\bdot(N;\bV)
$$
To connect with \cite{hain:goldman}, we point out that the flat vector bundle associated to $\Ldual_N$ is denoted by $\sL_N$, and the flat vector bundle associated to $\Pdual_N$ by $\sP_N$.

\subsection{Continuous DR duals}
\label{sec:cts_duals}

In this section, $M$ is an oriented surface of non-positive Euler characteristic and $\pi : \Mhat \to M$ is the bundle of non-zero tangent vectors of $M$. Both $M$ and $\Mhat$ are rational $K(\pi,1)$ spaces.\footnote{For $M$ this is proved in \cite[\S5.1]{hain:goldman}. That $\Mhat$ is also a rational $K(\pi,1)$ follows from this using the fact that an oriented circle bundle over a rational $K(\pi,1)$ is a rational $K(\pi,1)$.}

\subsubsection{The continuous dual of $H_\bdot(M^2,\Mhat;\bL_\Mhat\to \bP_M\otimes\bP_M^\op)$}

As in Section~\ref{sec:turaev}, we denote the composition of the projection $\pi$ with the diagonal map $M\to M^2$ by $\Deltabar$. There is a natural restriction mapping
$$
\iota^\ast : \Deltabar^\ast (\Pdual_M \otimes \Pdual_M^\op) \to \Ldual_\Mhat
$$
dual to the map (\ref{eqn:iota}). Its restriction
$$
\Hdual^0(\Lambda_x M) \otimes \Hdual^0(\Lambda_x M) \to \Hdual^0(\Lambda_v \Mhat)
$$
to the fiber over $v\in \Mhat$, where $x = \pi(v)$, is
$$
f \otimes g \mapsto f(1_x)\otimes (\pi^\ast g) - (\pi^\ast f)\otimes g(1_x).
$$

Since $\Pdual_M$ and $\bL_\Mhat$ are local systems of algebras, 
$\Deltabar$ and $\iota$ induce a DGA homomorphism
$$
\Deltabar^\ast\otimes \iota^\ast : E^\bdot(M^2;\Pdual_M\otimes\Pdual_M^\op) \to E^\bdot(\Mhat,\Ldual_\Mhat).
$$
Define
$$
E^\bdot(M^2,\Mhat;\Pdual_M\otimes\Pdual_M^\op\to \Ldual_\Mhat) := \cone\big(E^\bdot(M^2;\Pdual_M\otimes\Pdual_M^\op) \overset{\Deltabar^\ast\otimes \iota^\ast}{\To} E^\bdot(\Mhat,\Ldual_\Mhat)\big)[-1].
$$
Denote its cohomology groups by
$$
H^\bdot(M^2,\Mhat;\Pdual_M\otimes\Pdual_M^\op \to \Ldual_\Mhat).
$$
These cohomology groups are dual to the homology of the cone defined in Section~\ref{sec:turaev}.

\begin{proposition}
The pairing
$$
\bil : E^\bdot(M^2,\Mhat;\Pdual_M\otimes\Pdual_M^\op\to \Ldual_\Mhat) \otimes C_\bdot(\Deltabar_\ast\otimes\iota) \to \kk
$$
$$
\langle (\w,\xi),(s,u)\rangle = \int_s\w + \int_u \xi
$$
defined using integration and the pairings
$$
\Pdual_M \otimes \bP_M \to \kk \text{ and } \Ldual_\Mhat \otimes \bL_\Mhat \to \kk
$$
respects the differentials and thus induces a pairing
$$
\bil : H^\bdot(M^2,\Mhat;\Pdual_M\otimes\Pdual_M^\op \to \Ldual_\Mhat) \otimes
H_\bdot\big(M^2,\Mhat;\bL_\Mhat\to \bP_M\otimes\bP_M^\op\big) \to \kk. \qed
$$
\end{proposition}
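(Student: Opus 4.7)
The plan is to present this as a direct instance of the general cone-pairing construction recorded in Section~\ref{sec:cones}, applied to two component pairings built from integration. Concretely, I would set $\cA^\bdot = E^\bdot(\Mhat;\Ldual_\Mhat)$, $A_\bdot = C_\bdot(\Mhat;\bL_\Mhat)$, $\cB^\bdot = E^\bdot(M^2;\Pdual_M\otimes\Pdual_M^\op)$, $B_\bdot = C_\bdot(M^2;\bP_M\otimes\bP_M^\op)$, with $\langle\blank,\blank\rangle_A$ and $\langle\blank,\blank\rangle_B$ the respective integration pairings obtained by contracting local-coefficient cochains with local-coefficient chains and integrating. The cone pairing formula from Section~\ref{sec:cones} then reads
$$
(\w,\xi)\otimes(s,u)\mapsto \langle\w,s\rangle_B + \langle\xi,u\rangle_A = \int_s\w + \int_u\xi,
$$
which matches the formula in the statement.

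The first step is to check that each of $\langle\blank,\blank\rangle_A$ and $\langle\blank,\blank\rangle_B$ is a map of complexes, that is, $\langle d\w,c\rangle = \langle\w,\partial c\rangle$. This is the local-coefficient version of Stokes' theorem: on a singular simplex $\sigma$, a chain with coefficients in a local system $\bV$ is represented by a flat section $\tilde v$ of $\sigma^\ast\sV$, and the pairing of $\w\in E^\bdot(N;\bV^\vee)$ with $(\sigma,\tilde v)$ is the integral over $\sigma$ of the scalar form $\langle\w,\tilde v\rangle$. Since pullback by $\sigma$ commutes with $d$ and $\tilde v$ is flat, ordinary Stokes' theorem applied simplex by simplex gives the claim for both pairings.

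The second step is to verify that $\Deltabar^\ast\otimes\iota^\ast$ is adjoint to $\Deltabar_\ast\otimes\iota$ under these pairings, i.e.\ that
$$
\big\langle(\Deltabar^\ast\otimes\iota^\ast)\w,u\big\rangle_A = \big\langle\w,(\Deltabar_\ast\otimes\iota)u\big\rangle_B
$$
for $\w\in E^\bdot(M^2;\Pdual_M\otimes\Pdual_M^\op)$ and $u\in C_\bdot(\Mhat;\bL_\Mhat)$. Fiberwise this is the definition of $\iota^\ast$: at $v\in\Mhat$ with $x=\pi(v)$, the map $\iota^\ast$ was introduced precisely as the continuous dual of $\iota:\bL_{\Mhat,v}\to(\bP_M\otimes\bP_M^\op)_{(x,x)}$. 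Combining this stalkwise adjointness with the standard naturality identity $\int_{\Deltabar_\ast\sigma}\w = \int_\sigma \Deltabar^\ast\w$ for integration of differential forms yields the global adjointness. Granted steps one and two, the pairing
$$
\big\langle(\w,\xi),(s,u)\big\rangle = \int_s\w + \int_u\xi
$$
is exactly the cone pairing of Section~\ref{sec:cones} applied to the two pairings above, so it satisfies $\langle dz,c\rangle = \langle z,\partial c\rangle$ and descends to cohomology.

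The only real bookkeeping point is tracking the sign conventions for the cone differentials $\partial(b,a)=(\partial b-\phi(a),-\partial a)$ and $d(\beta,\alpha)=(d\beta,-d\alpha-\psi^\ast\beta)$ fixed in Section~\ref{sec:cones}, and checking that the two cross terms in $\langle dz,c\rangle - \langle z,\partial c\rangle$ produced by the cone differentials cancel. This cancellation is forced by the adjointness verified in step two, so no additional computation beyond that general cone argument is required. I do not anticipate a genuine obstacle; the proposition is essentially a packaging of the general principle with the two concrete integration pairings.
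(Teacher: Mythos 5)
Your proposal is correct and is exactly the argument the paper intends: the proposition is stated with an immediate \qed because it is the general cone-pairing construction of Section~\ref{sec:cones} applied to the two integration pairings, and the only inputs are the Stokes-type identity $\langle d\w,c\rangle=\langle\w,\partial c\rangle$ for local-coefficient integration and the adjointness of $\Deltabar^\ast\otimes\iota^\ast$ with $\Deltabar_\ast\otimes\iota$, the latter holding by the very definition of $\iota^\ast$ as the dual of $\iota$. Your sign check that the two cross terms cancel is precisely what makes the cone pairing of Section~\ref{sec:cones} a pairing of complexes, so nothing is missing.
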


\begin{proposition}
\label{prop:psidual}
If $M$ is an oriented non-compact surface, then there is a natural isomorphism
$$
\psidual : H^1(\Mhat;\Ldual_\Mhat) \overset{\simeq}{\To} H^2(M^2,\Mhat;\Pdual_M\otimes\Pdual_M^\op \to \Ldual_\Mhat)
$$
that is dual to the isomorphism $\psi$ in Lemma~\ref{lem:les_cone} with respect to the pairing $\bil$.
\end{proposition}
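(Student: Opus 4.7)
The plan is to mimic the argument of Lemma~\ref{lem:les_cone} in cohomology and then invoke the integration pairing $\bil$ to identify $\psidual$ as the continuous dual of $\psi$.

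First I would extract from the definition of $E^\bdot(M^2,\Mhat;\Pdual_M\otimes\Pdual_M^\op\to \Ldual_\Mhat)$ as a shifted cone the short exact sequence of cochain complexes
$$
0 \to E^\bdot(\Mhat;\Ldual_\Mhat)[-1] \to E^\bdot(M^2,\Mhat;\Pdual_M\otimes\Pdual_M^\op\to \Ldual_\Mhat) \to E^\bdot(M^2;\Pdual_M\otimes\Pdual_M^\op) \to 0,
$$
whose connecting homomorphism in degree two is by construction the map $\psidual$ of the proposition. The relevant fragment of the associated long exact sequence reads
$$
H^1(M^2;\Pdual_M\otimes\Pdual_M^\op) \to H^1(\Mhat;\Ldual_\Mhat) \xrightarrow{\psidual} H^2\big(M^2,\Mhat;\Pdual_M\otimes\Pdual_M^\op\to \Ldual_\Mhat\big) \to H^2(M^2;\Pdual_M\otimes\Pdual_M^\op).
$$

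Next I would apply Corollary~\ref{cor:coho_iso} to identify $H^j(M^2;\Pdual_M\otimes\Pdual_M^\op)$ with $H^j(M;\Ldual_M)$. Since $M$ is non-compact, it has the homotopy type of a $1$-complex, so $H^2(M;\Ldual_M)=0$; this gives surjectivity of $\psidual$. For injectivity I would check that the left-hand map $H^1(M;\Ldual_M) \to H^1(\Mhat;\Ldual_\Mhat)$ induced by $\iota^\ast$ vanishes. This parallels the corresponding step of Lemma~\ref{lem:les_cone}: the restriction of $\iota^\ast$ to the diagonal component $\Ldual_M\otimes\Ldual_M \hookrightarrow \Deltabar^\ast(\Pdual_M\otimes\Pdual_M^\op)$ is $f\otimes g \mapsto f(1_x)\,\pi^\ast g - (\pi^\ast f)\,g(1_x)$, whose image in the summand $\Ldual_M \hookrightarrow \Ldual_M\otimes\Ldual_M$ coming from Corollary~\ref{cor:coho_iso} collapses by the same ``$\pi\alpha-\pi\alpha$'' cancellation that makes the homological boundary map $H_1(\Lambda\Mhat) \to H_1(\Lambda M)$ vanish in the proof of Lemma~\ref{lem:les_cone}.

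To conclude, I would show that the integration pairing $\bil$ between $E^\bdot(M^2,\Mhat;\Pdual_M\otimes\Pdual_M^\op\to \Ldual_\Mhat)$ and $C_\bdot(\Deltabar_\ast\otimes\iota)$ identifies, after taking continuous duals of the relevant (pro-)finite-dimensional groups via the $K(\pi,1)$ identifications of Remark~\ref{rem:K(pi,1)} and Corollary~\ref{cor:coho_iso}, the cohomological long exact sequence above with the continuous dual of the homological long exact sequence appearing in the proof of Lemma~\ref{lem:les_cone}. Under this identification $\psidual$ is the continuous dual of $\psi$; since $\psi$ is an isomorphism for non-compact $M$, so is $\psidual$.

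The principal obstacle is exactly this duality bookkeeping: one must verify that the integration pairings on the two summands of the cone assemble to a perfect (continuous) pairing of long exact sequences, and in particular that the connecting maps correspond under duality. Once this is done (which is essentially a naturality statement for the cone construction together with the pairing formula $\langle dz,c\rangle = \langle z,\partial c\rangle$ from Section~\ref{sec:cones}), the proposition follows formally from Lemma~\ref{lem:les_cone} and the vanishing of $H^2(M;\Ldual_M)$.
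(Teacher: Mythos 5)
Your argument is essentially the paper's own proof: both use the long exact cohomology sequence of the cone, kill $H^2(M^2;\Pdual_M\otimes\Pdual_M^\op)\cong H^2(M;\Ldual_M)$ by non-compactness via Corollary~\ref{cor:coho_iso}, observe that the map $H^1(M^2;\Pdual_M\otimes\Pdual_M^\op)\to H^1(\Mhat;\Ldual_\Mhat)$ vanishes by the same cancellation as in Lemma~\ref{lem:les_cone}, and identify the sequence with the dual of the homological one via $\bil$. The only quibble is terminological: $\psidual$ is induced by the inclusion of the subcomplex $E^\bdot(\Mhat;\Ldual_\Mhat)[-1]$ into the cone, not by the connecting homomorphism of your short exact sequence (that connecting map is the paper's $\partialdual$), but the exact sequence you write down places $\psidual$ correctly, so the argument stands.
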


\begin{proof}
The cohomology long exact sequence of the cone is
\begin{multline*}
\cdots \to 
H^1(M^2;\Pdual_M\otimes\Pdual_M^\op) \overset{\partialdual}{\to} H^1(\Mhat;\Ldual_\Mhat)
\cr
\overset{\psidual}{\to}
H^2(M^2,\Mhat;\Pdual_M\otimes\Pdual_M^\op \to \Ldual_\Mhat)
\to H^2(M^2;\Pdual_M\otimes\Pdual_M^\op) \overset{\partialdual}{\to} \cdots
\end{multline*}
It is dual to the long exact sequence of the cone $C_\bdot(\Deltabar\otimes \iota)$ in Lemma~\ref{lem:les_cone}.

Since $M$ is a non-compact surface, it is homotopy equivalent to a wedge of circles and therefore a rational $K(\pi,1)$ of cohomological dimension 1. In particular, $H^2(M^2;\Pdual_M\otimes\Pdual_M^\op)$ vanishes. Finally, Corollary~\ref{cor:coho_iso} gives an isomorphism $H^1(M^2;\Pdual_M\otimes\Pdual_M^\op) \cong H^1(M;\Ldual_M)$. As in the proof of Lemma~\ref{lem:les_cone}, the connecting homomorphism $\partialdual$ vanishes, which implies that $\psidual$ is an isomorphism.
\end{proof}

Recall \cite[Prop.~8.1]{hain:goldman} that there is a map
$$
\csdual : H^1(\Mhat;\Ldual_\Mhat) \to \Hdual^0(\Lambda\Mhat).
$$
dual to the Chas--Sullivan map $\cs : H_0(\Lambda \Mhat) \to H_1(\Mhat,\bL_\Mhat)$ under $\bil$.

\begin{corollary}
If $M$ is an oriented non-compact surface, there is a map
$$
\phidual : H^2(M^2,\Mhat;\Pdual_M\otimes\Pdual_M^\op \to \Ldual_\Mhat) \to \Hdual^0(\Lambda \Mhat)
$$
that is dual to $\varphi$ under the pairing $\bil$ and corresponds to $\csdual$ in the sense that $\csdual=\phidual\circ\psidual$. \qed
\end{corollary}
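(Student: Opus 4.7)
The plan is to define $\phidual$ as the transpose of $\varphi$ under the pairing $\bil$ of the preceding proposition: for a class $\eta \in H^2(M^2,\Mhat;\Pdual_M\otimes\Pdual_M^\op \to \Ldual_\Mhat)$, set
$$
\phidual(\eta) : H_0(\Lambda\Mhat) \to \kk, \qquad a \mapsto \langle\eta,\varphi(a)\rangle.
$$
This is a well-defined linear functional, since the pairing descends to (co)homology. The two things needing verification are that $\phidual(\eta)$ is continuous with respect to the $I$-adic topology on $H_0(\Lambda\Mhat)$, so that $\phidual(\eta)$ actually lies in $\Hdual^0(\Lambda\Mhat)$, and that the compatibility $\csdual = \phidual\circ\psidual$ holds.

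Both assertions will follow by dualizing the factorization $\cs = \psi\circ\varphi$, which is the content of Proposition~\ref{prop:phi_cs} combined with Lemma~\ref{lem:les_cone} (using the identification $H_1(\Mhat;\bL_\Mhat) \cong H_1(\Lambda\Mhat)$ from Remark~\ref{rem:K(pi,1)}, available because $\Mhat$ is a rational $K(\pi,1)$). Transposing this identity under $\bil$, together with the characterizations of $\csdual$ from \cite[Prop.~8.1]{hain:goldman} and of $\psidual$ from Proposition~\ref{prop:psidual} as the respective transposes, immediately yields the compatibility $\phidual \circ \psidual = \csdual$ as functionals on $H_0(\Lambda\Mhat)$.

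To verify continuity of $\phidual(\eta)$ I would invoke the hypothesis that $M$ is non-compact: Lemma~\ref{lem:les_cone} then gives that $\psi$ is an isomorphism, hence so is $\psidual$ by Proposition~\ref{prop:psidual}. Consequently, one may rewrite
$$
\phidual = \csdual \circ \psidual^{-1},
$$
and since $\csdual$ is already known to take values in the continuous dual $\Hdual^0(\Lambda\Mhat)$ by \cite[Prop.~8.1]{hain:goldman}, so does $\phidual$. The argument is therefore a formal exercise in duality; the only geometric input is the non-compactness of $M$, which guarantees invertibility of $\psi$, and I do not anticipate any genuine obstacle beyond bookkeeping with the identifications supplied by Remark~\ref{rem:K(pi,1)}.
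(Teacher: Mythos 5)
Your argument is correct and is essentially the paper's intended one: the corollary is stated with no written proof precisely because it follows formally from $\cs=\psi\circ\varphi$, the duality of $\psi$ and $\psidual$, and the invertibility of $\psidual$ for non-compact $M$, so that $\phidual=\csdual\circ\psidual^{-1}$ is both well defined in the continuous dual and the transpose of $\varphi$. Your only reorganization is to define $\phidual$ as the transpose first and deduce continuity afterwards, which is a harmless reordering of the same ingredients.
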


\subsubsection{The cup product}

The de~Rham incarnation of the complex $C_\Delta^\bdot(M^2,\Mhat)$ defined in Section~\ref{sec:pairing} is
$$
E^\bdot_\Delta(M^2,\Mhat) := \cone\big(E^\bdot(M^2) \to E^\bdot(M^2-\Delta) \oplus E^\bdot(\Mhat)\big)[-1].
$$
De~Rham's Theorem and the 5-lemma imply that it computes $H^\bdot_\Delta(M^2,\Mhat;\kk)$.

\begin{lemma}
There is a well-defined product
\begin{equation}
\label{eqn:cup}
\smile\ : H^0(M;\Ldual_M\otimes\Ldual_M)\otimes H^2_\Delta(M^2,\Mhat) \to
H^2(M^2,\Mhat;\Pdual_M\otimes\Pdual_M^\op \to \Ldual_\Mhat).
\end{equation}
It is dual to the pairing
$$
\bil :
H_2(M^2,\Mhat;\bL_\Mhat\to \bP_M\otimes\bP_M^\op)\otimes H^2_\Delta(M^2,\Mhat) \to H_0(M;\bL_M\otimes \bL_M).
$$
of Proposition~\ref{prop:pairing} in the sense that
$$
\big\langle f \smile c, z \big\rangle = \big\langle f,\langle z,c \rangle\big\rangle
$$
for all
$$
f\in H^0(M;\Ldual_M\otimes\Ldual_M),\ c\in H^2_\Delta(M^2,\Mhat),\ z\in H_2(M^2,\Mhat;\bL_\Mhat\to \bP_M\otimes\bP_M^\op).
$$
\end{lemma}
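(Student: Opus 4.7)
The plan is to construct the cup product at the cochain level, mirroring the open-cover decomposition used in the proof of Proposition~\ref{prop:pairing}. Take the cover $\U = \{M^2-\Delta_M,\, U\}$ of $M^2$ where $U$ is a tubular neighbourhood of $\Delta_M$ equipped with a deformation retraction $r : U \to \Delta_M \cong M$. A class $f \in H^0(M;\Ldual_M\otimes\Ldual_M)$ is a flat global section of the flat bundle $\sL_M\otimes\sL_M$. Using the canonical isomorphism $r^\ast(\Ldual_M\otimes\Ldual_M) \cong (\Pdual_M\otimes\Pdual_M^\op)|_U$ invoked at the end of the proof of Proposition~\ref{prop:pairing}, pulling $f$ back along $r$ produces a flat section $\tilde f$ of $(\Pdual_M\otimes\Pdual_M^\op)|_U$ that extends $f$ across the tubular neighbourhood.

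Next I would use a $\U$-adapted de~Rham model for $H^\bdot_\Delta(M^2,\Mhat)$: represent a class $c$ by a pair $(\zeta,\eta)$ with $\zeta \in E^2(M^2)$ supported in $U$ (so $\zeta$ vanishes on $M^2-\Delta_M$) and $\eta \in E^1(\Mhat)$, subject to the cone cocycle conditions $d\zeta = 0$ and $d\eta + \Deltabar^\ast\zeta = 0$. Because $\zeta$ is supported in $U$, the product $\tilde f \cdot \zeta$ extends by zero to a global form in $E^2(M^2;\Pdual_M\otimes\Pdual_M^\op)$, and $\iota^\ast(\tilde f)\cdot \eta$ lies in $E^1(\Mhat;\Ldual_\Mhat)$. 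Define
\[
f \smile c \;:=\; \bigl(\tilde f\cdot \zeta,\; \iota^\ast(\tilde f)\cdot \eta\bigr).
\]
That this pair is a cocycle in the cone complex computing $H^2(M^2,\Mhat;\Pdual_M\otimes\Pdual_M^\op\to \Ldual_\Mhat)$ uses flatness of $\tilde f$, the identity $(\Deltabar^\ast\otimes \iota^\ast)(\tilde f\,\zeta) = \iota^\ast(\tilde f)\cdot \Deltabar^\ast\zeta$ (which expresses compatibility of the fibrewise restriction $\iota^\ast$ with the extension $\tilde f$), and the cocycle relations imposed on $(\zeta,\eta)$. Independence of the choice of $(\zeta,\eta)$ and of the tubular neighbourhood $U$ is a standard argument: a coboundary $(\zeta,\eta) = d(\zeta_0,\eta_0)$ maps under multiplication by $\tilde f$ to $d(\tilde f\zeta_0, \iota^\ast(\tilde f)\eta_0)$, and two choices of $U$ are related by a homotopy $U_0\cap U_1 \to \Delta_M$ that is similarly absorbed.

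The duality identity is then almost immediate from the construction, because both sides are built from exactly the same cochain-level ingredients. Given a cycle $z = (s,u) \in C_\bdot(\Deltabar_\ast\otimes\iota)$ realized by $\U$-small chains, the recipe of Proposition~\ref{prop:pairing} gives $\langle z,c\rangle = \langle \zeta,s\rangle + \iota\langle\eta,u\rangle \in H_0(M;\bL_M\otimes\bL_M)$, whose further pairing with $f$ applies the flat section pointwise and yields $\int_s \tilde f\,\zeta + \int_u \iota^\ast(\tilde f)\,\eta$, which is exactly $\langle f\smile c, z\rangle$ by the pairing from Section~\ref{sec:cts_duals}. The sharpest check is that under the identification $r^\ast(\Ldual_M\otimes\Ldual_M)\cong (\Pdual_M\otimes\Pdual_M^\op)|_U$, pairing $\tilde f$ with the horizontal lifts used to build $s$ agrees with pairing $f$ with the image of $s$ in $H_0(M;\bL_M\otimes\bL_M)$; this is the content of the isomorphism invoked in the proof of Proposition~\ref{prop:pairing}.

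The main obstacle is bookkeeping rather than genuine difficulty: one must be careful that the de~Rham cone model $E^\bdot_\Delta(M^2,\Mhat)$ really is computed by representatives with $\zeta$ supported in $U$ (this is where the subdivision to $\U$-small forms and a partition-of-unity argument enters), and one must track the signs imposed by the cone conventions of Section~\ref{sec:cones} so that the cocycle check and the duality formula both come out with matching signs. Once these conventions are fixed, the construction is functorial in $f$ and in $c$ and produces the required pairing.
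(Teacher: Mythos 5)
Your construction is essentially the paper's own proof: both extend $f$ to a flat section of $\Pdual_M\otimes\Pdual_M^\op$ over a tubular neighbourhood of the diagonal, represent $c$ by a cone cocycle whose $E^2(M^2)$-component is supported near the diagonal (the paper uses nested neighbourhoods $V\subset U$ and a 5-lemma quasi-isomorphism where you invoke $\U$-small forms), multiply componentwise, and verify the duality identity by the same chain-level computation $\int_s \tilde f\zeta + \int_u \iota^\ast(\tilde f)\eta$. The only blemish is the parenthetical claim that $\zeta$ ``vanishes on $M^2-\Delta_M$'' --- it should vanish outside a closed neighbourhood of $\Delta_M$ contained in $U$ --- but this is clearly a slip and does not affect the argument.
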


\begin{proof}
This result can be proved using differential forms or singular cochains. We will use differential forms. The proof using singular cochains is similar.

Choose regular neighbourhoods $U$ and $V$ of the diagonal $\Delta$ in $M^2$, where $V \subset U$, $V$ is closed and $U$ is open. Since $\Delta \hookrightarrow U$ is a homotopy equivalence, every flat section of $\Ldual_M\otimes \Ldual_M$ over the diagonal extends uniquely to a flat section of $\Pdual_M\otimes \Pdual_M^\op$ over $U$. It follows that restriction to the diagonal induces a quasi-isomorphism
$$
E^\bdot(U;\Pdual_M \otimes \Pdual_M^\op) \to E^\bdot(M,\Ldual_M\otimes\Ldual_M).
$$

Since the inclusion $\Delta \to V$ is a homotopy equivalence, the map
$$
E^\bdot_\Delta(M^2) \to E^\bdot_V(M^2) := \cone\big(E^\bdot(M^2) \to E^\bdot(M^2-V)\big)[-1]
$$
is a quasi-isomorphism. Denote the complex of forms of $M^2$ that vanish on $M^2-V$ by $E^\bdot(M^2,M^2-V)$. The 5-lemma implies that the cochain map
$$
E^\bdot(M^2,M^2-V) \to E^\bdot_V(M^2)
$$
that takes $\w$ to $[\w,0]$ is a quasi-isomorphism. Together these imply that $E^\bdot_\Delta(M^2,\Mhat)$ is quasi-isomorphic to the complex
$$
\cone\big(E^\bdot(M^2,M^2-V) \to E^\bdot(\Mhat)\big)[-1].
$$

The cup product pairing (\ref{eqn:cup}) is induced by the map of complexes
\begin{multline*}
E^\bdot(U,\Pdual_M\otimes \Pdual_M^\op) \otimes \cone\big(E^\bdot(M^2,M^2-V) \to E^\bdot(\Mhat)\big)[-1]
\cr
\to
E^\bdot(M^2,\Mhat;\Pdual_M\otimes \Pdual_M^\op \to \Ldual_\Mhat)
\end{multline*}
defined by
$
F \otimes [\w,\eta] \mapsto [F \wedge \w,(-1)^{|F|}(\pi^\ast F) \wedge \eta].
$
This is a chain map according to the conventions in Section~\ref{sec:cones}.

To prove the remaining assertion, suppose that $z$ is represented by $[s,u]$ in $C_2(\Deltabar_\ast\otimes\iota)$, $f$ is represented by $F \in E^0(U;\Pdual_M \otimes \Pdual_M^\op)$, and $c$ is represented by $[\w,\eta] \in \cone\big(E^\bdot(M^2,M^2-V) \to E^\bdot(\Mhat)\big)[-1]$. Then, $f\smile c$ is represented by $[F\w,\pi^\ast F \cdot \eta]$.
and
$$
\langle f\smile c, z \rangle = \langle [F\w,\pi^\ast F \cdot\eta],[s,u] \rangle = \int_s F\w + \int_u F\eta.
$$
On the other hand, since $F$ is locally constant,
$$
\langle f,\langle z,c \rangle \rangle
= \langle f, \langle [\w,\eta],[s,u] \rangle\rangle
= \langle F, \int_s \w + \int_u \eta \rangle
= \int_s F\w + \int_u F\eta.
$$
\end{proof}

\subsection{Factorization of the continuous dual of the Turaev cobracket}
\label{sec:factorization}

Define
$$
\deltadual_\xi : \Hdual^0(\Lambda M)\otimes\Hdual^0(\Lambda M) \to \Hdual^0(\Lambda M)
$$
so that the diagram
$$
\xymatrix@C=16pt{
\Hdual^0(\Lambda M)^{\otimes 2} \ar[r]^(.32)\simeq \ar@{.>}[d]_{\deltadual_\xi} &
H^0(M;\Ldual_M) \otimes H^0(M;\Ldual_M) \ar[r]^(.55){\text{mult}} &
H^0(M;\Ldual_M\otimes\Ldual_M) \ar[d]^{\phantom{x}\smile c_\xi}
\cr
\Hdual^0(\Lambda M) & \Hdual^0(\Mhat;\Ldual_\Mhat) \ar[l]_{(\Lambda\xi)^\ast} &
H^2(M^2,\Mhat;\Pdual_M\otimes\Pdual_M^\op \to \Ldual_\Mhat) \ar[l]_(.6)\phidual
}
$$
commutes. The next result follows directly from Theorem~\ref{thm:factor_turaev} and the results in Section~\ref{sec:cts_duals}.

\begin{proposition}
The map $\deltadual_\xi$ is the continuous dual of $\delta_\xi$ in the sense that
$$
\langle \deltadual_\xi(f\otimes g),\alpha\rangle = \langle f\otimes g,\delta_\xi(\alpha)\rangle
$$
for all $f,g\in \Hdual^0(\Lambda M)$ and $\alpha \in \Lambda M$.
\end{proposition}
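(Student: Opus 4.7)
The plan is to chase an arbitrary pairing $\langle \deltadual_\xi(f\otimes g),\alpha\rangle$ through the diagram that defines $\deltadual_\xi$, peeling off one layer of adjunction at a time, and then to apply the factorization of $\delta_\xi$ supplied by Theorem~\ref{thm:factor_turaev}. Everything has been set up so that each arrow in the definition of $\deltadual_\xi$ is identified as the continuous dual of the corresponding arrow in Theorem~\ref{thm:factor_turaev}, so the proof is essentially an assembly.

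First I would unpack the definition. Starting from $f\otimes g \in \Hdual^0(\Lambda M)^{\otimes 2}$, the identification $\Hdual^0(\Lambda M) \cong H^0(M;\Ldual_M)$ of \cite[Thm.~6.9]{hain:goldman} together with the algebra multiplication on $H^0(M;\Ldual_M\otimes\Ldual_M)$ produces a class $f\cdot g$; cupping with $c_\xi$ yields an element of $H^2(M^2,\Mhat;\Pdual_M\otimes\Pdual_M^\op\to\Ldual_\Mhat)$; and applying $\phidual$ followed by $(\Lambda\xi)^\ast$ gives $\deltadual_\xi(f\otimes g)$.

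Next I would evaluate the pairing. The adjunction $\langle (\Lambda\xi)^\ast h,\alpha\rangle = \langle h,(\Lambda\xi)_\ast\alpha\rangle$ moves $(\Lambda\xi)^\ast$ across; the assertion (just after Proposition~\ref{prop:psidual}) that $\phidual$ is dual to $\varphi$ under $\bil$ moves $\phidual$ across; and the cup/cap adjunction $\langle f\smile c,z\rangle = \langle f,\langle z,c\rangle\rangle$ from the lemma following Proposition~\ref{prop:psidual} moves $\smile c_\xi$ across. After these three steps,
$$
\langle \deltadual_\xi(f\otimes g),\alpha\rangle = \langle f\cdot g,\; \varphi((\Lambda\xi)_\ast\alpha)\cap c_\xi\rangle,
$$
with the right hand side a pairing $H^0(M;\Ldual_M\otimes\Ldual_M)\otimes H_0(M;\bL_M\otimes\bL_M)\to \kk$.

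Finally I would invoke Theorem~\ref{thm:factor_turaev}: the image of $\varphi((\Lambda\xi)_\ast\alpha)\cap c_\xi$ under the natural projection $H_0(M;\bL_M\otimes\bL_M) \to H_0(\Lambda M)^{\otimes 2}$ is precisely $-\delta_\xi(\alpha)$ (up to the sign convention appearing in the definition of $p_\xi$). Since $f\cdot g$ is the algebra product of two global sections, the pairing $\langle f\cdot g,\blank\rangle$ factors through this projection as $\langle f\otimes g,\blank\rangle$, and the desired identity follows. The main bookkeeping point — and the one step I would verify carefully — is the statement that the multiplication map $\Hdual^0(\Lambda M)^{\otimes 2} \to H^0(M;\Ldual_M\otimes\Ldual_M)$ is the continuous dual of the projection to $H_0(\Lambda M)^{\otimes 2}$; this follows because coinvariants of a tensor product of local systems surject onto the tensor product of coinvariants, and the $\Hdual^0$ functor converts this to the multiplication. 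Tracking signs between the minus in $-p_\xi$ of Theorem~\ref{thm:factor_turaev} and the cone sign conventions of Section~\ref{sec:cones} is the only delicate point; all other steps are direct applications of the duality statements already established in Section~\ref{sec:cts_duals}.
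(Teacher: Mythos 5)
Your proposal is correct and follows exactly the route the paper intends: the paper's entire proof is the single sentence that the result ``follows directly from Theorem~\ref{thm:factor_turaev} and the results in Section~\ref{sec:cts_duals},'' and your diagram chase is precisely the unpacking of that sentence, using the same three duality statements (for $(\Lambda\xi)^\ast$, for $\phidual$, and for the cup/cap adjunction) together with the duality of multiplication against the projection $H_0(M;\bL_M\otimes\bL_M)\to H_0(\Lambda M)^{\otimes 2}$. The residual minus sign you flag (coming from the $-p_\xi$ in Theorem~\ref{thm:factor_turaev} against the unsigned composite defining $\deltadual_\xi$) is a bookkeeping issue already present in the paper's own conventions rather than a gap in your argument.
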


\section{Proof of Theorem~\ref{thm:turaev}}
\label{sec:hodge_turaev}

In this section, $\kk$ will be $\Q$, $\R$ or $\C$, as appropriate, and $X$ will be a smooth affine curve over $\C$. Equivalently, $X$ is the complement $\Xbar - D$ of a finite subset $D$ of a compact Riemann surface $\Xbar$. Denote the holomorphic tangent bundle of $\Xbar$ by $T\Xbar$.

\subsection{The map $(\Lambda\xi)^\ast$ is a morphism of MHS}

\begin{definition}
\label{def:q-alg}
Suppose that $m$ is a positive integer. An {\em algebraic $m$-framing} of $X$ is a meromorphic section of $L\otimes T\Xbar$ whose divisor is supported on $D$, where $L$ is a holomorphic line bundle over $\Xbar$ whose $m$th power $L^{\otimes m}$ is trivial. Equivalently, $\xi$ is the $m$th root of a meromorphic section of the $m$th power of the holomorphic tangent bundle of $\Xbar$ whose divisor is supported on $D$. A {\em quasi-algebraic framing} of $X$ is an algebraic $m$-framing for some $m>0$. An {\em algebraic framing} of $X$ is, by definition, a 1-framing.
\end{definition}

Since torsion line bundles on $\Xbar$, such as $L$, are topologically trivial, each quasi-algebraic framing of $X$ determines a homotopy class of smooth framings of $X$ and a cobracket $\delta_\xi$. In this section, we prove the following stronger version of Theorem~\ref{thm:turaev}.

\begin{theorem}
\label{thm:strong_turaev}
If $\xi$ is a quasi-algebraic framing of $X$, then
$$
\delta_\xi : \Q\lambda(X)^\wedge\otimes\Q(-1) \to \Q\lambda(X)^\wedge\comptensor \Q\lambda(X)^\wedge
$$
is a morphism of pro-mixed Hodge structures.
\end{theorem}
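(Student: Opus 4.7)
The plan is to run the argument through the factorisation of the continuous dual $\deltadual_\xi$ displayed in Section~\ref{sec:factorization} and verify that each of the four factors preserves mixed Hodge structures, with the single $\Q(-1)$ Tate twist promised by the theorem concentrated in the cup product $(-\smile c_\xi)$. Once this is done, continuous dualization and passage to the $I$-adic completion yield that $\delta_\xi$ is a morphism of pro-MHS shifted by $\Q(-1)$, which is the statement of Theorem~\ref{thm:strong_turaev}.

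First I would put the canonical (ind-)MHS on each of the cohomology groups appearing in the factorisation. For $\Hdual^0(\Lambda X)$, $H^0(X;\Ldual_X)$, $H^0(X;\Ldual_X \otimes \Ldual_X)$, $H^1(\Xhat;\Ldual_\Xhat)$, and $\Hdual^0(\Lambda \Xhat)$, this comes from the Hodge theory of path torsors and free loop spaces developed in \cite{hain:dht,hain:goldman} (using here that an oriented circle bundle over a smooth quasi-projective variety carries a natural MHS on its cohomology with values in flat bundles of path-torsor type). For the cone cohomology $H^2(X^2,\Xhat;\Pdual_X\otimes\Pdual_X^\op\to\Ldual_\Xhat)$, the two de~Rham complexes assembled in Section~\ref{sec:cts_duals} are each a mixed Hodge complex and the comparison map $\Deltabar^\ast\otimes\iota^\ast$ is a morphism of such; their mapping cone is therefore a mixed Hodge complex, and the long exact sequence of Proposition~\ref{prop:psidual} is one of MHS. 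Together with the formal functoriality of the MHS on path-torsor cohomology, this makes the isomorphisms/morphisms $\Hdual^0(\Lambda X)^{\otimes 2} \cong H^0(X;\Ldual_X)^{\otimes 2}$, the multiplication $H^0(X;\Ldual_X)^{\otimes 2} \to H^0(X;\Ldual_X \otimes \Ldual_X)$, $\psidual$, and $\phidual$ morphisms of MHS without any Tate twist.

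The entire geometric input sits in the two remaining factors. For the map $(\Lambda\xi)^\ast : \Hdual^0(\Lambda\Xhat) \to \Hdual^0(\Lambda X)$ I would use the quasi-algebraicity of $\xi$: $\xi$ is a section of $L \otimes T\Xbar$ with $L^{\otimes m}$ trivial, so the $m$-fold cover $\Xbar' \to \Xbar$ trivialising $L$ turns $\xi$ into an honest meromorphic section of $T\Xbar'$ with divisor in the preimage of $D$; functoriality of the MHS on $\Hdual^0(\Lambda\cdot)$, together with Galois descent along the finite \'etale cover, upgrade $(\Lambda\xi)^\ast$ to a morphism of MHS. For the cup product $(-\smile c_\xi)$, I would prove that $c_\xi \in H^2_\Delta(X^2,\Xhat)$ is a Hodge class of type $(1,1)$, from which it follows that cupping with it is a morphism of MHS of bidegree $(1,1)$, i.e.\ twisted by $\Q(-1)$. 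Decomposing $c_\xi = \pi^\ast\tau_\xi + f_\xi$, the Thom-class piece $\pi^\ast\tau_\xi$ is represented (via a shrinking neighbourhood of the diagonal) by an algebraic cocycle on $X^2$ with a logarithmic pole along $\Delta$ and with the rotational boundary data on $\Xhat$ controlled by the divisor of $\xi$ on $\Xbar$; the rotation class $f_\xi \in H^1(\Xhat)$ is represented, again using the quasi-algebraic hypothesis, by $(2\pi i m)^{-1}\, d\log \xi^{\otimes m}$, a logarithmic $1$-form on $\Xhat$ which is manifestly of Hodge type $(1,1)$ inside the canonical MHS on $H^1(\Xhat)$. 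Lemma~\ref{lem:rot} and the computations of Section~\ref{sec:c} identify this de~Rham representative with $c_\xi$.

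The main obstacle is step three, namely constructing a \emph{global} algebraic/Hodge-theoretic representative of $c_\xi$ inside the cochain cone computing $H^2_\Delta(X^2,\Xhat)$, compatible with the morphism of mixed Hodge complexes, so that the cup product $(-\smile c_\xi)$ becomes a morphism of MHS on the nose rather than just at the level of graded pieces. Concretely, this requires extending the logarithmic/bar-complex de~Rham machinery of \cite[\S\S10--11]{hain:goldman} from $X^2$ to the relative cohomology of the pair $(X^2,\Xhat)$ and matching the resulting Hodge cocycle with the topological class $c_\xi$. Everything else in the factorisation is either formal MHS functoriality or Hodge theory already established in \cite{hain:goldman}.
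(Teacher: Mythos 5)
Your overall strategy coincides with the paper's: factor $\deltadual_\xi$ as in Section~\ref{sec:factorization}, check each factor is a morphism of MHS, and locate the single Tate twist in $\smile c_\xi$. The formal parts (multiplication, the cone computing $H^2(X^2,\Xhat;\Pdual_X\otimes\Pdual_X^\op\to\Ldual_\Xhat)$ as a mixed Hodge complex, $\psidual$ and $\phidual$) are handled as in the paper. But the step you yourself flag as ``the main obstacle'' --- producing a Hodge-theoretic incarnation of $H^2_\Delta(X^2,\Xhat)$ in which $c_\xi$ is a Hodge class --- is the actual content of the theorem, and your sketch does not close it. Two things are missing. First, the mixed Hodge complex for $H^\bdot_\Delta(X^2,\Xhat)$: the paper obtains it by blowing up $\Xbar\times\Xbar$ along $\Delta_D$ to get a normal crossings model of $X^2-\Delta_X$, then taking the cone of log-de~Rham complexes as in (\ref{eqn:hodge_cone}). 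Second, and more seriously, your proposed representative of the Thom piece --- ``an algebraic cocycle on $X^2$ with a logarithmic pole along $\Delta$'' --- only sees the image of $\tau_\xi$ in $H^2_\Delta(X^2)$, i.e.\ the Thom class $\tau_M$ of the diagonal, which is independent of the framing. The class $\tau_\xi$ is a specific \emph{lift} to $H^2(N,\Delta_M\cup\partial N)$ sitting in a nonsplit-looking extension by $H^1(\Delta_M)$, and the lift depends on $\xi$; a log form on $X^2$ alone cannot determine it. The paper pins down this lift Hodge-theoretically via the Durfee--Hain mixed Hodge complex of the punctured neighbourhood, which identifies $H^\bdot_\Delta(X^2,\Delta_X)\cong H^\bdot_X(TX,X)$ as real MHS, and then uses the \emph{algebraic} trivialization $\phi_m$ of $\Xhat_m$ given by $\xi^m$ to exhibit $\tau_\xi$ as the image of the generator of $H^2(\C,\C^\ast)\cong\Z(-1)$ under a chain of MHS morphisms (including $p_m^\ast$). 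Without some substitute for this, the claim that $\pi^\ast\tau_\xi$ is Hodge of type $(1,1)$ is unproved. (Your treatment of $f_\xi$ via $\frac{1}{m}r_m^\ast\,dt/t$ does match the paper.)

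A secondary divergence: for $(\Lambda\xi)^\ast$ you pass to the cyclic $m$-cover $\Xbar'\to\Xbar$ trivializing $L$ and invoke ``Galois descent''. This is a genuinely different route from the paper, which stays on $X$ and uses the covering $p_m:\Xhat\to\Xhat_m$ of tangent-tensor-power bundles together with the algebraicity of $\xi^m:X\to\Xhat_m$. Your route can probably be made to work, but it is not a formality: a free loop in $X$ need not lift to a loop in $X'$, so $\lambda(X')\to\lambda(X)$ is not surjective and the comparison of $\Hdual^0(\Lambda X)$ with $\Hdual^0(\Lambda X')$ only becomes an isomorphism after $I$-adic completion (using that Malcev completion is insensitive to finite index, and that $\gamma=\exp(\tfrac1m\log\gamma^m)$). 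You would need to verify this compatibly with the MHS and with the analogous statement for $\Xhat'\to\Xhat$; the paper's argument via $\Xhat_m$ sidesteps all of this.
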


Throughout this section, $m$ is a fixed positive integer, and $\xi$ is an algebraic $m$-framing of $X$. Its $m$th power $\xi^m$ is a meromorphic section of $(T\Xbar)^{\otimes m}$. The theorem is proved by showing that each group in the factorization of
$$
\deltadual_\xi : \Hdual^0(\Lambda X) \otimes\Hdual^0(\Lambda X) \to  \Hdual^0(\Lambda X)\otimes \Q(-1)
$$
given in Section~\ref{sec:factorization} has a mixed Hodge structure (MHS) and that each morphism in the factorization is a morphism of MHS. The twist by $\Q(-1)$ occurs in the map $\smile c_\xi$. Note that the topological factorization of $\delta_\xi$ in Section~\ref{sec:turaev} implies that all of the maps in the factorization of $\deltadual_\xi$ in Section~\ref{sec:factorization} are also defined over $\Q$. So we need only show that each preserves the Hodge and weight filtrations after extending scalars to $\C$.

For a positive integer $d$, denote the set of non-zero elements of $(TX)^{\otimes d}$ by $\Xhat_d$. This is a smooth quasi-projective variety. The map $TX \to (TX)^{\otimes d}$ that takes a tangent vector $v$ to $v^d$ induces a covering map $p_d : \Xhat \to \Xhat_d$. Since $X$, $\Xhat_d$ are smooth algebraic varieties, $\Hdual^0(\Lambda \Xhat_d)$ and $\Hdual^0(\Lambda X)$ have natural MHS by \cite[Cor.~10.7]{hain:goldman}.

\begin{lemma}
For all $d\ge 1$, the map $(\Lambda p_d)^\ast : \Hdual^0(\Lambda \Xhat_d;\Q) \to \Hdual^0(\Lambda \Xhat;\Q)$ is an isomorphism of MHS.
\end{lemma}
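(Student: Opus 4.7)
My plan is to prove the two halves separately: (i) $(\Lambda p_d)^\ast$ is an isomorphism of underlying $\Q$-vector spaces; (ii) $(\Lambda p_d)^\ast$ is a morphism of MHS. Together these suffice, since a morphism of MHS which is an isomorphism on underlying vector spaces is automatically an isomorphism of MHS by the strictness of morphisms of MHS for the weight and Hodge filtrations.

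Part (ii) is essentially formal. The map $p_d : \Xhat \to \Xhat_d$ is an algebraic morphism of smooth quasi-projective varieties; in fact, it is a finite \'etale Galois cover with group $\mu_d$, realized fiberwise as the $d$-th power map on the two $\C^\ast$-bundles over $X$. The functoriality of the MHS on $\Hdual^0(\Lambda Y;\Q)$ for smooth varieties (as recorded in Cor.~10.7 of the cited reference) then immediately gives that $(\Lambda p_d)^\ast$ is a morphism of MHS.

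The main work is (i). I would dualize and show instead that
$$
(\Lambda p_d)_\ast : \Q\lambda(\Xhat)^\wedge \to \Q\lambda(\Xhat_d)^\wedge
$$
is an isomorphism. Since $\Q\lambda(Y)^\wedge$ is the cyclic quotient (i.e.\ the quotient by the closure of the subspace spanned by the commutators $ab-ba$) of the $I$-adically completed group algebra $\Q\pi_1(Y,y)^\wedge$, it suffices to show that $(p_d)_\ast : \Q\pi_1(\Xhat,\hat v)^\wedge \to \Q\pi_1(\Xhat_d,p_d(\hat v))^\wedge$ is an isomorphism of complete Hopf algebras. To see this I would use the $\C^\ast$-bundle structure of $\Xhat$ and $\Xhat_d$ over the affine curve $X$: since $X$ is aspherical and $H^2(X;\Z)=0$, both bundles are topologically trivial principal bundles, and the homotopy fibre sequence collapses to a split central extension, giving $\pi_1(\Xhat) \cong \pi_1(X)\times\Z \cong \pi_1(\Xhat_d)$ in such a way that $(p_d)_\ast$ acts as the identity on the $\pi_1(X)$ factor and as multiplication by $d$ on the $\Z$ factor. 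Upon passing to Malcev (prounipotent) completions, the $\Z$ factor becomes $\mathbb{G}_a$, on which multiplication by $d$ is an automorphism; thus $(p_d)_\ast$ induces an isomorphism on prounipotent completions, hence on the completed group algebras $\Q\pi_1^\wedge$, and therefore on their cyclic quotients $\Q\lambda^\wedge$.

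The main step to verify carefully is the splitting $\pi_1(\Xhat) \cong \pi_1(X)\times\Z$, and the corresponding identification for $\Xhat_d$ making $(p_d)_\ast$ visibly the stated map on $\pi_1$. Once this is in hand, the rest is bookkeeping on prounipotent completions of direct products of finitely generated groups, together with the observation that the cyclic quotient construction $\Q\pi_1^\wedge \mapsto \Q\lambda^\wedge$ is functorial.
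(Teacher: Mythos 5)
Your proposal is correct and follows essentially the same route as the paper: both arguments reduce to the fact that the two $\C^\ast$-bundles are trivial (the paper trivializes them compatibly via a smooth section $\xi_o$ and its powers, so that $p_d$ becomes $\id_X\times(t\mapsto t^d)$) and that multiplication by $d$ on $\pi_1(\C^\ast)=\Z$ becomes invertible after rational completion, with algebraicity of $p_d$ supplying the MHS-morphism half. The only cosmetic difference is that you work on the predual side with $\Q\pi_1^\wedge$ and $\Q\lambda^\wedge$ before dualizing, whereas the paper argues directly on the continuous duals $\Hdual^0(\Lambda\,\cdot\,;\Q)$.
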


\begin{proof}
Since $p_d : \Xhat \to \Xhat_d$ is a morphisms of varieties, $(\Lambda p_m)^\ast$ is a morphism of MHS. So, to prove the result, it suffices to prove that it is an isomorphism of vector spaces.

To this end, fix a smooth section $\xi_o$ of $\Xhat \to X$. Then for all $k\ge 1$, $\xi_o^k$ is a smooth section of $\Xhat_k \to X$. Since $\Xhat_k \to X$ is a principal $\C^\ast$-bundle with section $\xi_o^k$, it is trivialized by the map
\begin{equation}
\label{eqn:triv}
\phi_k : X \times \C^\ast \to \Xhat_k,\qquad (x,t) \mapsto t\xi_o(x)^k.
\end{equation}
This trivialization induces an isomorphism
$$
(\Lambda \phi_k)^\ast : \Hdual^0(\Lambda \Xhat_k;\Q) \to \Hdual^0(\Lambda(X \times \C^\ast)) \cong \Hdual^0(\Lambda X;\Q)\otimes \Hdual^0(\Lambda\C^\ast;\Q)
$$
as there is a canonical isomorphism $\Lambda(A\times B) \cong \Lambda A\times\Lambda B$.

The $d$-fold covering map $\chi_d : \C^\ast \to \C^\ast$ induces an isomorphism $\Hdual^0(\Lambda\C^\ast;\Q)\to \Hdual^0(\Lambda\C^\ast;\Q)$. Since the diagram
$$
\xymatrix{
X \times \C^\ast \ar[r]^(.55){\phi_1} \ar[d]_{\id \times \chi_d} & \Xhat \ar[d]^{p_d} \cr
X \times \C^\ast \ar[r]^(.55){\phi_d} & \Xhat_d
}
$$
commutes, so does
$$
\xymatrix@C=32pt{
\Hdual^0(\Lambda\Xhat_d;\Q)\ar[d]_{(\Lambda\p_d)^\ast}  \ar[r]^(.38){(\Lambda \phi_d)^\ast} & \Hdual^0(\Lambda X;\Q)\otimes \Hdual^0(\Lambda\C^\ast;\Q) \ar[d]^{\id\otimes \chi_d}\cr
\Hdual^0(\Lambda\Xhat;\Q) \ar[r]^(.38){(\Lambda \phi_1)^\ast} & \Hdual^0(\Lambda X;\Q)\otimes \Hdual^0(\Lambda\C^\ast;\Q).
}
$$
The result follows as the two horizontal maps and the right-hand vertical map are isomorphisms.
\end{proof}

When $m>1$, it is not immediately obvious that $(\Lambda \xi)^\ast$ is a morphism of MHS. However, this is the case.

\begin{corollary}
The map $(\Lambda \xi)^\ast : \Hdual^0(\Lambda \Xhat) \to \Hdual^0(\Lambda X)$
is a morphism of MHS.
\end{corollary}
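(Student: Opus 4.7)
The plan is to factor the topological map $\xi \colon X \to \Xhat$ through an algebraic morphism by raising to the $m$th power, and then to transfer the Hodge structure across the covering $p_m$ using the preceding lemma. Concretely, fix an algebraic trivialization $L^{\otimes m} \cong \cO_{\Xbar}$. Under this isomorphism the meromorphic section $\xi^m$ of $L^{\otimes m} \otimes (T\Xbar)^{\otimes m}$ becomes a meromorphic section of $(T\Xbar)^{\otimes m}$ whose divisor is supported on $D$. Restricted to $X$, this is a nowhere-vanishing algebraic section, i.e.\ a morphism of smooth complex algebraic varieties $\xi^m \colon X \to \Xhat_m$.

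Because $\xi^m$ is algebraic, \cite[Cor.~10.7]{hain:goldman} implies that
$$
(\Lambda \xi^m)^\ast : \Hdual^0(\Lambda \Xhat_m) \to \Hdual^0(\Lambda X)
$$
is a morphism of MHS. By the previous lemma, $(\Lambda p_m)^\ast \colon \Hdual^0(\Lambda \Xhat_m) \to \Hdual^0(\Lambda \Xhat)$ is an isomorphism of MHS, and hence so is its inverse.

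It remains to identify $(\Lambda \xi)^\ast \circ (\Lambda p_m)^\ast$ with $(\Lambda \xi^m)^\ast$. Since $L$ is topologically trivial, the smooth framing underlying $\xi$ is obtained by choosing a topological trivialization $L \cong \cO^{\mathrm{top}}$, which is unique up to homotopy once we require that its $m$th power agree with the fixed algebraic trivialization of $L^{\otimes m}$ (the space of such topological $m$th roots is connected). With such a choice, $p_m \circ \xi = \xi^m$ on the nose as continuous maps $X \to \Xhat_m$; changing the choice only alters $\xi$ within its homotopy class, and $\Hdual^0(\Lambda -)$ is a homotopy functor. Hence
$$
(\Lambda \xi)^\ast \circ (\Lambda p_m)^\ast \;=\; (\Lambda \xi^m)^\ast,
$$
so that
$$
(\Lambda \xi)^\ast \;=\; (\Lambda \xi^m)^\ast \circ \bigl((\Lambda p_m)^\ast\bigr)^{-1}
$$
is a composition of morphisms of MHS, proving the corollary.

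The only delicate point is the compatibility of the algebraic trivialization of $L^{\otimes m}$ with a topological trivialization of $L$; once this is arranged (and it can be, up to homotopy, since $L$ is topologically trivial), the rest of the argument is entirely formal, being powered by the lemma and the functoriality of the MHS on $\Hdual^0(\Lambda -)$ for morphisms of smooth complex algebraic varieties.
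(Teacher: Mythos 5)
Your proposal is correct and follows essentially the same route as the paper: both factor $(\Lambda\xi)^\ast$ through the commuting square $(\Lambda\xi)^\ast\circ(\Lambda p_m)^\ast=(\Lambda\xi^m)^\ast$, using that $\xi^m$ is algebraic and that $(\Lambda p_m)^\ast$ is an isomorphism of MHS by the preceding lemma. The extra care you take in pinning down the homotopy $p_m\circ\xi\simeq\xi^m$ via compatibility of the topological trivialization of $L$ with the algebraic trivialization of $L^{\otimes m}$ is a welcome elaboration of a point the paper states without comment.
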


\begin{proof}
Regard $\xi$ as a smooth section of $\Xhat\to X$. Since $\xi^m$ is homotopic to $p_m\circ \xi$, the diagram
$$
\xymatrix@C=32pt{
\Hdual^0(\Lambda \Xhat_m) \ar[r]^{(\Lambda \xi^m)^\ast}\ar[d]_{(\Lambda p_m)^\ast}^{\cong} & \Hdual^0(\Lambda X;\Q) \ar@{=}[d] \cr
\Hdual^0(\Lambda \Xhat) \ar[r]^{(\Lambda \xi)^\ast} & \Hdual^0(\Lambda X;\Q)
}
$$
commutes. Since $\xi^m$ is algebraic, the map $(\Lambda \xi^m)^\ast$ is a morphism of MHS. The result follows as $(\Lambda p_m)^\ast$ is an isomorphism of MHS by the previous result.
\end{proof}

\subsection{The multiplication map is a morphism of MHS}

Each fiber $\Hdual^0(\Lambda_x X)$ of $\Ldual_X$ is a commutative Hopf algebra in $\MHS$. The product induces a map $\Ldual_X \otimes \Ldual_X \to \Ldual_X$ of local systems. It is a direct limit of morphisms of admissible variations of MHS over $X$. The Theorem of the Fixed Part (or a direct argument that uses the construction of these MHS) implies that
$$
\text{mult}\ : H^0(X,\Ldual_X)^{\otimes 2} \to H^0(X,\Ldual_X^{\otimes 2})
$$
is a morphism of MHS.

\subsection{The map $\phidual$ is a morphism of MHS}

To prove that the remaining groups have natural MHS and that the maps between them are morphisms, we need to recall the following standard fact about cones of mixed Hodge complexes, which is implicit in \cite{deligne:hodge3}.

\begin{lemma}
\label{lem:cone}
The cone $C^\bdot(\phi)$ of a morphism $\phi : B^\bdot \to A^\bdot$ of mixed Hodge complexes is a mixed Hodge complex, and the corresponding long exact sequence
$$
\cdots \to H^{j-1}(A^\bdot) \to H^j(C^\bdot(\phi)) \to H^j(B^\bdot) \to H^j(A^\bdot) \to \cdots
$$
is a long exact of MHS. \qed
\end{lemma}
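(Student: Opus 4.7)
The plan is to equip $C^\bdot(\phi)$ with a natural structure of mixed Hodge complex, and then invoke the standard fact that the cohomology of a mixed Hodge complex carries a canonical MHS for which short exact sequences of MHCs induce long exact sequences of MHS (Deligne, Hodge~III, \S8.1).

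First, I would define weight and Hodge filtrations on the components $C^j(\phi) = B^j \oplus A^{j-1}$, together with the corresponding rational filtration and comparison quasi-isomorphisms: the Hodge filtration is the componentwise direct sum $F^p = F^p B^j \oplus F^p A^{j-1}$, while the weight filtration is the direct sum with a unit shift on the $A$-factor (reflecting the degree shift $[-1]$), arranged so that
$$
\Gr^W_n C^\bdot(\phi) \;\cong\; \Gr^W_n B^\bdot \,\oplus\, \bigl(\Gr^W_{n+1} A^\bdot\bigr)[-1]
$$
is a direct sum of pure Hodge complexes of the same weight $n$. Indeed, a pure Hodge complex of weight $n+1$ shifted in degree by $-1$ becomes one of weight $n$ by inspection of cohomology, since $H^j$ of the shift is $H^{j-1}$ with pure weight $(n+1)+(j-1) = n+j$, matching the $B$-summand. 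This verifies the MHC axioms for $C^\bdot(\phi)$.

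Second, with the weight shift so arranged, the tautological short exact sequence of complexes
$$
0 \to A^\bdot[-1] \to C^\bdot(\phi) \to B^\bdot \to 0
$$
is a short exact sequence of mixed Hodge complexes, strict with respect to both filtrations. Deligne's formalism then produces the asserted long exact sequence of MHS on cohomology, and a direct check at the level of cocycles (lift $b \in B^j$ to $(b,0)\in C^j$, apply $d$, read off the $A$-component) identifies the connecting homomorphism $H^j(B^\bdot) \to H^{j+1}(A^\bdot[-1]) \cong H^j(A^\bdot)$ with $\phi^\ast$ up to a sign.

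The only genuine obstacle is the bookkeeping of the weight shift so that every map in sight is strictly compatible with both filtrations; once this is in place, the lemma reduces to a direct application of the standard machinery of mixed Hodge complexes.
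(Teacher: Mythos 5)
Your proposal is correct and is exactly the standard argument the paper relies on: the paper offers no proof, stating the lemma as a fact implicit in Deligne's Hodge~III, and your construction of the mixed cone (Hodge filtration taken componentwise, weight filtration shifted by one on the $A$-factor so that $\Gr^W_n$ becomes a pure Hodge complex of weight $n$, followed by the tautological short exact sequence $0\to A^\bdot[-1]\to C^\bdot(\phi)\to B^\bdot\to 0$) is precisely that standard construction. The only cosmetic quibble is that strictness of the maps with respect to the filtrations is a consequence of Deligne's theory applied to the resulting exact sequence of mixed Hodge complexes rather than a hypothesis one verifies beforehand, but this does not affect the argument.
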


\begin{proposition}
\label{prop:phi}
If $X$ is an affine curve, then $H^2(X^2,\Xhat;\Pdual_X\otimes\Pdual_X^\op \to \Ldual_\Xhat)$ has a natural MHS and $\psidual : H^1(X;\Ldual_\Xhat) \to H^2(X^2,\Xhat;\Pdual_X\otimes\Pdual_X^\op \to \Ldual_\Xhat)$ is an isomorphism of MHS. Consequently,
$$
\phidual : H^2(X^2,\Xhat;\Pdual_X\otimes\Pdual_X^\op \to \Ldual_\Xhat) \to \Hdual^0(\Lambda \Xhat)
$$
is also a morphism of MHS.
\end{proposition}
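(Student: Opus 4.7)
The plan is to upgrade the cone that computes $H^\bdot(X^2, \Xhat; \Pdual_X \otimes \Pdual_X^\op \to \Ldual_\Xhat)$ to a (pro) mixed Hodge complex, and then to invoke Lemma~\ref{lem:cone} in conjunction with Proposition~\ref{prop:psidual}.

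First, I would set up mixed Hodge complexes on each side of $\Deltabar^\ast \otimes \iota^\ast$. Both $X$ and $\Xhat$ are smooth quasi-projective varieties over $\C$, the latter because $\Xhat$ is the complement of the zero section in the algebraic line bundle $T\Xbar|_X$. By the results of \cite{hain:goldman} invoked earlier in the paper, $\Pdual_X$ on $X^2$ and $\Ldual_\Xhat$ on $\Xhat$ are pro-admissible variations of MHS, and $H^\bdot(X^2;\Pdual_X\otimes\Pdual_X^\op)$ and $H^\bdot(\Xhat;\Ldual_\Xhat)$ carry natural pro-MHS computed by mixed Hodge complexes whose $\C$-components are quasi-isomorphic to the respective de~Rham complexes with coefficients.

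Second, the map $\Deltabar = \Delta \circ \pi : \Xhat \to X^2$ is a morphism of algebraic varieties, so $\Deltabar^\ast$ extends to a morphism of MHCs. To treat $\iota^\ast$, note that fiberwise $\iota$ is built from the unit map $\kk \to H_0(\Lambda_x X)$ of the Hopf algebra structure, tensored with the map on $H_0$ induced by $\pi : \Xhat \to X$; both are morphisms in $\MHS$ (the unit because it is a morphism of Hopf algebras in $\MHS$, cf.\ \cite{hain:goldman}). Hence $\iota$ is a morphism of pro-VMHS, so its continuous dual $\iota^\ast$ is as well, and $\Deltabar^\ast \otimes \iota^\ast$ lifts to a morphism of mixed Hodge complexes.

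Third, applying Lemma~\ref{lem:cone} equips $H^\bdot(X^2,\Xhat;\Pdual_X\otimes\Pdual_X^\op \to \Ldual_\Xhat)$ with a natural pro-MHS and turns the long exact sequence in the proof of Proposition~\ref{prop:psidual} into a long exact sequence of MHS. Since $X$ is non-compact, $H^2(X^2;\Pdual_X\otimes\Pdual_X^\op)$ vanishes and the connecting map $\partialdual$ on $H^1$ vanishes, so $\psidual$ is an isomorphism of vector spaces; being embedded in a long exact sequence of MHS, it is therefore an isomorphism of MHS. Finally, $\phidual$ is a morphism of MHS by the identity $\csdual = \phidual \circ \psidual$ from Section~\ref{sec:cts_duals}, combined with the fact from \cite{hain:goldman} that $\csdual$ is a morphism of MHS and the isomorphism of MHS just established for $\psidual$.

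The main obstacle I expect is the second step, namely promoting $\iota^\ast$ from an abstract morphism of pro-VMHS to an actual morphism between the MHCs that Hain uses to compute $H^\bdot(X^2;\Pdual_X\otimes\Pdual_X^\op)$ and $H^\bdot(\Xhat;\Ldual_\Xhat)$. Concretely, this requires choosing compatible de~Rham models built from reduced bar constructions on $X$ and on $\Xhat$, with their Hodge and weight filtrations, in which the splitting-at-a-basepoint map $\iota$ arises from a filtered comultiplication on the bar complex over $\Xhat$ composed with pullback along the algebraic map $\pi$. Once this bookkeeping is carried out, the rest of the statement is a formal consequence of Lemma~\ref{lem:cone} and Proposition~\ref{prop:psidual}.
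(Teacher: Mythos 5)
Your proposal is correct and follows essentially the same route as the paper: build mixed Hodge complexes computing the cohomology of each side of $\Deltabar^\ast\otimes\iota^\ast$ (the paper does this via Saito's log-de~Rham complexes on the normal crossings compactifications $\Xbar^2$ and $P=\P(T\Xbar\oplus\cO)$ of $X^2$ and $\Xhat$), apply Lemma~\ref{lem:cone} to the cone, read off that $\psidual$ is an isomorphism of MHS from the resulting long exact sequence together with the vanishing in Proposition~\ref{prop:psidual}, and conclude for $\phidual$ via $\phidual=\csdual\circ\psidual^{-1}$. The ``obstacle'' you flag --- realizing $\iota^\ast$ as an honest map of the chosen filtered models --- is exactly the bookkeeping the paper's choice of explicit log compactifications is designed to handle.
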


\begin{proof}
The work of Saito \cite{saito:mhc} implies that if $\bV$ is an admissible variation of MHS over the complement of a divisor $W$ with normal crossings in a smooth variety $Z$, then the complex $E^\bdot(Z\log W;\bV)$ of smooth forms on $Z$ with values in the canonical extension of $\bV$ to $Z$ and log poles along $W$ is part of a mixed Hodge complex and is naturally quasi-isomorphic to $E^\bdot(Z-W;\bV)$. In particular, it computes $H^\bdot(Z-W;\bV)\otimes\C$, together with its Hodge and weight filtrations.

The compactification $P=\P(T\Xbar\oplus\cO_\Xhat)$ of the tangent bundle $T\Xbar$ of $\Xbar$ is a compactification of $\Xhat$ whose complement $W$ is a divisor with normal crossings. The cone
$$
\cone\big(E^\bdot(\Xbar^2,\log((\Xbar\times D) \cup (D\times\Xbar));\Pdual_X\otimes\Pdual_X^\op),E^\bdot(P\log W;\bL_\Xhat)\big)[-1]
$$
is quasi-isomorphic to $E^\bdot(X^2,\Xhat;\Pdual_X\otimes\Pdual_X^\op \to\Ldual_\Xhat)$. Lemma~\ref{lem:cone} implies that it is the complex part of a mixed Hodge complex that computes $H^\bdot(X^2,\Xhat;\Pdual_X\otimes\Pdual_X^\op \to \Ldual_\Xhat)$. In particular these cohomology groups have a natural MHS. It also follows that $\psidual$, being a map in cohomology sequence, is a morphism (and thus isomorphism) of MHS.

The map $\csdual$ is morphism of MHS by \cite[Lem.~11.1]{hain:goldman}. Since $\phidual = \csdual\circ \psidual^{-1}$, it is also a morphism of MHS.
\end{proof}

\subsection{The class $c_\xi$ is a Hodge class}

\begin{proposition}
The group $H_\Delta^\bdot(X^2,\Xhat)$ has a natural mixed Hodge structure and $c_\xi$ is a Hodge class of type $(1,1)$.
\end{proposition}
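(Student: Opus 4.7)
The plan is to first endow $H^\bdot_\Delta(X^2,\Xhat)$ with a mixed Hodge structure via a mixed Hodge complex built as a cone, and then verify that the two summands of $c_\xi = \pi^\ast\tau_\xi + f_\xi$ are separately of type $(1,1)$, reducing to genuinely algebraic data via the quasi-algebraic framing.

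For the first assertion, I would follow the pattern already used in the proof of Proposition~\ref{prop:phi}. The complex $C^\bdot(X^2, X^2 - \Delta_X)$ computes the local cohomology along the diagonal, which by the Thom isomorphism is $H^{\bdot-2}(X)(-1)$ as MHS; this can be realized by a mixed Hodge complex built from log forms on a smooth compactification of the pair $(X^2, X^2-\Delta_X)$ (e.g.\ using $\Xbar^2$ with the divisor $(\Xbar\times D)\cup(D\times\Xbar)\cup\Deltabar_\Xbar$). Since $\Deltabar = \Delta\circ\pi$ is an algebraic morphism, it lifts to a morphism of mixed Hodge complexes into the standard MHC for $H^\bdot(\Xhat)$. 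Applying Lemma~\ref{lem:cone} to this morphism yields a mixed Hodge complex whose cohomology is $H^\bdot_\Delta(X^2, \Xhat)$, and simultaneously upgrades the long exact sequence of Lemma~\ref{lem:les} to a long exact sequence of MHS.

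For the Hodge type of $c_\xi$, handle each summand separately. The class $\tau_\xi$ lies in $H^2_\Delta(X^2, \Delta_X)$ and maps to the Thom class of $TX$ under the natural projection $H^2_\Delta(X^2, \Delta_X)\to H^2_\Delta(X^2) \cong \Q(-1)$. The Thom class is the cohomology class of the algebraic cycle $\Delta_X\subset X^2$ and is therefore of type $(1,1)$. The ambiguity in the lift to $H^2_\Delta(X^2, \Delta_X)$ lives in $H^1(X)$, and here is where quasi-algebraicity enters. The section $\xi^m$ is an algebraic section of $(T\Xbar)^{\otimes m}$ with divisor supported on $D$, so the analogous class $\tau_{\xi^m}$ for the bundle $\Xhat_m$ is represented by an algebraic relative cycle and is pure of type $(1,1)$. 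Pulling back along the algebraic covering map $p_m: \Xhat \to \Xhat_m$ (and dividing by $m$ to account for the fiber degree) then shows that $\tau_\xi$, and hence $\pi^\ast\tau_\xi$ in $H^2_\Delta(X^2, \Xhat)$, is of type $(1,1)$.

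For $f_\xi \in H^1(\Xhat)$, I would use the Gysin sequence for the algebraic $\C^\ast$-bundle $\Xhat_m \to X$, which (since $X$ is affine and $H^2(X)=0$) gives a short exact sequence of MHS
$$0 \to H^1(X;\Q) \to H^1(\Xhat_m;\Q) \to \Q(-1) \to 0.$$
The algebraic section $\xi^m$ provides a splitting of this sequence in $\MHS$ via $(\xi^m)^\ast$, so its kernel is a copy of $\Q(-1)$, necessarily spanned by $f_{\xi^m}$; thus $f_{\xi^m}$ is of type $(1,1)$. A short fiberwise computation (using that $p_m$ is the $m$-th power map on fibers and that both classes are annihilated by $\xi^\ast$) gives $p_m^\ast f_{\xi^m} = m\, f_\xi$, and hence $f_\xi$ is of type $(1,1)$ as well. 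Adding the two pieces yields $c_\xi$ is Hodge of type $(1,1)$. The main technical obstacle is precisely this reduction from the quasi-algebraic $\xi$ to the algebraic $\xi^m$: one must set up $\tau_{\xi^m}$ and $f_{\xi^m}$ and check compatibility of the topological classes $\tau_\xi$, $f_\xi$ with their pullbacks under $p_m$, so as to transport the Hodge-type information from $\Xhat_m$ back to $\Xhat$.
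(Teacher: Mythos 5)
Your overall strategy coincides with the paper's: realize the MHS by a cone of mixed Hodge complexes, split $c_\xi=\pi^\ast\tau_\xi+f_\xi$, and transport Hodge-type information from the algebraic section $\xi^m$ back through the covering $p_m:\Xhat\to\Xhat_m$. Your treatment of $f_\xi$ via the Gysin sequence and the splitting $(\xi^m)^\ast$ is a correct variant of the paper's computation $2\pi i f_\xi=\tfrac1m r_m^\ast(dt/t)$. (A small point on the first part: the divisor $(\Xbar\times D)\cup(D\times\Xbar)\cup\Delta_\Xbar$ in $\Xbar^2$ is not normal crossings at the points of $\Delta_D$, which is why the paper first blows up $\Xbar^2$ at $\Delta_D$.)

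The genuine gap is in the $\tau_\xi$ step. The class $\tau_\xi$ lives in $H^2_\Delta(X^2,\Delta_X)$, whose MHS you have just defined via log complexes on a compactification of $X^2$. But the construction you propose for controlling its lift, and everything involving $\xi^m$, $\Xhat_m$ and $p_m$, takes place on the tangent bundle: the natural home of your class $\tau_{\xi^m}$ is $H^2_X((TX)^{\otimes m},X)$, and $p_m^\ast$ carries it to $H^2_X(TX,X)$, not to $H^2_\Delta(X^2,\Delta_X)$. You must still show that the topological (tubular-neighbourhood) isomorphism $H^\bdot_X(TX,X)\cong H^\bdot_\Delta(X^2,\Delta_X)$ is compatible with the two a priori different Hodge structures. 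This is not formal: the paper invokes the Durfee--Hain mixed Hodge complex for punctured neighbourhoods, which shows that the MHS on $H^\bdot_\Delta(X^2,\Delta_X)$ depends only on $X$ and the normal bundle of the diagonal, and even then this only yields a comparison of \emph{real} mixed Hodge structures --- which is exactly why the paper first observes that $\tau_\xi$ is rational, so that it suffices to prove it is a real Hodge class. Without this comparison your argument establishes that a class on the tangent-bundle side is Hodge for the MHS defined there, but not that $\tau_\xi$, viewed in $H^2_\Delta(X^2,\Xhat)$ with the cone MHS, is of type $(1,1)$.
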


\begin{proof}
Let $Y$ be the blow up $\Xbar\times \Xbar$ at $\Delta_D$. Then $X^2-\Delta$ is the complement of a normal crossing divisor $E$ in $Y$. Write $E=E'+\Delta_\Xbar$. The restriction of $E'$ to the diagonal $\Delta_\Xbar$ is $\Delta_D$.

Let $Z$ be the normal crossings compactification of $\Xhat$ constructed in the proof of Proposition~\ref{prop:phi}. The commutative diagram of morphisms of complex algebraic maps
$$
\xymatrix{
\Xhat \ar[r]^\pi\ar@{=}[d] & X \ar[r]^\Delta\ar@{=}[d] & X^2\ar@{=}[d] & X^2 - \Delta_X \ar[l]\ar@{=}[d]\cr
Z-W \ar[r] & \Xbar - D \ar[r] & Y-E' & Y-E \ar[l]
}
$$
induces a commutative diagram
$$
\xymatrix{
E^\bdot(Y\log E) \ar[d] & E^\bdot(Y\log E') \ar[l]\ar[r]\ar[d] & E^\bdot(\Xbar\log D) \ar[r]\ar[d] & E^\bdot(Z\log W) \ar[d] \cr
E^\bdot(X^2-\Delta_X) & E^\bdot(X^2) \ar[l] \ar[r]^{\Delta^\ast} & E^\bdot(X) \ar[r]^{\pi^\ast} & E^\bdot(\Xhat)
}
$$
of DGAs in which each vertical map is a quasi-isomorphism. Each DGA in this diagram is the complex part of the natural mixed Hodge complex associated to the corresponding variety. The Five Lemma implies that the complex $E^\bdot_\Delta(X^2,\Xbar)$ is naturally quasi-isomorphic to
\begin{equation}
\label{eqn:hodge_cone}
\cone\big(E^\bdot(Y\log E') \to E^\bdot(Y\log E) \oplus E^\bdot(Z\log W) \big)[-1]
\end{equation}
Lemma~\ref{lem:cone} implies that it is the complex part of a mixed Hodge complex. It follows that $H^\bdot_\Delta(X,\Xhat)$ has a natural MHS and that the exact sequence of Lemma~\ref{lem:s}
$$
0 \to H^1(\Xhat) \to H^2_\Delta(X^2,\Xhat) \to H^2_\Delta(X^2) \to 0
$$
is an exact sequence of mixed Hodge structures.

It remains to show that $c_\xi$ is a Hodge class that spans a copy of $\Q(-1)$. Recall the notation and the construction of $c_\xi$ from Section~\ref{sec:c}. In particular, $c_\xi = \pi^\ast\tau_\xi + f_\xi$. Since $\pi^\ast : H^2_\Delta(X^2,\Delta_X) \to  H^2_\Delta(X^2,\Xhat)$ is a morphism of MHS, to prove that $c_\xi$ is a Hodge class, it suffices to prove that both $f_\xi$ and $\tau_\xi$ are Hodge classes.

We first show that $f_\xi$ is a Hodge class. Let $r_m : \Xhat \to \C^\ast$ be the composite
$$
\xymatrix{
\Xhat \ar[r]^{p_m} & \Xhat_m \ar[r] & \C^\ast
}
$$
where the second map is the composition of the inverse of the isomorphism $\phi_m$ (\ref{eqn:triv}) with the projection $X\times \C^\ast \to \C^\ast$ given by $\xi^m$. Since $\xi^m$ is algebraic, this is a morphism of varieties and thus induces a morphism of MHS on cohomology. The map $r$ used in Lemma~\ref{lem:f_xi} in the construction of $f_\xi$ is the topological $m$th root of $r_m$. Since $r_m^\ast dt/t =  m r^\ast dt/t$, we have
$$
2 \pi i f_\xi = r^\ast \frac{dt}{t} = \frac{1}{m}r_m^\ast \frac{dt}{t} \in H^1(\Xhat;\Q) \subset H^2_\Delta(X^2,\Xhat)
$$
which spans a copy of $\Q(-1)$ as $H^1(\C^\ast;\Q) \cong \Q(-1)$. Thus $f_\xi$ is a Hodge class.

Since $\tau_\xi \in H^2_\Delta(X^2,X;\Q)$, to prove that it is a Hodge class, it suffices to show that it is a {\em real} Hodge class. To do this, we use the fact that the MHS on $H_\Delta^\bdot(X^2,\Delta_X)$ depends only on $X$ and the normal bundle of $\Delta_X$ in $X^2$, which is just the (holomorphic) tangent bundle $TX$ of $X$. This follows from the construction of a (real) mixed Hodge complex for the punctured neighbourhood of one variety in another that was constructed in \cite{durfee-hain}. That construction implies that the natural isomorphism
$$
H^\bdot_X(TX,X) \cong H^\bdot_\Delta(X^2,\Delta_X)
$$
that is constructed using topology, is an isomorphism of real MHS. There is also a natural isomorphism
$$
p_d^\ast : H^\bdot_X((TX)^{\otimes d},X) \overset{\simeq}{\To} H^\bdot_X(TX,X)
$$
of MHS for all $d\ge 1$, where $H^\bdot_X((TX)^{\otimes d},X)$ is defined to be the homology of the complex
$$
\cone\big(C^\bdot((TX)^{\otimes d},\Xhat_d) \to C^\bdot(X)\big)[-1].
$$

The trivialization $\phi_m : \Xhat_m \to X\times \C^\ast$ given by $\xi^m$ induces a MHS morphism $\phi_m^\ast : H^2(\C,\C^\ast) \to H^2_X((TX)^{\otimes m},X)$. The class $\tau_\xi$ is the image of the positive generator $\tau_B$ of $H^2(\C,\C^\ast) \cong \Z(-1)$ under the sequence
$$
\xymatrix@C=16pt{
H^2(\C,\C^\ast) = H^2_{\{0\}}(\C) \ar[r]^(.55){r_m^\ast} & H^2_X((TX)^{\otimes m},X) \ar[r]^(.55){p_m^\ast} & H^2_X(TX,X) & \ar[l]_{\simeq} H^2_\Delta(X^2,\Delta_X).
}
$$
It follows that $\tau_\xi$ is a real (and therefore rational) Hodge class, which completes the proof.
\end{proof}

\begin{corollary}
The cup product (\ref{eqn:cup}) is a morphism of MHS. Consequently, cupping with $c_\xi$
$$
\smile c_\xi : \Hdual^0(X;\Ldual_X \otimes \Ldual_X) \to H^2(X^2,\Xhat;\Pdual_X\otimes\Pdual_X^\op \to \Ldual_\Xhat)\otimes \Q(-1)
$$
is a morphism of MHS
\end{corollary}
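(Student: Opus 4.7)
The plan is to deduce the second assertion from the first together with the fact, established just above, that $c_\xi$ is a Hodge class of type $(1,1)$, i.e.\ spans a copy of $\Q(-1)$ inside $H^2_\Delta(X^2,\Xhat)$. Once the pairing (\ref{eqn:cup}) is known to be a morphism of MHS, evaluating its second slot on $c_\xi$ yields a morphism whose target is naturally Tate twisted by $\Q(-1)$, which is precisely the displayed map. So the task reduces to showing that (\ref{eqn:cup}) is a morphism of MHS.

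To do this I would lift the cochain-level construction of $\smile$ given in the lemma preceding Section~\ref{sec:factorization} to a pairing of mixed Hodge complexes. Mixed Hodge complexes computing the three groups in question are already on hand: for $H^0(X;\Ldual_X\otimes\Ldual_X)$ one uses logarithmic forms on $(\Xbar,D)$ with values in the canonical extension of the admissible VMHS $\Ldual_X\otimes\Ldual_X$; for $H_\Delta^\bdot(X^2,\Xhat)$ one uses the cone (\ref{eqn:hodge_cone}) constructed in the previous proposition from $(Y,E',E)$ and $(Z,W)$; and for $H^\bdot(X^2,\Xhat;\Pdual_X\otimes\Pdual_X^\op\to\Ldual_\Xhat)$ one uses the cone of mixed Hodge complexes exhibited in the proof of Proposition~\ref{prop:phi}.

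The main step is to promote the topological formula $F\otimes[\w,\eta]\mapsto [F\wedge\w,(-1)^{|F|}(\pi^\ast F)\wedge\eta]$ to a pairing of these mixed Hodge complexes, preserving Hodge and weight filtrations. The key technical point is an algebraic substitute for the tubular neighbourhoods $V\subset U$ of $\Delta_X$ used in the cochain argument: on the blow-up $Y$ of $\Xbar\times\Xbar$ along $\Delta_D$, the strict transform $\Delta_{\Xbar}\subset Y$ together with the $\log E'$ / $\log E$ complexes plays the role of ``forms extendable across $V$''. Because restriction from $U$ to $\Delta_X$ is realised at the log level by the pullback of admissible VMHS along $\Delta:\Xbar\to Y$ (a morphism of pairs), and because wedge product of logarithmic forms with coefficients in variations of MHS is a morphism of mixed Hodge complexes in each variable (this is where Saito's formalism, or equivalently \cite{hain:goldman}, enters), the map $(F,[\w,\eta])\mapsto[F\w,\pm\pi^\ast F\cdot\eta]$ is tri-filtered, and descends on cohomology to a morphism of MHS.

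The hard part is verifying the bifiltration compatibility when one factor carries a logarithmic pole along $\Delta_\Xbar$ and the other is a VMHS-valued form restricted to the diagonal; this requires choosing compatible mixed Hodge complexes so that the restriction quasi-isomorphism $E^\bdot(U;\Pdual_X\otimes\Pdual_X^\op)\to E^\bdot(\Delta_X;\Ldual_X\otimes\Ldual_X)$ is realised at the filtered level by a strict morphism. Once this diagram of mixed Hodge complexes is in place, the quasi-isomorphism with the cochain model of Proposition~\ref{prop:phi} is automatic, the cup product on cohomology coincides with the topologically defined $\smile$ of (\ref{eqn:cup}), and both conclusions of the corollary follow.
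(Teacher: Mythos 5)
The paper states this corollary with no proof at all, treating it as immediate: the cup product is realized by a pairing of the mixed Hodge complexes already constructed in the proofs of Proposition~\ref{prop:phi} and the preceding proposition, and cupping with the type $(1,1)$ Hodge class $c_\xi$ then produces the $\Q(-1)$-twisted morphism. Your proposal is correct and supplies exactly this intended argument in expanded form; the bifiltration compatibility across the blow-up and the tubular-neighbourhood replacement that you single out as ``the hard part'' is indeed the only real content, and it is precisely what the author chose to suppress.
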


\section{Mapping Class Group Orbits of Framings}
\label{sec:framings}

In this section, we recall Kawazumi's classification \cite{kawazumi:framings} of mapping class group orbits of framings of a surface. As we shall see subsequently, this classification is closely related to the classification of the strata of meromorphic 1-forms studied by Kontsevich and Zorich \cite{kontsevich-zorich} in the holomorphic case, and by Chen, Gendron, Grushevsky and M\"oller \cite{mero_diffs} in the meromorphic case.

We first recall the definition of mapping class groups and our notation for them. Suppose that $Q$ is a finite subset of $\Sbar$ with $\#Q=m$, and $V$ is a set of $r$ non-zero tangent vectors that are anchored at $r$ distinct points, none of which are in $Q$. The mapping class group $\G_{g,m+\vec{r}}$ is defined to be the group $\pi_0\Diff^+(\Sbar,Q,V)$ of isotopy classes of $\Sbar$ that fix the points $Q$ and the tangent vectors $V$. The indices $m$ and $r$ are omitted when they vanish.

Suppose that $Q$ is non-empty. Set $S=\Sbar - Q$. The mapping class group $\G_{g,m}$ acts on framings of $S$ by push forward. Kawazumi \cite{kawazumi:framings} determined the set of mapping class group orbits. They depend on the vector $\dd(\xi) = (d_q)_{q\in Q} \in \Z^Q$ of local degrees of $\xi$ at the points of $\Q$. We say that $\dd(\xi)$ is {\em even} if each $d_q$ is even. When $g>0$ and $\dd(\xi)$ is even, we can associate the $\F_2$ quadratic form 
$$
F_\xi : H_1(\Sbar;\F_2) \to \F_2,\quad  a \mapsto 1 + \rot_\xi(\alpha) \bmod 2 
$$
to $\xi$, where $\alpha$ is an imbedded circle that represents $a$. Denote the Arf invariant of this form by $\Arf(\xi)$.

\begin{theorem}[Kawazumi]
\label{thm:kawazumi}
Suppose that $\xi_0$ and $\xi_1$ are framings of $S$.
\begin{enumerate}

\item If $g=0$, then $\xi_0$ and $\xi_1$ are in the same $\G_{0,m}$ orbit if and only if $\dd(\xi_0) = \dd(\xi_1)$.

\item If $g > 1$ and $\dd(\xi_0)$ is not even, then $\xi_0$ and $\xi_1$ are in the same $\G_{g,m}$-orbit if and only if $\dd(\xi_0) = \dd(\xi_1)$.


\item If $g > 1$ and $\dd(\xi_0)$ is even, then $\xi_0$ and $\xi_1$ are in the same $\G_{g,m}$-orbit if and only if $\dd(\xi_0) = \dd(\xi_1)$ and $\Arf(\xi_0) = \Arf(\xi_1)$.

\item If $g=1$, then $\xi_0$ and $\xi_1$ are in the same $\G_{1,m}$-orbit if and only if $\dd(\xi_0) = \dd(\xi_1)$ and $A(\xi_o) = A(\xi_1)$, where
$$
A(\xi) := \gcd\{\rot_\xi(\alpha) : \alpha \text{ is a non-separating simple closed curve in } S\}.
$$

\end{enumerate}

\end{theorem}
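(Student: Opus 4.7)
The strategy is to parametrize homotopy classes of framings on $S$ as a cohomology torsor, compute the induced affine action of $\G_{g,m}$, and then read off the orbit invariants.

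First I would observe that a framing of $S=\Sbar - Q$ is the same as a section of the oriented unit tangent circle bundle $UTS \to S$. Obstruction theory (or direct inspection, since $S$ is an open 2-complex) shows that two framings of $S$ with the same local-degree vector $\dd$ differ by a well-defined homotopy class $[\xi_1:\xi_0] \in [S,S^1] = H^1(S;\Z)$. Hence homotopy classes of framings with degree $\dd$ form an $H^1(S;\Z)$-torsor. The pushforward by $\phi \in \G_{g,m}$ preserves $\dd$ (since punctures are pointwise fixed) and acts affinely: after choosing a reference $\xi_0$, one has $\phi\cdot\xi = \phi_\ast \xi_0 + \phi^\ast [\xi:\xi_0]$, with translation cocycle $\tau(\phi) := [\phi_\ast\xi_0 : \xi_0]$, the Chillingworth class of $\phi$ relative to $\xi_0$. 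The crucial computation is the \emph{Dehn-twist formula}: for a simple closed curve $\gamma \subset \Sbar$ and the Dehn twist $T_\gamma$, a local model on an annular neighbourhood of $\gamma$ gives $\tau(T_\gamma) = \rot_{\xi_0}(\gamma)\cdot \mathrm{PD}[\gamma] \in H_1(\Sbar;\Z) \subset H^1(S;\Z)$ (up to a universal sign convention).

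The genus-zero case is immediate: $H^1(S;\Z)$ is purely peripheral, spanned by Poincar\'e duals of loops around the punctures subject to the constraint $\sum d_q = 2-2g$, and the mapping class group acts trivially on peripheral cohomology when punctures are fixed, so $\dd$ alone is a complete invariant. For $g \ge 2$ the mapping class group acts on $H^1(S;\Z)$ through the symplectic representation on $H^1(\Sbar;\Z)$ and trivially on the peripheral quotient, so the sublattice of framing-differences reachable by $\G_{g,m}$ is generated by the $\Sp(2g,\Z)$-orbit of all $\rot_{\xi_0}(\gamma)\cdot \mathrm{PD}[\gamma]$. When $\dd$ is not even, a Poincar\'e--Hopf parity argument produces a simple closed curve with odd rotation number, and the resulting lattice saturates $H_1(\Sbar;\Z)$, so $\dd$ alone classifies the orbit.

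When $\dd$ is even every $\rot_{\xi_0}(\gamma)$ has a parity depending only on the $\F_2$-class of $\gamma$, so Dehn twists only move the torsor class by elements of $2H_1(\Sbar;\Z)$. The residual $\F_2$-datum is the map $F_\xi(a) := 1+\rot_\xi(\alpha) \bmod 2$ on $H_1(\Sbar;\F_2)$; a surgery computation for two simple closed curves meeting transversally (resolving the crossings and comparing rotation numbers) shows that $F_\xi$ is well-defined on $\F_2$-homology and satisfies the quadratic-refinement identity $F_\xi(a+b)=F_\xi(a)+F_\xi(b)+a\cdot b$, so it corresponds to a spin structure on $\Sbar$. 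Since $F_\xi$ transforms $\G_{g,m}$-equivariantly, $\Arf(\xi)$ is an orbit invariant; that it is a \emph{complete} invariant is Johnson's theorem on the transitivity of the mapping class group on spin structures of fixed Arf invariant. Finally, in genus one, $\Sp(2,\Z) = \SL_2(\Z)$ acts on $H_1(\Sbar;\Z)\cong\Z^2$ with orbits indexed by the gcd of coordinates, and the Dehn-twist formula shows that the Dehn-twist sublattice of framing-differences is exactly $A(\xi)\cdot\Z^2$, giving the stated classification. The main obstacles are the Dehn-twist cocycle formula (a delicate local computation sensitive to sign conventions) and, in the even case, the verification that $F_\xi$ is well-defined and quadratic; once these are in hand the rest reduces to standard orbit theory for $\Sp(2g,\Z)$.
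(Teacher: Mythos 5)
The paper does not prove this theorem: it is quoted from Kawazumi \cite{kawazumi:framings}, so there is no internal argument to compare against. Your outline follows the strategy of that reference --- framings with fixed local-degree vector form an $H^1(S;\Z)$-torsor, $\G_{g,m}$ acts affinely via a Chillingworth-type difference cocycle, the Dehn twist $T_\gamma$ translates by $\rot_{\xi_0}(\gamma)\cdot\mathrm{PD}[\gamma]$, and in the even case the residual mod $2$ datum is the quadratic form $F_\xi$ --- and the \emph{invariance} halves of all four statements do follow once the twist formula and the quadraticity of $F_\xi$ are established.

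The genuine gaps are all in the \emph{completeness} (transitivity) halves. First, in the even case, Johnson's theorem gives transitivity on $\F_2$-quadratic forms of fixed Arf invariant, but a framing is far more data than its quadratic form: after matching the forms, two framings with the same $\dd$ still differ by an arbitrary element of $2H^1(\Sbar;\Z)$, and one must show that the stabilizer of the quadratic form contains enough Dehn twists (those about curves $\gamma$ with $\rot_{\xi_0}(\gamma)$ even) to realize every translation in $2H^1(\Sbar;\Z)$. That is the heart of the even case and is not addressed by citing Johnson. Second, in the non-even case a single curve $\gamma$ with odd rotation number $\rho$ only contributes the sublattice $\rho\Z\cdot\mathrm{PD}[\gamma]$; to saturate $H^1(\Sbar;\Z)$ one needs, within a fixed nonseparating homology class, embedded representatives whose rotation numbers have gcd $1$, i.e.\ the realization lemma (valid for $g\ge 2$) that rotation numbers in a fixed class take all values of a given parity. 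The same issue makes the genus-one claim that the Dehn-twist sublattice equals $A(\xi)\cdot\Z^2$ a genuine computation rather than an observation, since $\rot_\xi$ is not additive on homology (smoothing a transverse intersection contributes $\pm1$). Finally, the genus-zero argument is a non sequitur as written: triviality of the mapping class group action on $H^1(S;\Z)$ would work \emph{against} transitivity if the torsor were nontrivial; the correct point is that the difference class of two framings with the same $\dd$ pairs to zero with every peripheral loop, and $H_1(S;\Z)$ is peripherally generated in genus $0$, so the two framings are already homotopic.
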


\begin{remark}
The role of the quadratic form $F_\xi$ is not mysterious. When $\dd(\xi)$ is even, there is a unique ``square root'' $\sqrt{\xi}$ of $\xi$. It is a section of a rank 2 vector bundle that is a square root of $T\Sbar$
whose local degree at $q\in Q$ is $d(\xi)/2$. This bundle corresponds to a spin structure on $\Sbar$. As is well known, spin structures correspond to $\F_2$ quadratic forms on $H_1(\Sbar;\F_2)$. There are only two $\Sp(H_\Z)$ orbits of these, and they are distinguished by the Arf invariant.
\end{remark}

We can regard the topological tangent bundle $T\Sbar$ of the oriented surface $\Sbar$ as complex line bundle $T$. This allows us to define the section $\xi_o^m$ of the complex line bundle $T^{\otimes m}$ over $S$ for all $m>0$. These are well defined up to homotopy. The obstruction to two ``even'' framings being in the same mapping class group orbit vanishes when we take squares.

\begin{corollary}
\label{cor:quad}
When $g>1$, $\xi_0^2$ and $\xi_1^2$ are in the same $\G_{g,m}$ orbit if and only if $\dd(\xi_0) = \dd(\xi_1)$.
\end{corollary}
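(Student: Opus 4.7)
\emph{Plan.} The ``only if'' direction is immediate: any $\phi\in\Gamma_{g,m}$ fixes $Q$ pointwise, so the local degree of $\phi_*(\xi_0^2)$ at each $q\in Q$ equals $2\,d_q(\xi_0)$, and matching these with those of $\xi_1^2$ forces $\dd(\xi_0)=\dd(\xi_1)$.

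For the converse, assume $\dd:=\dd(\xi_0)=\dd(\xi_1)$ and apply Theorem~\ref{thm:kawazumi} to $\xi_0,\xi_1$ themselves as framings. If $\dd$ is not even, part~(ii) furnishes $\phi\in\Gamma_{g,m}$ with $\phi_*\xi_0\simeq\xi_1$; squaring the isotopy then gives $\phi_*(\xi_0^2)=(\phi_*\xi_0)^2\simeq\xi_1^2$. The case where $\dd$ is even and $\Arf(\xi_0)=\Arf(\xi_1)$ is handled identically via part~(iii).

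The crux is the remaining case, $\dd$ even with $\Arf(\xi_0)\neq\Arf(\xi_1)$, where $\xi_0$ and $\xi_1$ lie in distinct $\Gamma_{g,m}$-orbits as framings yet their squares are claimed to coincide. The guiding heuristic is that the Arf invariant is extracted from the quadratic refinement $F_\xi(a)=1+\rot_\xi(\alpha)\bmod 2$ of the intersection form on $H_1(\Sbar;\F_2)$; on squaring, every rotation number doubles, so the analogue $F_{\xi^2}\equiv 1$ ceases to be a quadratic refinement and the Arf obstruction has no counterpart at the level of sections of $T^{\otimes 2}$. To make this rigorous I would dualise $\xi^2$ to a nowhere-vanishing meromorphic quadratic differential on $\Sbar$ with zero and pole orders determined by $\dd$, and invoke the connectedness (in $g>1$) of the corresponding stratum of such differentials --- from the work of Kontsevich--Zorich and Bainbridge--Chen--Gendron--Grushevsky--M\"oller appealed to elsewhere in this section --- to join the two squares by a path inside the stratum; an isotopy extension argument then delivers the required diffeomorphism $\phi$. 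This merging of the two Arf-distinguished orbits after squaring is the step where the real technical work resides, and the one I expect to be the main obstacle.
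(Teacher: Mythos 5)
Your reduction to Theorem~\ref{thm:kawazumi} in the cases where $\dd$ is odd, or even with $\Arf(\xi_0)=\Arf(\xi_1)$, matches the paper, and you have correctly isolated the crux: the even case with $\Arf(\xi_0)\neq\Arf(\xi_1)$. Your heuristic for why the Arf obstruction should die after squaring is also the right one. But the argument you propose for that case does not work. First, it is not purely topological: to ``dualise $\xi^2$ to a meromorphic quadratic differential'' you must first produce a complex structure on $(\Sbar,P)$ realizing the given smooth $2$-framing algebraically, and that realizability is exactly what Proposition~\ref{prop:framings}(\ref{item:quadratic}) establishes \emph{using} this corollary (together with the non-emptiness of $\sS^2_\dd$ from \cite{mero_diffs}); as set up, your argument is circular. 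Second, even granting realizability, the connectedness you invoke is false in the generality you need: strata of quadratic differentials are in general disconnected (there are hyperelliptic and other exceptional components), so ``join the two squares by a path inside the stratum'' is not available. What the existence results of \cite{kontsevich-zorich,mero_diffs} give is non-emptiness of strata, not connectedness, and the whole point of this section is that the passage from components of strata to mapping class group orbits of framings must be controlled by Kawazumi's topological invariants, not the other way around.

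The paper's argument for the crux case is purely topological and much softer. The $2$-framing $\xi_0^2$ has exactly $2^{2g}$ square roots $\xi_\delta$, indexed by $\delta\in H^1(\Sbar;\F_2)$: twist $T\Sbar$ by the flat order-$2$ line bundle $L_\delta$, use the canonical isomorphism $(T\Sbar\otimes L_\delta)^{\otimes 2}\cong (T\Sbar)^{\otimes 2}$, and take the preimage of $\xi_0^2$ under the squaring map. One checks that $F_{\xi_\delta}=F_{\xi_0}+\delta$, so as $\delta$ varies the quadratic forms $F_{\xi_\delta}$ realize both values of the Arf invariant. Choosing $\delta$ with $\Arf(F_{\xi_0}+\delta)=\Arf(\xi_1)$ and applying Theorem~\ref{thm:kawazumi} to $\xi_\delta$ and $\xi_1$ gives $\phi$ with $\phi_\ast\xi_\delta\simeq\xi_1$, whence $\phi_\ast(\xi_0^2)=\phi_\ast(\xi_\delta^2)\simeq\xi_1^2$. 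You should replace your stratum-connectedness step with this square-root twisting argument.
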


\begin{proof}
If any $d_j$ is odd or if all $d_j$ are even and $\Arf(\xi_0)=\Arf(\xi_1)$, then Kawazumi's result implies that $\xi_0$ and $\xi_1$ are in the same mapping class group orbit, so $\xi_0^2$ and $\xi_1^2$ are as well. Now suppose that all $d_j$ are even and that $\Arf(\xi_0) \neq \Arf(\xi_1)$. Observe that $\xi_0^2$ has $2^{2g}$ square roots. These are indexed by elements $\delta$ of $H^1(\Sbar;\F_2)$. We need to give a construction of the corresponding square root $\xi_\delta$ of $\xi_0^2$.

Denote the flat line bundle of order 2 over $\Sbar$ that corresponds to $\delta \in H^1(\Sbar;\F_2)$ by $L_\delta$. Since $L_\delta$ is topologically trivial, $T\Sbar \otimes L_\delta \cong T\Sbar$. Since $L_\delta$ is flat, its square is canonically isomorphic to the trivial bundle, so that one has a canonical isomorphism
$(T\Sbar\otimes L_\delta)^{\otimes 2} \cong (T\Sbar)^{\otimes 2}$. Thus one has the commutative diagram
$$
\xymatrix{
T\Sbar\otimes L_\delta \ar[r]^{(\phantom{x})^2}\ar[d]  & (T\Sbar)^{\otimes 2} \ar[d] \cr
\Sbar \ar@/^1pc/[u]^{\pm \xi_\delta} \ar@{=}[r] & \Sbar \ar@/_1pc/[u]_{\xi_0^2}
}
$$
where the top row is the squaring map $v\mapsto v^{\otimes 2}$. The preimage of the image of $\xi_0^2$ under the squaring map splits into two components. The section corresponding to either is the square root $\xi_\delta$ of $\xi_0^2$. Observe that
$$
F_{\xi_\delta} = F_{\xi_0} + \delta
$$
Choose $\delta$ so that $\Arf(F_{\xi_0} + \delta) = \Arf(\xi_1)$. Then $\Arf(\xi_\delta) = \Arf(\xi_1)$, so that $\xi_\delta$ and $\xi_1$ are in the same orbit. Since $\xi_0^2 = \xi_\delta^2$, this implies that $\xi_0^2$ and $\xi_1^2$ are in the same orbit.
\end{proof}

\section{The Existence of Quasi-algebraic Framings}
\label{sec:alg_framing}

In this section we prove Theorem~\ref{thm:quasi-alg}. We first fix the notation to be used in this and subsequent sections.

Suppose that $2g+n>1$, where $g$ and $n$ are non-negative integers. Suppose that $S$ is an $(n+1)$-punctured surface of genus $g$. Write $S=\Sbar-P$, where $P=\{x_0,\dots,x_n\}$ is a subset of $\Sbar$. Fix a vector $\dd = (d_0,\dots,d_n) \in \Z^{n+1}$ with $\sum_0^n d_j = 2-2g$. Suppose that $\vv_o$ is a non-zero tangent vector of $\Sbar$ anchored at the point $x_0$ and that $\xi_o$ is a nowhere vanishing vector field on $S$ with local degree $d_j$ at $x_j$.

A complex structure on $(\Sbar,P)$ is an orientation preserving diffeomorphism
\begin{equation}
\label{eqn:c_str}
\phi : (\Sbar,P) \to (\Xbar,D),
\end{equation}
where $\Xbar$ is a compact Riemann surface and $D$ a finite subset. It induces the complex structure $(\Sbar,P,\vv_o) \to (\Xbar,D,\phi_\ast \vv_o)$ on $(\Sbar,P,\vv_o)$. A complex structure $\phi : (\Sbar,P) \to (\Xbar,D)$ determines a base point of $\M_{g,n+1}$ and a natural isomorphism $\phi_\ast : \G_{g,n+1} \to \pi_1(\M_{g,n+1},\phi)$.

\begin{definition}
Suppose that $m$ is a positive integer. A {\em complex structure on $(\Sbar,P,\xi_o^m)$} (or on $\xi_o^m$ for short) is a complex structure $\phi : (\Sbar,P) \to (\Xbar,D)$ on $(\Sbar,P)$ and a meromorphic section $\eta$ on $\Xbar$ of $(T\Xbar)^{\otimes m}$ (an {\em algebraic $m$-framing}) whose divisor is supported on $D$ and whose pullback $\phi|_S^\ast\eta$ to $S$ is homotopic to $\xi_o^m$. A {\em quasi-complex structure} on $(\Sbar,P,\xi_o)$ is a complex structure on $(\Sbar,P,\xi_o^m)$ for some $m>0$. These correspond to quasi-algebraic framings on $(\Xbar,D)$.
\end{definition}

\begin{remark}
The residue theorem implies that $(\Sbar,P,\xi_o)$ does not have a complex structure when, say, $d_0 = 1$ and all other $d_j < 0$ are negative. However, $\xi_o^2$ can have a complex structure in this case. For example, suppose that $g\ge 1$ and that $\Xbar$ is the smooth projective model of the hyperelliptic curve
$$
y^2 = \prod_{j=0}^{2g} (x-a_j),
$$
where the $a_j$ are distinct elements of $\C^\ast$. Let $x_j$ be the point of $\Xbar$ lying over $a_j$ and $x_{2g+1}$ the point lying over $\infty$. Then the meromorphic section $\eta$ of $(T\Xbar)^{\otimes 2}$ dual to the quadratic differential
$$
\omega := \prod_{j=0}^{2g}(x-a_j)^{-d_j}\bigg(\frac{dx}{y}\bigg)^2
$$
is a 2-framing. It has divisor $2\sum_{j=0}^{2g+1} d_jx_j$. Each square root of $\omega$ is a topological framing of $X$. In particular, we can take $d_0 = 1$ and all other $d_j \le 0$.
\end{remark}

\begin{remark}
When $g=1$ and $\dd = 0$, $X$ is a punctured elliptic curve. So $\Xbar = \C/\Lambda$ for some lattice $\Lambda$. Since $T\Xbar$ is a trivial holomorphic line bundle, the only holomorphic sections of $(T\Xbar)^{\otimes m}$ are multiplies of the translation invariant section $(\partial/\partial z)^m$. All other smooth sections $\xi$ of $TX$ with $\dd=0$ differ from it by an element $e(\xi)$ of $H^1(\Xbar;\Z)$. If $e(\xi)\neq 0$, then $(S,\xi)$ does not  admit a quasi-complex structure.
\end{remark}

\begin{proposition}
\label{prop:framings}
For each $g$ below, $\dd\in \Z^{n+1}$ satisfies $\sum_0^n d_j = 2-2g$.
\begin{enumerate}

\item\label{item:g=0} If $g=0$, then for all $\dd$, there is exactly one mapping class group orbit of homotopy classes of complex structure on $(\Sbar,P,\xi_o)$.

\item\label{item:holo} If $g > 3$ and $\dd$ satisfies $d_j < 0$ for $j=0,\dots,n$, then there is at least one mapping class group orbit of complex structures on $(\Sbar,P,\xi_o)$.

\item\label{item:quadratic} If $g>1$, then there is at least one mapping class group orbit of homotopy class of complex structure on $(\Sbar,P,\xi_o^2)$ for all $\dd$.

\item\label{item:adapted} If $g=1$ and $\dd=0$, then there is exactly one complex structure on $(\Sbar,P,\xi_o)$.

\item\label{item:g=1} If $g=1$ and $\dd\neq 0$, then there is a quasi-complex structure on $(\Sbar,P,\xi_o)$ if and only if $A(\xi_o) = \gcd\{d_0,\dots,d_n\}$.\footnote{The condition that $A(\xi_o) = \gcd\{d_0,\dots,d_n\}$ is equivalent to the condition that $\rot_{\xi_o}(\alpha)$ is divisible by $\gcd(d_j)$ for all simple closed curves $\alpha$ in $S$.}

\item\label{item:conv} If $g=1$, $\#\{j : d_j \neq 0\} > 2$ and $A(\xi_0) = \gcd\{d_0,\dots,d_n\}$, then $(\Sbar,P,\xi_0)$ has a complex structure for all complex structures $(\Xbar,D)$ on $(\Sbar,P)$.

\end{enumerate}
\end{proposition}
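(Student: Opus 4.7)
The plan is to reduce each case of the proposition to a statement about non-emptiness and connectedness of an appropriate stratum of (possibly meromorphic) abelian or quadratic differentials on $\Xbar$, and then to combine this with Kawazumi's classification (Theorem~\ref{thm:kawazumi}) and Corollary~\ref{cor:quad}. A (quasi-)complex structure on $(\Sbar,P,\xi_o^m)$ is a marked point in the stratum of meromorphic sections of $(T\Xbar)^{\otimes m}$ with divisor $\sum d_j x_j$ whose induced topological framing on $S$ is homotopic to $\xi_o^m$, and MCG orbits of such correspond to connected components of the portion of the stratum that hits a prescribed Kawazumi class.

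First I would dispose of the easy cases. For \eqref{item:g=0} both existence (any divisor of degree $2$ on $\P^1$ arises from a meromorphic vector field) and connectedness of the resulting parameter space are elementary, and Theorem~\ref{thm:kawazumi} gives the single orbit. For \eqref{item:adapted}, the translation-invariant vector field on $\C/\Lambda$ supplies an algebraic framing with $\dd=0$; connectedness of $\M_{1,n+1}$ together with the $g=1$ case of Theorem~\ref{thm:kawazumi} then yields one MCG orbit.

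For the higher-genus cases I would work one stratum at a time. In \eqref{item:holo}, when all $d_j<0$ the dual of $\xi$ is a holomorphic abelian differential with zero of order $-d_j$ at $x_j$; non-emptiness of the corresponding Kontsevich--Zorich stratum is established in \cite{kontsevich-zorich}, which also classifies its connected components (a hyperelliptic locus and, when every $-d_j$ is even, two further components distinguished by the parity of the induced spin structure). By a rotation-number argument in the spirit of Lemma~\ref{lem:rot} this parity coincides with $\Arf(\xi)$, so Theorem~\ref{thm:kawazumi} matches the components of the stratum to the relevant $\G_{g,n+1}$-orbit of framings, giving a single orbit of complex structures per $\xi_o$. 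For \eqref{item:quadratic} the argument is parallel, using meromorphic quadratic differentials: non-emptiness of the appropriate stratum is supplied by \cite{mero_diffs}, and Corollary~\ref{cor:quad} eliminates the spin obstruction once one passes to squares, so the MCG-orbit count reduces to $\dd$ alone.

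For \eqref{item:g=1} and \eqref{item:conv}, the torsion line bundle $L$ of Definition~\ref{def:q-alg} is central. On an elliptic curve $\Xbar = \C/\Lambda$, a meromorphic section of $L\otimes T\Xbar$ with divisor $\sum d_j x_j$ exists precisely when $\sum d_j [x_j] \in \Pic^0(\Xbar)$ is a torsion class of order dividing that of $L$; a rotation-number computation analogous to Lemma~\ref{lem:rot} then shows that the rotation numbers realized by the pulled-back framing are exactly the integers divisible by $\gcd(d_0,\dots,d_n)$, which matches the $A$-invariant of Theorem~\ref{thm:kawazumi}(iv) and gives the necessity claim of \eqref{item:g=1}. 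For sufficiency and for \eqref{item:conv}, I would use \cite{mero_diffs} together with the freedom to vary $L$ through torsion line bundles and $D$ through $\Xbar$: when at least three $d_j$ are nonzero, the Abel--Jacobi map has enough flexibility to realize any required torsion class on an arbitrary $\Xbar$. The main obstacle I expect is this genus-one bookkeeping; in higher genus the results of Kontsevich--Zorich and \cite{mero_diffs} serve as essentially black-box inputs, but on an elliptic curve one must explicitly reconcile the Abel--Jacobi constraint, the order of $L$, and Kawazumi's $A$-invariant, tracking rotation numbers precisely enough to prove both the necessity of the divisibility condition and its sharpness.
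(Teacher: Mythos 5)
Your proposal follows essentially the same route as the paper's proof: for $g>1$ it reduces the orbit count to the non-emptiness and component structure of the strata $\sS^m_\dd$ via Kontsevich--Zorich and \cite{mero_diffs}, matched against Theorem~\ref{thm:kawazumi} and Corollary~\ref{cor:quad}, while for $g=1$ it works directly on $\C/\Lambda$, where every algebraic $m$-framing has the form $f\,(\partial/\partial z)^m$ and the rotation-number computation forces $A(\xi)=\gcd(d_j)$. The only step where the paper is more concrete is item (vi): your appeal to the ``flexibility of the Abel--Jacobi map'' is realized there by an explicit perturbation showing that the fiber of $F_\dd : X^{n+1}\to\Jac X$ over $0$ is not contained in any diagonal $\Delta_{j,k}$.
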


\begin{proof}
The proof of the genus 0 case (\ref{item:g=0}) is elementary and is left to the reader. We now assume that $g>0$.

Suppose now that $g>1$. Denote the locus of $(n+1)$-pointed curves $(C;x_0,\dots,x_n)$ in $\M_{g,n+1}$ for which $m\sum_j d_j x_j$ an $(-m)$-canonical divisor by $\sS_\dd^m$. This locus may be empty and may be disconnected. Each connected component of $\sS_\dd^m$ determines a $\G_{g,n+1}$-orbit of $m$-framings $\xi$ of the punctured reference surface $S$. When $g>3$, the classification of strata of abelian differentials \cite[Thm.~1]{kontsevich-zorich} implies that if all $d_j<0$, then $\sS_\dd^1$ is non-empty when at least one $d_j$ is odd and that it has exactly 2 non-hyperelliptic components, distinguished by the Arf invariant, when all $d_j$ are even. This and Theorem~\ref{thm:kawazumi} imply (\ref{item:holo}). The classification of meromorphic differentials in \cite{mero_diffs} implies that $\sS_\dd^2$ is non-empty all $\dd$. Combined with Corollary~\ref{cor:quad} it proves (\ref{item:quadratic}).

Suppose now that $g=1$ and that $\Xbar = \C/\Lambda$. Every algebraic $m$-framing of $(\Xbar,D)$ is of the form
$$
\eta = f(z) \big(\partial/\partial z)^m
$$
where $f$ is a non-zero meromorphic function whose divisor is $\sum_j d_j x_j$, where $D=\{x_0,\dots,x_n\}$. If $\xi$ is an $m$th root of $\eta$, then as $A(\partial/\partial z)=0$, it follows that the rotation number $\rot_\xi(\gamma)$ of every closed curve in $X$ lies in the ideal generated by the $d_j$. It follows that $A(\xi) = \gcd(d_j)$ for all quasi-algebraic framings of $X$. This proves (\ref{item:adapted}) and the ``only if'' part of (\ref{item:g=1}). If $\dd = \pm(-m,m)$, where $m>0$, then we can take $x_1-x_0$ to be a non-zero $m$-torsion point of the jacobian of $X$ and $f$ to be a function whose divisor is $m(x_1-x_0)$. We prove the remainder of the converse by proving (\ref{item:conv}).

Suppose that $g=1$ and $\dd\neq 0$. By decreasing $n$ if necessary, we may assume that all $d_j$ are non-zero. Suppose that $n>1$. Define
$$
F_\dd : X^{n+1} \to \Jac X
$$
by $F_\dd(x_0,\dots,x_n) = \sum_j d_j x_j$. We have to show that the fiber $Y$ of $F_\dd$ over $0$ is not contained in any of the diagonals $\Delta_{j,k} := \{x_j = x_k\}$. To see that $Y$ cannot be contained in $\Delta_{j,k}$, choose $\ell$ such that $j,k,\ell$ are distinct. This is possible as $n>1$. If $(x_0,\dots,x_n)\in Y$ then for all but finitely many $u \in \Jac X$, $(y_0,\dots,y_n)$ is not in $\Delta_{j,k}$, where
$$
y_a := 
\begin{cases}
x_a & a \neq k,\ell,\cr
x_k + d_\ell u & a = k, \cr
x_\ell - d_k u & a = \ell.
\end{cases}
$$
This completes the proof of (\ref{item:g=1}) and (\ref{item:conv}).
\end{proof}

\begin{remark}
\label{rem:framings}
This result implies that the framings that occur in \cite[Thm.~6.1]{akkn2} are precisely those that admit a quasi-complex structure. See footnote~\ref{foot:rot} on page~\pageref{foot:rot} for conventions.
\end{remark}

\section{Torsors of Splittings of the Goldman--Turaev Lie Bialgebra}
\label{sec:hoge_splittings}

In this section, we explain how Hodge theory gives torsors of simultaneous splittings of (\ref{eqn:split_GT}) and (\ref{eqn:split_A}) and explain how these give solutions to the Kashiwara--Vergne problem. In particular, we prove Corollary~\ref{cor:big_kv} and take the first steps towards proving Theorem~\ref{thm:krv}.

\begin{proposition}
\label{prop:splittings}
Each homotopy class of quasi-complex structures on $(\Sbar,P,\vv_o,\xi_o)$ gives a torsor of simultaneous splittings of (\ref{eqn:split_GT}) and (\ref{eqn:split_A}). The splittings constructed from a fixed complex structure on $(\Sbar,P,\vv_o) \to (\Xbar,D,\vv_o)$ are torsors under the prounipotent radical $\U^\MT_{X,\vv}$ of the Mumford--Tate group of $\Q\pi_1(X,\vv)^\wedge$.
\end{proposition}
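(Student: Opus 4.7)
The plan is to assemble this from the main theorems already established in the paper together with standard Hodge-theoretic splitting results. Fix a quasi-complex structure $\phi : (\Sbar,P,\vv_o,\xi_o) \overset{\simeq}{\To} (\Xbar,D,\vv,\xi)$. By Theorem~\ref{thm:strong_turaev}, the cobracket $\delta_\xi$ is a morphism of pro-MHS (after twisting by $\Q(\pm 1)$), and by the main result of \cite{hain:goldman} so is the Goldman bracket $\gold$. Consequently $(\Q\lambda(X)^\wedge,\gold,\delta_\xi)$ is a twisted complete Lie bialgebra object in the category of pro-MHS, and the natural surjection $\Q\pi_1(X,\vv)^\wedge \to \Q\lambda(X)^\wedge$ (reduction modulo conjugation) is a morphism of pro-mixed Hodge structures.

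Next, I would invoke the standard tannakian description of splittings of the weight filtration of a pro-MHS $V$ (cf.\ \cite[\S10.2]{hain:goldman}): the set of isomorphisms $V \overset{\simeq}{\To} \prod_m \Gr^W_{-m} V$ in the category of pro-MHS forms a torsor under the prounipotent radical of the Mumford--Tate group of $V$, and every morphism of pro-MHS is compatible with such splittings. Applied to $V = \Q\pi_1(X,\vv)^\wedge$, this yields the torsor of isomorphisms (\ref{eqn:split_A}) under $\U^\MT_{X,\vv}$. Because the Goldman--Turaev Lie bialgebra is constructed inside the tannakian category generated by $\Q\pi_1(X,\vv)^\wedge$, and because $\gold$ and $\delta_\xi$ are morphisms of pro-MHS (up to Tate twist), each splitting of $\Q\pi_1(X,\vv)^\wedge$ induces, in a unique and functorial way, a splitting of $\Q\lambda(X)^\wedge$ that simultaneously intertwines $\gold$ with $\Gr^W_\bdot \gold$ and $\delta_\xi$ with $\Gr^W_\bdot\delta_\xi$, producing (\ref{eqn:split_GT}). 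The Tate twists in the bracket and cobracket do not alter compatibility because the action of $\U^\MT_{X,\vv}$ is trivial on the Tate objects $\Q(\pm 1)$.

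For the boundary-circle normalization, observe that $\log$ of the loop around $\vv$ lies in $W_{-2}\Q\pi_1(X,\vv)^\wedge$ and maps to a generator of the canonical copy of $\Q(1) \hookrightarrow \Gr^W_{-2}\Q\pi_1(X,\vv)^\wedge$ determined by the residue at the puncture; any Hodge splitting therefore sends it into $\Gr^W_{-2}\Q\pi_1(X,\vv)^\wedge$, as required in Corollary~\ref{cor:formality}. Finally, the torsor assertion is immediate: $\U^\MT_{X,\vv}$ acts simply transitively on the weight splittings of $\Q\pi_1(X,\vv)^\wedge$, and this action lifts to a simply transitive action on the induced splittings of $\Q\lambda(X)^\wedge$, compatible with both structures.

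The only subtle point — and what I would regard as the main thing to verify rather than a genuine obstacle — is that a single splitting simultaneously trivializes the $\Q(-1)$-twisted bracket and the $\Q(1)$-twisted cobracket; this is forced by the fact that both morphisms live in the tannakian category of pro-MHS and that the Mumford--Tate group of any Tate twist of a pro-MHS coincides with the original one on the prounipotent radical. Everything else is essentially bookkeeping of the constructions in Corollary~\ref{cor:formality}.
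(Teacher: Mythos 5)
Your proposal is correct and follows essentially the same route as the paper: the paper likewise obtains the torsor of splittings of $\Q\pi_1(X,\vv)^\wedge$ under $\U^\MT_{X,\vv}$ from \cite[Thm.~6]{hain:goldman}, and then invokes Corollary~\ref{cor:gr_gt} together with the tannakian discussion of \cite[\S10.2]{hain:goldman} (splittings as lifts of the central cocharacter, hence functorial and compatible with all morphisms of pro-MHS up to Tate twist) to conclude that the induced splitting of $\Q\lambda(X)^\wedge$ respects both the bracket and the cobracket. Your more explicit treatment of the Tate-twist compatibility and the boundary normalization simply spells out what the paper leaves to those citations.
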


\begin{proof}
By \cite[Thm.~6]{hain:goldman}, the MHS on $\Q\pi_1(X,\vv)^\wedge$ determines a torsor of isomorphisms
\begin{equation}
\label{eqn:KVI}
\Q\pi_1(X,\vv)^\wedge \To \prod_{m\le 0} \Gr^W_m \Q\pi_1(X,\vv)^\wedge
\end{equation}
each of which solves the KV-problem $\KVI^{(g,n+1)}$, as defined in \cite{akkn1}. These are a torsor under $\U^\MT_{X,\vv}$. Corollary~\ref{cor:gr_gt} implies (via the discussion in \cite[\S10.2]{hain:goldman}) that the induced isomorphism
$$
\Q\lambda(X)^\wedge \cong \prod_{m\le 0} \Gr^W_m \Q\lambda(X)^\wedge
$$
is an isomorphism of Lie bialgebras.
\end{proof}

These Hodge theoretic splittings give solutions to the KV-problem $\KV^{(g,n+1)}_\dd$. This result implies Corollary~\ref{cor:big_kv}.

\begin{corollary}
\label{cor:kv}
Each homotopy class of quasi-complex structures on $(\Sbar,P,\vv_o,\xi_o)$ gives a torsor of solutions to the Kashiwara--Vergne problem $\KV^{(g,n+1)}_\dd$. These solutions form a torsor under the prounipotent radical $\U^\MT_{X,\vv}$ of the Mumford--Tate group of $\Q\pi_1(X,\vv)^\wedge$.
\end{corollary}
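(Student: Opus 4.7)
The plan is to assemble the ingredients already established: Theorem~\ref{thm:turaev} (and its quasi-algebraic refinement via Section~\ref{sec:alg_framing}), Corollary~\ref{cor:gr_gt}, Proposition~\ref{prop:splittings}, and the characterization of KV-solutions as splittings of the Goldman--Turaev Lie bialgebra given in \cite[Thm.~5]{akkn1}. Fix a representative $\phi : (\Sbar,P,\vv_o,\xi_o) \to (\Xbar,D,\vv,\xi)$ of the quasi-complex structure, so $\xi$ is a quasi-algebraic framing of $X$. By Proposition~\ref{prop:splittings}, the MHS on $\Q\pi_1(X,\vv)^\wedge$ gives a torsor, under the prounipotent radical $\U^\MT_{X,\vv}$ of the Mumford--Tate group, of isomorphisms
$$
\Q\pi_1(X,\vv)^\wedge \overset{\simeq}{\To} \prod_{m\ge 0}\Gr^W_{-m}\Q\pi_1(X,\vv)^\wedge,
$$
each of which simultaneously induces an isomorphism $\big(\Q\lambda(X)^\wedge,\gold,\delta_\xi\big)\to\big(\prod_m\Gr^W_{-m}\Q\lambda(X)^\wedge,\Gr^W_\bdot\gold,\Gr^W_\bdot\delta_\xi\big)$ of Lie bialgebras and sends $\log$ of the boundary loop into $\Gr^W_{-2}$.

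The first step is to transport this to the reference framed surface via $\phi$, so that each Hodge splitting becomes an automorphism $\Phi$ of the completed Hopf algebra $\Q\pi_1(S,\vv_o)^\wedge\cong\Q\ll x_1,\dots,x_g,y_1,\dots,y_g,z_1,\dots,z_n\rr$ that intertwines $(\gold,\delta_{\xi_o})$ with their weight associated graded (this uses only the \emph{homotopy class} of $\xi$, which by construction is homotopic to $\phi^\ast\xi_o$ up to an $m$-th power, and the invariance of the cobracket under homotopy of the framing). The second step is to invoke \cite[Thm.~5]{akkn1}, which asserts that an automorphism of this Hopf algebra is a solution of the Kashiwara--Vergne problem $\KV^{(g,n+1)}_\dd$ if and only if it intertwines the Goldman bracket and Turaev cobracket (for a framing of local degrees $\dd$) with their weight associated graded, subject to the boundary loop normalization. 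The splittings produced from the MHS satisfy exactly these properties, so each gives a bona fide solution of $\KV^{(g,n+1)}_\dd$.

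Finally, one must argue that the set of KV-solutions obtained from a \emph{fixed} complex structure $(\Xbar,D,\vv,\xi)$ is a torsor under $\U^\MT_{X,\vv}$ (and not merely a quotient thereof). Injectivity of the map from $\U^\MT_{X,\vv}$ to solutions follows from the standard fact that, for a pro-MHS on a pro-nilpotent Hopf algebra, the action of the prounipotent radical of the Mumford--Tate group on the torsor of splittings of the weight filtration is free; this is spelled out in \cite[\S10.2]{hain:goldman} and is the same mechanism already used in the proof of Proposition~\ref{prop:splittings}.

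I expect the only subtlety to be bookkeeping at the punctures: one must check that the ``adapted'' boundary conditions used in \cite{akkn1,akkn2} to single out solutions of $\KV^{(g,n+1)}_\dd$ (as opposed to general splittings of $\KVI^{(g,n+1)}$) correspond precisely to the conditions that the framing used to define $\delta_\xi$ has local degree vector $\dd$ and that $\log$ of the tangential basepoint loop sits in $\Gr^W_{-2}$. Once the conventions are aligned as in footnote~\ref{foot:rot}, this matching is automatic from the definitions; the quasi-algebraicity of $\xi$ (via Theorem~\ref{thm:quasi-alg} and Remark~\ref{rem:framings}) is what guarantees that such a framing exists in the first place and realises the prescribed degree vector $\dd$.
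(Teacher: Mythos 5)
Your proposal is correct and follows essentially the same route as the paper: the paper's proof is simply the combination of Proposition~\ref{prop:splittings} with \cite[Thm.~5]{akkn1}, which identifies the automorphism $\Phi$ built from a Hodge splitting (equivalently, a lift of the central cocharacter as in \cite[\S13.4]{hain:goldman}) as a solution of the KV equations. The extra care you take over the torsor freeness and the boundary/degree-vector bookkeeping is already absorbed in the paper into Proposition~\ref{prop:splittings} and the conventions of footnote~\ref{foot:rot}, so nothing further is needed.
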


\begin{proof}
This follows from Proposition~\ref{prop:splittings} and \cite[Thm.~5]{akkn1}, which implies that the automorphism $\Phi$ of
$$
\Q\ll x_1,\dots,x_g,y_1,\dots,y_g,z_1,\dots,z_n\rr
$$
constructed from the choice of a lifting $\chitilde$ of the canonical central cocharacter $\chi: \Gm \to \pi_1(\MHS^\ss)$ in \cite[\S13.4]{hain:goldman} is a solution of $\KVI^{(g,n+1)}$.
\end{proof}

\begin{remark}
In view of Remark~\ref{rem:framings}, this gives a new and independent proof of the main result, Theorem~6.1, of \cite{akkn2}.
\end{remark}

Solutions of $\KV^{(g,n+1)}_\dd$ that arise from Hodge theory will be called {\em motivic solutions} as they arise from a complex (and thus algebraic) structure on $(\Sbar,P,\vv_o,\xi_o^m)$ for some $m>0$. All solutions of $\KV^{(g,n+1)}_\dd$ comprise a torsor under a prounipotent subgroup $\KRV_{g,n+\uu}^\dd$ of $\Aut\Q\pi_1(S,\vv_o)^\wedge$. For each complex structure $\phi$ on $(\Sbar,P,\vv_o,\xi_o^m)$, there is an inclusion $\phi_\ast : \U_{X,\vv}^\MT \hookrightarrow \KRV_{g,n+\uu}^\dd$. These homomorphisms depend non-trivially on $\phi$ and are, in general, not surjective.

\section{The Stabilizer of a Framing}
\label{sec:stabilizer}

A second way to generate solutions of the KV-problem $\KV^{(g,n+\uu)}_\dd$ from a given solution is to conjugate it by an element of the Torelli group $T_{g,n+\uu}$ (defined below) that fixes the framing $\xi_o$. In this section, we compute the stabilizer of a framing.

Suppose that $\Sbar$ is a compact oriented surface of genus $g$ and that $2g-2+m+r>0$. For each commutative ring $A$ set $H_A = H_1(\Sbar;A)$. The intersection pairing $H_A\otimes H_A \to A$ is a unimodular symplectic form. Denote the corresponding symplectic group by $\Sp(H_A)$. We will regard both $H$ and $\Sp(H)$ as affine groups over $\Z$ whose $A$-rational points are $H_A$ and $\Sp(H_A)$, respectively. The Torelli group $T_{g,m+\vec{r}}$ is defined to be the kernel of the homomorphism
$$
\rho : \G_{g,m+\vec{r}} \to \Sp(H_\Z)
$$
that is induced by the action of $\G_{g,m+\vec{r}}$ on $H_\Z$. This homomorphism is well-known to be surjective.

For the remainder of this section $(\Sbar,P)$ will be an $(n+1)$ pointed surface of genus $g$, where $2g-2+n>0$, and $\xi_o$ will be a framing of $S$ with vector of local degrees $\dd$. Denote the push forward of $\xi_o$ by $\psi \in \Diff^+(\Sbar,P)$ by $\psi_\ast \xi_o$. The homotopy class of this push forward depends only on the class of $\psi$ in the mapping class group $\G_{g,n+1}$ of $(\Sbar,P)$. Since $\psi$ fixes the punctures $P$, $\psi_\ast \xi_o$ and $\xi_o$ have the same local degrees. The homotopy class of their ratio
$$
(\psi_\ast \xi_o)/\xi_o : \Sbar \to \C^\ast
$$
is an element of $H^1(\Sbar;\Z)$ that we denote by $f_{\xi_o}(\psi)$. It vanishes if and only if $\psi$ fixes $\xi_o$.

\begin{lemma}
The function $f_{\xi_o} : \Gamma_{g,n+1} \to H_\Z$ is a 1-cocycle. Its restriction to the Torelli group $T_{g,n+1}$ is an $\Sp(H_\Z)$-equivariant homomorphism whose kernel is the stabilizer of $\xi_o$ in $T_{g,n+1}$.
\end{lemma}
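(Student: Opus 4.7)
The plan is to verify the three assertions in order, each by a direct calculation with the quantity $(\psi_\ast\xi_o)/\xi_o$, together with the observation that $\C^\ast$ is a $K(\Z,1)$ so that $[\Sbar,\C^\ast]=H^1(\Sbar;\Z)\cong H_\Z$ canonically. Throughout, identify $f_{\xi_o}(\psi)$ with the homotopy class of the ratio in $H_\Z$.

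First I would verify the cocycle identity. Given $\psi,\phi\in\G_{g,n+1}$, one has the factorisation of maps $\Sbar\to\C^\ast$
$$
\frac{(\psi\phi)_\ast\xi_o}{\xi_o} \;=\; \frac{\psi_\ast(\phi_\ast\xi_o)}{\psi_\ast\xi_o}\cdot\frac{\psi_\ast\xi_o}{\xi_o} \;=\; \psi_\ast\!\Bigl(\frac{\phi_\ast\xi_o}{\xi_o}\Bigr)\cdot\frac{\psi_\ast\xi_o}{\xi_o}.
$$
Passing to homotopy classes in $H_\Z$ (where the group operation is additive) this gives
$$
f_{\xi_o}(\psi\phi) = \psi_\ast f_{\xi_o}(\phi) + f_{\xi_o}(\psi),
$$
where $\psi_\ast$ denotes the action of $\psi$ on $H_\Z$ through $\rho$. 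This is exactly the cocycle condition for $f_{\xi_o}\in Z^1(\G_{g,n+1};H_\Z)$.

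Next, since the Torelli group $T_{g,n+1}$ is by definition the kernel of $\rho$, every $\psi\in T_{g,n+1}$ acts trivially on $H_\Z$. Restricting the cocycle identity to $T_{g,n+1}$ therefore reduces it to
$$
f_{\xi_o}(\psi\phi) = f_{\xi_o}(\psi) + f_{\xi_o}(\phi),
$$
so the restriction is a group homomorphism. For $\Sp(H_\Z)$-equivariance, let $\gamma\in\G_{g,n+1}$ and $\psi\in T_{g,n+1}$; applying the cocycle identity twice and using that $\psi_\ast$ acts trivially on $H_\Z$, together with the relation $\gamma_\ast f_{\xi_o}(\gamma^{-1})=-f_{\xi_o}(\gamma)$ coming from $f_{\xi_o}(1)=0$, one obtains
$$
f_{\xi_o}(\gamma\psi\gamma^{-1}) = \gamma_\ast f_{\xi_o}(\psi).
$$
Since conjugation by $\gamma$ on $T_{g,n+1}$ depends only on $\rho(\gamma)\in\Sp(H_\Z)$, this proves equivariance.

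Finally, I would identify the kernel. If $\psi\in T_{g,n+1}$ lies in $\ker f_{\xi_o}$, then $(\psi_\ast\xi_o)/\xi_o : \Sbar\to\C^\ast$ is null-homotopic. Since $\C^\ast$ is a $K(\Z,1)$, this means the ratio is homotopic to the constant map $1$, so $\psi_\ast\xi_o$ is homotopic to $\xi_o$ through nowhere vanishing sections of $T\Sbar$; i.e.\ $\psi$ fixes the homotopy class of $\xi_o$. Conversely, if $\psi$ stabilises $\xi_o$ up to homotopy then the ratio is null-homotopic and $f_{\xi_o}(\psi)=0$. The only step requiring any care is making sure the identification $[\Sbar,\C^\ast]=H^1(\Sbar;\Z)$ intertwines the group law on $[\Sbar,\C^\ast]$ (pointwise multiplication of $\C^\ast$-valued functions) with addition in $H^1$, which is standard. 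I expect this last correspondence between null-homotopy of the ratio and the homotopy stabiliser to be the only conceptually substantive point; everything else is formal manipulation of the cocycle identity.
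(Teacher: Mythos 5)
Your proof is correct and follows essentially the same route as the paper: the cocycle identity comes from the same factorization of the ratio $(\psi\phi)_\ast\xi_o/\xi_o$, and the homomorphism and kernel statements follow exactly as you say. You supply slightly more detail than the paper on the $\Sp(H_\Z)$-equivariance and on why vanishing of the class is equivalent to stabilizing the homotopy class of $\xi_o$, but these are elaborations of the same argument, not a different one.
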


\begin{proof}
It is clear from the definition that $\psi\in \G_{g,n+1}$ stabilizes the homotopy class of $\xi_o$ if and only if $f_{\xi_o}(\psi) = 0$. Suppose that $\psi',\psi'' \in \G_{g,n+1}$. Since
$$
\frac{(\psi'\psi'')_\ast \xi_o}{\xi_o} = \frac{\psi'_\ast\xi_o}{\xi_o} \frac{(\psi'\psi'')_\ast\xi_o}{\psi'_\ast\xi_o}
= \frac{\psi'_\ast\xi_o}{\xi_o}\cdot \psi'_\ast\bigg(\frac{\psi''_\ast \xi_o}{\xi_o}\bigg)
$$
as homotopy classes of functions $\Sbar \to \C^\ast$, it follows that $f_{\xi_o}$ satisfies the 1-cocycle condition
$$
f_{\xi_o}(\psi'\psi'') = f_{\xi_o}(\psi') + \psi'_\ast f_{\xi_o}(\psi'').
$$
The restriction of $f_{\xi_o}$ to $T_{g,n+1}$ is a homomorphism as the Torelli group acts trivially on $H_\Z$.
\end{proof}

In the next section, we will need to know that the class of $f_{\xi_o}$ is a Hodge class. In preparation for proving this, we give an algebro-geometric interpretation of $f_{\xi_o}$.

The vector of local degrees $\dd$ of $\xi_o$ determines a section $F_\dd$ of the universal jacobian $\cJ_{g,n+1}$ over $\M_{g,n+1}$. It is defined by
\begin{equation}
\label{eqn:normal}
F_\dd(C;x_0,\dots,x_n) = K_C + \sum_{j=0}^n d_j x_j \in \Jac C
\end{equation}
where $C$ is a compact Riemann surface of genus $g$; $x_0,\dots,x_n$ are distinct labelled points of $C$; and where $K_C$ denotes the canonical class of $C$.\footnote{The image of $(C;x_0,\dots,x_n)$ under $F_\dd$ corresponds to a $C^\infty$ isomorphism of the line bundle $\cO_C(\sum d_j x_j)$ with $TC$ under which the section of $\cO_C(\sum d_j x_j)$ with divisor $\sum d_j x_j$ corresponds to a framing with local degree vector $\dd$. This gives an $m$-framing of $C-\{x_0,\dots,x_n\}$ if and only if $F_\dd(C;x_0,\dots,x_n)$ is an $m$-torsion point of $\Jac C$. If $g\neq 1$, or if $g=1$ and $\gcd\{d_j\} = A(\xi_o)$, this gives a complex structure on $\xi_o^m$.}

Fix a base point $o$ of $\M_{g,n+1}$. Denote the identity of $\Jac C_o$ by $z_o$. The fundamental group of $\cJ_{g,n+1}$ with base point $z_o$ is an extension of $\Gamma_{g,n+1}$ by $H_\Z$. The identity section induces a splitting of this extension and thus a canonical isomorphism
$$
\pi_1(\cJ_{g,n+1},z_o) \cong \G_{g,n+1}\ltimes H_\Z
$$
where we are identifying $\pi_1(\M_{g,n+1},o)$ with $\Gamma_{g,n+1}$ and $H_\Z$ with $H_1(C_o;\Z)$. The standard representation $\G_{g,n+1} \to \Sp(H_\Z)$ induces a homomorphism
$$
\pi_1(\cJ_{g,n+1},z_o) \to \Sp(H_\Z)\ltimes H_\Z.
$$
The section $F_\dd$ of $\cJ_{g,n+1}$ over $\M_{g,n+1}$ induces a homomorphism
$$
\tau_\dd : \Gamma_{g,n+1} \to \pi_1(\cJ_{g,n+1},z_o) \to \Sp(H_\Z) \ltimes H_\Z,
$$
which determines a cohomology class $[\tau_\dd] \in H^1(\Gamma_{g,n+1},H_\Z)$.

\begin{proposition}
The cohomology classes of $f_{\xi_o}$ and $\tau_\dd$ in $H^1(\G_{g,n+1};H_\Z)$ are equal. In particular, the class of $f_{\xi_o}$ depends only on the vector $\dd$ of local degrees.
\end{proposition}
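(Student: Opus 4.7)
The plan is to realize both $f_{\xi_o}$ and $\tau_\dd$ as Leray edge classes of the same line bundle
$
L := \Omega_{\cC/\M}(\textstyle\sum d_j X_j)
$
on the universal curve $\pi : \cC \to \M_{g,n+1}$, and deduce that they coincide in $H^1(\G_{g,n+1}; H_\Z)$. Since $L|_{C_o}$ has fiber degree $(2g-2) + \sum d_j = 0$, it is $C^\infty$-trivial on fibers, so $c_1(L) \in H^2(\cC;\Z)$ lies in the Leray filtration level $F^1$, and its image in $F^1/F^2 \cong H^1(\G_{g,n+1}; H^1(\Sbar;\Z))$ gives, after Poincar\'e duality $H^1(\Sbar;\Z) \cong H_\Z$, a canonical class in $H^1(\G_{g,n+1}; H_\Z)$. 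I claim both cocycles represent it.

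First I would unwind $\tau_\dd$. The section $F_\dd$ corresponds, via the identification of the relative Jacobian with fiberwise degree-zero line bundles, precisely to $L$. Lifting a loop in $\M_{g,n+1}$ through $F_\dd$ and comparing with the lift through the identity section gives the $H_\Z$-valued monodromy of $L$ around the loop, obtained via the fiberwise exponential sequence $0 \to H^1(\Sbar;\Z) \to H^1(\Sbar;\cO) \to \Jac C \to 0$. Chasing the Leray spectral sequence of $\pi$ with the exponential sequence of sheaves on $\cC$ identifies this monodromy cocycle with the edge image of $c_1(L)$.

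Next I would show $[f_{\xi_o}]$ is the same edge class. The framing provides a smooth trivialization $\xi_o^{-1}$ of $L|_{C_o}$. Representing $\psi \in \G_{g,n+1}$ by a loop in $\M_{g,n+1}$ and transporting $\xi_o^{-1}$ around it yields $(\psi_\ast\xi_o)^{-1}$; the obstruction to closing up the trivialization globally is the class of $\psi_\ast\xi_o/\xi_o : \Sbar \to \C^*$ in $H^1(\Sbar;\Z)$, which by definition is $f_{\xi_o}(\psi)$. But this obstruction is exactly the \v{C}ech description of the Leray-edge image of $c_1(L)$ applied to $\psi$. Hence $[f_{\xi_o}] = [\tau_\dd]$ in $H^1(\G_{g,n+1}; H_\Z)$. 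Because $L$ depends only on $\dd$, the independence of this class from the choice of $\xi_o$ (with fixed $\dd$) is then automatic: different framings with the same $\dd$ differ by a map $\Sbar \to \C^*$, producing cocycles that differ by the coboundary of its class in $H_\Z$.

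The main technical obstacle is verifying that the two concrete descriptions of the Leray edge class match on the nose with the correct signs. On the $\tau_\dd$ side one uses the exponential sequence on fibers together with $\pi_1(\Jac C_o) \cong H_1(\Xbar;\Z)$; on the $f_{\xi_o}$ side one uses smooth trivializations together with Poincar\'e duality $H^1(\Sbar;\Z) \cong H_\Z$. Matching these identifications and tracking the orientation convention on $\Sbar$ requires care; a sanity check on a single Dehn twist, whose action on $H_\Z$ is given by the Picard--Lefschetz formula, pins down the remaining sign and conventions.
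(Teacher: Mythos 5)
Your approach is sound and genuinely different from the paper's. The paper never mentions $c_1$ of a line bundle on the universal curve: it first reduces the comparison to the point-pushing subgroup of the Torelli group --- using torsion-freeness of $H^1(\G_{g,n+1};H_\Z)$, the ``center kills'' vanishing of $H^\bdot(\Sp(H_\Z);H_\Q)$ together with Hochschild--Serre, and Johnson's computation of $H_1(T_{g,n+1})$ (Watanabe in genus $2$, a separate sequence in genus $1$) --- and then evaluates both cocycles explicitly on point-pushers, where each restricts to $\dd\in\Q^{n+1}$. Your route instead identifies both classes with the Leray $(1,1)$-component of $c_1\big(\Omega_{\cC/\M}(\sum_j d_j X_j)\big)$: on the $\tau_\dd$ side this is the standard comparison of the topological invariant of the normal function $F_\dd$ with the Leray class of the underlying fiberwise-degree-zero cycle, and on the $f_{\xi_o}$ side it is the clutching-function (Wang sequence) description of the same edge class via the trivialization furnished by the framing. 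This buys uniformity in the genus and avoids Johnson's theorem entirely, at the cost of the two nontrivial compatibility checks you rightly flag (exponential-sequence versus trivialization descriptions of the edge map, plus the Poincar\'e duality and sign conventions); note that the paper's own argument is also only a sketch that outsources the $\tau_\dd$ computation to \cite{hain:normal}. Two small cautions. First, $F^1/F^2$ is only a subgroup of $H^1(\G_{g,n+1};H^1(\Sbar;\Z))$ (one must also handle the orbifold nature of $\M_{g,n+1}$, e.g.\ by working over $B\G_{g,n+1}$); both points are harmless. Second, a single Dehn twist is a poor sanity check for pinning down signs: it acts nontrivially on $H_\Z$, so the classes are merely crossed homomorphisms there and a single value is not conjugation-invariant. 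Evaluate instead on a point-pushing element (or a bounding pair), which is in effect exactly the computation the paper performs.
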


\begin{proof}[Sketch of Proof]
These classes clearly vanish when $g=0$. So suppose that $g>0$. First observe that $H^1(\G_{g,n+1};H_\Z)$ is torsion free. This can be proved using the cohomology long the exact sequence of
$$
\xymatrix{
0 \ar[r] & H_\Z \ar[r]^{\times N} & H_\Z \ar[r] & H_{\Z/N} \ar[r] & 0
}
$$
the vanishing of $H^0(\G_{g,n+1};H_{\Z/N})$ for all $N>0$, and the finite generation of $H^j(\G_{g,n+1};H_\Z)$. It therefore suffices to show that the classes of $f_{\xi_o}$ and $\tau_\dd$ agree in $H^1(\Gamma_{g,n+1};H_\Q)$.

By the ``center kills'' argument $H^\bdot(\Sp(H_\Z);H_\Q)$ vanishes. This implies (via the Hochschild--Serre spectral sequence) that the restriction mapping
$$
H^1(\G_{g,n+1};H_\Q) \to \Hom_{\Sp(H_\Z)}(H_1(T_{g,n+1}),H_\Q)
$$
is an isomorphism.

Denote the pure braid group on $n+1$ strings of $\Sbar$ by $\pi_{g,n+1}$. The inclusion of the configuration space of $\Sbar$ into $\Sbar^{n+1}$ induces an isomorphism $H_1(\pi_{g,n+1}) \cong H_\Z^{n+1}$. (See \cite[Prop.~2.1]{hain:torelli}.) When $g>1$, the inclusion $\pi_{g,n+1} \to T_{g,n+1}$ induces an isomorphism
\begin{equation}
\label{eqn:pt_push}
\Hom_{\Sp(H_\Z)}(H_1(T_{g,n+1}),H_\Q) \to \Hom_{\Sp(H_\Z)}(H_1(\pi_{g,n+1}),H_\Q) \cong \Q^{n+1}.
\end{equation}
When $g>2$, this follows from Johnson's work \cite{johnson:h1} as in \cite[Prop.~4.6]{hain:torelli}. When $g=2$, this follows similarly from results in \cite{watanabe}. When $g=1$, there is an exact sequence
$$
0 \to H_\Z \to H_1(\pi_{g,n+1}) \to H_1(T_{1,n+1}) \to 0
$$
where the left-hand map is the diagonal embedding, which is induced by the diagonal action of an elliptic curve $E$ on $E^{n+1}$. This implies that (\ref{eqn:pt_push}) is injective with image the hyperplane consisting of those $(u_0,\dots,u_n)$ with $\sum_j u_j = 0$.

These observations imply that to prove the equality of the classes of $f_{\xi_o}$ and $\tau_\dd$, we just have to see that they agree on the ``point pushing'' subgroup of $T_{g,n+1}$; that is, on the image of $\pi_{g,n+1}$ in the Torelli group.

Both $f_{\xi_o}$ and $\tau_\dd$ have image $\dd \in \Q^{n+1}$. The computations for $\tau_\dd$ can be found in \cite[Prop.~11.2]{hain:normal} for $g>1$ and \cite[\S12]{hain:normal} for $g=1$. We now sketch a proof for $f_{\xi_o}$.\footnote{Morita \cite[Prop.~4.1]{morita:casson} has proved the $n=0$ case.}

Suppose that $\alpha$ is a loop in $S$ whose closure is a loop in $\Sbar$ based at $x_j$. Denote the corresponding ``point pushing element of $\G_{g,n+1}$, that ``pushes $x_j$ around $\alpha$'', by $\psi_\alpha$. For all loops $\gamma$ in $S$ we have
$$
f_{\xi}(\psi_\alpha) : \gamma \mapsto \rot_{\psi_\ast\xi}(\gamma)- \rot_\xi(\gamma).
$$
We have to show that $f_{\xi}(\psi_\alpha) = d_j(\alpha\cdot \blank)$, where $(\blank\cdot\blank)$ denotes the intersection pairing. Equivalently, we have to show that
$$
\rot_{\psi_{\alpha\ast\xi}}(\gamma)- \rot_\xi(\gamma) = d_j (\alpha\cdot\gamma).
$$
It suffices to prove this when $\alpha$ is a simple closed curve and when both the geometric and algebraic intersection numbers of $\alpha$ and $\gamma$ are 1.

Since $\alpha$ is a simple closed curve, it has a regular neighbourhood $A \approx S^1 \times [-1,1]$ that is an annulus where $\alpha$ corresponds to $S^1\times 0$. We may assume that $\gamma$ intersects $A$ in the interval $1\times [-1,1]$ and that $\psi_\alpha$ is supported in $A$. We refer to Figure~\ref{fig:point_push} for additional notation.
\begin{figure}[!ht]
\epsfig{file=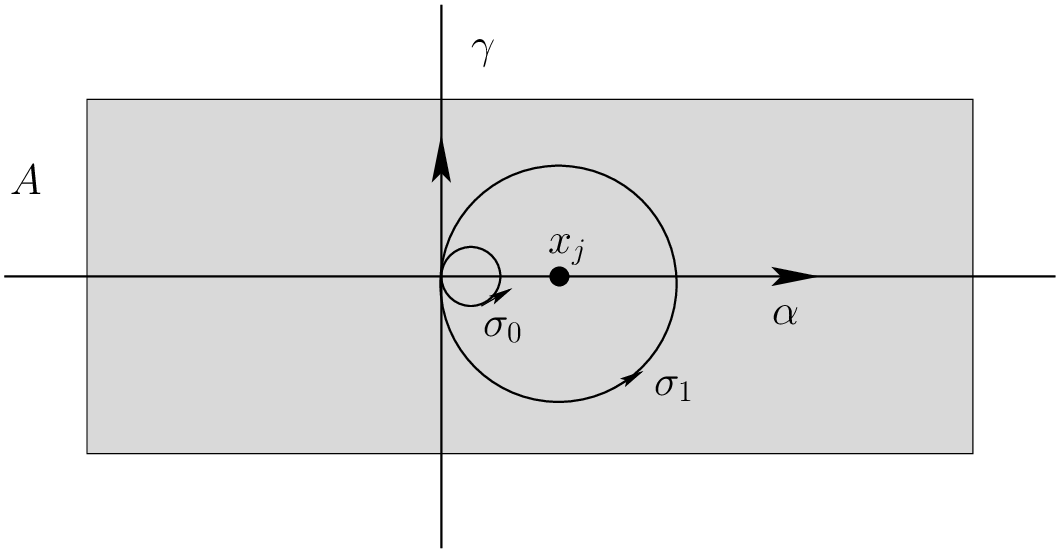, width=4in}
\caption{Point pushing}
\label{fig:point_push}
\end{figure}
Regard $\gamma$ as a loop based at the point of intersection of $\gamma$ and $\alpha$. It is homotopic to $\gamma\sigma_0$ in $S$ and $\psi_\alpha(\gamma)$ is homotopic to $\gamma\sigma_1$. Applying the formula in the footnote on page~\pageref{page:foot}, we have
\begin{multline*}
f_{\xi}(\psi_\alpha^{-1})(\gamma) 
= \rot_{\psi_{\alpha\ast}^{-1}\xi}(\gamma) - \rot_\xi(\gamma)
= \rot_\xi(\psi_\alpha\circ\gamma) - \rot_\xi(\gamma)\cr
= \rot_\xi(\sigma_1) - \rot_\xi(\sigma_0)
= (1-d_j) - 1
= -d_j.
\end{multline*}
This implies that $f_{\xi}(\psi_\alpha) = - f_{\xi}(\psi_\alpha^{-1}) = d_j = d_j(\alpha,\gamma)$.
\end{proof}

\begin{corollary}
The stabilizer of $\xi_o$ in $\G_{g,n+1}$ equals the kernel of $\tau_\dd : \G_{g,n+1} \to \Sp(H_\Z)\ltimes H_\Z$.
\end{corollary}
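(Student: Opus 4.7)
The plan is to deduce the corollary from the preceding lemma and proposition. By the lemma, the stabilizer of $\xi_o$ in $\G_{g,n+1}$ equals the vanishing set $\{\psi : f_{\xi_o}(\psi) = 0\}$ of the 1-cocycle $f_{\xi_o}$. Because $f_{\xi_o}$ satisfies the cocycle identity with respect to the action of $\G_{g,n+1}$ on $H_\Z$ via $\rho$, the assignment
$$
\widetilde\tau : \G_{g,n+1} \To \Sp(H_\Z) \ltimes H_\Z, \qquad \psi \mapsto (\rho(\psi),\, f_{\xi_o}(\psi))
$$
is a bona fide group homomorphism; its kernel is by definition the intersection of the stabilizer of $\xi_o$ with the Torelli group $T_{g,n+1}$.

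The proposition just proved supplies the equality $[f_{\xi_o}] = [\tau_\dd]$ in $H^1(\G_{g,n+1};H_\Z)$. Via the standard bijection between elements of $H^1(G;M)$ and $M$-conjugacy classes of set-theoretic splittings of $G \ltimes M \to G$, this cohomological equality is equivalent to the statement that $\widetilde\tau$ and $\tau_\dd$ differ by conjugation by a single element of the normal subgroup $H_\Z$ in the semidirect product $\Sp(H_\Z)\ltimes H_\Z$. Since conjugation in any group preserves kernels, this yields the identity $\ker \tau_\dd = \ker \widetilde\tau$.

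It remains to identify $\ker \widetilde\tau$ with the full stabilizer of $\xi_o$ rather than merely its intersection with $T_{g,n+1}$. One direction is immediate from the lemma: if $\widetilde\tau(\psi)=1$ then $f_{\xi_o}(\psi)=0$, so $\psi$ stabilizes $\xi_o$. The reverse inclusion, which is the main obstacle, requires the assertion that any $\psi$ stabilizing $\xi_o$ automatically satisfies $\rho(\psi)=1$. I would argue for this containment $\mathrm{stab}(\xi_o)\subseteq T_{g,n+1}$ using the topological data packaged by $\xi_o$: the rotation numbers of simple closed curves, and (when the local degree vector is even) the associated spin structure with its Arf invariant. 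By Kawazumi's classification recalled in Section~\ref{sec:framings}, these data are $\G_{g,n+1}$-invariants strong enough that a mapping class preserving $\xi_o$ pointwise must preserve each of them, and tracing this constraint through the explicit cocycle values on point-pushing elements computed in the proof of the preceding proposition forces $\rho(\psi)$ to act trivially on $H_\Z$. With that containment in hand, $\ker \widetilde\tau = \ker f_{\xi_o} = \mathrm{stab}(\xi_o)$, and combining with $\ker \tau_\dd = \ker \widetilde\tau$ completes the proof.
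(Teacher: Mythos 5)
Your first three paragraphs reproduce the paper's argument exactly, and they are correct: the equality $[f_{\xi_o}]=[\tau_\dd]$ in $H^1(\G_{g,n+1};H_\Z)$ says that the two lifts of $\rho$ to $\Sp(H_\Z)\ltimes H_\Z$ are conjugate by an element of the normal subgroup $H_\Z$, conjugate homomorphisms have equal kernels, and $\ker\widetilde\tau=T_{g,n+1}\cap\mathrm{Stab}(\xi_o)$ by the Lemma. You are also right that this is all the cohomological argument can deliver, since $\ker\tau_\dd$ is automatically contained in $T_{g,n+1}$; the paper's own proof stops here and is silent on the point you raise.

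Your last paragraph, however, rests on a false claim: the containment $\mathrm{Stab}_{\G_{g,n+1}}(\xi_o)\subseteq T_{g,n+1}$ fails for $g\ge 1$. The stabilizer is the \emph{zero set} of the cocycle $f_{\xi_o}$, not the kernel of the lift $\widetilde\tau$, and zero sets of cocycles routinely contain elements acting nontrivially on $H_\Z$. Concretely, a Dehn twist about a nonseparating simple closed curve whose rotation number is chosen so that $f_{\xi_o}$ vanishes on it stabilizes $\xi_o$ but is not in the Torelli group; and the paper itself, in the proof of the Proposition of Section~\ref{sec:torsors}, exhibits the image of $\G_{1,\uu}$ (which maps onto a copy of $\SL_2(\Z)$ in $\Sp(H_\Z)$) inside the stabilizer of $\xi_o$ --- the same mechanism (a genus-one subsurface carrying two transverse curves of rotation number zero, transported along the orbit by conjugacy of stabilizers) works in higher genus. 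Kawazumi's classification cannot close this gap: it describes orbits, and the invariants $\dd$, $\Arf$ and the rotation-number data are preserved by \emph{every} mapping class acting on the set of framings, so they place no constraint forcing a stabilizing class to act trivially on $H_\Z$. The correct reading of the Corollary --- consistent with the preceding Lemma, which is careful to speak of ``the stabilizer of $\xi_o$ in $T_{g,n+1}$,'' and with the way the statement is used in Section~\ref{sec:torsors} --- is that it identifies the stabilizer of $\xi_o$ \emph{within the Torelli group} with $\ker\tau_\dd$. Your first three paragraphs already prove exactly that; the final paragraph should be deleted rather than repaired.
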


\begin{proof}
This follows from the general fact that the kernel of an element of a cohomology group $H^1(\G,V)$ is well defined. This is because elements of this group correspond to lifts of the action $\G \to \Aut(V)$ of $\G$ on $V$ to a homomorphism $\G \to \Aut(V)\ltimes V$ modulo conjugation by elements of $V$. Conjugating such a homomorphism by an element of $V$ does not change its kernel.
\end{proof}

\section{Relative Completion of Mapping Class Groups and Torsors of Splittings}
\label{sec:torsors}

In this section, we consider the torsor of splittings of the Goldman--Turaev Lie bialgebra obtained by combining those constructed in Section~\ref{sec:hoge_splittings} using Hodge theory with those coming from the stabilizer of $\xi_o$ in the Torelli group. We will use the notation of the previous section. We replace mapping class groups by their relative completions, which allows us to prove stronger results.

Recall from \cite{hain:torelli} that the completion of $\G_{g,m+\vec{r}}$ relative to $\rho : \G_{g,m+\vec{r}} \to \Sp(H_\Q)$ is an affine $\Q$-group $\cG_{g,m+\vec{r}}$ that is an extension
$$
1 \to \U_{g,m+\vec{r}} \to \cG_{g,m+\vec{r}} \to \Sp(H) \to 1
$$
of affine $\Q$-groups, where $\U_{g,m+\vec{r}}$ is prounipotent. There is a Zariski dense homomorphism $\rhotilde : \G_{g,m+\vec{r}} \to \cG_{g,m+\vec{r}}(\Q)$ whose composition with the homomorphism $\cG_{g,m+\vec{r}}(\Q) \to \Sp(H_\Q)$ is $\rho$. When $g=0$, $\Sp(H)$ is trivial and $\cG_{0,m+\vec{r}}$ is the unipotent completion $\G_{0,m+\vec{r}}^\un$.

\begin{remark}
\label{rem:density}
The homomorphism $T_{g,m+\vec{r}} \to \U_{g,m+\vec{r}}(\Q)$ induced by $\rhotilde$ has Zariski dense image when $g>1$. This follows from the right exactness of relative completion \cite[Thm.~3.11]{hain:morita} and the vanishing of $H^1(\Sp(H_\Z);V)$ for all rational representations $V$ of $\Sp(H)$ when $g\neq 1$.  (See \cite[Thm.~4.3]{hain:morita}.) However, when $g=1$, $T_{1,n+\uu} \to \U_{1,n+\uu}(\Q)$ is not Zariski dense. For example, $T_{1,1}$ is trivial, while the Lie algebra of $\U_{1,1}$ is freely topologically generated by an infinite dimensional vector space as explained in \cite[Remarks~3.9,7.2]{hain:torelli} and in \cite[\S10]{hain-matsumoto:mem}.
\end{remark}

The action of the mapping class group $\G_{g,n+\uu}$ on $\Q\pi_1(S,\vv_o)$ induces an action on $\Q\lambda(S)$ which preserves the Goldman bracket. The stabilizer of $\xi_o$ preserves the Turaev cobracket. The universal mapping property of relative completion implies that $\cG_{g,n+\uu}$ acts on $\Q\pi_1(S,\vv_o)^\wedge$ and $\Q\lambda(S)^\wedge$. Since the image of the mapping class group in $\cG_{g,n+\uu}$ is Zariski dense, this action preserves the Goldman bracket. However, since $\cG_{g,n+\uu}$ does not generally preserve framings, it is not clear which subgroup of $\U_{g,n+\uu}$ preserves the cobracket. Our next task is to determine this subgroup.

The universal property of relative completion implies that the homomorphism $\tau_\dd : \G_{g,n+\uu} \to \Sp(H_\Z)\ltimes H_\Z$ constructed Section~\ref{sec:stabilizer} induces a homomorphism
$$
\tautilde_\dd : \cG_{g,n+\uu} \to \Sp(H)\ltimes H.
$$
It is surjective as the image of $\tau_\dd$ is Zariski dense in $\Sp(H)\ltimes H$.

\begin{proposition}
For all quasi-algebraic framings $\xi_o$ of $S$, the action of $\ker\tautilde_\dd$ on $\Q\lambda(S)^\wedge$ preserves the completed Turaev cobracket
\begin{equation}
\label{eqn:cobracket}
\delta_{\xi_o} : \Q\lambda(S)^\wedge \to \Q\lambda(S)^\wedge \comptensor \Q\lambda(S)^\wedge.
\end{equation}
\end{proposition}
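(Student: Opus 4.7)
The plan is to identify $\ker\tautilde_\dd$ with the Zariski closure of $\rhotilde(\ker\tau_\dd)$ in $\cG_{g,n+\uu}$ and to observe that the mapping class stabilizer of $\xi_o$ automatically fixes $\delta_{\xi_o}$. Combining these will show that the Zariski closed stabilizer of $\delta_{\xi_o}$ in $\cG_{g,n+\uu}$ contains $\ker\tautilde_\dd$.

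First, formula~\eqref{eqn:def_turaev} shows that $\delta_{\xi_o}$ depends only on the homotopy class of the framing $\xi_o$, since the trivial-winding-number condition used in the definition is homotopy invariant in~$\xi$. By the corollary at the end of Section~\ref{sec:stabilizer}, the mapping class stabilizer of $\xi_o$ equals $\ker\tau_\dd$, so every element of $\rhotilde(\ker\tau_\dd)$ fixes $\delta_{\xi_o}$. The $\cG_{g,n+\uu}$-action on $\Q\lambda(S)^\wedge$ is induced from its action on $\Q\pi_1(S,\vv_o)^\wedge$ by Hopf algebra automorphisms and is continuous in the $I$-adic topology, so the subgroup $G\subset\cG_{g,n+\uu}$ fixing $\delta_{\xi_o}$ is Zariski closed. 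Hence the Zariski closure $\Kbar$ of $\rhotilde(\ker\tau_\dd)$ in $\cG_{g,n+\uu}$ is contained in $G\cap\ker\tautilde_\dd$.

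For the reverse inclusion $\ker\tautilde_\dd\subseteq\Kbar$, form the affine quotient $Q:=\cG_{g,n+\uu}/\Kbar$; it surjects onto $\Sp(H)\ltimes H = \cG_{g,n+\uu}/\ker\tautilde_\dd$ with prounipotent kernel $N:=\ker\tautilde_\dd/\Kbar$. For $\psi\in\G_{g,n+\uu}$, $\rhotilde(\psi)\in\ker\tautilde_\dd$ iff $\psi\in\ker\tau_\dd$, so $\rhotilde(\G_{g,n+\uu})\cap\ker\tautilde_\dd = \rhotilde(\ker\tau_\dd)\subseteq\Kbar$ and the image of $\G_{g,n+\uu}$ in $N$ is trivial. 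Applying the right exactness of relative completion from~\cite{hain:morita} to the extension $1\to\ker\tau_\dd\to\G_{g,n+\uu}\to\mathrm{im}(\tau_\dd)\to 1$, and using that $\mathrm{im}(\tau_\dd)\supseteq\Sp(H_\Z)\ltimes\gcd(\dd)H_\Z$ is a finite-index subgroup of $\Sp(H_\Z)\ltimes H_\Z$ (so its relative completion with respect to the projection to $\Sp(H)$ is $\Sp(H)\ltimes H$), forces $N$ to be trivial, whence $\Kbar = \ker\tautilde_\dd$.

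The main obstacle is the density/right-exactness argument in the last paragraph: a Zariski dense subgroup of an affine algebraic group whose intersection with a prounipotent normal subgroup is trivial does not in general force that subgroup to be trivial, so we must rely on the specific structure of the relative completion. An alternative, more hands-on route would be to derive a variation formula $\delta_{g_*\xi_o} - \delta_{\xi_o} = F(f_{\xi_o}(g))$, with $F$ an algebraic map from $H$ to $\Hom(\Q\lambda(S)^\wedge,\Q\lambda(S)^\wedge\comptensor\Q\lambda(S)^\wedge)$ guided by the rotation-number term in Lemma~\ref{lem:formula} (which is the only part of the cobracket sensitive to a change of framing), then extend $F$ algebraically to $\cG_{g,n+\uu}$ via $\tautilde_\dd$ and conclude directly that $\ker\tautilde_\dd$ acts trivially on $\delta_{\xi_o}$.
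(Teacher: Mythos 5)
Your argument coincides with the paper's for $g\ge 2$ (and degenerates correctly for $g=0$), but it has a genuine gap in genus $1$ --- which is exactly the case where the quasi-algebraicity hypothesis, which you never invoke, is needed. The failure is in your last paragraph. Right exactness of relative completion gives an exact sequence $(\ker\tau_\dd)^\un\to\cG_{g,n+\uu}\to(\im\tau_\dd)^{\mathrm{rel}}\to 1$, but the identification of $(\im\tau_\dd)^{\mathrm{rel}}$ (the completion of a finite-index subgroup of $\Sp(H_\Z)\ltimes H_\Z$ relative to its projection to $\Sp(H_\Q)$) with $\Sp(H)\ltimes H$ rests on the vanishing of $H^1(\Sp(H_\Z);V)$ for all rational representations $V$, which holds only for $g\neq 1$: for $\SL_2(\Z)$ the groups $H^1(\SL_2(\Z);\Sym^k H)$ are nonzero (they are built from modular forms), so the relative completion of $\im\tau_\dd$ has a large prounipotent radical and surjects \emph{properly} onto $\Sp(H)\ltimes H$. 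Consequently your conclusion $\Kbar=\ker\tautilde_\dd$ is not merely unproved in genus $1$; it is false. For $(g,n)=(1,0)$ one has $\dd=(0)$ and $\tau_\dd=\rho$, so $\ker\tau_\dd=T_{1,\uu}$ is (at most) infinite cyclic and $\Kbar$ is at most one-dimensional, while $\ker\tautilde_\dd=\U_{1,\uu}$ has an infinitely generated free Lie algebra (Remark~\ref{rem:density}). No appeal to ``the specific structure of the relative completion'' can close this: the Zariski closure of the image of the topological stabilizer of $\xi_o$ really is much smaller than $\ker\tautilde_\dd$ in genus $1$.

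The paper therefore argues differently when $g=1$. For $n=0$ it uses that all of $\SL_2(\Z)$ (not just the Torelli group) preserves the homotopy class of a translation-invariant framing, together with the Zariski density of $\SL_2(\Z)$ in $\cG_{1,\uu}$, to conclude that $\U_{1,\uu}$ preserves the cobracket. For $n>0$ the quasi-algebraicity hypothesis enters through $A(\xi_o)=\gcd(d_j)$: it supplies two transversely intersecting simple closed curves of rotation number zero, hence a genus-$1$ subsurface on which $\xi_o$ is translation invariant, and $\ker\tautilde_\dd$ is then generated by the image of $\U_{1,\uu}$ together with the closure of $\ker\tau_\dd\cap T_{1,n+\uu}$. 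Your first two paragraphs --- homotopy invariance of $\delta_\xi$ in $\xi$, the identification of the mapping class stabilizer with $\ker\tau_\dd$, the Zariski-closedness of the stabilizer of the cobracket, and the inclusion $\Kbar\subseteq\ker\tautilde_\dd$ --- are correct and, supplemented by the density argument for $g\ge 2$, reproduce the paper's proof in that range; the genus $1$ case needs a separate argument along the lines above (or a worked-out version of the variation formula you sketch, which you have not supplied).
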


\begin{proof}
When $g=0$, $\tautilde_\dd$ is trivial. Since $\G_{0,n+\uu}$ preserves the homotopy class of $\xi_o$, the result is trivially true. Now assume that $g>0$. For the rest of the proof, we assume the reader is familiar with the general theory of relative completion as explained in \cite[\S3]{hain:morita}.

When $g\ge 2$, every framing is quasi-algebraic by Proposition~\ref{prop:framings} and the algebraic nature of the framing will not play any explicit role in the proof. The computation \cite[Ex.~3.12]{hain:morita} and the right exactness of relative completion \cite[Prop.~3.7]{hain:morita} imply that the completion of $\Sp(H_\Z)\ltimes H_\Z$ relative to the obvious homomorphism to $\Sp(H_\Q)$ is $\Sp(H)\ltimes H$; the canonical homomorphism $\Sp(H_\Z)\ltimes H_\Z \to \Sp(H_\Q)\ltimes H_\Q$ is the inclusion. Right exactness of relative completion implies that the sequence
$$
\xymatrix{
(\ker \tau_\dd)^\un \ar[r] & \cG_{g,n+\uu} \ar[r]^(.42){\tautilde_\dd} & \Sp(H) \ltimes H \ar[r] & 1
}
$$
is exact, where $(\phantom{X})^\un$ denotes unipotent completion. Since every group is Zariski dense in its unipotent completion, the exactness of this sequence implies that $\ker \tau_\dd$ is Zariski dense in $\ker \tautilde_\dd$. Since $\ker\tau_\dd$ fixes $\xi_o$, it preserves the completed cobracket. It follows that $\ker\tautilde_\dd$ does as well.

In view of Remark~\ref{rem:density}, the proof is more intricate when $g=1$. We first consider the case when $n=0$. We take $\Sbar$ to be the group $S^1\times S^1$ and $P$ to be its identity. In this case, $\xi_o$ is a translation invariant vector field. Since any two translation invariant vector fields are homotopic, it follows that their homotopy classes lie in one $\SL_2(\Z)$-orbit of framings. Since the cobracket depends only on the homotopy class of the framing, $\SL_2(\Z)$ preserves the completed cobracket (\ref{eqn:cobracket}). Since $\SL_2(\Z)$ is Zariski dense in $\cG_{1,\uu}$, it follows that it also preserves the cobracket. Since the image of $\tautilde_\dd : \cG_{1,\uu} \to \Sp(H)\ltimes H$ is $\Sp(H)$, it follows that $\ker\tautilde_\dd = \U_{1,\uu}$ preserves the completed cobracket.

Suppose now that $g=1$ and $n>0$. Since $\xi_o$ is quasi-algebraic, $A(\xi_o)=\gcd(d_j)$. So there exist two transversely intersecting simple closed curves $\alpha$ and $\beta$ in $S$ with $\rot_{\xi_o}(\alpha) = \rot_{\xi_o}(\beta) = 0$. A regular neighbourhood of the union of $\alpha \cup \beta$ is a genus 1 surface with one boundary component. Since $\rot_{\xi_o}(\alpha)=\rot_{\xi_o}(\beta) = 0$, the restriction of the framing to the genus 1 subsurface is homotopic to a translation invariant framing.

The inclusion of the genus 1 subsurface induces an inclusion $\G_{1,\uu} \to \G_{1,n+\uu}$. By the $n=0$ case, the image of $\G_{1,\uu}$ in $\G_{1,n+\uu}$ preserves the homotopy class of $\xi_o$. The kernel of the restriction of $\tau_\dd : T_{1,n+\uu} \to H_\Z$ also preserves the class of $\xi_o$. So the subgroup
$$
\G_{\xi_o} := \langle \ker\tau_\dd \cap T_{1,n+\uu},\G_{1,\uu}\rangle
$$
of $\G_{1,n+\uu}$ generated by these two groups stabilizes the class of $\xi_o$ and thus preserves the cobracket. The prounipotent radical $\U_{\xi_o}$ of the Zariski closure of $\G_{\xi_o}$ in $\cG_{1,n+\uu}$ is generated by the image of $\U_{1,\uu}$ and the kernel of $\tau_\dd : T_{1,n+\uu}^\un \to H$. It is precisely the kernel of $\tau_\dd : \U_{1,n+\uu} \to H$. Since $\G_{\xi_o}$ preserves the cobracket, so does $\U_{\xi_o}$.
\end{proof}

Denote $\ker \tautilde_\dd$ by $\U_{g,n+\uu}^\dd$. There is a natural homomorphism
$$
\U_{g,n+\uu}^\dd \to \KRV^\dd_{g,n+\uu}.
$$
Denote the image of $\U_{g,n+\uu}$ in $\Aut \Q\pi_1(S,\vv_o)^\wedge$ by $\Ubar_{g,n+\uu}$ and the image of $\U_{g,n+\uu}^\dd$ by $\Ubar_{g,n+\uu}^\dd$.

A complex structure $\phi: (\Sbar,P,\vv_o) \to (\Xbar,D,\vv)$ determines a Mumford--Tate group $\MT_{X,\vv}$, which acts faithfully on $\Q\pi_1(X,\vv)^\wedge$. Denote the corresponding subgroup $\phi \MT_{X,\vv} \phi^{-1}$ of $\Aut\Q\pi_1(S,\vv_o)^\wedge$ by $\MT(\phi)$ and its prounipotent radical by $\U^\MT(\phi)$.

\begin{definition}
The group $\Uhat_{g,n+\uu}^\dd(\phi)$ is the subgroup of $\Aut \Q\pi_1(S,\vv_o)^\wedge$ generated by $\U^\MT(\phi)$ and $\Ubar_{g,n+\uu}^\dd$.
\end{definition}

Recall that a MHS on an affine $\Q$-group $G$ is, by definition, a MHS on its coordinate ring $\cO(G)$. Equivalently, a MHS on $G$ is an algebraic action of $\pi_1(\MHS)$ on $G$. A homomorphism $G_1 \to G_2$ of affine $\Q$-groups with MHS is a morphism of MHS if it is $\pi_1(\MHS)$ equivariant. A MHS on $G$ induces one on its Lie algebra.

\begin{lemma}
A quasi-complex structure $\phi$ on $(\Sbar,P,\vv_o,\xi_o)$ determines pro-MHS on the Lie algebras (and coordinate rings) of $\U_{g,n+\uu}^\dd$ and $\Uhat_{g,n+\uu}^\dd(\phi)$. The natural homomorphism $\Uhat_{g,n+\uu}^\dd(\phi) \to \Aut \Q\pi_1(X,\vv)^\wedge$ is a morphism of MHS.
\end{lemma}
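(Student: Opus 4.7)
The plan is to equip $\U_{g,n+\uu}^\dd$ with a pro-MHS by realizing it as the kernel of a morphism of MHS. The complex structure $\phi$ gives a base point of $\M_{g,n+1}$ at which the Lie algebra of $\cG_{g,n+\uu}$ carries a canonical pro-MHS, as established in \cite{hain:torelli}. On the target $\Sp(H) \ltimes H$, where $H = H_1(\Xbar)$, I put the evident MHS in which $\Sp(H)$ is pure of weight $0$ and $H$ is pure of weight $-1$. The key step is to verify that $\tautilde_\dd : \cG_{g,n+\uu} \to \Sp(H) \ltimes H$ is $\pi_1(\MHS)$-equivariant; for this I would appeal to the identification, proved in the previous section, of $[\tau_\dd]$ with the cocycle defined by the algebraic section $F_\dd(C;x_0,\dots,x_n) = K_C + \sum_j d_j x_j$ of the universal Jacobian over $\M_{g,n+1}$. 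Because $F_\dd$ is a morphism of smooth complex varieties, the induced class in $H^1(\cG_{g,n+\uu};H)$ is Hodge-theoretic, so the corresponding homomorphism of affine groups respects the MHS. Hence $\U_{g,n+\uu}^\dd = \ker \tautilde_\dd$ inherits a pro-MHS on both its Lie algebra and coordinate ring.

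Next I would transport the picture to $\Aut \Q\pi_1(X,\vv)^\wedge$. The canonical MHS on $\Q\pi_1(X,\vv)^\wedge$, transported via $\phi$ to $\Q\pi_1(S,\vv_o)^\wedge$, endows its automorphism group with an algebraic $\pi_1(\MHS)$-action by conjugation. The action $\cG_{g,n+\uu} \to \Aut \Q\pi_1(S,\vv_o)^\wedge$ is a morphism of MHS because the MHS on the relative completion is defined at this very base point. Consequently, the image $\Ubar_{g,n+\uu}^\dd$ of $\U_{g,n+\uu}^\dd$ is a $\pi_1(\MHS)$-stable subgroup carrying a quotient pro-MHS. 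On the other hand, $\U^\MT(\phi) = \phi \U^\MT_{X,\vv}\phi^{-1}$ is by definition the image in $\Aut\Q\pi_1(X,\vv)^\wedge$ of the prounipotent radical of $\pi_1(\MHS)$, so it is tautologically $\pi_1(\MHS)$-stable under conjugation and already comes with a pro-MHS.

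To conclude for $\Uhat_{g,n+\uu}^\dd(\phi)$, I would use the following general fact: since $\pi_1(\MHS)$ acts on $\Aut \Q\pi_1(X,\vv)^\wedge$ by pro-algebraic group automorphisms, the (Zariski closed) subgroup generated by two $\pi_1(\MHS)$-stable subgroups is itself $\pi_1(\MHS)$-stable. Hence $\Uhat_{g,n+\uu}^\dd(\phi)$ inherits a pro-MHS on its coordinate ring (equivalently, on its Lie algebra), and the inclusion into $\Aut \Q\pi_1(X,\vv)^\wedge$ is a morphism of MHS by construction.

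The main obstacle I anticipate is the compatibility of $\tautilde_\dd$ with the MHS structures: one must verify that the extension class of $\Sp(H) \ltimes H$ realized by $\tautilde_\dd$ is precisely the $H$-valued cocycle arising from the algebraic section $F_\dd$, and that the MHS on the relative completion $\cG_{g,n+\uu}$ at the base point $\phi$ is compatible with the standard MHS on $H$ in a way that makes $\tautilde_\dd$ equivariant under $\pi_1(\MHS)$. Morally this is the motivic nature of the relative completion of mapping class groups, combined with the previous section's identification of $f_{\xi_o}$ with $\tau_\dd$; in practice it may require a careful chase through the Hodge-theoretic construction of the MHS on $\cG_{g,n+\uu}$ (the edge cases $g=1$ and the adjustment $\dd\mapsto \dd$ via the canonical class contribution deserving particular care).
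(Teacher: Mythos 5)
Your proposal is correct and is essentially the paper's argument. The one step you single out as the main obstacle --- the $\pi_1(\MHS)$-equivariance of $\tautilde_\dd$ --- is closed in the paper without any cocycle chase: the composite $\cG_{g,n+\uu}\to\cG_{g,n+1}\to\Sp(H)\ltimes H$ is induced on relative completions by the morphism of pointed varieties $(\M_{g,n+\uu},(\Xbar,D,\vv))\to(\M_{g,n+1},(\Xbar,D))\to(\X,(\Jac\Xbar,0))$, the second arrow being $F_\dd$ into the universal abelian variety $\X$ over $\cA_g$, so functoriality of the MHS on relative completion does all the work; and your treatment of $\Uhat_{g,n+\uu}^\dd(\phi)$ via $\pi_1(\MHS)$-stability of the generated subgroup is an equivalent repackaging of the paper's construction of it as the image of $\pi_1(\MHS)\ltimes\U_{g,n+\uu}^\dd$ in $\Aut\Q\pi_1(X,\vv)^\wedge$ endowed with the inner $\pi_1(\MHS)$-action, with the needed equivariance of $\U_{g,n+\uu}^\dd\to\Aut\Q\pi_1(X,\vv)^\wedge$ supplied by the cited lemma from the Torelli-group paper.
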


\begin{proof}
The quasi-complex structure $\phi$ determines a MHS on $\U_{g,n+\uu}$. Observe that $\U_{g,n+\uu}^\dd$ is the kernel of the homomorphism
$$
\xymatrix{
\cG_{g,n+\uu} \ar[r] & \cG_{g,n+1} \ar[r]^(.4){\tautilde_\dd} & \Sp(H)\ltimes H
}
$$
induced on completed fundamental groups by the morphism of pointed varieties
$$
\xymatrix{
\big(\M_{g,n+\uu},(\Xbar,D,\vv)\big) \ar[r] & \big(\M_{g,n+1},(\Xbar,D)\big) \ar[r]^(.55){F_\dd} & \big(\X,(\Jac\Xbar,0)\big),
}
$$
where $\X \to \cA_g$ is the universal abelian variety over $\cA_g$, the moduli space of principally polarized abelian varieties. Since morphisms of pointed varieties induce morphisms of MHS on completed fundamental groups, it follows that $\U_{g,n+\uu}^\dd$ has a natural MHS.

This MHS corresponds to an action of $\pi_1(\MHS)$ on it, so that one has the group $\pi_1(\MHS)\ltimes \U_{g,n+\uu}^\dd$. The pro-MHS on $\Q\pi_1(X,\vv)^\wedge$ corresponds to a homomorphism $\pi_1(\MHS) \to \Aut \Q\pi_1(X,\vv)^\wedge$. By \cite[Lem.~4.5]{hain:torelli}, the homomorphism $\U_{g,n+\uu}^\dd \to \Aut \Q\pi_1(X,\vv)^\wedge$ is a morphism of MHS. It thus extends to a homomorphism
$$
\pi_1(\MHS)\ltimes \U_{g,n+\uu}^\dd \to \Aut \Q\pi_1(X,\vv)^\wedge.
$$
Its image is $\Uhat_{g,n+\uu}^\dd(\phi)$. The inner action of $\pi_1(\MHS)$ on $\Uhat_{g,n+\uu}^\dd$ gives it a MHS. The inclusion $\Uhat_{g,n+\uu}^\dd \hookrightarrow \Aut \Q\pi_1(X,\vv)^\wedge$ is $\pi_1(\MHS)$-invariant, which implies that it is a morphism of MHS.
\end{proof}

The following theorem is proved in Section~\ref{sec:krv}. It and the previous lemma imply Theorem~\ref{thm:krv}.

\begin{theorem}
\label{thm:krv2}
For each quasi-complex structure $\phi : (\Sbar,P,\vv_o,\xi_o) \to (\Xbar,D,\vv,\xi)$, there is an injective homomorphism $\Uhat_{g,n+\uu}^\dd(\phi) \hookrightarrow \KRV_{g,n+\uu}^\dd$ of prounipotent $\Q$-groups. Its image does not depend on the quasi-complex structure $\phi$. The group $\Ubar_{g,n+\uu}^\dd$ is a normal subgroup of $\Uhat_{g,n+\uu}^\dd$. There is a canonical surjection $\K \to \Uhat_{g,n+\uu}^\dd/\Ubar_{g,n+\uu}^\dd$, where $\K$ is the prounipotent radical of $\pi_1(\MTM)$.
\end{theorem}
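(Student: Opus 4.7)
The plan is to prove the four assertions in sequence, exploiting the fact that each of the two generating subgroups $\U^\MT(\phi)$ and $\Ubar_{g,n+\uu}^\dd$ already acts on $\Q\pi_1(S,\vv_o)^\wedge$ preserving enough structure to land inside $\KRV_{g,n+\uu}^\dd$, and then extracting the quotient by $\Ubar_{g,n+\uu}^\dd$ via a tangential base point.

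First I would verify that $\Uhat_{g,n+\uu}^\dd(\phi) \subseteq \KRV_{g,n+\uu}^\dd$ by checking the two generating subgroups separately. The group $\Ubar_{g,n+\uu}^\dd$ preserves the completed Goldman bracket since it is the image of a subgroup of the mapping class group, and by construction it stabilizes the homotopy class of $\xi_o$ (it is the image of $\ker\tautilde_\dd$), so it preserves $\delta_{\xi_o}$ as well; hence it solves the boundary conditions defining $\KRV_{g,n+\uu}^\dd$. That $\U^\MT(\phi)$ lies in $\KRV_{g,n+\uu}^\dd$ is precisely Corollary~\ref{cor:kv}. Injectivity of the map $\Uhat_{g,n+\uu}^\dd(\phi) \to \KRV_{g,n+\uu}^\dd$ is automatic since both sides are subgroups of $\Aut\Q\pi_1(S,\vv_o)^\wedge$.

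For independence of $\phi$, I would observe that any two quasi-complex structures $\phi_1,\phi_2$ on $(\Sbar,P,\vv_o,\xi_o)$ differ by a diffeomorphism in the stabilizer of $(\vv_o,\xi_o^m)$, whose class in the mapping class group lies in $\ker\tau_\dd$ and therefore, upon passage to the relative completion and thence to $\Aut\Q\pi_1(S,\vv_o)^\wedge$, in $\Ubar_{g,n+\uu}^\dd$. Conjugation by such an element carries $\U^\MT(\phi_1)$ to $\U^\MT(\phi_2)$, so the two subgroups $\langle \U^\MT(\phi_i),\Ubar_{g,n+\uu}^\dd\rangle$ of $\Aut\Q\pi_1(S,\vv_o)^\wedge$ coincide. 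Normality of $\Ubar_{g,n+\uu}^\dd$ in $\Uhat_{g,n+\uu}^\dd$ then reduces to showing that $\U^\MT(\phi)$ normalizes $\Ubar_{g,n+\uu}^\dd$; this follows from the lemma preceding the theorem, which equips $\U_{g,n+\uu}^\dd$ with a pro-MHS and identifies the homomorphism $\U_{g,n+\uu}^\dd \to \Aut\Q\pi_1(S,\vv_o)^\wedge$ as a morphism of MHS. Consequently the inner action of $\pi_1(\MHS)$, and in particular of $\U^\MT(\phi)$, preserves $\Ubar_{g,n+\uu}^\dd$.

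For the surjection $\K \twoheadrightarrow \Uhat_{g,n+\uu}^\dd / \Ubar_{g,n+\uu}^\dd$, I would specialize $\phi$ to the tangential quasi-complex structure associated to a maximally degenerate stable curve via the Ihara--Nakamura smoothings. Under Hypothesis~\ref{hypoth:ihara}, the limit MHS on $\Q\pi_1(X_\vv,\vv)^\wedge$ is the Hodge realization of a pro-object of $\MTM(\Z)$, so its Mumford--Tate group is isomorphic to $\pi_1(\MTM,\w^B)$ and its prounipotent radical is $\K$. The composite
\[
\K \;=\; \U^\MT(X_\vv,\vv) \;\hookrightarrow\; \Uhat_{g,n+\uu}^\dd(\phi) \;\twoheadrightarrow\; \Uhat_{g,n+\uu}^\dd/\Ubar_{g,n+\uu}^\dd
\]
is surjective by the very definition of $\Uhat_{g,n+\uu}^\dd$ as generated by $\U^\MT(\phi)$ and $\Ubar_{g,n+\uu}^\dd$. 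That this homomorphism does not depend on the choice of tangential base point follows from the independence already proved, together with the tannakian canonicality of $\K$: any two such choices differ by an element of $\Ubar_{g,n+\uu}^\dd$, and we are quotienting by precisely this subgroup.

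The main obstacle is the normality step, specifically establishing the MHS-compatibility of the inclusion $\U_{g,n+\uu}^\dd \to \Aut\Q\pi_1(S,\vv_o)^\wedge$; this hinges on matching the Hodge theory of the relative completion of the mapping class group from \cite{hain:torelli} with the Hodge theory of $\Q\pi_1(X,\vv)^\wedge$, both constructed via the same tangential base point, so that the $\pi_1(\MHS)$-equivariance is genuine and not merely formal. Once this is in place, the remaining verifications are essentially bookkeeping with the two generating subgroups.
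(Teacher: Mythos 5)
Your outline for the first, third, and fourth assertions is broadly consistent with the paper: the inclusion into $\KRV_{g,n+\uu}^\dd$ comes from checking the two generating subgroups separately, normality of $\Ubar_{g,n+\uu}^\dd$ comes from the $\pi_1(\MHS)$-equivariance of $\U_{g,n+\uu}^\dd \to \Aut\Q\pi_1(S,\vv_o)^\wedge$, and the surjection from $\K$ comes from the Ihara--Nakamura degeneration together with Hypothesis~\ref{hypoth:ihara} and Brown's theorem. The problem is the second step, independence of the image from $\phi$, which is the substantive content of the theorem and where your argument breaks down.

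You claim that any two quasi-complex structures $\phi_1,\phi_2$ on $(\Sbar,P,\vv_o,\xi_o)$ differ by a diffeomorphism, so that conjugation by an element of $\Ubar_{g,n+\uu}^\dd$ carries $\U^\MT(\phi_1)$ to $\U^\MT(\phi_2)$. This is false. A quasi-complex structure includes the choice of an actual Riemann surface $(\Xbar,D)$, i.e.\ a point of $\M_{g,n+1}$; two structures with non-isomorphic targets are not related by any diffeomorphism of $(\Sbar,P)$, and the subgroups $\U^\MT(\phi_1)$ and $\U^\MT(\phi_2)$ of $\Aut\Q\pi_1(S,\vv_o)^\wedge$ are in general not conjugate --- they need not even be abstractly isomorphic, since the size of the Mumford--Tate group varies non-trivially with the complex structure (the paper stresses this at the start of the proof of the first proposition of Section~\ref{sec:krv}, and again at the end of Section~\ref{sec:hoge_splittings}). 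The whole point of the theorem is that the group generated by $\U^\MT(\phi)$ together with $\Ubar_{g,n+\uu}^\dd$ is nevertheless constant, even though $\U^\MT(\phi)$ is not. The paper's mechanism cannot be bypassed: one uses that the relative completion $\cG_{x,y}$ of the path torsor between two points $x,y$ of $\M_{g,n+\uu}$ carries a MHS for which $\cG_x\times\cG_{x,y}\to\cG_y$ is a morphism of MHS \cite{hain:malcev}; this shows the groups $\cGhat_x$ form a local system, with $\cGhat_y=\gamma^{-1}\cGhat_x\gamma$ for $\gamma$ in the image of $\cG_{x,y}$, and since that image is a coset of $\cGbar_{g,n+\uu}\subseteq\cGhat_x$ one gets $\cGhat_y=\cGhat_x$; the theorem of the fixed part then makes the MHS on $\cO(\Uhat_{g,n+\uu}/\Ubar_{g,n+\uu})$ canonical. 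A secondary consequence of this gap: in your last step you apply the first part of the theorem to the ``tangential quasi-complex structure'' at a maximally degenerate curve, but that is a boundary point of $\Mbar_{g,n+\uu}$, not a quasi-complex structure in the sense of the theorem; transporting its limit Mumford--Tate group into $\Uhat_{g,n+\uu}^\dd/\Ubar_{g,n+\uu}^\dd$ requires knowing that the constant variation $\cO(\Uhat_{g,n+\uu}/\Ubar_{g,n+\uu})$ extends over the boundary, which again rests on the constancy argument you have omitted.
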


Since $\Uhat_{g,n+\uu}^\dd(\phi)$ is independent of the choice of $\phi$, we denote it by $\Uhat_{g,n+\uu}^\dd$.

\begin{remark}
The complex structure on $(\Sbar,P,\vv_o,\xi_o)$ defines a $\C$-point, and thus a geometric point, $p$ of the moduli stack $\M_{g,n+\uu/\Q}$. Its \'etale fundamental group $\pi_1^\et(\M_{g,n+\uu},p)$ is an extension
$$
1 \to \G_{g,n+\uu}^\wedge \to \pi_1^\et(\M_{g,n+\uu},p) \to \Gal(\Qbar/\Q) \to 1.
$$
where $\G_{g,n+\uu}^\wedge$ denotes the profinite completion of the mapping class group. For each prime number $\ell$, there is an homomorphism $\pi_1^\et(\M_{g,n+\uu},p) \to \Sp(H_\Zl)\ltimes H_\Zl$. Denote its kernel by $\pi_1^\et(\M_{g,n+\uu},p)^\dd$. There is a homomorphism
$$
\phi_\ell : \pi_1^\et(\M_{g,n+\uu},p)^\dd \to \KRV_{g,n+\uu}^\dd(\Ql).
$$
Using weighted completion \cite[\S8]{hain:rat_pts}, one can show that the Zariski closure of the image of $\phi_\ell$ is $\Uhat_{g,n+\uu}^\dd(\Ql)$.
\end{remark}

Recall from \cite[\S10.2]{hain:goldman} that natural splittings of the weight filtration of a MHS correspond to lifts of the central cocharacter $\chi : \Gm \to \pi_1(\MHS^\ss)$ to $\pi_1(\MHS)$. Each MHS on the completed Goldman--Turaev Lie bialgebra and each lift of $\chi$ gives rise to a splitting of the Goldman--Turaev Lie bialgebra.\footnote{This is called {\em Goldman--Turaev formality} in \cite{akkn2}.} It also gives a grading of $\uhat_{g,n+\uu}^\dd$. Thus 

\begin{corollary}
Each choice of a quasi-complex structure $\phi : (\Sbar,P,\vv_o,\xi_o) \to (\Xbar,D,\vv,\xi)$ and each choice of a lift of the central cocharacter $\chi : \Gm \to \pi_1(\MHS^\ss)$ gives isomorphisms
$$
\u_{g,n+\uu}^\dd \cong \prod_m \Gr^W_m \u_{g,n+\uu}^\dd
\text{ and }
\Q\pi_1(X,\vv)^\wedge \cong \prod_m \Gr^W_m \Q\pi_1(S,\vv)
$$
such that the diagram
$$
\xymatrix{
\uhat_{g,n+\uu}^\dd \ar[r]\ar[d]^\cong & \Der \Q\pi_1(X,\vv)^\wedge\ar[d]^\cong \cr
\prod_m \Gr^W_m \uhat_{g,n+\uu}^\dd \ar[r] & \Der \prod_m \Gr^W_m \Q\pi_1(S,\vv)
}
$$
commutes. Each of these splittings descends to splitting of the weight filtration of the Goldman--Turaev Lie bialgebra $(\Q\lambda(S)^\wedge,\gold,\delta_{\xi_o})$. \qed
\end{corollary}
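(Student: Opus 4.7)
The plan is to appeal to the tannakian dictionary (recalled in \cite[\S10.2]{hain:goldman}) that identifies a lift $\chitilde : \Gm \to \pi_1(\MHS)$ of the central cocharacter with a functorial splitting of the weight filtration on every object of $\MHS$. Concretely, $\chitilde$ determines a tensor functor $V \mapsto \prod_m \Gr^W_m V$ from $\pro\MHS$ to complete bigraded $\Q$-vector spaces. Because this is a tensor functor, it sends Lie algebras, completed Hopf algebras, and their morphisms in $\pro\MHS$ to the corresponding structures on the weight-graded side. First I would apply this functor to the pro-MHS on $\uhat_{g,n+\uu}^\dd$ produced via $\phi$ in the lemma preceding Theorem~\ref{thm:krv2}, and to the canonical pro-MHS on $\Q\pi_1(X,\vv)^\wedge$ from \cite{hain:dht}. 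This directly yields the two asserted isomorphisms, each as an isomorphism of the appropriate algebraic structure.

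To see that the displayed square commutes, I would invoke the same lemma preceding Theorem~\ref{thm:krv2}, which asserts that $\Uhat_{g,n+\uu}^\dd \to \Aut\Q\pi_1(X,\vv)^\wedge$ is a morphism of MHS. Differentiating gives that $\uhat_{g,n+\uu}^\dd \to \Der \Q\pi_1(X,\vv)^\wedge$ is a morphism of MHS, and functoriality of the weight splitting functor forces the square to commute.

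For the descent to the Goldman--Turaev bialgebra, I would combine Corollary~\ref{cor:gr_gt} with the observation that the natural surjection $\Q\pi_1(X,\vv)^\wedge \twoheadrightarrow \Q\lambda(X)^\wedge$ is a morphism of pro-MHS (built into the construction of the MHS on $\Q\lambda(X)^\wedge$ in \cite{hain:dht}). Applying the $\chitilde$-splitting to this surjection produces a splitting of $\Q\lambda(X)^\wedge$ compatible with that of $\Q\pi_1(X,\vv)^\wedge$. Corollary~\ref{cor:gr_gt} then ensures that the splitting carries $\gold$ to $\Gr^W_\bdot \gold$ and $\delta_{\xi}$ to $\Gr^W_\bdot \delta_{\xi}$, so that one obtains a splitting of the Goldman--Turaev Lie bialgebra in the twisted sense.

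The only genuine subtlety is keeping track of the Tate twists $\Q(\pm 1)$ in Corollary~\ref{cor:gr_gt}: the bracket and cobracket are morphisms of MHS only after twisting by $\Q(1)$ and $\Q(-1)$ respectively. Because $\chitilde$ acts on $\Q(n)$ in pure weight $-2n$, these twists contribute matching shifts on both sides of the relevant bracket and cobracket diagrams, and the splittings are therefore compatible with the twisted bialgebra structure. Apart from this bookkeeping, the corollary is a formal consequence of the tannakian formalism together with Theorem~\ref{thm:turaev} and Theorem~\ref{thm:krv2}, and I anticipate no further geometric or analytic input is needed.
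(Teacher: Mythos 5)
Your proposal is correct and takes essentially the same route as the paper, which states the corollary as an immediate consequence of the tannakian splitting formalism of \cite[\S10.2]{hain:goldman} applied to the pro-MHS furnished by the lemma preceding Theorem~\ref{thm:krv2}, with Corollary~\ref{cor:gr_gt} handling the descent to the Goldman--Turaev Lie bialgebra. Your explicit bookkeeping of the Tate twists is exactly the point the paper leaves implicit.
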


\section{Proof of Theorem~\ref{thm:krv2}}
\label{sec:krv}

We will use the notation of the previous section. We begin a reformulation of the definition of $\Uhat_{g,n+\uu}^\dd(\phi)$ associated to a quasi-complex structure
$$
\phi : (\Sbar,P,\vv_o,\xi_o)\to (\Xbar,D,\vv,\xi)
$$
on $(\Sbar,P,\vv_o,\xi_o)$. This determines an isomorphism $\G_{g,n+\uu} \cong \pi_1(\M_{g,n+\uu},\phi_o)$. The corresponding MHS on the relative completion $\cG_{g,n+\uu}$ corresponds to an action of $\pi_1(\MHS)$ on $\cG_{g,n+\uu}$. The quasi-complex structure $\phi$ determines a semi-direct product
$$
\pi_1(\MHS) \ltimes \cG_{g,n+\uu}.
$$
Since the natural homomorphism $\cG_{g,n+\uu} \to \Aut \Q\pi_1(X,\vv_o)^\wedge$ is a morphism of MHS, \cite[Lem.~4.5]{hain:torelli}, the monodromy homomorphism extends to a homomorphism
$$
\pi_1(\MHS) \ltimes \cG_{g,n+\uu} \to \Aut \Q\pi_1(X,\vv_o)^\wedge.
$$
Denote its image by $\cGhat_{g,n+\uu}$ and the image of $\cG_{g,n+\uu}$ by $\cGbar_{g,n+\uu}$. It is normal in $\cGhat_{g,n+\uu}$. The group $\cGhat_{g,n+\uu}$ is an extension
$$
1 \to \Uhat_{g,n+\uu} \to \cGhat_{g,n+\uu} \to \GSp(H) \to 1,
$$
where $\GSp$ denotes the general symplectic group and $\Uhat_{g,n+\uu}$ is prounipotent.\footnote{One can argue as in \cite{hain-matsumoto} that, if $g\ge 3$, then then $\U^\MT_{X,\vv} \to \Uhat_{g,n+\uu}$ is an isomorphism if and only if $\pi_1(\MHS) \to \GSp(H)$ is surjective; the Griffiths invariant $\nu(\Xbar) \in \Ext^1_\MHS(\Q,PH^3(\Jac \Xbar(2)))$ of the Ceresa cycle in $\Jac\Xbar$ is non-zero; and if the points $\kappa_j := (2g-2)x_j - K_\Xbar \in (\Jac \Xbar)\otimes \Q$, $0\le j \le n$, are linearly independent over $\Q$. This holds for general $(\Xbar,D,\vv)$.}

\begin{proposition}
For each complex structure $\phi : (\Sbar,P,\vv_o) \to (\Xbar,D,\vv)$, the coordinate ring $\cO(\cGhat_{g,n+\uu}/\cGbar_{g,n+\uu})$ has a canonical MHS. These form an admissible variation of MHS over $\M_{g,n+\uu}$ with trivial monodromy. Consequently, the MHS on $\cO(\Uhat_{g,n+\uu}/\Ubar_{g,n+\uu})$ does not depend on the complex structure $\phi$.
\end{proposition}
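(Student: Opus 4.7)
The plan is to exhibit the coordinate ring $\cO(\cGhat_{g,n+\uu}/\cGbar_{g,n+\uu})$ as the fiber at $\phi$ of an admissible pro-variation of MHS over $\M_{g,n+\uu}$ whose underlying local system has trivial monodromy. Combined with the observation that $\Uhat_{g,n+\uu}/\Ubar_{g,n+\uu}$ is the kernel of the induced quotient map $\cGhat_{g,n+\uu}/\cGbar_{g,n+\uu} \to \GSp(H)/\Sp(H) \cong \Gm$, this will give all three assertions at once.

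First I would endow the fiber with its canonical MHS. The pro-MHS on $\Q\pi_1(X,\vv)^\wedge$ corresponds to a homomorphism $\pi_1(\MHS) \to \Aut\Q\pi_1(X,\vv)^\wedge$, whose image is the Mumford--Tate group $\MT(\phi)$. Together with the MHS on $\cG_{g,n+\uu}$ and the compatibility established in \cite[Lem.~4.5]{hain:torelli}, this realizes $\cGhat_{g,n+\uu}$ as the image of a $\pi_1(\MHS)$-equivariant homomorphism $\pi_1(\MHS) \ltimes \cG_{g,n+\uu} \to \Aut\Q\pi_1(X,\vv)^\wedge$. Hence $\pi_1(\MHS)$ acts algebraically on $\cGhat_{g,n+\uu}$ and preserves the normal subgroup $\cGbar_{g,n+\uu}$, which is exactly a MHS on the coordinate ring of the quotient.

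Next I would upgrade this to an admissible pro-variation over $\M_{g,n+\uu}$. The pro-MHS on $\Q\pi_1(X,\vv)^\wedge$ varies admissibly by the results underlying \cite{hain:goldman}, and the relative completion $\cG_{g,n+\uu}$ is a variation of affine $\Q$-groups with MHS by \cite{hain:torelli}. Functoriality of the Tannakian formalism then makes both $\MT(\phi)$ and $\cGbar_{g,n+\uu}$ admissible subvariations of the affine group scheme $\Aut\Q\pi_1(X,\vv)^\wedge$, with $\cGbar$ a normal subvariation of the variation $\cGhat$ they generate; passing to the quotient yields the desired admissible pro-variation of MHS on $\cO(\cGhat/\cGbar)$. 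The monodromy computation is then tautological: the monodromy representation of $\G_{g,n+\uu}$ on $\cGhat/\cGbar$ factors through $\rhotilde : \G_{g,n+\uu} \to \cG_{g,n+\uu}(\Q) \to \cGbar_{g,n+\uu}(\Q)$ followed by inner conjugation within $\Aut\Q\pi_1(X,\vv)^\wedge$, and since $\cGbar$ is normal in $\cGhat$, this conjugation descends to the trivial action on the quotient. Admissibility plus trivial monodromy then force the underlying variation to be constant, giving the claimed $\phi$-independence.

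The main obstacle will be verifying the compatibility of admissibility of variations of MHS with the algebraic operations invoked above — forming the pro-algebraic subgroup generated by two subvariations, and quotienting by a normal subvariation — together with the passage from the $\pi_1(\MHS) \ltimes \cG_{g,n+\uu}$ description of $\cGhat$ at the point $\phi$ to a genuine variation-of-MHS structure. These should be bookkeeping exercises within Saito's formalism once the two input admissibility results from \cite{hain:goldman, hain:torelli} are in hand; the final step of restricting the variation to the closed subscheme $\Uhat/\Ubar \subset \cGhat/\cGbar$ is automatic.
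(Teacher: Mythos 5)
Your overall strategy matches the paper's: the pointwise construction of the MHS on $\cO(\cGhat_{g,n+\uu}/\cGbar_{g,n+\uu})$ via the image of $\pi_1(\MHS)\ltimes\cG_{g,n+\uu}$ in $\Aut\Q\pi_1(X,\vv)^\wedge$, the monodromy computation using normality of $\cGbar_{g,n+\uu}$, and the passage to $\Uhat_{g,n+\uu}/\Ubar_{g,n+\uu}$ are all in order. But there is a genuine gap at precisely the step you defer as ``bookkeeping'': the claim that $\MT(\phi)$, and hence $\cGhat_{g,n+\uu}$, forms an (admissible sub)variation over $\M_{g,n+\uu}$. The Mumford--Tate group of $\Q\pi_1(X,\vv)^\wedge$ is \emph{not} locally constant in $\phi$: its dimension jumps on special loci (for instance where the Ceresa normal function or the points $\kappa_j=(2g-2)x_j-K_\Xbar$ degenerate), so the subgroups $\MT(\phi)$ of $\Aut\Q\pi_1(S,\vv_o)^\wedge$ do not assemble into a local system, and one cannot ``generate'' a variation $\cGhat_{g,n+\uu}$ from them by Tannakian functoriality. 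This is exactly the difficulty the paper flags at the outset of its proof.

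The missing ingredient is the Hodge theory of relative completions of \emph{path torsors}. For $x,y\in\M_{g,n+\uu}$, the relative completion $\cG_{x,y}$ of the torsor of paths from $x$ to $y$ carries a MHS, and the multiplication $\cG_x\times\cG_{x,y}\to\cG_y$ is a morphism of MHS \cite{hain:malcev}; equivalently, the map $(\pi_1(\MHS)\ltimes\cG_x)\times\cG_{x,y}\to\pi_1(\MHS)\ltimes\cG_y$ is a $\pi_1(\MHS)$-equivariant surjection. Combining this with the parallel transport on the local system with fiber $\Aut\Q\pi_1(X,\vv)^\wedge$ shows that transport along any path from $x$ to $y$ carries $\cGhat_x$ onto $\cGhat_y$, even though it need not carry $\MT_x$ to $\MT_y$. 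Only after this does one know that the $\cGhat_x$, and then the quotients $\cGhat_x/\cGbar_x$, form a local system, at which point triviality of the monodromy together with the theorem of the fixed part (which is what the paper invokes, rather than ``admissibility plus trivial monodromy'' in the abstract) yields the constancy and hence the independence of $\phi$.
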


\begin{proof}
The first task is to show that the $\cGhat_{g,n+\uu}$ form a local system over $\M_{g,n+\uu}$. This is not immediately clear, as the size of the Mumford--Tate group depends non-trivially on complex structure on $(\Sbar,P,\vv_o)$. To this end, let $x=(\Xbar,D,\vv)$ be a point of $\M_{g,n+\uu}$. Denote the relative completion of $\pi_1(\M_{g,n+\uu},x)$ by $\cG_x$. Let $y=(\Ybar,E,\vv')$ be another point of $\M_{g,n+\uu}$ and let $\cG_y$ be the relative completion of $\pi_1(\M_{g,n+\uu},y)$. Denote the relative completion of the torsor of paths in $\M_{g,n+\uu}$ from $x$ to $y$ by $\cG_{x,y}$. Its coordinate ring has a natural MHS and the multiplication map
$$
\cG_x \times \cG_{x,y} \to \cG_y
$$
is a morphism of MHS \cite{hain:malcev}. This is equivalent to the statement that the map
$$
(\pi_1(\MHS)\ltimes \cG_x) \times \cG_{x,y} \to \pi_1(\MHS)\ltimes \cG_y
$$
defined by $(\sigma,\lambda,\gamma) \mapsto (\sigma, \gamma^{-1}\lambda \gamma)$ is a $\pi_1(\MHS)$-equivariant surjection, where $\alpha \in \pi_1(\MHS)$ acts by
$$
\alpha : (\sigma,\lambda,\gamma) \mapsto (\alpha\sigma\alpha^{-1},\alpha\cdot\lambda,\alpha\cdot\gamma) \text{ and } \alpha : (\sigma,\mu) \mapsto (\alpha\sigma\alpha^{-1},\alpha\cdot\mu) .
$$
The diagram
$$
\xymatrix{
(\pi_1(\MHS)\ltimes \cG_x) \times \cG_{x,y} \ar[r] \ar[d] & \pi_1(\MHS)\ltimes \cG_y \ar[d] \cr
\Aut\Q\pi_1(X,\vv)^\wedge \times \cG_{x,y} \ar[r] & \Aut\Q\pi_1(Y,\xi')^\wedge
}
$$
commutes, where $Y=\Ybar-E$ and where the bottom arrow is induced by parallel transport in the local system whose fiber over $x$ is $\Aut\Q\pi_1(X,\vv)^\wedge$. This implies that there is a morphism $\cGhat_x \times \cG_{x,y} \to \cGhat_y$ that is compatible with path multiplication. It follows that the $\cG_x$ form a local system over $\M_{g,n+\uu}$.

We now prove the remaining assertions. The monodromy action of $\G_{g,n+\uu}$ on $\cGhat_{g,n+\uu}/\cGbar_{g,n+\uu}$ is the composite
$$
\G_{g,n+\uu} \to \cG_{g,n+\uu}(\Q) \to \Aut\big(\cGhat_{g,n+\uu}/\cGbar_{g,n+\uu}\big)(\Q),
$$
where the first homomorphism is the canonical map, and the second is induced by conjugation. It is easily seen to be trivial as $\cGbar_{g,n+\uu}$ is normal in $\cGhat_{g,n+\uu}$.

The coordinate ring of $\cGhat_{g,n+\uu}/\cGbar_{g,n+\uu}$ has a MHS as the inclusion $\cGbar_{g,n+\uu} \to \cGhat_{g,n+\uu}$ is $\pi_1(\MHS)$-equivariant. This variation has no geometric monodromy, and so is constant by the theorem of the fixed part. Since $\Ubar_{g,n+\uu} = \Uhat_{g,n+\uu} \cap \cGbar_{g,n+\uu}$, the map
$$
\Uhat_{g,n+\uu}/\Ubar_{g,n+\uu} \to \cGhat_{g,n+\uu}/\cGbar_{g,n+\uu}
$$
is a $\pi_1(\MHS)$-equivariant inclusion. It follows that $\Uhat_{g,n+\uu}/\Ubar_{g,n+\uu}$ is also a constant variation of MHS over $\M_{g,n+\uu}$.
\end{proof}

The homomorphism $\tautilde_\dd : \cG_{g,n+\uu} \to \Sp(H)\ltimes H$ lifts to a homomorphism
$$
\tautilde_\dd : \cGhat_{g,n+\uu} \to \GSp(H)\ltimes H
$$
Its kernel is the group $\Uhat_{g,n+\uu}^\dd$ defined in the previous section. Since $\Ubar_{g,n+\uu}^\dd = \Uhat_{g,n+\uu}^\dd \cap \Ubar_{g,n+\uu}$, we have:

\begin{corollary}
$\cO(\Uhat_{g,n+\uu}^\dd/\Ubar_{g,n+\uu}^\dd)$ is a constant VMHS over $\M_{g,n+\uu}$.
\end{corollary}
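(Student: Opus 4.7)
The plan is to deduce this directly from the preceding proposition by a short snake-lemma argument. The key claim is that the natural inclusions induce a canonical isomorphism
\begin{equation*}
\Uhat_{g,n+\uu}^\dd/\Ubar_{g,n+\uu}^\dd \overset{\simeq}{\To} \Uhat_{g,n+\uu}/\Ubar_{g,n+\uu}
\end{equation*}
in the category of prounipotent $\Q$-groups with pro-MHS; once this is in hand, the corollary follows immediately from the constancy statement for $\cO(\Uhat_{g,n+\uu}/\Ubar_{g,n+\uu})$ just established.

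First I would set up the commutative diagram of prounipotent $\Q$-groups
\begin{equation*}
\xymatrix{
1 \ar[r] & \Ubar_{g,n+\uu}^\dd \ar[r]\ar[d] & \Ubar_{g,n+\uu} \ar[r]^(.55){\tautilde_\dd}\ar[d] & H \ar[r]\ar@{=}[d] & 1 \cr
1 \ar[r] & \Uhat_{g,n+\uu}^\dd \ar[r] & \Uhat_{g,n+\uu} \ar[r]^(.55){\tautilde_\dd} & H \ar[r] & 1,
}
\end{equation*}
whose rows are exact on the left by the very definitions of the $\dd$-subgroups as kernels of $\tautilde_\dd$ restricted to the respective unipotent radicals. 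The vertical equality on the right requires a small verification that both horizontal $\tautilde_\dd$ maps are surjective onto $H$; this follows from the surjectivity of $\tautilde_\dd : \cG_{g,n+\uu} \to \Sp(H)\ltimes H$ established earlier, together with the observation that the $\Sp(H)$-factor is entirely accounted for by the Levi quotients of $\cG_{g,n+\uu}$ and $\cGhat_{g,n+\uu}$, so the $H$-component necessarily comes from the respective unipotent radicals $\Ubar_{g,n+\uu}$ and $\Uhat_{g,n+\uu}$. Applying the snake lemma (either directly for prounipotent groups or, more concretely, after passing to Lie algebras and working with the short exact sequences of pro-nilpotent Lie algebras) yields the displayed isomorphism.

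Next I would promote this to an isomorphism of pro-MHS, fiber by fiber. Since $\tautilde_\dd$ arises from the morphism of pointed varieties $\M_{g,n+\uu} \to \X$ used in the proof of the lemma preceding Theorem~\ref{thm:krv2}, it is a morphism of pro-MHS, equivalently a $\pi_1(\MHS)$-equivariant map. Its kernels $\Ubar_{g,n+\uu}^\dd$ and $\Uhat_{g,n+\uu}^\dd$ therefore inherit canonical sub-MHS, and the snake-lemma isomorphism is $\pi_1(\MHS)$-equivariant. Naturality of the whole construction in the base point identifies the two objects as local systems of pro-MHS over $\M_{g,n+\uu}$, so that $\cO(\Uhat_{g,n+\uu}^\dd/\Ubar_{g,n+\uu}^\dd) \cong \cO(\Uhat_{g,n+\uu}/\Ubar_{g,n+\uu})$ as variations of MHS. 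The right-hand side is constant by the preceding proposition, and the corollary follows.

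The only subtlety I expect is in cleanly verifying the surjectivity $\Ubar_{g,n+\uu} \twoheadrightarrow H$ (and likewise for $\Uhat_{g,n+\uu}$), since this requires tracking how $\tautilde_\dd$ interacts with the Levi decompositions of $\cG_{g,n+\uu}$ and $\cGhat_{g,n+\uu}$; everything else is a formal consequence of the snake lemma and the constancy statement of the preceding proposition.
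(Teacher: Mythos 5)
Your argument is correct, but it proves more than the paper does and takes a correspondingly longer route. The paper's own proof is essentially one line: since $\Ubar_{g,n+\uu}^\dd = \Uhat_{g,n+\uu}^\dd \cap \Ubar_{g,n+\uu}$, the induced map $\Uhat_{g,n+\uu}^\dd/\Ubar_{g,n+\uu}^\dd \to \Uhat_{g,n+\uu}/\Ubar_{g,n+\uu}$ is an \emph{injection}, equivariant for $\pi_1(\MHS)$ and for the (trivial) geometric monodromy, and an equivariant subobject of a constant VMHS is constant by the theorem of the fixed part --- exactly the mechanism already used at the end of the preceding proposition. You instead upgrade this injection to an isomorphism via the second isomorphism theorem, which forces you to verify that $\tautilde_\dd$ maps both $\Ubar_{g,n+\uu}$ and $\Uhat_{g,n+\uu}$ \emph{onto} $H$. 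That surjectivity is true and your sketch of it is sound: $\tautilde_\dd(\U_{g,n+\uu})$ is a normal subgroup of $\Sp(H)\ltimes H$ contained in $H$, and the quotient $(\Sp(H)\ltimes H)/\tautilde_\dd(\U_{g,n+\uu})$ is a quotient of $\cG_{g,n+\uu}/\U_{g,n+\uu}\cong\Sp(H)$, hence reductive, so its unipotent radical $H/\tautilde_\dd(\U_{g,n+\uu})$ must vanish; the same argument with $\GSp(H)$ handles $\Uhat_{g,n+\uu}$. What your route buys is the stronger statement that $\Uhat_{g,n+\uu}^\dd/\Ubar_{g,n+\uu}^\dd \cong \Uhat_{g,n+\uu}/\Ubar_{g,n+\uu}$ canonically (so in particular the quotient is independent of $\dd$), which is consistent with, and slightly sharpens, the surjection $\K \to \Uhat_{g,n+\uu}^\dd/\Ubar_{g,n+\uu}^\dd$ obtained later; what it costs is the extra Levi-decomposition bookkeeping, which the paper's injectivity argument avoids entirely. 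One small caveat: exactness of your top row on the left, i.e.\ $\Ubar_{g,n+\uu}^\dd = \ker\big(\tautilde_\dd|_{\Ubar_{g,n+\uu}}\big)$, is not quite definitional --- $\Ubar_{g,n+\uu}^\dd$ is defined as the image of $\U_{g,n+\uu}^\dd$ in $\Aut\Q\pi_1(S,\vv_o)^\wedge$ --- but it is precisely the identity $\Ubar_{g,n+\uu}^\dd = \Uhat_{g,n+\uu}^\dd \cap \Ubar_{g,n+\uu}$ that the paper records immediately before the corollary, so you may cite it rather than treat it as a definition.
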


\begin{proposition}
\label{prop:surjection}
Assuming hypothesis (\ref{hypoth:ihara}), there is a canonical surjection $\pi_1(\MTM) \to \cGhat_{g,n+\uu}/\cGbar_{g,n+\uu}$.
\end{proposition}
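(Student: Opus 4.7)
The plan is to exploit the previous proposition, which identifies $\cO(\cGhat_{g,n+\uu}/\cGbar_{g,n+\uu})$ with a constant pro-VMHS on $\M_{g,n+\uu}$, so its underlying MHS may be computed at any base point. I would use a tangential base point $\vv = \pm\partial/\partial q_j$ of an Ihara--Nakamura smoothing at a maximally degenerate stable curve of type $(g,n+1)$, which is precisely the setting of Hypothesis~\ref{hypoth:ihara}. Extending the construction to this boundary point via the limit MHS, $\cGhat_{g,n+\uu}$ becomes the image of $\pi_1(\MHS)\ltimes\cG_{g,n+\uu}$ in $\Aut\Q\pi_1(X_\vv,\vv)^\wedge$, with $\pi_1(\MHS)$ acting through the limit MHS structure.

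Under Hypothesis~\ref{hypoth:ihara}, this limit MHS is the Hodge realization of a pro-object of $\MTM(\Z)$ whose Mumford--Tate group equals $\pi_1(\MTM,\w^B)$. Tannakian formalism applied to the Hodge realization $\MTM(\Z)\to\MHS$ yields a canonical homomorphism $\pi_1(\MHS)\to\pi_1(\MTM,\w^B)$ through which the monodromy representation factors as
\[
\pi_1(\MHS)\To\pi_1(\MTM)\overset{\eta_\vv}{\To}\Aut\Q\pi_1(X_\vv,\vv)^\wedge,
\]
with $\im\eta_\vv$ equal to the Mumford--Tate group, i.e.\ to the image of $\pi_1(\MHS)$. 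Consequently $\cGhat_{g,n+\uu}$ is generated by $\im\eta_\vv$ together with $\cGbar_{g,n+\uu}$, so $\eta_\vv$ descends to the desired surjection $\pi_1(\MTM)\twoheadrightarrow\cGhat_{g,n+\uu}/\cGbar_{g,n+\uu}$.

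The main obstacles I foresee are two-fold. First, Hypothesis~\ref{hypoth:ihara} is formulated for the loop coalgebra $\Q\lambda(X_\vv)^\wedge$, whereas $\cGhat$ acts on $\Q\pi_1(X_\vv,\vv)^\wedge$; one needs the analogous motivic statement for $\Q\pi_1$, which should follow from the mutual tannakian containments of $\Q\pi_1$ and $\Q\lambda$ (via the cyclic quotient in one direction, and the standard generation of path-algebra MHS by loops and boundary data in the other), or be supplied by the ``more detailed description'' of the hypothesis promised in the proof. Second, the canonicity of the resulting surjection, i.e.\ its independence from the choice of tangential base point and of the maximally degenerate curve, requires that the Ihara--Nakamura limit mixed Hodge structures form a coherent motivic family as $\vv$ varies over the tangential base points at the deepest boundary stratum of $\Mbar_{g,n+1}$; combined with the constancy of $\cGhat_{g,n+\uu}/\cGbar_{g,n+\uu}$ from the previous proposition and the intrinsic nature of the map $\pi_1(\MHS)\to\pi_1(\MTM)$, this should pin down a single canonical surjection independent of all choices.
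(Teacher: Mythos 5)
Your proposal follows essentially the same route as the paper's (sketched) proof: degenerate to an Ihara--Nakamura tangential base point $\qq$ at a maximally degenerate stable curve, use the constancy of $\cO(\cGhat_{g,n+\uu}/\cGbar_{g,n+\uu})$ and admissibility to compute there via the limit MHS, and invoke Hypothesis~\ref{hypoth:ihara} to factor the $\pi_1(\MHS)$-action through $\pi_1(\MTM)$, whence surjectivity onto the quotient. Your first listed obstacle is resolved in the paper by the elaborated form of the hypothesis, which asserts motivicity of the limit MHS on $\Q\pi_1(X_\qq,\vv)^\wedge$ and on $\cO(\Uhat_{g,n+\uu})$, $\cO(\Ubar_{g,n+\uu})$ directly (not only on $\Q\lambda$), with Brown's theorem then giving faithfulness of the $\pi_1(\MTM)$-action because the path torsor of $X_\qq$ is built from copies of that of $\Pminus$.
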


The prounipotent analogue of the proof of Oda's Conjecture \cite{takao} should imply that this is an isomorphism.

\begin{proof}[Sketch of Proof]
Since the variation $\cO(\Uhat_{g,n+\uu}/\Ubar_{g,n+\uu})$ is constant, it extends over the boundary of $\M_{g,n+1}$. Since the variation of MHS over $\M_{g,n+\uu}$ with fiber $\u_{g,n+\uu}$ is admissible, it has a limit MHS at each tangent vector of the boundary divisor $\Delta$ of $\Mbar_{g,n+\uu}$. These tangent vectors correspond to first order smoothings of an $(n+1)$-pointed stable nodal curve of genus $g$ together with a tangent vector at the initial point $x_0$. For each such maximally degenerate stable curve\footnote{These correspond to pants decompositions of $(\Sbar,P,\vv)$ and also to the 0-dimensional strata of $\Mbar_{g,n+\uu}$.} $(\Xbar_0, P,\vv_0)$ of type $(g,n+\uu)$, Ihara and Nakamura \cite{ihara-nakamura} construct a proper flat curve
$$
\X \to \Spec\Z[[q_1,\dots,q_N]],\qquad N = \dim \Mbar_{g,n+1} =  3g + n - 2
$$
with sections $x_j$, $0\le j \le n$ and a non-zero tangent vector field $\vv$ along $x_0$ that specialize to the points of $P$ and the tangent vector $\xi_0$ at $q=0$. The projection is smooth away from the divisor $q_1 q_2 \dots q_N = 0$. These are higher genus generalizations of the Tate curve in genus 1.

There is a limit MHS on each of
$$
\Q\pi_1(X_\qq,\vv)^\wedge,\ \cO(\Uhat_{g,n+\uu}),\ \cO(\Ubar_{g,n+\uu})
$$
corresponding to the tangent vector $\qq := \sum_{j=1}^N \partial/\partial q_j$ of $\Mbar_{g,n+\uu}$ at the point corresponding to $(\Xbar_0,P,\vv_0)$. These can be thought of as MHSs on the invariants of $(X_\qq,\vv)$, where $\Xbar_\qq$ denotes the fiber of $\X$ over $\qq$ and $X_\qq$ the corresponding affine curve.  

The hypothesis (\ref{hypoth:ihara}) --- which we claim is proved in \cite{hain:ihara} --- is that these MHS are Hodge realizations of objects of $\MTM$. This implies that each has an action of $\pi_1(\MTM)$ and that the action of $\pi_1(\MHS)$ on each factors through the canonical surjection $\pi_1(\MHS) \to \pi_1(\MTM)$. Brown's result \cite{brown} asserts that $\pi_1(\MTM)$ acts faithfully on
$$
\Q\pi_1(\Pminus,\vv_o)^\wedge.
$$
This implies that it also acts faithfully on $\Q\pi_1(X_\qq,\vv)^\wedge$ as (by the construction in \cite{hain:ihara}), the unipotent path torsor of $X_\qq$ is built up from the path torsors of copies of $\Pminus$ (and is 6 canonical tangent vectors) in $\Xbar_\qq$. In other words, $\MT_{X_\qq,\vv}$ is naturally isomorphic to $\pi_1(\MTM)$. This implies that the homomorphism $\pi_1(\MHS) \to \cGhat_{g,n+\uu}/\cGbar_{g,n+\uu}$ induces a surjective homomorphism $h:\pi_1(\MTM) \to \cGhat_{g,n+\uu}/\cGbar_{g,n+\uu}$.
\end{proof}

Recall from the introduction that $\K$ is the prounipotent radical of $\pi_1(\MTM,\w^B)$.

\begin{corollary}
Assuming hypothesis (\ref{hypoth:ihara}), there is a canonical surjection $\K \to \Uhat_{g,n+\uu}^\dd/\Ubar_{g,n+\uu}^\dd$.
\end{corollary}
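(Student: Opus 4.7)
The plan is to deduce the corollary from Proposition~\ref{prop:surjection} by restricting the canonical surjection $h : \pi_1(\MTM) \twoheadrightarrow \cGhat_{g,n+\uu}/\cGbar_{g,n+\uu}$ to the prounipotent radical $\K$, and then identifying the target with $\Uhat_{g,n+\uu}^\dd/\Ubar_{g,n+\uu}^\dd$.

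First I would unpack the structure of the quotient. Since $\cGhat_{g,n+\uu}$ is an extension of $\GSp(H)$ by the prounipotent group $\Uhat_{g,n+\uu}$, while $\cGbar_{g,n+\uu}$ is an extension of $\Sp(H)$ by $\Ubar_{g,n+\uu}$, and the inclusion $\cGbar \hookrightarrow \cGhat$ covers the standard inclusion $\Sp(H) \hookrightarrow \GSp(H)$, one has $\cGbar \cap \Uhat = \Ubar$. This produces a short exact sequence
\begin{equation*}
1 \to \Uhat_{g,n+\uu}/\Ubar_{g,n+\uu} \to \cGhat_{g,n+\uu}/\cGbar_{g,n+\uu} \to \Gm \to 1,
\end{equation*}
in which $\Gm = \GSp(H)/\Sp(H)$ is the similitude character. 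Since $\Uhat/\Ubar$ is prounipotent and $\Gm$ is reductive, this identifies $\Uhat_{g,n+\uu}/\Ubar_{g,n+\uu}$ as the prounipotent radical of $\cGhat_{g,n+\uu}/\cGbar_{g,n+\uu}$.

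Next I would restrict $h$ to $\K$. Because $\K$ is prounipotent and $\Gm$ is reductive, the composite $\K \hookrightarrow \pi_1(\MTM) \to \cGhat_{g,n+\uu}/\cGbar_{g,n+\uu} \to \Gm$ is trivial, so $h(\K) \subseteq \Uhat_{g,n+\uu}/\Ubar_{g,n+\uu}$. Conversely, the composite $\pi_1(\MTM) \to \cGhat_{g,n+\uu}/\cGbar_{g,n+\uu} \to \Gm$ is surjective (since $h$ is) and factors through $\pi_1(\MTM)/\K \cong \Gm$; hence its kernel is exactly $\K$, which therefore maps onto $\Uhat_{g,n+\uu}/\Ubar_{g,n+\uu}$.

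Finally I would identify the target with $\Uhat_{g,n+\uu}^\dd/\Ubar_{g,n+\uu}^\dd$. The surjectivity of $\tautilde_\dd : \cG_{g,n+\uu} \to \Sp(H)\ltimes H$ implies that its restriction $\tautilde_\dd : \U_{g,n+\uu} \to H$ is surjective, and hence so is $\tautilde_\dd : \Ubar_{g,n+\uu} \to H$. Thus every element of $\Uhat_{g,n+\uu}$ can be multiplied by an element of $\Ubar_{g,n+\uu}$ to land in $\Uhat_{g,n+\uu}^\dd = \ker\tautilde_\dd$, so $\Uhat_{g,n+\uu}^\dd \cdot \Ubar_{g,n+\uu} = \Uhat_{g,n+\uu}$. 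The second isomorphism theorem then gives
\begin{equation*}
\Uhat_{g,n+\uu}^\dd/\Ubar_{g,n+\uu}^\dd \;=\; \Uhat_{g,n+\uu}^\dd/(\Uhat_{g,n+\uu}^\dd \cap \Ubar_{g,n+\uu}) \;\cong\; \Uhat_{g,n+\uu}^\dd \cdot \Ubar_{g,n+\uu}/\Ubar_{g,n+\uu} \;=\; \Uhat_{g,n+\uu}/\Ubar_{g,n+\uu},
\end{equation*}
and composing this isomorphism with $h|_\K$ yields the desired surjection. The argument is essentially a diagram chase, so there is no deep obstacle; the main bookkeeping point I expect to have to check is the identification $\Uhat \cap \cGbar = \Ubar$, which reduces to verifying that the inclusion $\cGbar \hookrightarrow \cGhat$ induces the standard inclusion on the reductive quotients, a fact immediate from the compatibility of the actions on $H_1(X)$.
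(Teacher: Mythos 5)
Your proposal is correct and is exactly the deduction the paper leaves implicit: restrict the surjection of Proposition~\ref{prop:surjection} to prounipotent radicals and identify $\Uhat_{g,n+\uu}^\dd/\Ubar_{g,n+\uu}^\dd$ with $\Uhat_{g,n+\uu}/\Ubar_{g,n+\uu}$ via $\Uhat_{g,n+\uu}^\dd\cdot\Ubar_{g,n+\uu}=\Uhat_{g,n+\uu}$, which you correctly isolate as the one step that actually needs an argument. The only microscopic imprecision is the assertion that the kernel of $\pi_1(\MTM)\to\Gm$ is exactly $\K$ (a priori the induced surjection $\Gm\to\Gm$ could have finite kernel $\mu_n$), but since a prounipotent group has no nontrivial finite quotients this does not affect the conclusion that $h(\K)=\Uhat_{g,n+\uu}/\Ubar_{g,n+\uu}$.
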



\begin{thebibliography}{99}

\bibitem{akkn1}
A.~Alekseev, N.~Kawazumi, Y.~Kuno, F.~Naef:
{\em Higher genus Kashiwara--Vergne problems and the Goldman--Turaev Lie bialgebra}, C.~R. Math.\ Acad.\ Sci.\ Paris 355 (2017), 123--127.

\bibitem{akkn}
A.~Alekseev, N.~Kawazumi, Y.~Kuno, F.~Naef:
{\em The Goldman-Turaev Lie bialgebra in genus zero and the Kashiwara-Vergne problem}, Adv.\ Math.\ 326 (2018), 1--53.  \comment{arXiv:1703.05813}

\bibitem{akkn2}
A.~Alekseev, N.~Kawazumi, Y.~Kuno, F.~Naef:
{\em The Goldman-Turaev Lie bialgebra and the Kashiwara-Vergne problem in higher genera}, \comment{arXiv:1804.09566}

\bibitem{mero_diffs}
M.~Bainbridge, D.~Chen, Q.~Gendron, S.~Grushevsky, M.~M\"oller:
{\em Compactification of strata of abelian differentials}, Duke Math.\ Journal, 167 (2018), 2347--2416.

\bibitem{brown}
F.~Brown:
{\em Mixed Tate motives over $\Z$}, Ann.\ of Math.\ (2) 175 (2012), 949--976.

\bibitem{chas}
M.~Chas:
{\em Combinatorial Lie bialgebras of curves on surfaces}, Topology 43 (2004), 543--568.

\bibitem{deligne:hodge3}
P.~Deligne:
{\em Th\'eorie de Hodge, III}, Inst.\ Hautes \'Etudes Sci.\ Publ.\ Math.\ No.~44 (1974), 5--77.

\bibitem{durfee-hain}
A.~Durfee, R.~Hain:
{\em Mixed Hodge structures on the homotopy of links},
Math.\ Ann.\ 280 (1988), 69--83.

\bibitem{goldman}
W.~Goldman:
{\em Invariant functions on Lie groups and Hamiltonian flows of surface group representations}, Invent.\ Math.\ 85 (1986), 263--302.

\bibitem{hain:dht}
R.~Hain:
{\em The de Rham homotopy theory of complex algebraic varieties, I}, K-Theory 1 (1987), 271--324.


\bibitem{hain:malcev}
R.~Hain:
{\em The Hodge de Rham theory of relative Malcev completion}, Ann.\ Sci.\ \'Ecole Norm.\ Sup. (4) 31 (1998), 47--92.

\bibitem{hain:torelli}
R.~Hain:
{\em Infinitesimal presentations of the Torelli groups}, J.\ Amer.\ Math.\ Soc.\ 10 (1997), 597--651.

\bibitem{hain:morita}
R.~Hain:
{\em Relative weight filtrations on completions of mapping class groups}, in ``Groups of diffeomorphisms'', 309--368, Adv.\ Stud.\ Pure Math., 52, Math.\ Soc.\ Japan, Tokyo, 2008. \comment{arXiv:0802.0814}

\bibitem{hain:rat_pts}
R.~Hain:
{\em Rational points of universal curves}, J.\ Amer.\ Math.\ Soc. 24 (2011), 709--769.

\bibitem{hain:kahler}
R.~Hain:
{\em Genus 3 mapping class groups are not K\"ahler}, J.\ Topol.\ 8 (2015), 213--246.

\bibitem{hain:normal}
R.~Hain:
{\em Normal functions and the geometry of moduli spaces of curves}, in Handbook of moduli, Vol.~I, 527--578, Adv.\ Lect.\ Math.\ (ALM), 24, Int.\ Press, Somerville, MA, 2013.

\bibitem{hain:goldman}
R.~Hain:
{\em Hodge theory of the Goldman bracket}, Geometry and Topology,  to appear. \comment{arXiv:1710.06053}

\bibitem{hain:ihara}
R.~Hain:
{\em Unipotent Path Torsors of Ihara Curves}, in preparation.

\bibitem{hain:johnson}
R.~Hain:
{\em Johnson homomorphisms}, \comment{arXiv:1909.03914}

\bibitem{hain-matsumoto}
R.~Hain, M.~Matsumoto:
{\em Galois actions on fundamental groups of curves and the cycle $C-C^-$}, J.\ Inst.\ Math.\ Jussieu 4 (2005), 363--403.

\bibitem{hain-matsumoto:mem}
R.~Hain, M.~Matsumoto:
{\em Universal mixed elliptic motives}, J.\ Inst.\ Math.\ Jussieu 19 (2020), 663--766.

\bibitem{hirsch}
M.~Hirsch:
{\em Immersions of manifolds}, Trans.\ Amer.\ Math.\ Soc.\ 93 (1959), 242--276.

\bibitem{ihara-nakamura}
Y.~Ihara, H.~Nakamura:
{\em On deformation of maximally degenerate stable marked curves and Oda's
problem}, J.\ Reine Angew.\ Math.\ 487 (1997), 125--151.

\bibitem{kawazumi:framings}
N.~Kawazumi:
{\em The mapping class group orbits in the framings of compact surfaces}, \comment{arXiv:1703.02258}

\bibitem{kk:log}
N.~Kawazumi, Y.~Kuno:
{\em The logarithms of Dehn twists}, Quantum Topol.\ 5 (2014), 347--423. \comment{arXiv:1008.5017}

\bibitem{kk:intersections}
N.~Kawazumi, Y.~Kuno:
{\em Intersections of curves on surfaces and their applications to mapping class groups}, Ann.\ Inst.\ Fourier (Grenoble) 65 (2015), 2711--2762. \comment{arXiv:1112.3841}

\bibitem{johnson:h1}
D.~Johnson:
{\em The structure of the Torelli group. III: The abelianization of $\mathcal I$}, Topology 24 (1985), 127--144.

\bibitem{kontsevich-zorich}
M.~Kontsevich, A.~Zorich:
{\em Connected components of the moduli spaces of Abelian differentials with prescribed singularities}, Invent.\ Math.\ 153 (2003), 631--678.

\bibitem{morita:casson}
S.~Morita:
{\em Casson invariant, signature defect of framed manifolds and the secondary characteristic classes of surface bundles}, J.\ Differential Geom.\ 47 (1997), 560--599.

\bibitem{oda}
T.~Oda:
{\em The universal monodromy representations on the pro-nilpotent fundamental groups of algebraic curves}, Mathematische Arbeitstagung (Neue Serie) June 1993, Max-Planck-Institute preprint MPI/93-57.\footnote{Avaiable at https://www.mpim-bonn.mpg.de/preblob/4902}

\bibitem{saito:mhc}
M.~Saito:
{\em Mixed Hodge modules and admissible variations}, C.\ R.\ Acad.\ Sci.\
Paris, t.~309, 1989, S\'erie I, pp.~351--356.

\bibitem{takao}
N.~Takao:
{\em Braid monodromies on proper curves and pro-$\ell$ Galois representations}, J.\ Inst.\ Math.\ Jussieu 11 (2012), 161--181. 

\bibitem{turaev:78}
V.~Turaev:
{\em Intersections of loops in two-dimensional manifolds}, Mat.\ Sb.\ 106 (148) (1978), 566--588.

\bibitem{turaev:skein}
V.~Turaev:
{\em Skein quantization of Poisson algebras of loops on surfaces}, Ann.\ Sci.\ \'Ecole Norm.\ Sup.\ (4) 24 (1991), 635--704.

\bibitem{watanabe}
T.~Watanabe:
{\em On the completion of the mapping class group of genus two},  J.\ Algebra 501 (2018), 303--327. \comment{arXiv:1609.05552}

\bibitem{whitney}
H.~Whitney:
{\em On regular closed curves in the plane}, Compositio Math.\ 4 (1937), 276--284. 

\end{thebibliography}
\end{document}